\newcommand{\dd}{\mathrm{d}}
\newcommand{\E}{\mathbb{E}}
\newcommand{\1}{\textbf{1}}
\newcommand{\R}{\mathbb{R}}
\newcommand{\Z}{\mathbb{Z}}
\newcommand{\C}{\mathbb{C}}
\newcommand{\mb}[1]{\mathbb{{#1}}}
\newcommand{\mc}[1]{\mathcal{{#1}}}
\newcommand{\p}[1]{\mathbb{P}\left( #1 \right)}
\newcommand{\po}[2]{\frac{\textrm{d} #1}{\textrm{d} #2}}
\newcommand{\e}{\varepsilon}
\newcommand{\kk}{k}
\newcommand{\scal}[2]{\left\langle #1, #2 \right\rangle}
\DeclareMathOperator{\vol}{vol}
\DeclareMathOperator{\proj}{Proj}
\DeclareMathOperator{\var}{Var}
\DeclareMathOperator{\conv}{conv}
\DeclareMathOperator{\sgn}{sgn}
\def\thm@space@setup{%
  \thm@preskip=12pt plus 0pt minus 0pt
  \thm@postskip=0pt plus 0pt minus 0pt
}
\newtheorem{theorem}{Theorem}
\newtheorem{lemma}[theorem]{Lemma}
\theoremstyle{remark}
\newtheorem{remark}[theorem]{Remark}
\newtheorem{conjecture}{Conjecture}
\theoremstyle{definition}
\title{Extremal sections and projections of certain convex bodies: a survey}
\author{Piotr Nayar}
\address{(PN) University of Warsaw, 02-097 Warsaw, Poland.}
\email{nayar@mimuw.edu.pl}
\author{Tomasz Tkocz}
\address{(TT) Carnegie Mellon University; Pittsburgh, PA 15213, USA.}
\email{ttkocz@andrew.cmu.edu}
\thanks{PN's research supported in part d by the National Science Centre, Poland, grant 2018/31/D/ST1/0135. TT's research supported in part by NSF grant DMS-2246484.}
\date{\today}
\begin{document}

\begin{abstract} 
We survey results concerning sharp estimates on volumes of sections and projections of certain convex bodies, mainly $\ell_p$ balls, by and onto lower dimensional subspaces. This subject emerged from geometry of numbers several decades ago and since then has seen development of a variety of probabilistic and analytic methods, showcased in this survey.
\end{abstract}

\maketitle

\bigskip

\begin{footnotesize}
\noindent {\em 2020 Mathematics Subject Classification.} Primary 52A38. Secondary 60E15, 42B10.

\noindent {\em Key words.} Volume, Convex Sets, Sections, Projections, $p$-norms, Cube, Simplex.
\end{footnotesize}

\bigskip

\section{Introduction}

\subsection{Prologue}
\emph{How small can the volume of a slice of the unit cube be?} This question, asked by Good in the 70s in the context of its applications in geometry of numbers has turned out to be rather influential, prompting development of several important methods, as well as spurring further problems and research directions of independent interest in convex geometry, with strong ties to probability. Those most notably include the \emph{dual} question of extremal-volume projections, which in the simplest nontrivial case of hyperplane-projections, naturally translates into probabilistic Khinchin-type inequalities. Intriguingly, questions on extremal-volume sections can be similarly  translated into the same probabilistic language. 

The purpose of this survey is thus two-folded: in addition to striving to give a systematic account of the known results, our second goal is to illustrate intertwined Fourier analytic, geometric and probabilistic methods underpinning the old and recent approaches.

\subsection{The motivating example}
We begin with recalling Good's question (following \cite{Ball-surv, Vaa}). Suppose we are given $n$ linear forms $L_i(x) = \sum_{j=1}^k a_{ij}x_j$, $i = 1, \dots, n$ in $k$ variables. When does the system $|L_i(x)| \leq 1$, $i \leq n$, admit a nontrivial integral solution? The cornerstone result in geometry of numbers, Minkowski's (first) theorem provides a link to volume: if $K$ is a symmetric convex body in $\R^d$ of volume at least $2^d$, then it contains a nontrivial lattice point (see, e.g. Chapter 2 in \cite{Mat}). Let $A = [a_{ij}]_{i \leq n, j \leq k}$ be the $n \times k$ matrix whose $i$th row determines $L_i$. Thus immediately, if $k \geq n$ and $\det(A) \leq 1$ when $k=n$, then the answer to Good's question is affirmative because the set 
\[
K = \{x \in \R^k, \ |L_i(x)| \leq 1, i \leq n\} = \{x \in \R^k, \ Ax \in [-1,1]^n\}
\]
is the preimage of the cube $[-1,1]^n$ under the linear map $A\colon \R^k \to \R^n$ (unbounded if $A$ is singular and of volume exactly $2^k\det(A)^{-1}$ otherwise when $k=n$). The case $k < n$ is more interesting. Suppose $A$ is of full rank $k$. Then the image of $K$ under $A$ is the section of the cube $[-1,1]^n$ by the $k$-dimensional linear subspace $A(\R^k)$. How small can its volume be? Good's conjecture confirmed later by Vaaler in \cite{Vaa} says that it is at least $2^k$ (the volume of the $k$-dimensional subcube $[-1,1]^k \times \{0\}^{n-k}$). Thus if $\det(A^\top A) \leq 1$, we obtain
\[
\vol(K) \geq \sqrt{\det(A^\top A)}\vol(K) = \vol(A(K)) \geq 2^k
\]
also asserting in view of Minkowski's theorem that the initial system of inequalities admits a nontrivial integral solution, provided the convenient sufficient condition
$
\det(A^\top A) \leq 1.
$
From a geometric point of view, it now seems natural and interesting to ask further questions about the maximal-volume sections for the cube, as well as other sets.

\subsection{Preliminaries and overview}
We endow $\R^n$ with the standard inner product $\scal{x}{y} = \sum_{j=1}^n x_jy_j$ between two vectors $x = (x_1, \dots, x_n)$ and $y = (y_1, \dots, y_n)$ in $\R^n$ and denote by $|x| = \sqrt{\scal{x}{x}}$, the induced standard Euclidean norm. Its closed centred unit ball is denoted $B_2^n$ and for the unit sphere, we write $S^{n-1} = \partial B_2^n$. Moreover, we write $e_1, \dots, e_n$ for the standard basis vectors, $e_1 = (1,0,\dots,0)$, $e_2 = (0,1,0,\dots,0)$ etc. As usual, for a set $A$ in $\R^n$, $A^\perp = \{x \in \R^n, \scal{x}{a} = 0 \ \forall a \in A\}$ is its orthogonal complement, with the convention that for a vector $u$ in $\R^n$, $u^\perp = \{u\}^\perp$ is the hyperplane perpendicular to $u$. Dilates are denoted by $\lambda A = \{\lambda a, \ a \in A\}$ for a scalar $\lambda$. In particular, if $-A = A$, the set $A$ is called (origin) symmetric. The Minkowski or algebraic sum of two sets is $A+B = \{a+b, \ a \in A, b \in B\}$. The orthogonal projection onto an affine or linear subspace $H$ in $\R^n$ is denoted by $\proj_H$. Volume, i.e. $k$-dimensional Lebesgue measure in $\R^n$ is denoted by $\vol_k(\cdot)$, identified with $k$-dimensional Haussdorff measure (normalised so that cubes with side-length $1$ have volume $1$). Recall that a body in $\R^n$ is a compact set with nonempty interior. For a symmetric convex body $K$ in $\R^n$, its Minkowski functional is $\|x\|_K = \sup\{t \geq 0, \ x \in tK\}$, $x \in \R^n$, the norm whose unit ball is $K$. A function $f\colon \R^n \to \R_+$ is called log-concave, if it is of the form $e^{-V}$ for a convex function $V\colon \R^n \to (-\infty,+\infty]$. We refer for instance to the monographs \cite{AGM, BGVV}.

To put it fairly generally, given a body $B$ in $\R^n$ and $1 \leq k \leq n$, the two questions of our main interest will be

\vspace*{4pt}
\hspace*{0.05\textwidth}
\fbox{
\begin{minipage}{0.4\textwidth}
\centering
(I) What are the minimal and maximal volume \emph{sections} $\vol_k(B \cap H)$ among all $k$-dimensional subspaces $H$ in $\R^n$?
\end{minipage}}
%
\hspace*{0.05\textwidth}
\fbox{\begin{minipage}{0.4\textwidth}
\centering
(II) What are the minimal and maximal volume \emph{projections} $\vol_k(\proj_H(B))$ among all $k$-dimensional subspaces $H$ in $\R^n$?
\end{minipage}}
\hspace*{0.05\textwidth}
\vspace*{4pt}


We note the obvious that in contrast to (I), Question (II) does not change if we translate the body $B$. 

It is worth recalling two classical convexity-type results allowing to compare such volumes in the codimension $1$ case, $k = n-1$ (despite not yielding direct answers to these questions).

\begin{theorem}[Busemann \cite{Bus}]\label{thm:Bus}
Let $K$ be a symmetric convex body in $\R^n$. Then the function
\[
x \mapsto \frac{|x|}{\vol_{n-1}(K \cap x^\perp)}, \qquad x \neq 0,\]
extended by $0$ at $x=0$ defines a norm on $\R^n$.
\end{theorem}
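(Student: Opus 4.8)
The plan is to verify that $f(x)=|x|/\vol_{n-1}(K\cap x^\perp)$ (with $f(0)=0$) is a norm by checking the three axioms. Two are immediate. Since $K$ is symmetric, convex and has nonempty interior, $0\in\operatorname{int}K$, so for $x\neq 0$ the section $K\cap x^\perp$ is a bounded symmetric convex subset of $x^\perp$ with nonempty interior in $x^\perp$; hence $0<\vol_{n-1}(K\cap x^\perp)<\infty$ and $0<f(x)<\infty$. Also $(\lambda x)^\perp=x^\perp$ for $\lambda\neq 0$ gives $f(\lambda x)=|\lambda|f(x)$, and in particular $f(-x)=f(x)$. The real content is the triangle inequality. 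Since $f$ is the Minkowski functional of the symmetric star body $\mathcal{I}=\{x\neq 0:f(x)\le 1\}\cup\{0\}$, whose radial function is $\theta\mapsto\vol_{n-1}(K\cap\theta^\perp)$, this is equivalent to convexity of $\mathcal{I}$, and for a star body that holds iff $\mathcal{I}\cap E$ is convex for every $2$-dimensional subspace $E\subset\R^n$. Fix such an $E$ (we may take $E=\operatorname{span}(x,y)$), let $E^\perp$ be its orthogonal complement and $R$ the rotation by $\pi/2$ inside $E$. For unit $\theta\in E$ we have $\theta^\perp=\R R\theta\oplus E^\perp$, so Fubini gives
\[
\vol_{n-1}(K\cap\theta^\perp)=\int_\R g(sR\theta)\,\dd s,\qquad g(z):=\vol_{n-2}\bigl(K\cap(z+E^\perp)\bigr),\ z\in E,
\]
and here convexity of $K$ enters: $g$ is even, supported on the symmetric convex body $D:=\proj_E(K)$, and $g^{1/(n-2)}$ is concave on $D$ by the Brunn--Minkowski inequality. (For $n=2$ one has $g=\1_K$ and $\mathcal{I}=2RK$, so the claim is immediate; assume $n\ge 3$.) Setting $V(v):=\int_\R g(sv)\,\dd s$, which is $(-1)$-homogeneous, the whole problem reduces to showing $v\mapsto V(v)$ is the radial function of a convex body in $E$; equivalently, after the harmless rotation $R$, that $1/V$ is a norm, i.e.
\[
\frac{1}{V(\theta_3)}\le\frac{c_1}{V(\theta_1)}+\frac{c_2}{V(\theta_2)}\qquad\text{whenever }\theta_3=c_1\theta_1+c_2\theta_2,\ c_1,c_2>0.
\]

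To attack this planar inequality, the natural move is to write, for each $t\in\R$, the point $t\theta_3$ on the line $\R\theta_3$ as the convex combination $t\theta_3=\lambda\cdot\tfrac{tc_1}{\lambda}\theta_1+(1-\lambda)\cdot\tfrac{tc_2}{1-\lambda}\theta_2$, bound $g(t\theta_3)$ below by the corresponding combination via concavity of $g^{1/(n-2)}$, and integrate in $t$. When $n=3$, so that $g$ itself is concave on $D$, this already succeeds and is elementary: choosing $\lambda=c_1\|\theta_1\|_D/(c_1\|\theta_1\|_D+c_2\|\theta_2\|_D)$, both interpolation points stay in $D$ exactly for $|t|\le t^*:=(c_1\|\theta_1\|_D+c_2\|\theta_2\|_D)^{-1}$; integrating the bound over that range and changing variables yields $V(\theta_3)\ge\tfrac{\lambda^2}{c_1}V(\theta_1)+\tfrac{(1-\lambda)^2}{c_2}V(\theta_2)$, and the desired bound then follows because $\tfrac{\lambda^2}{c_1}V(\theta_1)+\tfrac{(1-\lambda)^2}{c_2}V(\theta_2)\ge\bigl(c_1/V(\theta_1)+c_2/V(\theta_2)\bigr)^{-1}$, which after clearing denominators is just $c_1c_2\bigl(\|\theta_1\|_D\,V(\theta_1)-\|\theta_2\|_D\,V(\theta_2)\bigr)^2\ge 0$.

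The case $n\ge 4$ is where the genuine difficulty lies, and it is the main obstacle. The bound from the same decomposition involves the integrand $[\lambda(\cdot)^{1/(n-2)}+(1-\lambda)(\cdot)^{1/(n-2)}]^{n-2}$, which cannot be split because $t\mapsto t^{n-2}$ is now convex, and moreover (as elementary examples confirm) discarding the range $|t|>t^*$ is too lossy to reach the conclusion. Overcoming this calls for a more delicate application of the Brunn--Minkowski inequality together with the concavity of $g^{1/(n-2)}$, controlling $\int_\R g(t\theta_3)\,\dd t$ from below on the full line — this is the core of Busemann's original argument; the remaining steps are the standard reductions recorded above.
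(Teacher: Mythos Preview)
The paper does not prove this theorem; it is quoted as a classical result with a citation to Busemann's original article, so there is no ``paper's own proof'' to compare against.

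On its merits, your proposal is not a proof. The preliminary reductions are correct and standard: positivity and homogeneity are clear; convexity of the intersection body may be checked on $2$-planes; and on a fixed plane $E$ the radial function of $\mathcal{I}\cap E$ is $\theta\mapsto\int_\R g(t\theta)\,\dd t$ with $g^{1/(n-2)}$ concave by Brunn--Minkowski. Your treatment of $n=2$ and $n=3$ is fine. But for $n\ge 4$ you explicitly stop and defer to ``Busemann's original argument''. That missing step \emph{is} the theorem: after your reductions, what remains is precisely Busemann's inequality in the plane, namely that for any even $g$ with $g^{1/p}$ concave on its support ($p\ge 1$), the map $\theta\mapsto\int_\R g(t\theta)\,\dd t$ is the radial function of a convex body. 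You have proved this only for $p=1$.

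The defect you identified is real: with a fixed $\lambda$ the bound discards the tails and is too weak once $p>1$. The classical remedy is to let $\lambda$ vary with $t$. One clean way is to choose $\lambda(t)$ so that the two interpolation points $\tfrac{tc_1}{\lambda}\theta_1$ and $\tfrac{tc_2}{1-\lambda}\theta_2$ lie on the \emph{same level set} of $g$; then the power $p$ drops out of the concavity bound, giving $g(t\theta_3)\ge g(\tfrac{tc_1}{\lambda}\theta_1)$ pointwise, and the problem becomes a layer-cake comparison in which the superlevel sets of the three integrands along their rays are related by a harmonic-mean identity. Equivalently, one reparametrises each ray by the cumulative integral $s\mapsto\rho_i(s)$ with $\int_0^{\rho_i(s)} g(t\theta_i)\,\dd t=s$, rather than by arclength. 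Either route yields the inequality uniformly in $p$ and completes the argument; without it, the proposal is a correct outline of the reductions but leaves the substantive step undone.
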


The surface area measure $\sigma_K$ of a convex body $K$ in $\R^n$ is a Borel measure on the unit sphere $S^{n-1}$ defined as follows: for $E \subset S^{n-1}$, $\sigma_K(E)$ equals the volume of the part of the boundary $\partial K$ where normal vectors belong to $E$ (in other words, $\sigma_K$ is the pushforward of the $(n-1)$-dimensional Haussdorff measure on $\partial K$ via the Guass map $\nu_K\colon \partial K \to S^{n-1}$).

\begin{theorem}[Cauchy-Minkowski]\label{thm:Cauchy-Mink}
Let $K$ be a convex body in $\R^n$. Then for every unit vector $\theta \in S^{n-1}$, we have
\[
\vol_{n-1}\big(\proj_{\theta^\perp}(K)\big) = \frac{1}{2}\int_{S^{n-1}} |\scal{\theta}{\xi}| \dd\sigma_K(\xi).
\]
In particular, the function
$
x \mapsto |x|\vol_{n-1}\big(\proj_{x^\perp}(K)\big), x \neq 0,
$
extended by $0$ at $x=0$, defines a norm on $\R^n$.
\end{theorem}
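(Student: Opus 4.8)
The plan is to prove the Cauchy--Minkowski formula first, and then deduce the ``in particular'' claim about the norm. For the formula itself, I would argue by approximation: it suffices to establish it for convex polytopes, since every convex body can be approximated in the Hausdorff metric by polytopes, both sides of the identity being continuous under such approximation (the surface area measure converges weakly, and the volume of a projection is continuous). So let $K$ be a polytope with facets $F_1, \dots, F_m$, having outer unit normals $\nu_1, \dots, \nu_m$ and $(n-1)$-volumes $|F_i| = \vol_{n-1}(F_i)$; then $\sigma_K = \sum_i |F_i|\delta_{\nu_i}$, and the right-hand side becomes $\frac12\sum_i |F_i| |\scal{\theta}{\nu_i}|$.

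The key geometric observation is that the orthogonal projection $\proj_{\theta^\perp}(K)$ is covered by the projections $\proj_{\theta^\perp}(F_i)$ of the facets, and that this cover is ``two-to-one'': a generic point $y \in \proj_{\theta^\perp}(K)$ has a line $y + \R\theta$ meeting $\partial K$ in exactly two points, one lying on a facet with $\scal{\theta}{\nu_i} > 0$ (the ``top'') and one on a facet with $\scal{\theta}{\nu_i} < 0$ (the ``bottom''); facets with $\scal{\theta}{\nu_i} = 0$ project to sets of measure zero. Hence, counting the top facets only,
\[
\vol_{n-1}\big(\proj_{\theta^\perp}(K)\big) = \sum_{i:\, \scal{\theta}{\nu_i} > 0} \vol_{n-1}\big(\proj_{\theta^\perp}(F_i)\big).
\]
Now the elementary linear-algebra fact that projecting an $(n-1)$-dimensional flat with unit normal $\nu_i$ orthogonally onto $\theta^\perp$ scales $(n-1)$-volume by exactly $|\scal{\theta}{\nu_i}|$ gives $\vol_{n-1}(\proj_{\theta^\perp}(F_i)) = |\scal{\theta}{\nu_i}| \cdot |F_i|$. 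Summing over top facets yields $\sum_{i:\, \scal{\theta}{\nu_i} > 0} |\scal{\theta}{\nu_i}| |F_i|$; by the symmetric argument with bottom facets this equals $\sum_{i:\, \scal{\theta}{\nu_i} < 0} |\scal{\theta}{\nu_i}| |F_i|$, so it is also $\frac12\sum_i |\scal{\theta}{\nu_i}| |F_i|$, which is the claimed formula.

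For the ``in particular'' part, write $h(\theta) = |\theta|\vol_{n-1}(\proj_{\theta^\perp}(K))$ for $\theta \neq 0$ and $h(0) = 0$. By the formula just proved applied to $\theta/|\theta|$ (and homogeneity of the projection under scaling the subspace's defining vector, which does not change $\theta^\perp$), we get $h(\theta) = \frac12\int_{S^{n-1}} |\scal{\theta}{\xi}| \dd\sigma_K(\xi)$ for all $\theta$, now $1$-homogeneous in $\theta$. This is manifestly nonnegative, positively homogeneous of degree $1$, and subadditive since $|\scal{\theta + \theta'}{\xi}| \leq |\scal{\theta}{\xi}| + |\scal{\theta'}{\xi}|$ pointwise; so $h$ is a seminorm. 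Positivity ($h(\theta) > 0$ for $\theta \neq 0$) follows because $\sigma_K$ is not supported on any great subsphere $\xi^\perp \cap S^{n-1}$ --- indeed $\int_{S^{n-1}} \xi \dd\sigma_K(\xi) = 0$ and $\sigma_K$ is a genuine $(n-1)$-dimensional measure on $\partial K$, so its support spans $\R^n$ --- hence the integrand is positive on a set of positive $\sigma_K$-measure.

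I expect the main obstacle to be making the ``two-to-one cover'' step rigorous: one must argue that the set of $y \in \proj_{\theta^\perp}(K)$ whose vertical line meets lower-dimensional faces (edges of the silhouette, points over faces orthogonal to $\theta$, etc.) has $(n-1)$-measure zero, and that over the complement the top and bottom facets are well-defined and their projections partition $\proj_{\theta^\perp}(K)$ up to measure zero. This is intuitively clear but slightly tedious; it is exactly the kind of routine measure-theoretic bookkeeping I would relegate to a remark or cite from a standard reference on mixed volumes (e.g. Schneider's book), where the Cauchy projection formula is classical.
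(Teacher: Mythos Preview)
Your proposal is correct and follows essentially the same approach as the paper. The paper does not give a self-contained proof either: it explains the polytope case via exactly your ``every point is covered twice'' observation and the facet-projection scaling $\vol_{n-1}(\proj_{\theta^\perp}(F)) = \vol_{n-1}(F)\cdot |\scal{\theta}{n(F)}|$, then declares the general case a straightforward generalisation and cites Gardner's \emph{Geometric Tomography}; you go slightly further by spelling out the approximation step and the ``in particular'' norm claim, neither of which the paper addresses explicitly. One minor remark: your positivity argument for $h$ can be shortened --- if $h(\theta)=0$ then $\vol_{n-1}(\proj_{\theta^\perp}(K))=0$, which is impossible since $K$ has nonempty interior and hence its projection contains an $(n-1)$-ball.
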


Let us explain this formula in the case of polytopes.  Suppose we are give a convex polytope $P$ in $\R^n$ and we want to project it onto a hyperplane $\theta^\perp$, where $\theta$ is a unit vector. Let $\mc{F_P}$ be the set of faces of $P$. If $F \in \mc{F}_P$ then $\vol_{n-1}(\proj_{\theta^\perp}(F))= \vol_{n-1}(F)\cdot |\scal{\theta}{n(F)}|$, where $n(F)$ is the unit outer-normal vector to $F$. Note that in $\proj_{\theta^\perp}(P)$ every point is \emph{covered} two times, so one gets the following expression for the volume of projection
\[
	\vol_{n-1}(\proj_{\theta^\perp} P) = \frac12 \sum_{F \in \mc{F}_P} \vol_{n-1}(F)\cdot |\scal{\theta}{n(F)}|.
\] 
The Cauchy-Minkowski formula is a straightforward generalization of this formula to general convex bodies. For further background and proofs, we refer for instance to \cite{Gar-tom} (Theorem 8.1.10 and (A.49)).

For $p > 0$ and a vector $x = (x_1, \dots, x_n)$ in $\R^n$, we define the $\ell_p$-norm of $x$ (quasinorm, when $0 < p < 1$) and its (closed) unit ball by
\[
\|x\|_p = \left(\sum_{j=1}^n |x_j|^p\right)^{1/p}, \qquad \|x\|_\infty = \max_{j \leq n} |x_j|, \qquad \qquad B_p^n = \left\{x \in \R^n, \ \|x\|_p \leq 1\right\}.
\]
The cube $B_\infty^n = [-1,1]^n$ often warrants the more convenient volume $1$ normalisation
\[Q_n = \frac{1}{2}B_\infty^n = \left[-\frac{1}{2},\frac{1}{2}\right]^n.
\]

The known results about extremal-volume hyperplane sections and projections of $\ell_p$ balls are summarised in Tables \ref{tab:sec} and \ref{tab:proj} (that is the known answers to Questions (I) and (II) when $B = B_p^n$ and $k = n-1$). We shall discuss them and many more in detail in the next sections.

\begin{table}[!hb]
\begin{center}
\caption{Extremal-volume hyperplane sections of $\ell_p$-balls: $\min/\max_{a \in S^{n-1}} \vol_{n-1}(B_p^n \cap a^\perp)$.}
\label{tab:sec}
\begin{tabular}{c|c|c|c}
 & $0 < p < 2$ & $2 < p < \infty$ & $p=\infty$ \\\hline
$\min$ & $a = \left(\frac{1}{\sqrt{n}},\dots, \frac{1}{\sqrt{n}}\right)$ \ \ \cite{Kol} & $a = (1,0,\dots,0)$ \ \ \cite{MP} & $a = (1,0,\dots,0)$ \ \ \cite{Hadw,Hen} \\\hline
$\max$ & $a = (1,0,\dots,0)$ \ \ \cite{MP} & \textbf{?} & $a = \left(\frac{1}{\sqrt{2}},\frac{1}{\sqrt{2}},0,\dots,0\right)$ \ \ \cite{Ball} 
\end{tabular}
\end{center}
\end{table}

\vspace*{2em}

\begin{table}[!hb]
\begin{center}
\caption{Extremal-volume hyperplane projections of $\ell_p$-balls: $\min/\max_{a \in S^{n-1}} \vol_{n-1}\big(\proj_{a^\perp} (B_p^n)\big)$.}
\label{tab:proj}
\begin{tabular}{c|c|c|c}
 & $p=1$ & $1 < p < 2$ & $2 < p \leq \infty$ \\\hline
$\min$ & $a = \left(\frac{1}{\sqrt{2}},\frac{1}{\sqrt{2}},0,\dots,0\right)$ \ \ \cite{Ball-wave, Sza} & \textbf{?} & $a = (1,0,\dots,0)$ \ \ \cite{BN} \\\hline
$\max$ & $a = (1,0,\dots,0)$ \ \ [folklore] & $a = (1,0,,\dots,0)$ \ \ \cite{BN} & $a = \left(\frac{1}{\sqrt{n}},\dots, \frac{1}{\sqrt{n}}\right)$ \ \ \cite{BN} 
\end{tabular}
\end{center}
\end{table}

\subsection{Existing literature and our aim}
There is of course a vast literature on the subject. Ball's survey \cite{Ball-surv} presents stochastic comparison methods and applications of the celebrated Brascamp-Lieb inequalities to derive sharp bounds on sections. Koldobsky, Ryabogin and Zvavich's survey \cite{KRZ-surv}, as well as Koldobsky's monograh \cite{Kol-mon} bring a common Fourier-analytic treatment to bounds on both sections and projections. The aim of this paper is to update on these and gather in one place what we know up-to-date, as well as highlight what we would like to know around Questions (I) and (II), presenting 11 conjectures. We also showcase a unifying probabilistic point of view (via Khinchin-type inequalities) which goes hand in hand with the Fourier-analytic methods, allowing to obtain additional insights and sharper results.

\section{Sections}\label{sec:sec}

The goal here is to give a comprehensive account of known results concerning Question (I), with some indication of methods to which we come back in Section~\ref{sec:methods}. We begin with some general remarks. A convex body $K$ in $\R^n$ is called \emph{isotropic} if it has volume $1$, its barycentre is at the origin and its covariance matrix is proportional to the identity matrix, that is
\[
\vol_n(K) = 1, \qquad \int_K x \dd x = 0, \qquad \int_K x_ix_j \dd x = L_K^2\delta_{ij}.
\]
The positive propotionality constant $L_K$ is called the isotropic constant of $K$. Every convex body admits an affine image which is isotropic (diagonalising the covariance matrix). It turns out that for symmetric isotropic convex bodies, volumes of all sections of a fixed dimension are comparable.

\begin{theorem}[Hensley \cite{Hen-gen}]\label{thm:Hensley-gen}
Fix $1 \leq \kk \leq n$. There are positive constants $c_\kk, c_\kk'$ which depend only on $\kk$ such that for every symmetric convex body $K$ in $\R^n$ which is isotropic and every $\kk$-codimensional subspace $H$ in $\R^n$, we have
\[
\frac{c_\kk}{L_K} \leq \vol_{n-\kk}(K \cap H)^{1/\kk} \leq \frac{c_\kk'}{L_K}.
\]
\end{theorem}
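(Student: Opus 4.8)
The plan is to reduce the statement to an estimate for the value at the origin of an isotropic even log-concave density on $\R^k$, and then prove that estimate by elementary level-set considerations.

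\emph{Reduction.} Write $H = E^\perp$, where $E$ is a $k$-dimensional subspace, and let $X$ be a random vector uniform on $K$. Let $\pi_E \colon E \to \R_+$ be the density of the projection $\proj_E(X)$; by Fubini's theorem $\pi_E(y) = \vol_{n-k}\big(K \cap (y + H)\big)$ for $y \in E$, and $\int_E \pi_E = \vol_n(K) = 1$. Since $\1_K$ is log-concave, the Pr\'ekopa--Leindler inequality shows that $\pi_E$ is log-concave, and it is even because $K = -K$; consequently $\pi_E$, being an even log-concave function, attains its maximum at $0$, so
\[
\vol_{n-k}(K \cap H) = \pi_E(0) = \|\pi_E\|_\infty .
\]
Writing out the isotropy condition in an orthonormal basis of $E$ gives $\int_E \scal{y}{\theta}^2 \pi_E(y)\,\dd y = L_K^2$ for every unit $\theta \in E$, and in particular $\int_E |y|^2 \pi_E(y)\,\dd y = kL_K^2$. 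Thus it suffices to prove: for every even log-concave probability density $g$ on $\R^k$ with $\int_{\R^k}\scal{y}{\theta}^2 g(y)\,\dd y = \sigma^2$ for all unit vectors $\theta$, one has $c_k/\sigma \le g(0)^{1/k} \le c_k'/\sigma$, with constants depending only on $k$.

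\emph{Lower bound.} By Chebyshev's inequality applied with $\int |y|^2 g = k\sigma^2$, the set $\{|y| \le \sqrt{2k}\,\sigma\}$ carries $g$-mass at least $\tfrac12$; since $g \le g(0)$ everywhere, $\tfrac12 \le g(0)\,\vol_k\big(\sqrt{2k}\,\sigma B_2^k\big)$, which yields $g(0) \ge c_k\,\sigma^{-k}$.

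\emph{Upper bound.} Put $M = g(0) = \|g\|_\infty$ and let $L = \{g \ge M/2\}$, a symmetric convex body with gauge $\|\cdot\|_L$ and support function $h_L$. From $1 = \int g \ge \int_L g \ge \tfrac{M}{2}\vol_k(L)$ we get $\vol_k(L) \le 2/M$. Comparing $g$ along the segment from $0$ to $y$ with its values at the two endpoints, log-concavity together with $g(0)=M$ gives the pointwise bound $g(y) \le 2M\,2^{-\|y\|_L}$ (the factor $2$ absorbing the region $\|y\|_L < 1$). Now fix a unit vector $\theta$. Using $\scal{y}{\theta}^2 \le h_L(\theta)^2\|y\|_L^2$ and polar integration with respect to the gauge of $L$,
\[
\sigma^2 = \int_{\R^k}\scal{y}{\theta}^2 g(y)\,\dd y \le 2M\,h_L(\theta)^2\!\int_{\R^k}\|y\|_L^2\,2^{-\|y\|_L}\,\dd y = 2M\,h_L(\theta)^2\,\frac{k\,(k+1)!}{(\ln 2)^{k+2}}\,\vol_k(L),
\]
and combining with $\vol_k(L) \le 2/M$ shows $h_L(\theta) \ge r_k\,\sigma$ for a constant $r_k > 0$ depending only on $k$. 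Since this holds for every $\theta$, the body $L$ contains the ball $r_k\sigma B_2^k$, so $\vol_k(L) \ge r_k^k\,\sigma^k\,\vol_k(B_2^k)$; together with $\vol_k(L) \le 2/M$ this forces $M \le c_k'^{\,k}\,\sigma^{-k}$, i.e. $g(0)^{1/k} \le c_k'/\sigma$.

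\emph{Main difficulty.} The computations are routine; the one step that genuinely uses the full isotropy hypothesis, and not merely the trace $\int |y|^2 g = k\sigma^2$, is showing that $L$ contains a Euclidean ball of radius comparable to $\sigma$: it is here that the lower bound $\sigma^2$ on the variance in \emph{every} direction is essential, and without it the upper estimate fails (an anisotropic even log-concave density can be arbitrarily peaked at the origin). The lower bound, by contrast, needs only the trace.
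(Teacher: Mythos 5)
Your proof is correct, and the two bounds are obtained by a route genuinely different from the one sketched in the paper (which follows Hensley's original argument). The reduction to a statement about even log-concave isotropic densities on $\R^k$ is common to both. For the lower bound, the paper invokes a ``moving mass towards the origin'' comparison that identifies the uniform density on a Euclidean ball as the extremiser (sharp); your Chebyshev argument is looser in the constant but shorter and entirely elementary. For the upper bound, Hensley compares against a symmetric exponential profile, which gives the sharp constant when $k=1$ but, as the paper notes, ``is more complicated and not sharp anymore'' in higher codimension; your level-set argument -- cut at $L=\{g\ge M/2\}$, use $\vol_k(L)\le 2/M$, derive the decay $g(y)\le 2M\,2^{-\|y\|_L}$ from log-concavity, and exploit isotropy through the support-function lower bound $h_L(\theta)\ge r_k\sigma$ to squeeze $\vol_k(L)$ from below by $(r_k\sigma)^k\vol_k(B_2^k)$ -- treats all $k\ge 1$ uniformly and avoids extremising over densities altogether. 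The polar-integration computation and the two volume bounds on $L$ all check out, and your closing remark is accurate: the lower bound needs only $\int|y|^2 g = k\sigma^2$, whereas full directional isotropy is essential for the step $L\supset r_k\sigma B_2^k$ (a degenerate anisotropic product of two intervals with the same trace shows the upper bound fails otherwise). The only price paid relative to Hensley is in the constants, which the theorem does not ask to be sharp.
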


To illustrate the key insight of Hensley's argument, let us consider the hyperplane case: we take $H = a^\perp$ for a unit vector $a$ and let
\begin{equation}\label{eq:f}
f(t) = \vol_{n-1}(K \cap (H + ta)), \qquad t \in \R.
\end{equation}
By the Brunn-Minkowski inequality, this defines a log-concave function. By the assumptions on $K$, it is even and integrates to $1$. We claim that $f(t)$ is the probability density function of the random variable $\scal{a}{X}$, where $X$ is uniform on $K$. Indeed,
\[
	\p{\sum_{i=1}^n a_i X_i \leq s} = \p{\scal{a}{X} \leq s} = \vol_{n-1}\left(\left\{x \in K: \scal{a}{x} \leq s\right\} \right) = \int_{-\infty}^s f(t) \dd t,
\]
by Fubini's theorem.  Crucially,
\begin{equation}\label{eq:sec-density}
\vol_{n-1}(K \cap a^\perp) = f(0).
\end{equation}
Since by isotropicity $\E|\scal{a}{X}|^2=L_K^2$, we have $\int_{\R} t^2 f(t) \dd t = L_K^2$. It then remains to extremise the value of $f(0)$ among all such densities. Using a ``moving mass to where it is beneficial'' type of argument (see the proof of Theorem \ref{thm:Had-Hen} below), a sharp lower bound is obtained by considering a uniform density (in higher dimensions, i.e. $\kk > 1$, isotropicity naturally dictates a uniform density on a Euclidean ball), whilst for a sharp upper bound, using convexity, the comparison is made against a symmetric exponential density (in higher dimensions, the argument is more complicated and not sharp anymore -- see Lemma 3 in \cite{Hen-gen}).

Bourgain in \cite{Bou} used the property of hyperplane sections having comparable volume to obtain bounds on maximal functions. He asked whether the isotropic constants $L_K$ over all $K$ in all dimensions are uniformly bounded by a universal constant  -- equivalently, whether every (symmetric) convex body of volume $1$ admits a hyperplane section of volume at least a universal constant. This has become one of the central questions in asymptotic convex geometry, the hyperplane, or slicing conjecture, see e.g. \cite{BGVV} for a comprehensive monograph, \cite{KM} for a recent survey and \cite{JLV, KL, Kla} for the best results up to date.

By Theorem \ref{thm:Hensley-gen}, for two arbitrary subspaces $H_1, H_2$ of codimension $\kk$, we have
\[
\left(\frac{\vol_{\kk}(K \cap H_1)}{\vol_{\kk}(K \cap H_2)}\right)^{1/\kk} \leq C_\kk
\]
with $C_\kk = \frac{c_\kk'}{c_\kk}$. Hensley's proof gives an upper bound on $C_\kk$ of the order $\kk!$, which was improved to $\sqrt{\kk}$ by Ball in \cite{Ball-lc}, who conjectured an optimal bound to be in fact of the constant order, which remains open and turns out to be equivalent to the slicing conjecture. Implicit in his work and elucidated by V. Milman and Pajor in their seminal work \cite{MilPaj} is the following reason for that equivalence: for a symmetric isotropic convex body $K$ in $\R^n$ and a $\kk$-codimensional subspace $H$ in $\R^n$, there is a symmetric $\kk$-dimensional convex body $C$ in $H^\perp$ such that
\[
c_1\frac{L_C}{L_K} \leq \vol_{n-\kk}(K \cap H)^{1/\kk} \leq c_2\frac{L_C}{L_K},
\]
where $c_1, c_2 > 0$ are universal constants. The body $C$ emerges from a generalisation of Busemann's Theorem \ref{thm:Bus} to higher codimensions (see \cite{Ball-lc} and \cite{MilPaj}). Since $L_K \geq L_{B_2^n}$, if the slicing conjecture is true, then $L_C \leq c_3$ for a universal constant $c_3 > 0$ thus it in particular implies the existence of a universal constant $M > 0$ such that for all $\kk$-codimensional subspaces $H$, we have
\begin{equation}\label{eq:slicing-consequence}
\vol_{n-\kk}(K \cap H) \leq M^\kk.
\end{equation}

\subsection{Cube}
Recall $Q_n = [-\frac12, \frac12]^n$. As highlighted in the introduction, in the context of extremal-volume sections, it has always been the cube sparking most interest and attention. The first sharp result concerned minimum-volume hyperplane sections and was obtained independently by Hadwiger in \cite{Hadw} and Hensley in \cite{Hen}.

\begin{theorem}[Hadwiger \cite{Hadw}, Hensley \cite{Hen}]\label{thm:Had-Hen}
For every unit vector $a$ in $\R^n$, we have
\[
\vol_{n-1}(Q_n \cap a^\perp) \geq 1.
\]
Equality holds if and only if $a = \pm e_j$ for some $1\leq j \leq n$.
\end{theorem}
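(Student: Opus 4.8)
The plan is to use the probabilistic reformulation already set up in the discussion of Hensley's argument: for a unit vector $a$, the function $f(t) = \vol_{n-1}(Q_n \cap (a^\perp + ta))$ is the density of the random variable $\scal{a}{X} = \sum_{j=1}^n a_j X_j$ where $X$ is uniform on $Q_n$, i.e.\ $X_1,\dots,X_n$ are i.i.d.\ uniform on $[-\frac12,\frac12]$, and we have the identity $\vol_{n-1}(Q_n \cap a^\perp) = f(0)$. So the theorem is equivalent to the Khinchin-type statement $f(0) \geq 1$, with equality iff $a$ is a coordinate vector. The density of $\scal{a}{X}$ has Fourier transform $\prod_{j=1}^n \widehat{\mu}(a_j s)$ where $\widehat{\mu}(u) = \frac{\sin(u/2)}{u/2} = \mathrm{sinc}(u/2)$ is the characteristic function of the uniform distribution on $[-\frac12,\frac12]$. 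By Fourier inversion,
\[
f(0) = \frac{1}{2\pi}\int_{\R} \prod_{j=1}^n \frac{\sin(a_j s/2)}{a_j s/2}\, \dd s.
\]
After rescaling $s \mapsto 2s$ this becomes $\frac{1}{\pi}\int_{\R}\prod_{j=1}^n \mathrm{sinc}(a_j s)\,\dd s$, and since $\int_\R \mathrm{sinc}(s)\,\dd s = \pi$ corresponds to $f(0)=1$ when $a = e_j$, the task is to show this integral is minimised (and only minimised) at a coordinate vector.

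The key step is the following pointwise/integral inequality, which is the heart of Ball's approach and of the original arguments: for any real numbers $b_1,\dots,b_n$ with $\sum b_j^2 = 1$ one has
\[
\int_{\R} \prod_{j=1}^n \left|\frac{\sin(b_j s)}{b_j s}\right| \dd s \;\geq\; \pi,
\]
with the convention that a factor equals $1$ when $b_j = 0$. The standard route is: first reduce to the case of at least two nonzero $b_j$ (if only one is nonzero it equals $\pm 1$ and the integral is exactly $\pi$); then exploit the elementary inequality $|\mathrm{sinc}(u)| \le e^{-u^2/6}$ valid for $|u|$ not too large together with a tail estimate, OR — cleaner and what I would actually do — use the convolution/interpolation trick: write $\prod_j \mathrm{sinc}(b_j s)$ as (a constant times) the Fourier transform of the density of $\sum b_j X_j$ and compare with the density of a single variable. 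Concretely, since $\sum b_j^2 = 1$, the random variable $S = \sum b_j X_j$ has variance $\frac{1}{12} = \Var(X_1)$, so $f_S(0) \ge f_{X_1}(0) = 1$ would follow if the density $f_S$ at $0$ were minimal among mean-zero, variance-$\frac{1}{12}$ log-concave densities that are "more spread out"; but that is false in general, so instead I would invoke the sharper structural fact that $f_S$ is, by Brunn–Minkowski, log-concave, even, with second moment $\frac{1}{12}$, AND additionally satisfies $f_S(0) \ge \|f_S\|_\infty/\sqrt{?}$ — this is getting circular. The honest main step, and the one I expect to be the real obstacle, is Ball's integral inequality itself: showing $\int_\R \prod_j |\mathrm{sinc}(b_j s)|\,\dd s \ge \int_\R \mathrm{sinc}(s)\,\dd s$. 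I would prove it by the substitution reducing to two factors via Hölder: for $q_j = 1/b_j^2 \ge 1$ with $\sum 1/q_j = 1$, Hölder's inequality gives
\[
\int_\R \prod_j |\mathrm{sinc}(b_j s)|\,\dd s \le \prod_j \left(\int_\R |\mathrm{sinc}(b_j s)|^{q_j}\,\dd s\right)^{1/q_j} = \prod_j \left(\frac{1}{|b_j|}\int_\R |\mathrm{sinc}(u)|^{q_j}\,\dd u\right)^{1/q_j},
\]
which is an \emph{upper} bound and hence the wrong direction for the minimum — this is exactly the tension that makes the lower bound subtle, and the resolution (following Ball) is that the lower bound $f(0) \ge 1$ is \emph{not} obtained from $L^q$ norms of $\mathrm{sinc}$ but rather from a direct "mass transportation" comparison in the spatial domain.

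So the approach I would actually carry out is the spatial-domain one alluded to in the text: use the slicing function $f(t) = \vol_{n-1}(Q_n \cap (a^\perp + ta))$, which is even and log-concave with $\int f = 1$. The constraint is not just the variance; crucially, for the cube, $f$ is supported on an interval of length $\|a\|_1 \le \sqrt n$ and is bounded: in fact $f(t) \le f(0)$ by log-concavity and evenness, so $1 = \int f \le f(0)\cdot(\text{length of support}) = f(0)\|a\|_1$ — but $\|a\|_1$ can exceed $1$, so this only gives $f(0) \ge 1/\|a\|_1$, the wrong bound. The correct extra ingredient is that $f$ being a section-volume function of a product body is not merely log-concave but is a multiple convolution $f = \mu_{a_1} * \cdots * \mu_{a_n}$ of scaled uniform densities, and one can compare it to the convolution of two of them (the rest only "smear" mass outward). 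Precisely, I would prove: if $g$ is any even density on $\R$ and $h$ is an even log-concave density with $h(0) \ge 1$ and bounded by its value at $0$, then $(g*h)(0) = \int g(t) h(-t)\,\dd t = \int g(t)h(t)\,\dd t$; this is not obviously $\ge 1$. Given these difficulties in reconstructing the "slick" proof, the step I flag as the genuine obstacle is establishing $f(0) \ge 1$ itself; the cleanest known resolution is Ball's: reduce by a symmetrisation/induction to the worst case where all nonzero $a_j$ are equal (the "flattest" slice), i.e.\ $a = \frac{1}{\sqrt m}(1,\dots,1,0,\dots,0)$, and then verify directly that for the central section of the cube by the hyperplane $\{\sum_{j\le m} x_j = 0\}$ one has $\vol_{n-1} \ge 1$, e.g.\ via the explicit formula $\sqrt m \cdot \mathrm{vol}_{m-1}\{x\in[-\frac12,\frac12]^m : \sum x_j = 0\} = \sqrt m\cdot \frac{1}{(m-1)!}\sum_{k} (-1)^k\binom{m}{k}(\frac m2 - k)_+^{m-1} \ge 1$, with the inequality checked by a convexity/majorization argument in $m$, and equality forced only when $m=1$. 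Finally, to recover the equality characterization, I would trace back: equality in the reduction to equal coordinates, plus equality in the $m=1$ base case, forces exactly one nonzero coordinate, i.e.\ $a = \pm e_j$.
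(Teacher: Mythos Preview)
Your proposal does not contain a proof. You correctly set up the problem as $f(0)\geq 1$ for the density $f$ of $\sum a_jX_j$, try several approaches (Fourier inversion, H\"older on sinc powers, support length, convolution structure), correctly recognise that each one either points the wrong way or is circular, and then explicitly flag ``establishing $f(0)\geq 1$ itself'' as the obstacle. Your proposed resolution---reduce by ``symmetrisation/induction'' to the case of equal nonzero coordinates and then verify $\alpha_m\geq 1$ from the explicit Eulerian formula---is not a proof either: the reduction step is unjustified and is essentially the whole theorem. (Note also that the diagonal directions are \emph{not} candidate minimisers; they give values strictly above $1$. If your symmetrisation were meant to \emph{increase} $f(0)$, the base case would be the most unbalanced vector $e_1$, and the induction would already assume the result.)

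The paper's argument is precisely the one you brushed against and then discarded. You noted that $\sum a_jX_j$ has variance $\tfrac{1}{12}$ independently of $a$ (isotropy of $Q_n$), but then wrote ``that is false in general'' and moved on. The missing ingredient is Moriguti's inequality: for \emph{every} probability density $g$ on $\R$ one has
\[
\|g\|_\infty^{2}\int_{\R} t^{2} g(t)\,\dd t \;\geq\; \frac{1}{12},
\]
with equality only for uniform densities on a centred interval. Since $f$ is even and log-concave, $\|f\|_\infty=f(0)$; combined with $\int t^2 f=\tfrac{1}{12}$ this gives $f(0)\geq 1$ immediately, with equality forcing $f=\1_{[-1/2,1/2]}$, i.e.\ $a=\pm e_j$. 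The proof of Moriguti's bound is two lines of mass-moving: symmetrise, set $f_0=\|f\|_\infty\,\1_{[-c,c]}$ with $c=(2\|f\|_\infty)^{-1}$, and observe $\int t^2(f-f_0)\,\dd t=\int(t^2-c^2)(f-f_0)\,\dd t\geq 0$ since the integrand is pointwise nonnegative. No Fourier analysis, no induction, no special structure of the convolution is needed.
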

\begin{proof}
Let $K=Q_n$ and let us consider the function $f$ from \eqref{eq:f}. Since $Q_n$ is isotropic, the value of  $\int t^2 f(t) \dd t = \int_{Q_n} \scal{x}{a}^2 \dd x = \frac{1}{12}$ does not depend on $a$ (easily found by taking $a=e_1$). Moreover, $\|f\|_\infty = f(0)$, for $f$ is even and log-concave. It is therefore enough to show that for \emph{every} probability density $f$, we have
\[
\|f\|_\infty^2 \int t^2 f(t) \dd t \geq \frac{1}{12}.
\]
This goes back to Moriguti's work \cite{Mor} (rederived by Ball in \cite{Ball} in a slightly more general case of $p$-norms). For the proof, we can assume that $f$ is even, as otherwise we consider $g(t)=\frac12(f(t)+f(-t))$ and $\|g\|_\infty\leq \|f\|_\infty$, whereas the second moments of $f$ and $g$ are the same. Then we move mass towards the origin, as this is beneficial: formally, take $f_0=\|f\|_\infty \1_{[-c,c]}$, where $c=(2 \|f\|_\infty)^{-1}$. Clearly $\|f_0\|_\infty = \|f\|_\infty$. We have
\[
	\int t^2(f(t)-f_0(t)) \dd t  = \int (t^2-c^2)(f(t)-f_0(t)) \dd t \geq 0,
\]  
as the integrant is nonnegative.
\end{proof}

In words, the canonical coordinate subspaces uniquely minimise the volume of hyperplane sections of the cube. Soon after, this was extended to sections of arbitrary dimension by Vaaler in \cite{Vaa}, confirming Good's conjecture.

\begin{theorem}[Vaaler \cite{Vaa}]\label{thm:vaaler}
Fix $1 \leq k \leq n$. For every $k$-dimensional subspace $H$ in $\R^n$, we have
\[
\vol_{k}(Q_n \cap H) \geq 1.
\]
Equality holds if and only if $H$ is spanned by some $k$ standard basis vectors.
\end{theorem}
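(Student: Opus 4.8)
The plan is to upgrade the density argument behind Theorem~\ref{thm:Had-Hen} to arbitrary codimension and then induct on $n$. Let $X=(X_1,\dots,X_n)$ be uniform on $Q_n$, so the $X_i$ are i.i.d.\ uniform on $[-\tfrac12,\tfrac12]$, and fix a $\kk$-dimensional subspace $H$ of $\R^n$. Put $\rho_i=\proj_{H^\perp}(e_i)\in H^\perp$; then $\sum_{i=1}^n\rho_i\rho_i^\top$ is the orthogonal projection onto $H^\perp$, i.e.\ $(\rho_i)_{i=1}^n$ is a Parseval frame for $H^\perp\cong\R^{m}$, $m=n-\kk$. The random vector $Z=\proj_{H^\perp}(X)=\sum_{i=1}^n X_i\rho_i$ has a symmetric log-concave density $g$ on $H^\perp$ with covariance $\tfrac1{12}\mathrm{Id}_m$, and exactly as in the derivation of \eqref{eq:sec-density} (Fubini, writing the $\proj_{H^\perp}$-preimage of a small ball as a cylinder) one gets
\[
\vol_{\kk}(Q_n\cap H)=g(0)=\int_{H^\perp}\prod_{i=1}^n\frac{\sin(\pi\xi_i)}{\pi\xi_i}\,\dd\xi ,
\]
the last expression being the Fourier-slice form of the same quantity. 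For $m=1$ this is precisely Theorem~\ref{thm:Had-Hen}, so the task is to prove $g(0)\ge1$ for all $m$. Note that log-concavity together with the value of the covariance is \emph{not} enough: the uniform density on the Euclidean ball with covariance $\tfrac1{12}\mathrm{Id}_m$ has central value $<1$ for $m\ge2$, so one must exploit that $Z$ is a sum of one-dimensional uniforms adapted to a tight frame.

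Now induct on $n$. If $e_i\in H$ for some $i$, then $Q_n\cap H=[-\tfrac12,\tfrac12]e_i\times(Q_{n-1}\cap H')$ with $H'=H\cap e_i^\perp\subset\R^{n-1}$ of dimension $\kk-1$, and we conclude by induction; if $e_i\in H^\perp$ for some $i$, then $H\subset\R^{n-1}\times\{0\}$ and we conclude with the same $\kk$; equality in either case unwinds to $H$ being coordinate. So assume $0<|\rho_i|<1$ for every $i$. Peel off the last summand: $g=g'*\mu_n$, where $g'$ is the density of $Z'=\sum_{i<n}X_i\rho_i$ and $\mu_n$ is the law of $X_n\rho_n$. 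Renormalising by $S=\sum_{i<n}\rho_i\rho_i^\top=\mathrm{Id}_m-\rho_n\rho_n^\top\succ0$, the vectors $S^{-1/2}\rho_i$ ($i<n$) form a Parseval frame for $\R^m$, which by Naimark's dilation is a coordinate projection of an orthonormal basis of $\R^{n-1}$; hence the associated density $\tilde g'$ is the section-volume function of a cube in $\R^{n-1}$, so by the inductive hypothesis $\tilde g'(0)\ge1$, and its covariance is $\tfrac1{12}\mathrm{Id}_m$. Unwinding $Z'=S^{1/2}\tilde Z'$ in $g(0)=\int_{-1/2}^{1/2}g'(s\rho_n)\,\dd s$ (using that $\rho_n$ is an eigenvector of $S$) gives
\[
g(0)=\frac{\sqrt{1+R^2}}{R}\int_{-R/2}^{R/2}\tilde g'(t\,\hat\rho_n)\,\dd t ,\qquad R=\frac{|\rho_n|}{\sqrt{1-|\rho_n|^2}},\quad\hat\rho_n=\frac{\rho_n}{|\rho_n|}.
\]
It therefore suffices to prove the one-dimensional inequality $\tfrac{\sqrt{1+R^2}}{R}\int_{-R/2}^{R/2}\phi\ge1$ for $\phi(t)=\tilde g'(t\hat\rho_n)$, a symmetric log-concave function with $\phi(0)\ge1$ whose spread is constrained by the fact that the marginal of $\tilde g'$ in the direction $\hat\rho_n$ has variance $\tfrac1{12}$. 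This should follow from a ``move mass towards the origin'' rearrangement in the spirit of Theorem~\ref{thm:Had-Hen}, with the extremal $\phi$ a uniform density, after which sharpness propagates up the recursion and forces every $\rho_i\in\{0\}\cup\{\pm e_j\}$, i.e.\ $H$ spanned by standard basis vectors.

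The crux is exactly that last one-dimensional step. In codimension one ($m=1$) one only had to balance $\|f\|_\infty^2$ against the second moment; here $\phi$ is the \emph{restriction} of the higher-dimensional density $\tilde g'$ to a line through its mode, not a marginal of it, so one must carry along the right second-order information in the peeled direction and match it against $R$ so that the comparison with a uniform density is \emph{exactly} sharp and the equality case survives. Getting this bookkeeping right is the technical heart of Vaaler's argument — and, in essence, the point at which the analogous reasoning for general isotropic convex bodies breaks down, leaving Theorem~\ref{thm:Hensley-gen} known only with dimensional constants.
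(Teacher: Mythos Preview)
Your reduction is clean and correct through the displayed identity for $g(0)$, and recognising $\tilde g'$ via Naimark dilation as a cube marginal in one fewer ambient dimension is a nice device. The gap is the last step, and it is genuine rather than bookkeeping. The Moriguti ``move mass'' argument behind Theorem~\ref{thm:Had-Hen} works because the competitor is a probability density with prescribed second moment; those two scalar constraints single out the uniform density as extremal. Your $\phi(t)=\tilde g'(t\hat\rho_n)$ is the \emph{restriction} of an $m$-variate density to a line through its mode, so neither $\int_\R\phi$ nor $\int t^2\phi$ is fixed by anything you retain. With only ``even, log-concave, $\phi(0)\ge1$'' the family $\phi_c(t)=e^{-c|t|}$, $c\to\infty$, drives $\tfrac{\sqrt{1+R^2}}{R}\int_{-R/2}^{R/2}\phi$ to zero for every fixed $R>0$. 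You correctly flag that the covariance of $\tilde g'$ controls its \emph{marginal} in the direction $\hat\rho_n$, not its restriction; there is no inequality between the two tight enough to rescue a Moriguti-type comparison here.

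What the induction is missing is exactly what Vaaler carries: not the single number $\tilde g'(0)\ge1$ but a comparison against a fixed reference on \emph{all} symmetric convex sets. As the paper indicates, his argument uses Kanter's peakedness: the uniform law on $[-\tfrac12,\tfrac12]$ is more peaked than the centred Gaussian $\gamma_\sigma$ with $\gamma_\sigma(0)=1$ (trivially, since $\1_{[-1/2,1/2]}\ge\gamma_\sigma$ on its support and both integrate to $1$); peakedness tensorises across independent even log-concave factors (Kanter) and is preserved under linear images, so the law of $\proj_{H^\perp}X$ is more peaked than $\gamma_\sigma^{\otimes m}$, whose density at the origin is $1$; shrinking balls gives $g(0)\ge1$. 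If you strengthen your inductive hypothesis from ``$\tilde g'(0)\ge1$'' to ``$\tilde g'$ is more peaked than $\gamma_\sigma^{\otimes m}$'', the peeling step becomes a one-liner via the convolution stability of peakedness --- but that is precisely Vaaler's proof, and the induction on $n$ is then superfluous.
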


Thus every section of the cube has \emph{large} volume. It is also ``fat in all directions'', in terms of quadratic forms, see Ball and Prodromou's work \cite{BPr}.
Vaaler used log-concavity and the notion of peakedness (introduced by Kanter in \cite{Kant}) to make such comparison, generalising Hensley's argument. Recently, Akopyan, Hubard and Karasev in \cite{AHK} gave a different proof based on topological methods. 

Thus Vaaler's theorem gives a complete answer to Question (I) for mimimal-volume sections of the cube. Turning to the maximal ones, the first general upper bound for hyperplane sections was given by Hensley in \cite{Hen}, viz.
$
\vol_{n-1}(Q_n \cap a^\perp) \leq 5
$
for every unit vector $a$ in $\R^n$, who also conjectured that the sharp bound would be with $5$ replaced by $\sqrt{2}$ attained at $a = (\frac{1}{\sqrt{2}},\frac{1}{\sqrt{2}},0,\dots,0)$. This was later confirmed by Ball in his seminal work \cite{Ball}.

\begin{theorem}[Ball \cite{Ball}]\label{thm:ball-codim1}
For every unit vector $a$ in $\R^n$, we have
\[
\vol_{n-1}(Q_n \cap a^\perp) \leq \sqrt{2}.
\]
Equality holds if and only if $a = (\pm e_i \pm e_j)/\sqrt{2}$ for some $1\leq i < j \leq n$.
\end{theorem}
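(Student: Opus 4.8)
The plan is to follow Ball's original Fourier-analytic argument, whose key ingredient is the following integral formula for the volume of a hyperplane section of the cube: for a unit vector $a=(a_1,\dots,a_n)$ with all $a_j\neq 0$,
\[
\vol_{n-1}(Q_n \cap a^\perp) = \int_{\R} \prod_{j=1}^n \frac{\sin(\pi a_j t)}{\pi a_j t}\,\dd t.
\]
This comes from writing the section volume as the value at $0$ of the density $f$ of $\scal{a}{X}$ with $X$ uniform on $Q_n$ (as in \eqref{eq:sec-density}), then using Fourier inversion together with the fact that the characteristic function of the uniform distribution on $[-\tfrac12,\tfrac12]$ is $\xi\mapsto \frac{\sin(\pi\xi)}{\pi\xi}$, so $f$ is a product of such factors. (If some coordinates of $a$ vanish, the section is a lower-dimensional cube slice and the bound $1\le\sqrt2$ is trivial, so we may assume all $a_j\neq 0$, and by symmetry all $a_j>0$.)

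The heart of the matter is then the inequality
\[
\int_{\R} \prod_{j=1}^n \left|\frac{\sin(\pi a_j t)}{\pi a_j t}\right|\,\dd t \;\le\; \sqrt{2}\qquad\text{whenever }\sum a_j^2 = 1.
\]
Ball's idea is to exploit $\sum a_j^2=1$ via a clever Hölder (or rather, generalized Hölder / Brascamp--Lieb-flavoured) splitting: write $\prod_j \left|\frac{\sin(\pi a_j t)}{\pi a_j t}\right| = \prod_j \left(\left|\frac{\sin(\pi a_j t)}{\pi a_j t}\right|^{1/a_j^2}\right)^{a_j^2}$ and apply Hölder's inequality with exponents $1/a_j^2$ (which sum of reciprocals is $\sum a_j^2 = 1$) to get
\[
\int_{\R}\prod_{j=1}^n \left|\frac{\sin(\pi a_j t)}{\pi a_j t}\right|\,\dd t \;\le\; \prod_{j=1}^n \left(\int_{\R}\left|\frac{\sin(\pi a_j t)}{\pi a_j t}\right|^{1/a_j^2}\,\dd t\right)^{a_j^2}.
\]
After the change of variables $s = a_j t$ in the $j$th integral, each factor becomes $\left(\tfrac{1}{a_j}\int_{\R}\left|\tfrac{\sin(\pi s)}{\pi s}\right|^{1/a_j^2}\dd s\right)^{a_j^2}$, so the whole bound reduces to controlling the one-variable function
\[
q \longmapsto \left(\int_{\R}\left|\frac{\sin(\pi s)}{\pi s}\right|^{q}\,\dd s\right)^{1/q}\cdot q^{-1/2}, \qquad q = 1/a_j^2 \ge 1,
\]
(the product of the $a_j^{-a_j^2} = q^{a_j^2/2}$ factors being handled by the $q^{-1/2}$ normalization, using $\sum a_j^2=1$ again). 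The claim then follows from the purely analytic fact that this function is bounded by $\sqrt2$ for all $q\ge 2$, with the relevant case being $q\ge 2$ because $a_j^2\le 1$ forces... actually one needs the estimate for all $q\ge1$; the key lemma is $\int_{\R}\left|\frac{\sin(\pi s)}{\pi s}\right|^{q}\dd s \le \sqrt{\tfrac{2}{q}}$ for $q\ge 2$, together with a separate elementary argument when some $a_j^2 > 1/2$ is impossible since $\sum a_j^2 = 1$ forces at most one such coordinate, which is then treated by hand.

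The main obstacle is exactly this sharp one-dimensional integral estimate $\int_{\R}\left|\frac{\sin(\pi s)}{\pi s}\right|^{q}\dd s \le \sqrt{2/q}$ for $q\ge 2$: it is not a soft inequality and requires genuine work — Ball proves it by comparing $\left|\frac{\sin(\pi s)}{\pi s}\right|^2$ pointwise (on a suitable range) with a Gaussian $e^{-cs^2}$ of matching behaviour near $0$, reducing to a Gaussian integral, and handling the tails separately; the constant $\sqrt2$ is sharp precisely at $q=2$, i.e. $a_i^2=a_j^2=\tfrac12$, which pins down the equality case. For the equality characterization one tracks when Hölder is tight (all the functions $\left|\frac{\sin(\pi a_j s)}{\pi a_j s}\right|^{1/a_j^2}$ proportional, forcing at most two nonzero coordinates) and when the one-dimensional bound is tight ($q=2$), yielding $a = (\pm e_i\pm e_j)/\sqrt2$. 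I would present the Fourier formula and the Hölder splitting in full, state the one-dimensional lemma precisely, and either prove it or cite \cite{Ball}, since that estimate is the technical crux and has been reworked by several authors since.
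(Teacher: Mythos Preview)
Your approach is essentially Ball's, matching the paper's sketch: the Fourier formula, H\"older with exponents $1/a_j^2$, and reduction to the integral inequality $\frac{1}{\pi}\int_\R |\frac{\sin t}{t}|^p\,\dd t \le \sqrt{2/p}$ for $p\ge 2$.

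One point deserves tightening. Your treatment of the complementary case is garbled: you write that ``one needs the estimate for all $q\ge 1$'' and then that the case $a_j^2>1/2$ ``is impossible'' before saying it is ``treated by hand''. In fact Ball's integral inequality genuinely fails for $p<2$ (and the integral even diverges at $p=1$), so the H\"older route simply does not cover the situation where some $a_j^2>1/2$. The paper disposes of this case by a clean geometric observation you should make explicit: projecting $Q_n\cap a^\perp$ onto $e_j^\perp$ scales volume by $|a_j|$ and lands inside $\proj_{e_j^\perp}(Q_n)=Q_{n-1}$, giving $\vol_{n-1}(Q_n\cap a^\perp)\le 1/|a_j|<\sqrt{2}$ whenever $|a_j|>1/\sqrt{2}$.
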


\begin{proof}[Sketch of the proof]
The starting point of Ball's approach was Fourier-analytic: if we fix a unit vector $a$ and let $f$ be the probability density of $\scal{a}{X}$, where $X$ is a random vector uniform on $Q_n$ (thus having i.i.d. components $X_j$ which are uniform on $[-\frac12, \frac12]$), then by \eqref{eq:sec-density} and the standard Fourier inversion formula,
\[
\vol_{n-1}(Q_n \cap a^\perp) = f(0) =  \frac{1}{2\pi}\int_{-\infty}^\infty \hat f(t) \dd t =  \frac{1}{2\pi}\int_{-\infty}^\infty \prod_{j=1}^n \frac{\sin(\frac{1}{2}a_jt)}{\frac{1}{2}a_jt} \dd t 
=  \frac{1}{\pi}\int_{-\infty}^\infty \prod_{j=1}^n \frac{\sin(a_jt)}{a_jt} \dd t.
\]
(This formula can perhaps be traced back to P\'olya's work \cite{Pol}, and was also used by Hensley.) The next crucial idea is to apply H\"older's inequality with the weights $p_j = a_j^{-2}$ to get the bound
\begin{equation}\label{eq:Ball-Holder}
\int_{-\infty}^\infty \prod_{j=1}^n \frac{\sin(a_jt)}{a_jt} \dd t \leq \prod_{j=1}^n \left(\int_{-\infty}^\infty \left|\frac{\sin(a_jt)}{a_jt}\right|^{a_j^{-2}} \dd t\right)^{a_j^2} = \prod_{j=1}^n \Psi\left(a_j^{-2}\right)^{a_j^2}
\end{equation}
with 
$
\Psi(p) = \sqrt{p}\int_{-\infty}^\infty \left|\frac{\sin t}{t}\right|^{p}\dd t, p \geq 1
$
(a similar trick was also used in Haagerup's seminal work \cite{Haa} on sharp constants in Khinichin inequalities). The most technically challenging and rather intricate is the problem of maximisation of $\Psi$. The so-called Ball's integral inequality which he established to finish his proof asserts that
\begin{equation}\label{eq:Ball-integralineq}
\frac{1}{\pi}\int_{-\infty}^\infty \left|\frac{\sin t}{t}\right|^{p}\dd t \leq \sqrt{\frac{2}{p}}, \qquad p \geq 2
\end{equation}
with equality if and only if $p = 2$. This completes the proof in the case where all $|a_j| \leq \frac{1}{\sqrt{2}}$. The complimentary case is dispensed with by a geometric argument justifying that $\vol_{n-1}(Q_n \cap a^\perp) \leq \frac{1}{|a_j|}$ for each $j$. Indeed, projecting the section onto $e_j^\perp$ changes its volume by the factor $|\scal{a}{e_j}|$ and it is contained in the projection of the entire cube,
\[
\vol_{n-1}(Q_n \cap a^\perp) = \frac{1}{|\scal{a}{e_j}|}\vol_{n-1}(\proj_{e_j^\perp}(Q_n \cap a^\perp)) \leq \frac{1}{|\scal{a}{e_j}|}\vol_{n-1}(\proj_{e_j^\perp}(Q_n)) = \frac{1}{|a_j|}.
\]
\end{proof}

We mention in passing that this integral inequality has been quite influential, with a very powerful method developed by Nazarov and Podkorytov in \cite{NP} to give a ``simple'' proof, as well as many extensions, generalisations, discrete versions, or even stability properties (see \cite{ALM, EM, KOS, LPP, MR, MelRob}). 

Quite remarkably and unexpectedly, the $\sqrt{2}$ bound allows to produce a very simple counter-example to the famous Busemann-Petty problem posed in \cite{BP}: \emph{If for two symmetric convex bodies $K$ and $L$ in $\R^n$, we have $\vol_{n-1}(K \cap a^\perp) \leq \vol_{n-1}(L \cap a^\perp)$ for every vector $a$, does it follow that $\vol_n(K) \leq \vol_n(L)$?} Indeed, Ball observed in \cite{Ball-BP} that since the volume of the hyperplane sections of the unit volume Euclidean ball in high dimensions is roughly $\sqrt{e}$ and $\sqrt{2} < \sqrt{e}$, it suffices to take $K = Q_n$ and for $L$, the ball of a slightly smaller radius. This argument in fact works in all dimensions $n \geq 10$. Later in \cite{Gia1}, Giannopoulos used similar ideas involving cylinders to produce such elegant and simple counter-examples in dimensions $n \geq 7$. The answer to the Busemann-Petty problem is negative for $n \geq 5$ and positive for $n \leq 4$, see \cite{GKS} which has culminated efforts of significant work. We refer for instance to Koldobsky's comprehensive monography \cite{Kol-mon} for a full account.

The situation for upper bounds on volume of sections of codimensions higher that~$1$ is not fully understood. Ball has obtained two general bounds.

\begin{theorem}[Ball \cite{Ball2}]\label{thm:ball-codim>1}
Fix $1 \leq k \leq n$. For every $k$-dimensional subspace $H$ in $\R^n$, we have
\[
\vol_{n-1}(Q_n \cap H) \leq \min\left\{\sqrt{\frac{n}{k}}^{k},\sqrt{2}^{n-k}\right\}.
\]
\end{theorem}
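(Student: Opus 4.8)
The plan is to establish the two bounds in the minimum separately, each by a projection argument of the kind already used in the sketch of Theorem~\ref{thm:ball-codim1}. Write $k$ for $\dim H$ and $m = n-k$ for its codimension.

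\textbf{The bound $\sqrt{2}^{\,n-k}$.} First I would iterate the codimension-one estimate. Pick an orthonormal basis $a_1,\dots,a_m$ of $H^\perp$. Projecting $Q_n\cap H$ orthogonally onto $a_1^\perp$ multiplies its $k$-volume by $|\scal{a_1}{\nu}|\le 1$ for the appropriate unit normal, and the image is contained in $Q_n\cap(H\oplus\R a_1)\cap a_1^\perp$, a section of the cube of one higher dimension inside the hyperplane $a_1^\perp$. Repeating this $m$ times peels off the subspace $H^\perp$ one direction at a time and leaves a full-dimensional-in-the-remaining-coordinates section; more cleanly, one shows by induction on the codimension that $\vol_k(Q_n\cap H)\le \sqrt{2}^{\,m}$, the inductive step being exactly Theorem~\ref{thm:ball-codim1} applied inside the $(n-j)$-dimensional cube obtained after $j$ projections, using that each projection can only decrease volume while the slicing constant for one more codimension picks up at most a factor $\sqrt 2$. (Alternatively, and perhaps more transparently, one invokes Theorem~\ref{thm:ball-codim1} together with the fact that $Q_n\cap H$ is, up to the projection factors, squeezed between sections of cubes of dimensions decreasing from $n$ to $k$.) The point to be careful about is that after projecting onto $a_1^\perp$ the image is still a central hyperplane section of an $(n-1)$-dimensional cube, so the hypotheses of Theorem~\ref{thm:ball-codim1} are genuinely available at each stage.

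\textbf{The bound $\sqrt{n/k}^{\,k}$.} For this I would pass to the Fourier/Hölder picture of Ball. Writing $X$ for a random vector uniform on $Q_n$ and letting $f_H$ be the density of $\proj_H(X)$ on $H\cong\R^k$, one has $\vol_k(Q_n\cap H)=f_H(0)$ and, by Fourier inversion on $\R^k$,
\[
\vol_k(Q_n\cap H)=\frac{1}{(2\pi)^k}\int_{\R^k}\prod_{j=1}^n \frac{\sin\big(\tfrac12\scal{P^\top\xi}{e_j}\big)}{\tfrac12\scal{P^\top\xi}{e_j}}\,\dd\xi,
\]
where $P\colon\R^n\to H$ is the orthogonal projection. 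Now apply Hölder in the $\xi$-integral with exponents $p_j = 1/|Pe_j|^2$ (these satisfy $\sum_j 1/p_j = \sum_j |Pe_j|^2 = \tr(P) = k$, which is the normalisation that makes the exponents sum correctly), bound each resulting factor by Ball's integral inequality \eqref{eq:Ball-integralineq} when $p_j\ge 2$, and optimise. This is precisely the computation carried out in \cite{Ball2}; the arithmetic–geometric mean / concavity step, combined with $\sum_j |Pe_j|^2 = k$ and $|Pe_j|^2\le 1$, yields the clean bound $\sqrt{n/k}^{\,k}$. The main obstacle here is the same one flagged in the codimension-one sketch: controlling the factors where $p_j<2$ (i.e.\ $|Pe_j|^2>\tfrac12$), which is not covered by \eqref{eq:Ball-integralineq}; one handles these by a direct geometric estimate, namely $\vol_k(Q_n\cap H)\le \prod_{j\in S}|Pe_j|^{-1}$ whenever the coordinates indexed by $S$ (with $|S|=k$) have $H$ projecting onto $\R^S$ isomorphically, the proof being the same ``project and contain'' inequality as in Theorem~\ref{thm:ball-codim1}.

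\textbf{Conclusion and the hard part.} Taking the minimum of the two bounds finishes the proof. I expect the genuinely delicate step to be Ball's integral inequality \eqref{eq:Ball-integralineq} itself (which we are allowed to assume), and after that, the careful bookkeeping needed to reduce the general $k$-dimensional section to situations where either all the Hölder exponents are $\ge 2$ or a geometric projection bound applies — exactly the dichotomy that already appears, in embryonic form, in the codimension-one argument. Everything else is a routine combination of Fubini, Fourier inversion, and Hölder's inequality.
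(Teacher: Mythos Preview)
Both halves of your proposal have genuine gaps, and in fact the paper's discussion (just after the statement) flags the missing ingredient: \emph{both} bounds rest on Ball's geometric form of the Brascamp--Lieb inequality, not on ordinary H\"older or on iteration of the hyperplane bound.

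\textbf{The iteration for $\sqrt{2}^{\,n-k}$ does not work.} If $a_1\in H^\perp$, then $H\subset a_1^\perp$ already, so projecting $Q_n\cap H$ onto $a_1^\perp$ is the identity and buys nothing. What you are really doing is writing $Q_n\cap H=(Q_n\cap a_1^\perp)\cap H$, a codimension-$(m-1)$ slice of the body $Q_n\cap a_1^\perp$. But $Q_n\cap a_1^\perp$ is \emph{not} a cube, so Theorem~\ref{thm:ball-codim1} is unavailable at the next step. Your sentence ``after projecting onto $a_1^\perp$ the image is still a central hyperplane section of an $(n-1)$-dimensional cube'' is simply false in general; this is precisely why the codimension-one result does not bootstrap. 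In Ball's argument the $\sqrt{2}^{\,n-k}$ bound comes instead from applying Brascamp--Lieb to the $k$-dimensional Fourier integral and then invoking the integral inequality \eqref{eq:Ball-integralineq} factor by factor.

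\textbf{The ``H\"older'' step for $\sqrt{n/k}^{\,k}$ is not H\"older.} In your Fourier formula each factor depends on $\xi\in\R^k$ only through the single linear form $\scal{\xi}{Pe_j}$, hence is constant along $(k-1)$-dimensional affine subspaces; every $\int_{\R^k}|f_j|^{p_j}$ is therefore infinite and ordinary H\"older gives a vacuous bound. You even note that your exponents satisfy $\sum_j 1/p_j=k$ rather than $1$: that is exactly the signature of the Brascamp--Lieb inequality (in Ball's geometric form, with the decomposition $\sum_j |Pe_j|^2\,\frac{Pe_j}{|Pe_j|}\otimes\frac{Pe_j}{|Pe_j|}=\mathrm{Id}_H$), not of H\"older. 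Incidentally, you have also swapped the two arguments relative to Ball's paper: the direct application of Brascamp--Lieb to indicator functions of intervals yields $\sqrt{n/k}^{\,k}$, while the Fourier route yields $\sqrt{2}^{\,n-k}$.
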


The first bound $\sqrt{\frac{n}{k}}^{k}$ is optimal when $k$ divides $n$. Rogalski's question asked for the symmetric convex body of largest volume ratio (see \cite{Ball2}). It turns out that the bound $\sqrt{\frac{n}{k}}^{k}$ is equivalent to the fact that the cube is such a maximiser. The second bound $\sqrt{2}^{n-k}$ is better than the first one for $k \geq \frac{n}{2}$ and turns out to be sharp in this case. Both bounds rely heavily on Ball's ingenious geometric version of the Brascamp-Lieb inequality (from \cite{BL}), which provides a multidimensional analog of H\"older's inequality. The first bound uses it in a direct way (applied to indicator functions of intervals), whereas the second bound applies it to the Fourier analytic formula. As already mentioned, the exponential bound $\sqrt{2}^{n-k}$ in the codimension $n-k$ is largely motivated by the slicing problem, see \eqref{eq:slicing-consequence}, providing an explicit constant for the cube.

These bounds, although sharp for many values of $k$ and $n$, leave many other cases open. A sort of folklore conjecture (see, e.g. \cite{Iva, Poo}) states that for arbitrary $k$ and $n$, the maximal-volume section of the cube is attained at an affine cube. Specifically, given $1 \leq k \leq n$, let $n = k\ell+r$ with $r$ being the remainder from the division of $n$ by $k$. We define the following $k$ orthogonal vectors in $\R^n$:
\begin{align*}
u_{j+1} &= e_{j\ell+1} + e_{j\ell+2} + \dots + e_{(j+1)\ell}, \hspace*{15em} 0 \leq j < k-r, \\
u_{k-r+j} &= e_{(d-r)\ell + (\ell+1)j + 1} + e_{(d-r)\ell + (\ell+1)j + 2} + \dots + e_{(d-r)\ell + (\ell+1)(j+1)}, \quad 0 \leq j < r.
\end{align*}
Let $H^*$ be the $k$-dimensional subspace spanned by them. Then
\[
Q_n \cap H^* = \left\{\sum_{j=1}^k t_ku_k, \ |t_1|, \dots, |t_k| \leq \frac{1}{2}\right\}
\]
which is an affine cube of volume
\[
\vol_k(Q_n \cap H^*) = \prod_{j=1}^k |u_j| = \sqrt{\ell}^{k-r}\sqrt{\ell+1}^{r}.
\]
Note that this becomes $\sqrt{n/k}^{k}$ when $k$ divides $n$ and $\sqrt{2}^{n-k}$ when $k \geq n/2$. 

\begin{conjecture}\label{conj:min-sec-cube}
Let $1 \leq k \leq n$. For every $k$ dimensional subspace $H$ in $\R^n$, we have
\[
\vol_k(Q_n \cap H) \leq \vol_k(Q_n \cap H^*).
\]
\end{conjecture}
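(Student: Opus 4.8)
Since the statement is conjectural, I outline a strategy. The natural framework is Ball's Brascamp--Lieb method. Fix a $k$-dimensional subspace $H$ and an orthonormal basis $w_1,\dots,w_k$ of $H$; the isometry $\R^k\to H$, $x\mapsto\sum_i x_iw_i$, identifies $Q_n\cap H$ with $P=\{x\in\R^k:\ |\scal{x}{b_j}|\le\tfrac12,\ j=1,\dots,n\}$, where $b_j=(\scal{w_1}{e_j},\dots,\scal{w_k}{e_j})\in\R^k$. Orthonormality of the $w_i$ gives $\sum_{j=1}^n b_jb_j^\top=I_k$, hence $\sum_j|b_j|^2=k$ and $|b_j|\le1$. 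Applying the geometric Brascamp--Lieb inequality (\cite{BL}; cf.\ \cite{Ball2}) to $\1_{[-1/(2|b_j|),1/(2|b_j|)]}$ with exponents $|b_j|^2$ yields
\[
\vol_k(Q_n\cap H)=\int_{\R^k}\prod_{j=1}^n\1_{[-1/2,1/2]}\big(\scal{x}{b_j}\big)\,\dd x\ \le\ \prod_{j=1}^n|b_j|^{-|b_j|^2}.
\]
The first thing to note is that this inequality is an \emph{equality} at $H=H^*$: there each $b_j$ equals $|u_{i(j)}|^{-1}$ times a standard basis vector of $\R^k$, and grouping the product over the blocks $i$ gives $\prod_i|u_i|=\vol_k(Q_n\cap H^*)$. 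So, writing $\beta(n,k):=\vol_k(Q_n\cap H^*)=\sqrt{\ell}^{k-r}\sqrt{\ell+1}^{r}$, the conjecture is the claim $\vol_k(Q_n\cap H)\le\beta(n,k)$; but the right-hand side above, maximised over all admissible $(|b_j|^2)$ --- strict concavity of $t\mapsto -t\log t$ under $\sum_j|b_j|^2=k$ forces $|b_j|^2\equiv k/n$ --- equals $\sqrt{n/k}^{\,k}$, which coincides with $\beta(n,k)$ exactly when $k\mid n$. Hence this Brascamp--Lieb bound already proves the conjecture when $k\mid n$ (recovering Theorem~\ref{thm:ball-codim>1}), and the entire content when $k\nmid n$ lies in the strictly positive deficit $\sqrt{n/k}^{\,k}-\beta(n,k)$.

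The plan for $k\nmid n$ is to induct on $n$, peeling a single coordinate at a time so as to reduce to a compact ``balanced'' regime. For each $j$ there are two coordinate-reductions. First, the restriction of $\proj_{e_j^\perp}$ to $H$ has Jacobian $\sqrt{\det(I_k-b_jb_j^\top)}=\sqrt{1-|b_j|^2}$ and $\proj_{e_j^\perp}(Q_n\cap H)\subseteq Q_{n-1}\cap\proj_{e_j^\perp}(H)$, a $k$-section of the $(n-1)$-cube; so, using the conjecture for $(n-1,k)$,
\[
\vol_k(Q_n\cap H)\ \le\ \frac{\beta(n-1,k)}{\sqrt{1-|b_j|^2}}\ \le\ \beta(n,k)\quad\text{whenever } |b_j|^2\le \tfrac{1}{\ell+1}
\]
(since $k\nmid n$ gives $r\ge1$, the threshold $1-(\beta(n-1,k)/\beta(n,k))^2$ equals $\tfrac1{\ell+1}$). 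Second, $Q_n\cap H$ lies in a slab of width $1/|b_j|$ perpendicular to $\proj_H e_j$ inside $H$, and by Brunn--Minkowski and symmetry its largest slice is the central one, $Q_{n-1}\cap(H\cap e_j^\perp)$, a $(k-1)$-section of the $(n-1)$-cube; so, using the conjecture for $(n-1,k-1)$,
\[
\vol_k(Q_n\cap H)\ \le\ \frac{\beta(n-1,k-1)}{|b_j|}\ \le\ \beta(n,k)\quad\text{whenever } |b_j|^2\ge \Big(\tfrac{\beta(n-1,k-1)}{\beta(n,k)}\Big)^2.
\]
The trivial cases $n=k$ and, more generally, $k\mid n$ anchor the induction, so the induction step closes unless \emph{every} coordinate $j$ satisfies $\tfrac{1}{\ell+1}<|b_j|^2<(\beta(n-1,k-1)/\beta(n,k))^2$ --- call this Regime~B. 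There all the $|b_j|^2$ are bounded away from $0$ and from $1$ by explicit constants and, summing to $k$ over $n$ terms, are forced to cluster around $k/n$.

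The hard part is Regime~B, where the Brascamp--Lieb bound must be beaten. What is needed is a \emph{stability} version of Ball's inequality for indicator data: if $Q_n\cap H$ is not, up to a coordinate permutation, an orthogonal product of $k$ cubes with block sizes as equal as possible, then $\vol_k(Q_n\cap H)$ is below $\prod_j|b_j|^{-|b_j|^2}$ by a definite, combinatorially matched amount --- enough to drop below $\beta(n,k)$. Qualitatively this is plausible: equality in the geometric Brascamp--Lieb inequality forces the data $(b_j)$ to split along an orthogonal one-dimensional decomposition of $\R^k$ with matching parameters --- precisely the structure of an affine box --- which is incompatible with Regime~B, so the inequality is strict there; the difficulty, and what is genuinely open, is to extract the explicit deficit. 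I would look for it via a multidimensional sharpening of the Nazarov--Podkorytov method \cite{NP} or a direct stability analysis of the Brascamp--Lieb inequality, using the settled cases $k\mid n$, $k\ge n/2$ (Theorem~\ref{thm:ball-codim>1}) and $k\in\{1,n-1,n\}$ (Theorems~\ref{thm:ball-codim1} and~\ref{thm:vaaler}) as consistency checks.
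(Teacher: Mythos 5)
This statement is Conjecture~\ref{conj:min-sec-cube}, which is open; the paper offers no proof, only the partial results it cites (Ball's bounds in Theorem~\ref{thm:ball-codim>1}, which settle the cases $k\mid n$ and $k\ge n/2$, and the Ivanov--Tsiutsiurupa result for $k=2$). You correctly recognise this and present a plan rather than a proof, so there is nothing in the paper to compare your argument against line by line. The parts of your plan that are actual mathematics check out: the identification of $Q_n\cap H$ with $\{x:\ |\scal{x}{b_j}|\le\frac12\}$ under $\sum_j b_jb_j^\top=I_k$, the geometric Brascamp--Lieb bound $\prod_j|b_j|^{-|b_j|^2}$ and the observation that it is attained with equality at $H^*$ and maximises to $\sqrt{n/k}^{\,k}$ over the constraint, the projection reduction with Jacobian $\sqrt{1-|b_j|^2}$ and threshold $|b_j|^2\le\frac1{\ell+1}$ (your computation of $\beta(n-1,k)/\beta(n,k)=\sqrt{\ell/(\ell+1)}$ when $r\ge1$ is right), and the Fubini/Brunn--Minkowski slicing reduction with threshold $|b_j|\ge\beta(n-1,k-1)/\beta(n,k)$ --- the latter being the $k$-codimensional analogue of the geometric step Ball uses in the proof of Theorem~\ref{thm:ball-codim1}.

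The genuine gap is exactly where you place it, and you should be aware of how large it is. In your Regime~B the two thresholds typically leave a wide window (e.g.\ for $n=5$, $k=3$ the slicing reduction only fires at $|b_j|=1$, so Regime~B is essentially all of $\frac12<|b_j|^2<1$), and what is required there is a quantitative stability version of the geometric Brascamp--Lieb inequality for indicator data whose deficit exactly matches the combinatorial gap $\sqrt{n/k}^{\,k}-\beta(n,k)$. No such result is known, and it is not a routine sharpening: the equality analysis of Brascamp--Lieb gives qualitative rigidity (the data must split along an orthogonal decomposition), but converting that into the precise deficit needed here is, as you say, the entire open content of the conjecture. So your proposal should be read as a correct organisation of the problem that recovers the known cases and isolates the obstruction, not as progress on the obstruction itself.
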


In addition to Ball's results of Theorem \ref{thm:ball-codim>1}, this conjecture has recently been confirmed for planar sections, i.e. when $k=2$ by Ivanov and Tsiutsiurupa in \cite{IT}, who developed local conditions for extremal subspaces.

At the end of this subsection, we mention several loosely related extensions of these fundamental results.

\subsubsection*{Other measures.}
Let $\gamma_n$ denote the standard Gaussian measure on $\R^n$, that is the Borel probability measure on $\R^n$ with density $(2\pi)^{-n/2}e^{-|x|^2/2}$, whereas for a  subspace $H$, let $\gamma_H$ be its counterpart on $H$, that is the Borel probability measure supported on $H$ with density $(2\pi)^{-\dim H/2}e^{-|x|^2/2}$ (with respect to Lebesgue measure on $H$). Due to the lack of homogeneity, now of course cube's side lengths may play a role. For the lower bounds, Barthe, Gu\'edon, Mendelson and Naor in \cite{BGMN} established an analogue of Vaaler's theorem.

\begin{theorem}[Barthe-Gu\'edon-Mendelson-Naor \cite{BGMN}]\label{thm:BGMN-cube-low}
Fix $1 \leq k \leq n$. For every $k$-dimensional subspace $H$ in $\R^n$, the function
\[
t \mapsto \frac{\gamma_H(tQ_n \cap H)}{\gamma_k(tQ_k)}
\]
is nonincreasing on $[0,+\infty)$. In particular (letting $t \to \infty$), for every $t > 0$, we have
\[
\gamma_H(tQ_n \cap H) \geq \gamma_k(tQ_k).
\]
\end{theorem}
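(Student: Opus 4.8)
The plan is to reduce the Gaussian statement to the Lebesgue (Vaaler-type) statement by a disintegration/averaging argument, exploiting the fact that the Gaussian density is a superposition of indicators of Euclidean balls, or—perhaps more cleanly—of products of indicators of cubes along the coordinate axes; either way the key is that the statement for $tQ_n$ should follow by integrating Vaaler's theorem against an appropriate one-parameter family. Concretely, I would first record the one-dimensional identity expressing a centered Gaussian as a mixture: writing the standard Gaussian density on $\R$ as $\varphi(x) = \int_0^\infty \mathbf{1}_{[-s,s]}(x)\,\dd\mu(s)$ for a suitable probability measure $\mu$ on $(0,\infty)$ (this is just the ``layer-cake'' decomposition of the decreasing density $\varphi$ restricted to $[0,\infty)$, made symmetric), and then tensorizing: $\gamma_n$ has density $\prod_{j=1}^n \varphi(x_j) = \int_{(0,\infty)^n} \mathbf{1}_{\prod_j[-s_j,s_j]}(x)\,\dd\mu^{\otimes n}(s)$. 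Thus for any subspace $H$ of dimension $k$,
\[
\gamma_H(tQ_n \cap H) = \int_{(0,\infty)^n} \vol_k\Big(\big(tQ_n \cap H\big) \cap \big(\textstyle\prod_j[-s_j,s_j]\big)\Big)\,\dd\mu^{\otimes n}(s) \cdot (\text{normalization}),
\]
except that the inner body $tQ_n \cap \prod_j[-s_j,s_j]$ is a rectangular box, not a cube, so one cannot apply Vaaler directly and must instead use the affine-invariant, anisotropic form.

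The honest way, then, is to invoke the \emph{anisotropic} extension of Vaaler's theorem that is already implicit in Theorem~\ref{thm:BGMN-cube-low}'s companion results, or to re-prove the needed inequality in the box setting: for a box $R = \prod_{j=1}^n[-s_j,s_j]$ and a $k$-subspace $H$, one has the peakedness/log-concavity comparison $\vol_k(R\cap H) \le$ (something governed by the $k$ largest relevant side-lengths), and crucially a \emph{ratio monotonicity} in a scaling parameter. Rather than that, I would follow the BGMN strategy directly: the right framework is Kanter's notion of peakedness together with the observation that $X$ uniform on $tQ_n$ has independent coordinates, so the pushforward density of $\proj_H$ is a convolution one can compare, via peakedness, to its ``coordinate'' counterpart—and Gaussian comparison then comes for free because the Gaussian is obtained by a central-limit/mixture procedure that preserves peakedness. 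The cleanest route: let $g$ be the density on $H$ of $\proj_H(Z)$ where $Z \sim \gamma_n$, and let $g_k$ be the density on $H$ (after identifying $H$ with $\R^k$ via an orthonormal basis) of a standard Gaussian on $\R^k$; one shows $g$ is more peaked than $g_k$ in Kanter's sense, i.e. $\int_{rA} g \le \int_{rA} g_k$ for every symmetric convex $A$ and every $r>0$ is reversed appropriately—then applying this with $A = Q_n\cap H$ and $r = t$ gives exactly the stated monotonicity of $t\mapsto \gamma_H(tQ_n\cap H)/\gamma_k(tQ_k)$ once one checks $g_k$ is rotation-invariant so that $\int_{tQ_k} g_k = \gamma_k(tQ_k)$.

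The steps in order: (1) fix an orthonormal basis $v_1,\dots,v_k$ of $H$ and write $\gamma_H(tQ_n\cap H) = \int_{\{y\in\R^k:\ \sum y_i v_i \in tQ_n\}} (2\pi)^{-k/2}e^{-|y|^2/2}\,\dd y$; (2) recognize the region as $\{y: |\scal{\sum y_i v_i}{e_j}|\le t/2 \ \forall j\} = \{y: \|Ay\|_\infty \le t/2\}$ where $A$ is the $n\times k$ matrix with rows $v_i\cdot e_j$, which has orthonormal \emph{columns}; (3) invoke the Gaussian peakedness comparison for linear images of the standard Gaussian under matrices with orthonormal columns versus the identity—this is the Gaussian analogue of Vaaler, and is exactly what BGMN prove (their Theorem reduces to the statement that $A^\top Z$ with $A$ having orthonormal columns is less peaked than $Z'\sim\gamma_k$ with respect to the cube $tQ_k$, uniformly in $t$, equivalently the ratio is monotone); (4) conclude, and let $t\to\infty$ using $\gamma_k(tQ_k)\to 1$. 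The main obstacle is Step (3): one must genuinely prove the Gaussian peakedness/monotonicity statement, which does not follow from Vaaler's Lebesgue theorem by a one-line mixture argument because mixing indicators of \emph{balls} interacts badly with the \emph{cube} $Q_n$, while mixing indicators of boxes forces the anisotropic version of Vaaler. I expect the cleanest proof of Step (3) to use Kanter's lemma (peakedness is preserved under independent convolution) combined with an induction on $n$: adding one more coordinate replaces $\proj_H(Z_{1:n})$ by a convolution with an independent Gaussian-weighted copy of $v_{n+1}$, and one shows each such step can only decrease peakedness relative to the fixed cube—this is precisely where the orthonormality of the columns of $A$ (equivalently $\sum_j (v_i\cdot e_j)(v_{i'}\cdot e_j) = \delta_{ii'}$) is consumed.
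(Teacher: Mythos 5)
The survey itself does not reproduce a proof of this theorem; it only records that the argument of \cite{BGMN} ``follows Vaaler's approach, crucially using the product structure of Gaussian measure''. Your instinct to reject the mixture-of-boxes idea and to reach for Kanter's peakedness is therefore consistent with the indicated route, but your write-up has a genuine gap: the step carrying all the content, your Step (3), is both left unproved (by your own admission) and mis-stated in a way that cannot work. You propose to show that the density $g$ of $\proj_H(Z)$, $Z\sim\gamma_n$, is more (or less) peaked than the standard Gaussian density $g_k$ on $H\cong\R^k$; but $\proj_H(Z)$ \emph{is} the standard Gaussian on $H$, so $g=g_k$ and the comparison is vacuous --- likewise $A^\top Z$ with $A$ having orthonormal columns is exactly $N(0,I_k)$, so it is not ``less peaked'' than $\gamma_k$ in any nontrivial sense. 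The theorem compares one Gaussian measure on two \emph{different} bodies, $Q_n\cap H$ versus $Q_k$, not two measures on the same body, so peakedness must be fed a different pair of measures. The Vaaler-style fix is: let $\mu_t$ be $\gamma_1$ restricted to $[-t/2,t/2]$ and normalised; check that $\mu_t$ is even, log-concave and more peaked than the one-dimensional Gaussian $g_{\sigma(t)}$ having the same density at the origin (their densities cross exactly once on $(0,\infty)$ because that density at the origin is at least $(2\pi)^{-1/2}$); tensorise by Kanter's lemma and push forward under $\proj_{H^\perp}$, which preserves peakedness; then read off $\gamma_H(tQ_n\cap H)$ and $\gamma_k(tQ_k)$ as the densities at the origin of $\proj_{H^\perp}\big(\mu_t^{\otimes n}\big)$ and $\proj_{H_0^\perp}\big(\mu_t^{\otimes n}\big)$ respectively (up to the common factor $c_t^{n}(2\pi)^{k/2}$), using the rotation invariance of $g_{\sigma(t)}^{\otimes n}$ to evaluate the comparison term. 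This is where the product structure of $\gamma_n$ is consumed.

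Second, even a correct peakedness argument at each fixed $t$ only yields the ``in particular'' inequality $\gamma_H(tQ_n\cap H)\geq\gamma_k(tQ_k)$; it does not give the asserted monotonicity of the ratio, which is strictly stronger (monotonicity together with the limit $1$ at infinity implies the inequality, not conversely --- your parenthetical ``uniformly in $t$, equivalently the ratio is monotone'' is false). To obtain monotonicity one must run the same comparison for the restricted measures: for $s\leq t$ the quotient $\gamma_H(sQ_n\cap H)/\gamma_H(tQ_n\cap H)$ is the mass of $sQ_n$ under the fiber over $0\in H^\perp$ of the product of truncated Gaussians $\mu_t^{\otimes n}$, and this must be compared with the corresponding coordinate quantity $\gamma_k(sQ_k)/\gamma_k(tQ_k)$. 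Your proposal never addresses this, and the concluding ``induction on $n$'' is not carried out. As it stands the submission is a plan whose decisive lemma is missing and, in the form you state it, incorrect.
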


Their argument follows Vaaler's approach, crucially using the product structure of Gaussian measure. Using Ball's geometric form of the Brascamp-Lieb inequality, they also obtain an upper bound, similar to his bound for volume.

\begin{theorem}[Barthe-Gu\'edon-Mendelson-Naor \cite{BGMN}]\label{thm:BGMN-cube-up}
Fix $1 \leq k \leq n$. For every $k$-dimensional subspace $H$ in $\R^n$ and every $t > 0$, we have
\[
\gamma_H(tQ_n \cap H) \leq \gamma_k\left(t\sqrt{\frac{n}{k}}Q_k\right).
\]
\end{theorem}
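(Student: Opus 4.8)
The plan is to mimic Ball's proof of the volume bound in Theorem~\ref{thm:ball-codim>1}, replacing the Fourier-analytic representation of a section's volume by the analogous representation of the Gaussian measure of a section, and then invoking Ball's geometric Brascamp--Lieb inequality. Fix a $k$-dimensional subspace $H$ and choose an orthonormal basis $v_1,\dots,v_k$ of $H$. Parametrising $x\in H$ as $x=\sum_{i=1}^k s_i v_i$, the condition $x\in tQ_n$ reads $|\scal{x}{e_j}|\le t/2$ for all $j$, i.e. $|\scal{s}{w_j}|\le t/2$ where $w_j=(\scal{v_1}{e_j},\dots,\scal{v_k}{e_j})\in\R^k$ is the $j$-th row of the $n\times k$ matrix with columns $v_1,\dots,v_k$. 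Writing $c_j=|w_j|$ and $u_j=w_j/c_j\in S^{k-1}$, the orthonormality of the $v_i$ translates into the John-type decomposition $\sum_{j=1}^n c_j^2\, u_j\otimes u_j = I_k$ (the matrix with those rows is an isometry of $H$ into $\R^n$, so its Gram-type sum of outer products of rows is the identity). Hence
\[
\gamma_H(tQ_n\cap H)=(2\pi)^{-k/2}\int_{\R^k}\prod_{j=1}^n \1_{\{|\scal{s}{u_j}|\le t/(2c_j)\}}\; e^{-|s|^2/2}\,\dd s.
\]
The key step is to split the Gaussian factor across the constraints using the identity $e^{-|s|^2/2}=\prod_{j=1}^n e^{-c_j^2\scal{s}{u_j}^2/2}$, which is valid precisely because $\sum c_j^2 u_j\otimes u_j=I_k$; this rewrites the integrand as $\prod_{j=1}^n g_j(\scal{s}{u_j})^{c_j^2}$ with $g_j(r)=\exp(-r^2/2)\,\1_{\{|r|\le t/(2c_j)\}}$ raised to the power $c_j^2$ — more carefully, one writes it as $\prod_j \big(h_j(\scal{s}{u_j})\big)$ where $h_j(r)=\1_{\{|r|\le t/(2c_j)\}}e^{-c_j^2 r^2/2}$ and notes $h_j = f_j^{c_j^2}$ for $f_j(r)=\1_{\{|r|\le t/(2c_j)\}}e^{-r^2/2}$.

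With the data $(u_j,c_j^2)$ forming a Brascamp--Lieb (John) system on $\R^k$, Ball's geometric Brascamp--Lieb inequality gives
\[
\int_{\R^k}\prod_{j=1}^n f_j\big(\scal{s}{u_j}\big)^{c_j^2}\,\dd s \;\le\; \prod_{j=1}^n\left(\int_{\R} f_j(r)\,\dd r\right)^{c_j^2}.
\]
Each one-dimensional integral is $\int_{-t/(2c_j)}^{t/(2c_j)} e^{-r^2/2}\,\dd r=\sqrt{2\pi}\,\big(\gamma_1([-t/(2c_j),t/(2c_j)])\big)$. Collecting constants, $\gamma_H(tQ_n\cap H)\le \prod_{j=1}^n \gamma_1\!\big([-t/(2c_j),t/(2c_j)]\big)^{c_j^2}$, using $\sum c_j^2=\tr I_k=k$ to match the normalising powers of $\sqrt{2\pi}$. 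It remains to bound this product. Since $\sum_{j=1}^n c_j^2=k$ and each $c_j^2\in[0,1]$ (as $c_j^2 u_j\otimes u_j\preceq I_k$), the weights $c_j^2/k$ are a subprobability vector; one checks that $p\mapsto \log \gamma_1([-t\sqrt p/2, t\sqrt p/2])$ — equivalently the function $p \mapsto \log\big(\int_{-t/(2\sqrt{1/p})}^{\,t/(2\sqrt{1/p})}\big)$ after the substitution matching $c_j^2 = k/(\text{something})$ — is concave in an appropriate variable, so by Jensen the worst case is all $c_j^2$ equal. When all $c_j^2=k/n$ (which forces $n$ of them, so $c_j=\sqrt{k/n}$ and the bound on each $1$-d integral becomes $\gamma_1([-t\sqrt{n/k}/2, t\sqrt{n/k}/2])$), the product equals $\gamma_1([-\tfrac t2\sqrt{n/k},\tfrac t2\sqrt{n/k}])^n$, and a second application of the same concavity (now going "the other way", tensorising the $k$ equal Gaussian factors on $H$) identifies this with $\gamma_k(t\sqrt{n/k}\,Q_k)$, giving the claim.

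The main obstacle I anticipate is the final extremisation of $\prod_j \gamma_1([-t/(2c_j),t/(2c_j)])^{c_j^2}$ under $\sum c_j^2=k$, $0\le c_j^2\le 1$: one must identify the right concave/convex structure so that Jensen pushes towards the equal-weights configuration \emph{and} so that the resulting quantity is exactly $\gamma_k(t\sqrt{n/k}\,Q_k)$ rather than merely bounded by it. A clean way to sidestep an ad hoc computation is to observe that $\gamma_k(sQ_k)=\big(\gamma_1([-s/2,s/2])\big)^k$ by the product structure of $\gamma_k$, so the target inequality is literally $\prod_j \phi(c_j)^{c_j^2}\le \phi(\sqrt{k/n})^{k}$ with $\phi(c)=\gamma_1([-t/(2c),t/(2c)])$ and the constraint $\sum c_j^2=k$; this reduces everything to verifying that $q\mapsto \log\phi(1/\sqrt q)$ is concave on $(0,\infty)$ — a one-variable calculus fact about the Gaussian error function that I would isolate as a lemma. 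Everything else (the John decomposition bookkeeping and the single invocation of the geometric Brascamp--Lieb inequality) is routine and parallels \cite{BGMN} and Ball's argument for Theorem~\ref{thm:ball-codim>1}.
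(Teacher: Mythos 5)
Your proposal is correct and follows essentially the same route as the cited proof: the John-type decomposition $\sum_j c_j^2\,u_j\otimes u_j=I_k$ coming from the orthonormal parametrisation, the splitting of the Gaussian density across the constraints, one application of Ball's geometric Brascamp--Lieb inequality, and a Jensen-type extremisation of the resulting product. The one-variable lemma you defer --- concavity of $q\mapsto\log\gamma_1\bigl(\bigl[-\tfrac{t}{2}\sqrt{q},\tfrac{t}{2}\sqrt{q}\bigr]\bigr)$ --- is indeed true (a short computation using $F''(v)=-vF'(v)$ for $F(v)=\gamma_1([-v,v])$), and your final reformulation via $\gamma_k(sQ_k)=\gamma_1([-s/2,s/2])^k$ closes the argument cleanly.
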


Again, this is sharp whenever $k$ divides $n$. 
The maximal-Gaussian-volume hyperplane sections of cubes are not known for all values of $t$. Zvavitch has showed in \cite{Zva} that the hyperplane $(\frac{1}{\sqrt{2}},\frac{1}{\sqrt{2}},0,\dots,0)^\perp$ cannot be extremal for all dilates because the bound from Theorem \ref{thm:BGMN-cube-up} in the case $k=n-1$ is tight as $t \to \infty$. K\"onig and Koldobsky in \cite{KoKol-prod} have found conditions on product measures assuring that the hyperplane $(\frac{1}{\sqrt{2}},\frac{1}{\sqrt{2}},0,\dots,0)^\perp$ gives the maximal volume among all hyperplanes $a^\perp$ with $\max_j |a_j| \leq \frac{1}{\sqrt{2}}$. When specialised to the standard Gaussian measure, they have additionally obtained that the hyperplane $(\frac{1}{\sqrt{2}},\frac{1}{\sqrt{2}},0,\dots,0)^\perp$ yields maximal volume (among \emph{all} hyperplanes) if and only if  $t < t_0 = 1.253..$. Sharp upper bounds for $t > t_0$ are not known.

\subsubsection*{Cylinders}
Dirksen in \cite{Dirk-cyl} has studied the extremal central sections of the generalised cylinders $Z_r = Q_n \times (rB_2^m)$, $r > 0$, $m, n \geq 1$. He has found sharp upper bounds in the $3$ dimensional case of an ordinary cylinder, i.e. $m=2, n=1$, as well as upper bounds in the general case, sharp for large radii, developing Fourier analytic formulae and delicate integral inequalities involving Bessel functions.

\subsubsection*{Perimeter}
Answering a question of Pe\l czy\'nski about hyperplane sections of maximal-\emph{perimeter} (i.e. sections with the boundary of the cube), K\"onig and Koldobsky in \cite{KK-perim} have shown that the extremal direction is the same as for the volume.

\begin{theorem}[K\"onig-Koldobsky \cite{KK-perim}]\label{thm:KK-perim}
Let $n \geq 3$. For every unit vector $a$ in $\R^n$, we have
\[
\vol_{n-2}(\partial Q_n \cap a^\perp) \leq 2((n-2)\sqrt{2}+1).
\]
\end{theorem}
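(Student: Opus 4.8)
The plan is to slice the boundary facet by facet, convert each piece into a Fourier integral à la Ball, and then exploit a cancellation identity that ties the resulting sum to the single quantity $\vol_{n-1}(Q_n\cap a^\perp)$, which Theorem~\ref{thm:ball-codim1} already controls. Let $a\in S^{n-1}$; permuting and flipping signs of coordinates, assume $a_1\ge\dots\ge a_n\ge0$, and dispose of $a=\pm e_j$ separately (then $Q_n\cap a^\perp$ is an $(n-1)$-cube, whose boundary has $(n-2)$-volume $2(n-1)\le2((n-2)\sqrt2+1)$). Writing $F_j^\pm=\{x\in Q_n:\ x_j=\pm\tfrac12\}$ for the facets of $Q_n$, we have $\partial Q_n\cap a^\perp=\bigcup_{j,\pm}(F_j^\pm\cap a^\perp)$, and for $a$ in general position these $2n$ pieces meet only in sets of dimension $\le n-3$, so $\vol_{n-2}(\partial Q_n\cap a^\perp)=2\sum_{j=1}^n S_j$ with $S_j=\vol_{n-2}(F_j^+\cap a^\perp)$ (using that $F_j^-\cap a^\perp$ is the reflection of $F_j^+\cap a^\perp$), and the general case follows by continuity. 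Identifying $\{x_j=\tfrac12\}$ with $\R^{n-1}$, the set $F_j^+\cap a^\perp$ is the section of $Q_{n-1}$ by the affine hyperplane $\{\langle a^{(j)},\cdot\rangle=-a_j/2\}$, where $a^{(j)}$ is $a$ with its $j$th coordinate deleted; hence, by Fubini and Fourier inversion exactly as in the sketch of Theorem~\ref{thm:ball-codim1},
\[
S_j=\sqrt{1-a_j^2}\;p_j\!\left(\tfrac{a_j}{2}\right),\qquad
p_j(s)=\frac1\pi\int_{-\infty}^\infty\cos(2st)\prod_{i\ne j}\frac{\sin(a_it)}{a_it}\,dt,
\]
where $p_j\ge0$ is the probability density of $Z_j=\sum_{i\ne j}a_iY_i$ for $Y_i$ i.i.d. uniform on $[-\tfrac12,\tfrac12]$. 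In particular $S_j\le\sqrt2$ by Theorem~\ref{thm:ball-codim1}, but summing this gives only $2\sqrt2\,n$, which is too weak; the point is a cancellation.

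The cancellation identity is the following. Set $P(t)=\prod_{i=1}^n\frac{\sin(a_it)}{a_it}$; then $t\,\tfrac{d}{dt}\log P(t)=\sum_i a_it\cot(a_it)-n$, and since $\cos(a_jt)\prod_{i\ne j}\frac{\sin(a_it)}{a_it}=a_jt\cot(a_jt)\,P(t)$, summing over $j$ gives $\sum_{j}\cos(a_jt)\prod_{i\ne j}\frac{\sin(a_it)}{a_it}=nP(t)+tP'(t)$. Integrating (the boundary term $[tP]$ vanishes, as $P=O(t^{-2})$ once two coordinates of $a$ are nonzero) and using the Pólya--Ball formula $\vol_{n-1}(Q_n\cap a^\perp)=\frac1\pi\int P$ from the sketch of Theorem~\ref{thm:ball-codim1}, we obtain
\[
\sum_{j=1}^n S_j=(n-1)\,\vol_{n-1}(Q_n\cap a^\perp)-\sum_{j=1}^n\Big(1-\sqrt{1-a_j^2}\Big)\,p_j\!\left(\tfrac{a_j}{2}\right).
\]
Each factor $1-\sqrt{1-a_j^2}$ and each $p_j$ is nonnegative, and $\vol_{n-1}(Q_n\cap a^\perp)\le\sqrt2$, so it suffices to prove
\[
\sum_{j=1}^n\Big(1-\sqrt{1-a_j^2}\Big)\,p_j\!\left(\tfrac{a_j}{2}\right)\ \ge\ (\sqrt2-1)-(n-1)\Big(\sqrt2-\vol_{n-1}(Q_n\cap a^\perp)\Big).
\]
If $\vol_{n-1}(Q_n\cap a^\perp)\le\sqrt2-\tfrac{\sqrt2-1}{n-1}$ the right-hand side is $\le0$ and we are done, so we may assume that $\vol_{n-1}(Q_n\cap a^\perp)$ lies within $\tfrac{\sqrt2-1}{n-1}$ of its maximal value $\sqrt2$.

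In that near-extremal regime I would invoke a stability version of Ball's cube-section bound --- quantitative forms are available in the literature cited around \eqref{eq:Ball-integralineq} --- to conclude that $a$ must be close, with an explicit $n$-dependent modulus, to one of the directions $(\pm e_i\pm e_j)/\sqrt2$; after relabelling, $a_1,a_2\approx\tfrac1{\sqrt2}$ and $a_3,\dots,a_n\approx0$. At $a^\ast=(\tfrac1{\sqrt2},\tfrac1{\sqrt2},0,\dots,0)$ the left-hand side of the displayed inequality equals $2\big(1-\tfrac1{\sqrt2}\big)\tfrac1{\sqrt2}=\sqrt2-1$, so a local (Lipschitz, or second-order) estimate for $a\mapsto(1-\sqrt{1-a_j^2})p_j(a_j/2)$ near $a^\ast$, showing that its value drops by at most a quantity comparable to $\sqrt2-\vol_{n-1}(Q_n\cap a^\perp)$, would close the inequality; tracking equalities throughout then identifies $a=(\pm e_i\pm e_j)/\sqrt2$ as the unique extremisers.

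The hard part is precisely this last step: reconciling the modulus in the stability version of Ball's inequality with the local behaviour of $\sum_j(1-\sqrt{1-a_j^2})p_j(a_j/2)$ at $a^\ast$, complicated by the fact that the combinatorial type of $Q_n\cap a^\perp$ changes at $a^\ast$ (a facet of the section splits into two), so that the perimeter functional is merely Lipschitz, not differentiable, there, and one must verify that the first-order term in the relevant expansion has the favourable sign rather than assume it vanishes. A variant that sidesteps this, at the cost of heavier integral estimates, is to keep working with the individual $S_j$: bound $S_j\le\frac{\sqrt{1-a_j^2}}{\pi}\int|\cos(a_jt)|\prod_{i\ne j}\big|\frac{\sin(a_it)}{a_it}\big|\,dt$, apply Hölder with the weights $a_i^{-2}(1-a_j^2)$ while pairing $|\cos(a_jt)|$ with the two largest surviving sine factors so as to keep the integrand in $L^1$, thereby reducing to a sharp ``perimeter analogue'' of Ball's integral inequality \eqref{eq:Ball-integralineq}, and treat the eccentric directions $a_1^2>\tfrac12$ separately, where the elementary projection bound $S_j\le\sqrt{1-a_j^2}/a_1$ for $j\ne1$ already suffices.
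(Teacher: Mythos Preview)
Your decomposition into facet pieces $S_j$ and the cancellation identity
\[
\sum_{j=1}^n p_j\!\left(\tfrac{a_j}{2}\right)=(n-1)\vol_{n-1}(Q_n\cap a^\perp)
\]
are both correct for generic $a$, and the reduction of the bound to
\[
R:=\sum_{j}\bigl(1-\sqrt{1-a_j^2}\bigr)p_j\!\left(\tfrac{a_j}{2}\right)\ \ge\ (\sqrt2-1)-(n-1)\bigl(\sqrt2-\vol_{n-1}(Q_n\cap a^\perp)\bigr)
\]
is a genuinely nice observation. The paper does not give a proof (it is a survey); it only reports that K\"onig and Koldobsky derive a Fourier formula for the perimeter and then combine \emph{local conditions for constrained extrema} with \emph{subtle technical estimates around Ball's integral inequality}. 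Your route, through the identity, is a different packaging of the same Fourier data and is arguably cleaner in that it dispatches every $a$ with $\vol_{n-1}(Q_n\cap a^\perp)\le\sqrt2-\frac{\sqrt2-1}{n-1}$ for free.

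However, what you have written is not a proof: the entire content of the theorem lives in the near-extremal regime, and there you only say that you ``would invoke'' a stability version of Theorem~\ref{thm:ball-codim1} and that a local Lipschitz estimate ``would close the inequality''. This is exactly the step that K\"onig and Koldobsky's ``subtle technical estimates'' handle, and it cannot be done by soft citation. Concretely, the available stability bound (Theorem~\ref{thm:cube-stab}) gives $|a-a^\ast|\le c_2^{-1}\bigl(\sqrt2-\vol_{n-1}(Q_n\cap a^\perp)\bigr)$ with a \emph{universal but unspecified} constant $c_2$, while you need to show that $R$ drops from $\sqrt2-1$ by at most $(n-1)\bigl(\sqrt2-\vol_{n-1}(Q_n\cap a^\perp)\bigr)$; matching these requires an explicit Lipschitz constant for $R$ near $a^\ast$ against an explicit $c_2$, and you provide neither. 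You yourself flag the difficulty: the combinatorial type of the section changes at $a^\ast$, the $S_j$ (and hence the $p_j(a_j/2)$ as geometric section volumes) are discontinuous there, so the analysis must be done with the Fourier integrals directly and with care about one-sided limits. Your variant via H\"older and a ``perimeter analogue'' of \eqref{eq:Ball-integralineq} is likewise only a sketch: you neither state the needed integral inequality nor indicate why it should hold with the sharp constant. In short, the framework is sound and pleasantly close in spirit to the original, but the hard analytic work --- precisely what the paper calls the ``new ingredients'' of the K\"onig--Koldobsky argument --- is missing.
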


This bound is attained if $a = \left(\pm e_i \pm e_j\right)/\sqrt{2}$ for some $1\leq i < j \leq n$. This theorem also leads to counter-examples to a permiter version of the Busemann-Petty probem in dimensions $n \geq 14$. For the proof, they derive a Fourier analytic formula for the perimeter; its analysis involves new ingredients, most notably local conditions for constrained extrema, as well as subtle technical estimates around Ball's integral inequality.

\subsubsection*{Diagonal sections}
Here we consider the volume of the section by the hyperplane perpendicular to the main diagonal
\[
\alpha_n = \vol_{n-1}\big(Q_n \cap (\underbrace{1,\dots,1}_{n})^\perp\big), \qquad n \geq 1.
\]
Perhaps a more natural interpretation of the sequence $\alpha_1, \dots, \alpha_n$ is as the volumes of the sections of $Q_n$ by hyperplanes perpendicular to the diagonals of subcubes of growing dimension, for $1 \leq k \leq n$, we have
\[
\vol_{n-1}\big(Q_n \cap (\underbrace{1,\dots,1}_{k},\underbrace{0,\dots,0}_{n-k})^\perp\big) = \alpha_k.
\]
Theorems \ref{thm:Had-Hen} and \ref{thm:ball-codim1} in particular assert that $\alpha_1 \leq \alpha_i \leq \alpha_2$. Interestingly, the volumes of the diagonal sections form a (strictly) increasing sequence.

\begin{theorem}[Bartha-Fodor-Gonz\'alez \cite{BFG}]\label{thm:diagonals}
We have,
$
\alpha_1 < \alpha_3 < \alpha_4 < \alpha_5 < \dots < \alpha_2.
$
\end{theorem}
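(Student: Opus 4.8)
The plan is to work with the classical Pólya/Ball formula for diagonal sections,
\[
\alpha_k = \vol_{n-1}\big(Q_n \cap (\underbrace{1,\dots,1}_{k},0,\dots,0)^\perp\big) = \frac{1}{\pi}\int_{-\infty}^\infty \left(\frac{\sin t}{t}\right)^{k} \dd t =: \frac1\pi I_k,
\]
so that the whole statement $\alpha_1 < \alpha_3 < \alpha_4 < \cdots < \alpha_2$ reduces to proving $I_1 < I_3 < I_4 < I_5 < \cdots$ together with $\sup_{k\ge 2} I_k = I_2$ (the latter is already contained in Theorems~\ref{thm:Had-Hen} and~\ref{thm:ball-codim1}, which give $\alpha_k \le \alpha_2$, so really one only needs strict monotonicity $I_k < I_{k+1}$ for $k \ge 3$ and the single inequality $\alpha_1 < \alpha_3$). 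The inequality $\alpha_1<\alpha_3$ is a finite computation: $I_1 = \pi$ (so $\alpha_1 = 1$, consistent with Theorem~\ref{thm:Had-Hen}) while $I_3 = \tfrac{3\pi}{4}$ gives $\alpha_3 = \tfrac34 \cdot \tfrac{?}{}$\,—wait, one must be careful: the normalisation in the table is $\alpha_1 = 1$, whereas $\tfrac1\pi I_1 = 1$ and $\tfrac1\pi I_3 = \tfrac34$, which would contradict $\alpha_1 < \alpha_3$. The resolution is that $\alpha_k$ is \emph{not} $\tfrac1\pi I_k$ but rather the section volume, which equals $\sqrt{k}\cdot \tfrac1\pi I_k$ after accounting for the fact that the normal $(1,\dots,1,0,\dots,0)$ is not a unit vector: indeed $\vol_{n-1}(Q_n\cap u^\perp) = |u|\,f(0)$ where $f$ is the density of $\scal{u}{X}$, giving $\alpha_k = \frac{\sqrt k}{\pi}\int \big(\tfrac{\sin t}{t}\big)^k\dd t$. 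So the real claim is monotonicity of $J_k := \sqrt{k}\,I_k$.

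With that correction, the first step is to establish the clean recursion/representation for $J_k$. One route: write $I_k = \int_{\R}\hat{g}(t)^k\,\dd t$ where $g = \1_{[-1/2,1/2]}$, so $\tfrac1\pi I_k = (g^{*k})(0)$, the $k$-fold convolution evaluated at $0$; equivalently $\tfrac1\pi I_k$ is the density at $0$ of $S_k = U_1+\cdots+U_k$ with $U_i$ i.i.d.\ uniform on $[-\tfrac12,\tfrac12]$. Then $J_k = \sqrt{k}\cdot(\text{density of }S_k\text{ at }0) = (\text{density of }S_k/\sqrt k\text{ at }0)$, i.e.\ $J_k = \pi^{-1}\sqrt k \cdot \p{S_k = 0}$-type quantity; more usefully $J_k = h_k(0)$ where $h_k$ is the density of the normalised sum $S_k/\sqrt{k}$. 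The sequence $h_k(0)$ converges to $h_\infty(0) = (2\pi\sigma^2)^{-1/2}$ with $\sigma^2 = \tfrac1{12}$ by the local CLT, so $J_k \to \sqrt{6/\pi}$, and the assertion is that $h_k(0)$ increases to this limit for $k \ge 3$. I would prove the increments $J_{k+1} - J_k > 0$ directly from the Fourier formula: $J_{k+1}-J_k = \tfrac1\pi\int_0^\infty \big(\tfrac{\sin t}{t}\big)^k\big(\sqrt{k+1}\,\tfrac{\sin t}{t} - \sqrt k\big)\,2\,\dd t$, and the key is to exploit that $\big(\tfrac{\sin t}{t}\big)^k$ decays fast (like a Gaussian $e^{-kt^2/6}$ near $0$, and is $O(t^{-k})$ for large $t$), so the integral localises near $0$ where $\tfrac{\sin t}{t} \approx 1 - t^2/6$ and the weight $\sqrt{k+1}(1-t^2/6) - \sqrt k \approx (\sqrt{k+1}-\sqrt k) - \sqrt{k+1}\,t^2/6$ is positive for $t \lesssim (k+1)^{-1/4}$.

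The main obstacle is making this localisation rigorous and uniform in $k$: one must show the negative contribution from $|t|$ large enough that $\tfrac{\sin t}{t} < \sqrt{k/(k+1)}$ is dominated by the positive contribution near the origin, for \emph{every} $k \ge 3$ (not just asymptotically). I expect the cleanest way is a careful two-region split at a threshold like $t_k = c k^{-1/4}$ combined with Ball's integral inequality $\tfrac1\pi\int_{\R}|\tfrac{\sin t}{t}|^p\,\dd t \le \sqrt{2/p}$ (valid for $p\ge 2$, equality at $p=2$) to control tails: on the tail $|t|\ge t_k$ one bounds $\big(\tfrac{\sin t}{t}\big)^k \le (\sin t_k/t_k)^{k-p}\cdot|\tfrac{\sin t}{t}|^p$ and integrates. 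The small handful of base cases ($k=3,4$, and perhaps $5$) that the asymptotic analysis does not cover cleanly can be handled by explicit rational evaluations of $I_k$ (e.g.\ $I_3 = \tfrac{3\pi}{4}$, $I_4 = \tfrac{2\pi}{3}$, giving $J_3 = \tfrac{3\sqrt3}{4}\approx 1.299$, $J_4 = \tfrac{4}{3}\approx 1.333$, $J_5 = \tfrac{115\sqrt5}{192}\approx 1.339$, all increasing toward $\sqrt{6/\pi}\approx 1.382$) together with the closed forms available for $I_k$ via the Laplace/Pólya formula $I_k = \tfrac{\pi}{2^{k-1}(k-1)!}\sum_{j\le k/2}(-1)^j\binom{k}{j}(k-2j)^{k-1}$. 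Alternatively, and perhaps more elegantly, one can try to avoid case analysis entirely by proving log-concavity or a suitable convexity property of $k \mapsto J_k$ (or of $p\mapsto \sqrt p\,\int|\tfrac{\sin t}{t}|^p$, i.e.\ of $\log\Psi(p)+\tfrac12\log p$ in the notation of \eqref{eq:Ball-Holder}) which, combined with the limit and one or two explicit values, forces monotonicity — but I expect establishing such a convexity statement to be at least as hard as the direct estimate, so the region-splitting argument above is my primary plan.
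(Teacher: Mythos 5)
Your setup is right and matches the route the paper attributes to Bartha--Fodor--Gonz\'alez: the corrected normalisation $\alpha_k=\frac{\sqrt k}{\pi}\int_{\R}\bigl(\frac{\sin t}{t}\bigr)^k\dd t$ is exactly P\'olya's formula used there, the reduction to strict monotonicity of $J_k=\sqrt k\,I_k$ for $k\ge 3$ (with $\alpha_k<\alpha_2$ supplied by Ball's theorem and its equality cases, and $\alpha_1<\alpha_3$ by direct computation) is correct, and your explicit values $J_3=\tfrac{3\sqrt3}{4}$, $J_4=\tfrac43$, $J_5=\tfrac{115\sqrt5}{192}$ and the limit $\sqrt{6/\pi}$ all check out.

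However, the core step as you propose it would fail. Writing $J_{k+1}-J_k=\frac{2}{\pi}\int_0^\infty\bigl(\frac{\sin t}{t}\bigr)^k\bigl[\sqrt{k+1}\,\frac{\sin t}{t}-\sqrt k\bigr]\dd t$ and approximating near the origin, the weight $\sqrt{k+1}(1-t^2/6)-\sqrt k\approx\frac{1}{2\sqrt k}-\frac{\sqrt k\,t^2}{6}$ changes sign at $t\approx\sqrt{3/k}$ --- not at $t\sim k^{-1/4}$ as you assert --- which is precisely the width of the Gaussian profile $e^{-kt^2/6}$. Worse, the leading-order contributions cancel \emph{exactly}: with $a=k/6$ one has $\int_0^\infty e^{-at^2}\bigl(\frac{1}{2\sqrt k}-\frac{\sqrt k t^2}{6}\bigr)\dd t=\sqrt{6\pi/k}\bigl(\frac{1}{4\sqrt k}-\frac{1}{4\sqrt k}\bigr)=0$. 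Consistently, $J_k=\sqrt{6/\pi}\bigl(1-\frac{3}{20k}+O(k^{-2})\bigr)$, so the increment $J_{k+1}-J_k$ is only of order $k^{-2}$, a second-order effect invisible to any crude ``positive bulk versus controlled tail'' split, whose individual pieces are of order $k^{-1}$. To make this work one needs the full second-order Laplace expansion of the integral with explicit, quantitative error bounds (the $t^4$ term of $\log\frac{\sin t}{t}$, the next term of $\sqrt{k+1}-\sqrt k$, and rigorous tail control), extract an explicit threshold $n_0$, and then verify the finitely many cases $k\le n_0$ --- which is exactly the ``intricate asymptotic analysis'' plus computer-assisted verification that the cited proof carries out. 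As written, your argument establishes the claim only for $k\le 5$ and leaves the general case unproven.
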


Their approach starts with P\'olya's formula $\alpha_n = \frac{\sqrt{n}}{\pi}\int_{-\infty}^\infty \left(\frac{\sin t}{t}\right)^n \dd t $ and is based on an intricate asymptotic analysis by means of the Laplace method. They first argue that the sequence $(\alpha_n)$ increases for all $n \geq n_0$ for some $n_0$. Then, using numerical estimates, they bound $n_0$ and deal with all $n \leq n_0$ by computer assisted calculations. Their arguments also show that the sequence $(\alpha_n)$ is eventually concave.

It is tempting to believe that critical hyperplane sections must be diagonal, that is if $a \mapsto \vol_{n-1}(Q_n \cap a^\perp)$ has an extremum at a unit vector $a^*$ then $a^*$ is proportional to a diagonal $(1, \dots, 1, 0, \dots, 0)$. In \cite{Amb}, Ambrus, and independently Ivanov and Tsiutsiurupa in \cite{IT} have recently found an elegant local condition (with vastly different methods). Moreover, Ambrus has confirmed this believe for $n \leq 3$ and disproved it for $n = 4$. 

\subsubsection*{Discrete version}
Melbourne and Roberto in \cite{MelRob} have derived a sharp discrete analogue of Ball's upper bound for hyperplane sections.

\begin{theorem}[Melbourne-Roberto \cite{MelRob}]\label{thm:MelRob}
Let $n, \ell_1, \dots, \ell_n \geq 1$ and $t, k_1, \dots, k_n$ be integers. Then,
\[
\left|\left\{z \in \Z^n \cap \prod_{j=1}^n [k_j, k_j+\ell_j-1], \ \sum_{j=1}^n z_j = t \right\}\right| < \sqrt{2}\frac{\prod_{j=1}^n \ell_j}{\sqrt{\sum_{j=1}^n(\ell_j^2-1)}}.
\]
\end{theorem}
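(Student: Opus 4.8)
The plan is to mirror Ball's Fourier-analytic proof of Theorem~\ref{thm:ball-codim1} in the discrete setting. First I would rewrite the left-hand side as an integral. The number of lattice points $z$ in the box $\prod_{j=1}^n [k_j, k_j+\ell_j-1]$ with $\sum z_j = t$ equals the coefficient of $x^t$ (equivalently $\frac{1}{2\pi}\int_{-\pi}^\pi (\cdots) e^{-i\theta t}\dd\theta$) in the product of the polynomials $\sum_{m=0}^{\ell_j-1} x^{k_j+m}$. After pulling out the monomial factors $x^{k_j}$ (which only shift $t$), we obtain the integral representation
\[
N := \left|\left\{z \in \Z^n \cap \prod_{j=1}^n [k_j, k_j+\ell_j-1], \ \sum_j z_j = t \right\}\right| = \frac{1}{2\pi}\int_{-\pi}^\pi \prod_{j=1}^n \frac{\sin(\ell_j\theta/2)}{\sin(\theta/2)} \, e^{-i s\theta}\dd\theta
\]
for the appropriate shifted integer $s$, using $\sum_{m=0}^{\ell-1} e^{im\theta} = e^{i(\ell-1)\theta/2}\frac{\sin(\ell\theta/2)}{\sin(\theta/2)}$. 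Bounding $N$ by dropping the oscillatory factor $e^{-is\theta}$ and taking absolute values (this step is exactly the discrete analogue of evaluating the density at $0$, and is where the strict inequality ultimately has room to appear), we get
\[
N \leq \frac{1}{2\pi}\int_{-\pi}^\pi \prod_{j=1}^n \left|\frac{\sin(\ell_j\theta/2)}{\sin(\theta/2)}\right|\dd\theta.
\]

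Next I would apply H\"older's inequality with exponents $p_j$ chosen so that $\sum_j p_j^{-1} = 1$; the natural choice dictated by the second-moment normalisation is $p_j^{-1} = (\ell_j^2-1)/\sum_i (\ell_i^2-1)$, i.e. $p_j = \frac{\sum_i(\ell_i^2-1)}{\ell_j^2-1}$. This yields
\[
N \leq \prod_{j=1}^n \left(\frac{1}{2\pi}\int_{-\pi}^\pi \left|\frac{\sin(\ell_j\theta/2)}{\sin(\theta/2)}\right|^{p_j}\dd\theta\right)^{1/p_j}.
\]
The task then reduces to a one-variable discrete Ball-type integral inequality: for an integer $\ell \geq 1$ and a real exponent $p \geq 2$, one needs
\[
\frac{1}{2\pi}\int_{-\pi}^\pi \left|\frac{\sin(\ell\theta/2)}{\sin(\theta/2)}\right|^{p}\dd\theta < \sqrt{\frac{2}{p}} \cdot \left(\frac{\ell^2-1}{?}\right)^{\cdots},
\]
more precisely the normalised form asserting that $\Phi_\ell(p) := \left(\frac{1}{2\pi}\int_{-\pi}^\pi \left|\frac{\sin(\ell\theta/2)}{\sin(\theta/2)}\right|^{p}\dd\theta\right)^{1/p}$ satisfies $\Phi_\ell(p) \leq \sqrt{2}^{1/p}\,\ell\,(\ell^2-1)^{-1/(2p)}$ for $p \geq 2$, with appropriate handling of $\ell=1$ (where $\ell^2-1=0$ but the corresponding H\"older exponent is $+\infty$ and that factor simply contributes $\sup |1| = 1$ — one treats coordinates with $\ell_j = 1$ separately, as they do not constrain anything). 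Combining the per-coordinate bounds, $\sqrt{2}^{1/p_j}$ multiplies out to $\sqrt{2}^{\sum 1/p_j} = \sqrt{2}$, the factors $\ell_j$ give $\prod \ell_j$, and the factors $(\ell_j^2-1)^{-1/(2p_j)}$ combine to $\left(\sum_i (\ell_i^2-1)\right)^{-1/2}$, giving exactly the claimed right-hand side.

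The main obstacle is, unsurprisingly, the one-dimensional integral inequality — the discrete analogue of Ball's integral inequality~\eqref{eq:Ball-integralineq}. As $\ell \to \infty$ with $\theta$ rescaled, $\frac{\sin(\ell\theta/2)}{\ell\sin(\theta/2)} \to \frac{\sin(\ell\theta/2)}{\ell\theta/2}$, so one expects Ball's continuous inequality to emerge in the limit, but for finite $\ell$ one must control the Dirichlet-kernel-type function $\frac{\sin(\ell\theta/2)}{\sin(\theta/2)}$ on all of $[-\pi,\pi]$, not just near the origin. I would split the integral into a neighbourhood of $0$ (where a Taylor/Laplace-type estimate comparing against a Gaussian handles things, and the second moment $\frac{1}{2\pi}\int_{-\pi}^\pi \theta^2 \left(\frac{\sin(\ell\theta/2)}{\ell\sin(\theta/2)}\right)^{2}\dd\theta$-type computation pins down the exact constant $(\ell^2-1)/12$, the variance of a uniform distribution on $\ell$ consecutive integers) and the complementary region (where $\left|\frac{\sin(\ell\theta/2)}{\sin(\theta/2)}\right|$ is bounded away from $\ell$ and a crude bound suffices since $p \geq 2$). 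The strict inequality is genuine and comes for free: unlike the continuous sinc, the discrete kernel raised to the power $p$ is never a pure Gaussian, and equality in the dropped-phase step would force $t$ to be the exact centre while equality in H\"older forces all the normalised kernels to be equal — incompatible for distinct $\ell_j$, and even in the equidimensional case the one-dimensional inequality is strict for every $p$. Following the references to the analytic toolkit around Ball's inequality (\cite{NP, MelRob, KOS}) one can import the needed estimates rather than redo the delicate asymptotics from scratch.
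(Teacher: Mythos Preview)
Your overall architecture is right and matches the paper's: Fourier-analytic formula for the lattice-point count, H\"older with variance-based exponents $p_j^{-1} = (\ell_j^2-1)/\sum_i(\ell_i^2-1)$, and reduction to a one-dimensional Dirichlet-kernel inequality. However, the one-dimensional inequality you wrote down is \emph{wrong}, and with it the combination step breaks. You claim $\Phi_\ell(p) \leq \sqrt{2}^{1/p}\,\ell\,(\ell^2-1)^{-1/(2p)}$ and then say ``the factors $(\ell_j^2-1)^{-1/(2p_j)}$ combine to $(\sum_i(\ell_i^2-1))^{-1/2}$''. They don't: with $1/p_j = (\ell_j^2-1)/S$ (where $S = \sum_i(\ell_i^2-1)$) the product is $\prod_j (\ell_j^2-1)^{-(\ell_j^2-1)/(2S)}$, which by $\ell_j^2-1 \leq S$ is at \emph{least} $S^{-1/2}$, the wrong direction. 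The correct inequality, stated in the paper, has an extra $p^{-1/2}$:
\[
\int_{-1/2}^{1/2}\left|\frac{\sin(\ell\pi t)}{\ell\sin(\pi t)}\right|^p \dd t < \sqrt{\frac{2}{p(\ell^2-1)}}, \qquad p \geq 2, \ \ell \geq 2,
\]
equivalently $\Phi_\ell(p) < \ell\,(2/(p(\ell^2-1)))^{1/(2p)}$. With this, the per-factor contribution is $(2/S)^{1/(2p_j)}$ (since $p_j(\ell_j^2-1) = S$), and the product over $j$ is exactly $(2/S)^{1/2}$, giving the claimed bound. So the missing $1/\sqrt{p}$ is not cosmetic---it is precisely what makes the H\"older pieces telescope.

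Two further omissions. First, your H\"older exponents satisfy $p_j \geq 2$ only when $\ell_j^2-1 \leq S/2$ for every $j$; you need the discrete analogue of Ball's ``large coordinate'' case. It is immediate: if $\ell_1^2-1 > S/2$ then the trivial bound $N \leq \prod_{j\geq 2}\ell_j$ (fixing $z_2,\dots,z_n$ determines $z_1$) already beats $\sqrt{2}\prod_j\ell_j/\sqrt{S}$ because $S < 2(\ell_1^2-1) < 2\ell_1^2$. Second, for the one-dimensional inequality itself the paper points out that Melbourne and Roberto do \emph{not} use a Laplace/splitting argument but a majorisation/optimal-transport method; your proposed route may well work but is not what is done there, and in any case must target the inequality with the $p^{-1/2}$ factor.
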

The constant $\sqrt{2}$ is best possible as can be seen by discretizing Ball's extremiser (by taking $\ell_1 = \ell_2 = m$, $\ell_3 = \dots = \ell_n = 1$ and letting $m \to \infty$). Mimicing Ball's approach, the following integral inequality lies at the heart of the argument
\[
\int_{-1/2}^{1/2} \left|\frac{\sin(n\pi t)}{n\sin(\pi t)}\right|^p \dd t < \sqrt{\frac{2}{p(n^2-1)}}, \qquad p \geq 2, n = 2, 3, \dots.
\]
This is in fact stronger than Ball's inequality \eqref{eq:Ball-integralineq} and recovers it by letting $n \to \infty$. Melbourne and Roberto have developed a new view-point on establishing such delicate bounds for oscillatory integrands, borrowing and combining ideas from majorisation and optimal transport.

\subsubsection*{Chessboard-cutting}
It is folklore that a line can meet the interiors of no more than $2N-1$ squares of the usual $N \times N$ chessboard and this bound is tight (consider the diagonal pushed down a bit). We refer to B\'ar\'any and Frenkel's work \cite{BF1} for a short argument as well as precise estimates for a $3$-dimensional analogue. To tackle the problem in higher dimensions, in \cite{BF}, they have introduced the following quantity involving volumes of hyperplane sections of the cube,
\[
V_n = \max_{v \in \R^n} \frac{\|u\|_1}{|v|}\vol_{n-1}(Q_n \cap v^\perp).
\] 
They have shown that if the cube $[0,N]^n$ is divided into $N^n$ unit cubes in the usual way, then the maximal number of the unit cubes that a hyperplane can intersect equals
\[
(1+o(1))V_nN^{n-1}
\] 
for a fixed $n \geq 1$ as $N \to \infty$. Confirming a conjecture from \cite{BF}, Aliev  has recently found the constant $V_n$ in \cite{Ali}.

\begin{theorem}[Aliev \cite{Ali}]\label{thm:aliev}
Let $n \geq 1$. We have, $V_n = \sqrt{n}\vol_{n-1}(Q_n \cap (1,\dots,1)^\perp)$.
\end{theorem}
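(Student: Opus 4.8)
The plan is to recast the entire quantity via Busemann's theorem (Theorem~\ref{thm:Bus}) applied to $K=Q_n$, turning the problem into one about a support point of a symmetric convex body. Let $\|\cdot\|_B$ be the norm furnished by Theorem~\ref{thm:Bus} for $K=Q_n$, so $\|v\|_B=|v|/\vol_{n-1}(Q_n\cap v^\perp)$ for $v\neq 0$ (a genuine norm: $\vol_{n-1}(Q_n\cap v^\perp)$ is sandwiched between $1$ and $\sqrt 2$ by Theorems~\ref{thm:Had-Hen} and~\ref{thm:ball-codim1}), and let $B=\{v:\|v\|_B\le 1\}$ be its unit ball, a symmetric convex body. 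Then
\[
\frac{\|v\|_1}{|v|}\,\vol_{n-1}(Q_n\cap v^\perp)=\frac{\|v\|_1}{\|v\|_B},
\]
so that $V_n=\max_{v\neq 0}\frac{\|v\|_1}{\|v\|_B}=\max_{v\in B}\|v\|_1$.

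Next I would use symmetry. Every signed coordinate permutation is orthogonal and preserves $Q_n$, hence it preserves the map $v\mapsto\vol_{n-1}(Q_n\cap v^\perp)$, hence $\|\cdot\|_B$ and $B$. Writing $\|v\|_1=\max_{\varepsilon\in\{-1,1\}^n}\scal{\varepsilon}{v}$ and using the sign symmetry of $B$,
\[
V_n=\max_{v\in B}\ \max_{\varepsilon\in\{-1,1\}^n}\scal{\varepsilon}{v}=\max_{\varepsilon\in\{-1,1\}^n}h_B(\varepsilon)=h_B(\mathbf 1),
\]
where $h_B$ is the support function of $B$ and $\mathbf 1=(1,\dots,1)$.

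It remains to evaluate $h_B(\mathbf 1)$, and this is where convexity together with permutation symmetry does the work. The set $F$ of points of $B$ maximizing the linear functional $\scal{\mathbf 1}{\cdot}$ is a nonempty compact convex face of $B$, and it is invariant under coordinate permutations (as are $B$ and $\scal{\mathbf 1}{\cdot}$). Its barycenter therefore lies in $F$ and, being permutation invariant, equals $t\mathbf 1$ for some scalar $t$; since $0$ is interior to $B$ we have $h_B(\mathbf 1)>0$, forcing $t>0$. Then $t\mathbf 1\in F\subset\partial B$, i.e. $\|t\mathbf 1\|_B=1$, which by the definitions of $\|\cdot\|_B$ and of $\alpha_n$ reads $t\sqrt n=|t\mathbf 1|=\vol_{n-1}\big(Q_n\cap\mathbf 1^\perp\big)=\alpha_n$. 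Hence
\[
V_n=h_B(\mathbf 1)=\scal{\mathbf 1}{t\mathbf 1}=tn=\sqrt n\,\alpha_n=\sqrt n\,\vol_{n-1}\big(Q_n\cap(1,\dots,1)^\perp\big).
\]

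The only nontrivial input is the very first move: recognizing that Busemann's theorem makes $\vol_{n-1}(Q_n\cap v^\perp)/|v|$ the reciprocal of a norm, after which $V_n$ is simply the largest $\ell_1$-norm attained on a permutation-symmetric convex body, and the diagonal direction wins automatically. So the ``main obstacle'' is conceptual rather than computational. Should one wish to avoid Busemann's theorem, an alternative is to start from P\'olya's formula $\vol_{n-1}(Q_n\cap a^\perp)=\tfrac1\pi\int_{\R}\prod_j\frac{\sin(a_jt)}{a_jt}\,\dd t$ and maximize $\big(\sum_j|a_j|\big)\vol_{n-1}(Q_n\cap a^\perp)$ over $S^{n-1}$ by Lagrange multipliers: one verifies $(1,\dots,1)/\sqrt n$ is a critical point, but ruling out other critical points and handling the orthant boundary (where one descends to dimension $n-1$, for which $V_{n-1}<V_n$ by Theorem~\ref{thm:diagonals}) is considerably more laborious, so I would only resort to this if the convexity argument above ran into an unforeseen gap.
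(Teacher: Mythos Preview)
Your proof is correct and is essentially the same as Aliev's argument as described in the paper: your Busemann ball $B$ is precisely the intersection body $IQ_n$ of the cube (its radial function at a unit vector $u$ is $1/\|u\|_B=\vol_{n-1}(Q_n\cap u^\perp)$), and your symmetry step establishing that $t\mathbf 1$ lies on the face $F$ is exactly the statement that the hyperplane with normal $(1,\dots,1)$ supports $IQ_n$ at a point on the diagonal ray. One cosmetic remark: rather than invoking a ``barycenter'' of the face $F$ (which needs a choice of measure), you can simply pick any $v\in F$ and average its orbit $\frac{1}{n!}\sum_{\sigma\in S_n}\sigma(v)\in F$ to land on the diagonal.
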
 

In words, it is the diagonal section that maximises $V_n$, thus $\sqrt{n} \leq V_n \leq \sqrt{2}\sqrt{n}$ and $V_n \sim \sqrt{\frac{6}{\pi}}\sqrt{n}$ for large $n$. Aliev's argument is purely geometric with the main observation being that the hyperplane parallel to $(1,\dots,1)^\perp$ supports the intersection body of the cube.

\subsubsection*{Stability} 
With additional insights gained from a certain probabilistic point of view (see Section \ref{sec:methods}), Chasapis and the authors have recently obtained in \cite{CNT} a dimension free stability result for both lower and upper bounds for hyperplane sections.

\begin{theorem}[Chasapis-Nayar-Tkocz \cite{CNT}]\label{thm:cube-stab}
There are universal constants $c_1, c_2 > 0$ such that for every unit vector $a$ in $\R^n$ with $a_1 \geq \dots \geq a_n \geq 0$, we have
\[
1 + c_1|a - e_1|^2 \leq \vol_{n-1}(Q_n \cap a^\perp) \leq \sqrt{2} - c_2\left|a-\frac{e_1+e_2}{\sqrt{2}}\right|.
\]
\end{theorem}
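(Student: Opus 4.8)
The plan is to establish the two one-sided stability estimates separately, in each case reducing to a quantitative version of the extremal inequality already available (Theorem \ref{thm:Had-Hen} for the lower bound, Theorem \ref{thm:ball-codim1} for the upper bound) and then combining a ``near the extremiser'' analysis with an ``away from the extremiser'' compactness/elementary estimate. Throughout we use the probabilistic description: if $X$ is uniform on $Q_n$, then $\vol_{n-1}(Q_n \cap a^\perp) = f_a(0)$ where $f_a$ is the density of $\scal{a}{X} = \sum a_j X_j$, and $f_a$ is even and log-concave with $\int t^2 f_a(t)\,\dd t = \frac{1}{12}\sum a_j^2 = \frac{1}{12}$. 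Equivalently, via P\'olya's formula, $f_a(0) = \frac{1}{\pi}\int_{-\infty}^\infty \prod_{j=1}^n \frac{\sin(a_jt)}{a_jt}\,\dd t$.

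For the \emph{lower} bound, note $|a-e_1|^2 = 2(1-a_1) = 2\sum_{j\geq 2}a_j^2 / (1+a_1) \asymp \sum_{j \geq 2} a_j^2$, so it suffices to prove $f_a(0) \geq 1 + c\sum_{j\geq 2}a_j^2$. Write $\|a\|_4^4 = \sum a_j^4$. I would sharpen the ``move mass to the origin'' argument in the proof of Theorem \ref{thm:Had-Hen}: for an even log-concave density $f$ with $\|f\|_\infty = f(0) = h$ and second moment $\sigma^2$, one has $h^2\sigma^2 \geq \frac{1}{12}$, and the deficit $h^2\sigma^2 - \frac{1}{12}$ controls how far $f$ is from the uniform density $h\1_{[-1/(2h),1/(2h)]}$; one then needs a lower bound on this deficit for $f = f_a$. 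The natural route is to quantify how far $\scal{a}{X}$ is from uniform: when some $a_j$ with $j \geq 2$ is bounded away from $0$, the distribution of $\scal{a}{X}$ is a genuine (non-degenerate) convolution and hence strictly smoother than uniform, giving $f_a(0)\sigma^2 \cdot f_a(0) - \frac{1}{12} \gtrsim \|a\|_4^4$ or similar. Since $\sigma^2 = \frac{1}{12}$ is fixed, this yields $f_a(0) \geq 1 + c\|a\|_4^4 \cdot 12 \geq 1 + c'\sum_{j \geq 2}a_j^2$ once we note $\sum_{j\geq 2}a_j^4 \asymp \sum_{j\geq 2}a_j^2$ in the regime $a_1 \geq a_2 \geq \dots$; the small-$a_j$ tail where all $a_j$ ($j\geq 2$) are tiny can alternatively be handled by a direct Taylor expansion of the P\'olya integrand about $a = e_1$, where $\frac{\sin(a_jt)}{a_jt} = 1 - \frac{a_j^2t^2}{6} + O(a_j^4t^4)$.

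For the \emph{upper} bound, split into the two cases from Ball's proof. When $\max_j|a_j| \leq \frac{1}{\sqrt2}$, use H\"older as in \eqref{eq:Ball-Holder} together with Ball's integral inequality \eqref{eq:Ball-integralineq}; here I would invoke a \emph{stable} form of \eqref{eq:Ball-integralineq} — i.e.\ $\frac{1}{\pi}\int|\frac{\sin t}{t}|^p\,\dd t \leq \sqrt{2/p}\,(1 - c(p-2))$ or at least a quantitative gap bounded below in terms of $p-2$ for $p$ in a compact range, of the kind established in the stability literature cited after \eqref{eq:Ball-integralineq} (e.g.\ \cite{MelRob, EM}) — to conclude $f_a(0) \leq \sqrt{2} - c\sum_j(a_j^{-2}-2)^{-1}$-type savings, which translate into a gain proportional to $|a - \frac{e_1+e_2}{\sqrt2}|$ (possibly squared) once one checks the elementary inequality relating $\prod_j \Psi(a_j^{-2})^{a_j^2}$ near its maximiser $a = (\tfrac1{\sqrt2},\tfrac1{\sqrt2},0,\dots)$ to the Euclidean distance. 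When some $|a_j| > \frac{1}{\sqrt2}$ (necessarily $j=1$, i.e.\ $a_1 > \frac{1}{\sqrt 2}$), Ball's geometric bound gives $f_a(0) \leq \frac{1}{a_1}$, and $\frac{1}{a_1} \leq \sqrt{2} - c|a - \frac{e_1+e_2}{\sqrt2}|$ follows because $a_1 > \frac1{\sqrt2}$ forces $|a - \frac{e_1+e_2}{\sqrt2}|$ to be bounded below by a constant while $\frac1{a_1} < \sqrt2$, plus a short argument handling $a_1$ close to $\frac{1}{\sqrt 2}$ by interpolating with the first case; here the linear (rather than quadratic) dependence on the distance in the statement is what makes the gluing of the two cases painless.

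The main obstacle I anticipate is the upper bound in the first case: extracting a clean, dimension-free linear-in-distance deficit from the product $\prod_j \Psi(a_j^{-2})^{a_j^2}$ requires a robust quantitative form of Ball's integral inequality valid uniformly for all relevant exponents $p = a_j^{-2} \in [2,\infty)$ (including $p \to \infty$, i.e.\ $a_j \to 0$), together with control of how the constraint $\sum a_j^2 = 1$ interacts with the per-coordinate losses; the probabilistic reformulation (treating $f_a(0)$ as an entropy-like or $L^\infty$ functional of a sum of independent bounded variables, and comparing to the two-summand extremiser) is, I expect, the conceptual tool that makes this uniform control tractable, which is presumably why \cite{CNT} frames the result that way.
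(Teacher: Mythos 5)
Your proposal has two genuine gaps, one in each direction; I describe them below. (For context: the survey itself only sketches the lower bound, via the K\"onig--Koldobsky identity $\vol_{n-1}(Q_n\cap a^\perp)=\E\big|\sum_j a_j\xi_j\big|^{-1}$ with $\xi_j$ i.i.d.\ uniform on $S^2$, expanding $(1+R)^{-1/2}$ pointwise where $R=2\sum_{i<j}a_ia_j\scal{\xi_i}{\xi_j}$ and estimating $\E R^2,\E R^3$ --- a different and dimension-free mechanism from yours.)

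For the lower bound, your reduction rests on the claim that $\sum_{j\geq 2}a_j^4\asymp\sum_{j\geq 2}a_j^2$ for ordered unit vectors, which is false: take $a_1^2=1-\delta$ and $a_2=\dots=a_n=\sqrt{\delta/(n-1)}$, so that $\sum_{j\geq2}a_j^2=\delta$ while $\sum_{j\geq2}a_j^4=\delta^2/(n-1)$ is arbitrarily smaller. Consequently a deficit of order $\|a\|_4^4$ --- which is what any ``the convolution is genuinely smoother than uniform'' mechanism naturally produces --- does not imply the claimed deficit of order $|a-e_1|^2\asymp\sum_{j\geq2}a_j^2$. In this spread-out regime the true gain comes not from smoothing at all but from the normalisation of the dominant uniform factor, $1/a_1=(1-\sum_{j\geq2}a_j^2)^{-1/2}\geq 1+\tfrac12\sum_{j\geq2}a_j^2$, the perturbation by the remaining (nearly Gaussian, variance $\delta/12$) summand costing only an exponentially small amount. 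Your fallback of Taylor-expanding the P\'olya integrand points in that direction but is not a proof as written: the term-by-term integrals such as $\int \frac{\sin(a_1t)}{a_1t}\,t^2\,\dd t$ are not absolutely convergent, so the expansion needs genuine care with the oscillatory tails.

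For the upper bound, the route ``H\"older as in \eqref{eq:Ball-Holder} plus a stable form of \eqref{eq:Ball-integralineq}'' cannot reach the \emph{linear} deficit in the statement. The function $p\mapsto \frac1\pi\int|\tfrac{\sin t}{t}|^p\dd t-\sqrt{2/p}$ is smooth and vanishes at $p=2$, so the per-coordinate gain is at best $O(p_j-2)$, and aggregating over $j$ yields a total deficit of order $\sum_j a_j^2\min(a_j^{-2}-2,1)\asymp\big|a-\tfrac{e_1+e_2}{\sqrt2}\big|^2$ --- quadratic, not linear. Concretely, for $a=(b,b,\sqrt{\delta},0,\dots,0)$ with $b^2=(1-\delta)/2$, a direct convolution computation gives $\vol_{n-1}(Q_n\cap a^\perp)=\sqrt2-\Theta(\sqrt\delta)$, i.e.\ a deficit comparable to the distance, whereas the H\"older--Ball chain certifies only $\sqrt2-\Theta(\delta)$. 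The missing linear-order loss sits in the H\"older step itself (the functions $|\sin(a_jt)/(a_jt)|^{p_j}$ are far from proportional when one $a_j$ is small), and you provide no way to extract it. Relatedly, in the complementary case the crude bound $1/a_1$ does not suffice when $a_1$ is barely above $1/\sqrt2$ but the remaining mass is spread over many coordinates: then $1/a_1=\sqrt2-O(\epsilon)$ while $|a-\tfrac{e_1+e_2}{\sqrt2}|$ is of constant order, and ``interpolating with the first case'' is unavailable because the H\"older step requires all $a_j\leq 1/\sqrt2$. The linear exponent is exactly the hard part of the theorem, not a convenience that makes the gluing painless.
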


The exponents $2$ and $1$ on the left and the right hand side respectively are best possible, as can be explicitly verified for $n=2$. In an independent work \cite{MR}, Melbourne and Roberto have obtained a similar result.

\subsection{Balls of $p$-norms}
We begin with a monotonicity result in the parameter $p$ discovered by Mayer and Pajor in \cite{MP}.

\begin{theorem}[Meyer-Pajor \cite{MP}]\label{thm:MP}
Fix $1 \leq k \leq n$. For every $k$-dimensional subspace $H$ in $\R^n$, the function
\[
p \mapsto \frac{\vol_k(B_p^n \cap H)}{\vol_k(B_p^k)}
\]
is nondecreasing on $[0,+\infty)$. In particular, comparing against the Euclidean ball yields that
\begin{align*}
\vol_{k}(B_p^n \cap H) &\leq \vol_{k}(B_p^k), \qquad 0 < p < 2, \\
\vol_{k}(B_p^n \cap H) &\geq \vol_{k}(B_p^k), \qquad p > 2.
\end{align*}
In each inequality, the equality holds if and only if $H$ is spanned by some $k$ standard basis vectors.
\end{theorem}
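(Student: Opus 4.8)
The plan is to convert the statement into a one–dimensional convexity inequality via an integral representation of section volumes. Since the restriction of $\|\cdot\|_p$ to the subspace $H$ is exactly the gauge of the star body $B_p^n\cap H\subseteq H$, integration in polar coordinates on $H$ gives
\[
\Gamma\!\left(1+\tfrac{k}{p}\right)\vol_k(B_p^n\cap H)=\int_H e^{-\|x\|_p^p}\,\dd\vol_k(x),
\]
and specializing to $E_0:=\mathrm{span}(e_1,\dots,e_k)$ yields $\Gamma(1+\tfrac{k}{p})\vol_k(B_p^k)=\big(2\Gamma(1+\tfrac1p)\big)^k=\int_{E_0}e^{-\|x\|_p^p}\,\dd\vol_k(x)$. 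Hence the ratio in the theorem equals
\[
R_p(H)=\frac{\int_H e^{-\|x\|_p^p}\,\dd\vol_k(x)}{\int_{E_0}e^{-\|x\|_p^p}\,\dd\vol_k(x)},
\]
and since $R_2(H)\equiv 1$ (the Euclidean case), it suffices to show $p\mapsto\log R_p(H)$ is nondecreasing on $(0,\infty)$.

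Next I would differentiate in $p$. Writing $\E_H$ for expectation against the probability density proportional to $e^{-\|x\|_p^p}$ on $H$ and using $\frac{\dd}{\dd p}|x_j|^p=|x_j|^p\log|x_j|$, one obtains
\[
\frac{\dd}{\dd p}\log R_p(H)=\E_{E_0}\!\left[\sum_{j=1}^{k}|x_j|^p\log|x_j|\right]-\E_{H}\!\left[\sum_{j=1}^{n}|x_j|^p\log|x_j|\right].
\]
Substituting $z=|x_j|^p$ makes each summand $\tfrac1p\,\phi(z)$ with $\phi(z)=z\log z$ convex. A direct computation shows that under $E_0$ the variables $|x_j|^p$ are i.i.d.\ $\mathrm{Gamma}(1/p)$, while for \emph{every} $H$ one has $\sum_j|x_j|^p=\|x\|_p^p\sim\mathrm{Gamma}(k/p)$; thus both bracketed quantities describe the splitting of a common $\mathrm{Gamma}(k/p)$ total mass. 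So the theorem becomes equivalent to the claim that, among all $k$-dimensional subspaces, the coordinate configuration maximizes $\sum_{j}\E_H[\phi(|x_j|^p)]$ — informally, that chopping the mass $\|x\|_p^p$ into exactly $k$ independent pieces is the extreme way to distribute it. (As a sanity check, for $n=2,k=1$ and $H$ the diagonal line this reduces, after undoing the substitution, to $\E[\mathrm{Gamma}(1/p)]\log 2\ge 0$, which matches the explicit value $R_p=2^{1/2-1/p}$.)

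The main obstacle is precisely this convexity/majorization inequality; note it genuinely uses the special coupling of $(|x_1|^p,\dots,|x_n|^p)$ coming from a subspace, since superadditivity of $\phi$ would give the wrong direction for an arbitrary splitting. I would attack it by realizing the density on $H$ as a Gaussian mixture: for $p\le 2$ the function $u\mapsto e^{-|u|^p}$ is completely monotone in $u^2$, so $e^{-\|x\|_p^p}=\int e^{-\scal{D_s x}{x}}\,\dd\nu_p^{\otimes n}(s)$ with $D_s=\mathrm{diag}(s_1,\dots,s_n)$, which disintegrates the measure on $H$ into centred Gaussians and makes each marginal $|x_j|$ a tractable scale-mixture of half-Gaussians; one then compares these marginals against the coordinate case and pushes the comparison through $\phi$, the inequality reversing for $p\ge 2$ via a dual representation with $p=2$ as the self-dual pivot point.

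Finally, for the equality characterization of the two displayed consequences, I would track equality through the polar-coordinate step and the convexity inequality: equality throughout forces each $|x_j|$ on $H$ to have the coordinate-case law, hence $B_p^n\cap H$ to be an orthogonal box, hence $H$ to be spanned by standard basis vectors; conversely any such $H$ gives $R_p(H)\equiv 1$ by the product structure. This also upgrades the monotonicity to strict monotonicity off coordinate subspaces, yielding the stated equality cases for $0<p<2$ and $p>2$.
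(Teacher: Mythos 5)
Your reduction to a one-parameter monotonicity problem via the representation
\[
\Gamma\!\left(1+\tfrac{k}{p}\right)\vol_k(B_p^n\cap H)=\int_H e^{-\|x\|_p^p}\,\dd\vol_k(x)
\]
is correct, and so is the derivative computation and the Gamma-distribution bookkeeping (your $n=2$, $k=1$ sanity check is right). But the route you take from there diverges from the paper's and, more importantly, leaves the actual inequality unproved. The paper (following Vaaler and Meyer--Pajor, as recounted in Section~\ref{sec:methods}) does \emph{not} differentiate in $p$; it compares the laws with densities $\propto e^{-|x|^p}$ and $\propto e^{-|x|^{p'}}$ directly through Kanter's peakedness order. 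The crucial point there is that the two one-dimensional densities, once normalised, cross exactly once, so the one with larger $p$ is more peaked; this order is then closed under taking products and under convolution of even log-concave measures, which immediately transfers it to any linear image into $H^\perp$, and the section volume is recovered by Kalton--Koldobsky negative moments (or, in the original presentations, by a layer-cake argument directly on $\int_H f$). That argument is uniform across all $p>0$, with no case split at $p=2$.

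Your approach instead reduces the theorem to
\[
\E_{E_0}\!\left[\sum_{j=1}^{k}\phi\big(|x_j|^p\big)\right]\ \ge\ \E_{H}\!\left[\sum_{j=1}^{n}\phi\big(|x_j|^p\big)\right],\qquad \phi(z)=z\log z,
\]
and this is where the gap is. First, the comparison involves expectations of a convex function of \emph{individual coordinates} of a random vector whose joint law depends on $H$ in a complicated correlated way, so it does not follow from any off-the-shelf convexity, superadditivity, or Schur-type statement --- you acknowledge this, but you do not supply the missing lemma. Second, the proposed Gaussian-mixture disintegration only exists for $p\le 2$ (that is where $u\mapsto e^{-|u|^p}$ is completely monotone as a function of $u^2$), and even there it is unclear how it yields the coordinate-subspace extremality of $\sum_j\E\phi(|x_j|^p)$: after conditioning on the mixing variable you get a \emph{correlated} Gaussian on $H$, not independent pieces, and no argument is given for comparing the resulting mixture marginals against the i.i.d.\ coordinate case. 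Third, for $p\ge 2$ the claimed ``dual representation with $p=2$ as the self-dual pivot'' does not exist as stated; there is no Gaussian-mixture-type representation of $e^{-|u|^p}$ for $p>2$, and the survey is explicit that the tractability of the range $0<p<2$ (for the Schur-convexity results of Koldobsky, Barthe--Naor, Eskenazis) rests precisely on Gaussian mixtures, with no analogue available beyond $p=2$. So the heart of the proof --- the differential inequality --- remains open under your plan, and the claimed mechanism for closing it in the regime $p>2$ is not a real tool. I'd recommend abandoning the derivative approach and instead running Kanter's peakedness directly: show the single crossing of the two normalised one-dimensional densities, invoke closure under products and under log-concave convolution, and read off the section volume (either via a layer-cake/Vaaler argument on $\int_H f$ or via the negative-moment limit as in Lemma~\ref{lm:sec-formula}). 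The equality cases then come from tracking strictness in the peakedness comparison, which requires the supports of the involved projections to be coordinate-aligned.
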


Meyer and Pajor established this theorem for $p \geq 1$, which was extended later to $p < 1$ independently by Barthe in \cite{Bar} and Caetano in \cite{Cae}. Letting $p \to \infty$ recovers Vaaler's Theorem \ref{thm:vaaler} for the cube-sections. Vaaler's argument uses Kanter's peakedness to make a comparison between the uniform and Gaussian distribution. The key point in \cite{MP} was that the same comparison holds across the whole family of probability measures with densities $\left\{e^{-c_p|x|^p}\right\}_{p > 0}$. We will present this crucial idea in a probabilistic setting in Section \ref{sec:methods}.

More is known for hyperplane sections when $0 < p < 2$. Meyer and Pajor in \cite{MP} have found that the minimal-volume hyperplane sections of the cross-polytope $B_1^n$ are attained by the diagonal directions and conjectured the same for the entire range $0 < p < 2$, confirmed later by Koldobsky in \cite{Kol} in a strong Schur-convexity-type result. 

\begin{theorem}[Koldobsky \cite{Kol}]\label{thm:Kol}
Let $0 < p < 2$. For every two unit vectors $a$ and $b$ in $\R^n$ such that $(b_1^2, \dots, b_n^2)$ majorises $(a_1^2, \dots, a_n^2)$, we have
\[
\vol_{n-1}(B_p^n \cap a^\perp) \leq \vol_{n-1}(B_p^n \cap b^\perp).
\]
\end{theorem}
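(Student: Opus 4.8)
The plan is to reduce the geometric statement to a Fourier-analytic one and then prove a Schur-convexity inequality for a product of single-variable integrals. First I would use the same Fourier inversion identity that underlies Ball's argument for the cube. If $X_1,\dots,X_n$ are i.i.d. random variables whose common density is proportional to $e^{-c_p|t|^p}$ (equivalently, the density whose Fourier transform is, up to normalisation, a known function $\psi_p$ of a single variable), then the pushforward of the ``cone measure'' on $\partial B_p^n$, or more directly the relevant volume of the section, can be written via the density of $\sum a_j X_j$ at $0$. Concretely, there is a standard representation
\[
\vol_{n-1}(B_p^n\cap a^\perp)=c(n,p)\int_{-\infty}^\infty \prod_{j=1}^n \widehat{\mu_p}(a_j t)\,\dd t,
\]
where $\mu_p$ is the symmetric probability measure on $\R$ with density $c_p e^{-|t|^p}$ (the identity one gets by polar integration in the $\ell_p^n$ norm, exactly as Meyer--Pajor and Koldobsky use; see the discussion after Theorem \ref{thm:MP}). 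The key feature for $0<p<2$ is that $\widehat{\mu_p}(s)=\E e^{i s X}$ is a nonnegative function (since $e^{-|s|^p}$ is positive-definite, i.e. $p$-stable for $p\le 2$), so the integrand is nonnegative and the whole problem becomes: show that
\[
a\longmapsto \int_{-\infty}^\infty \prod_{j=1}^n g(a_j^2 t^2)\,\dd t
\]
is Schur-convex in $(a_1^2,\dots,a_n^2)$ on the simplex $\sum a_j^2=1$, where $g$ is the single-variable profile $g(u)=\widehat{\mu_p}(\sqrt u)$, a positive, decreasing, log-convex-in-$\sqrt{\cdot}$ function coming from the $p$-stable density.

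Next I would invoke the standard criterion for Schur-convexity (Schur--Ostrowski): it suffices to check that the function is symmetric (clear, since it is a symmetric function of the $a_j^2$) and that for any two coordinates, say the first two, the derivative condition
\[
(a_1^2-a_2^2)\left(\frac{\partial F}{\partial(a_1^2)}-\frac{\partial F}{\partial(a_2^2)}\right)\ge 0
\]
holds. Differentiating under the integral sign reduces this, after the substitution $t\mapsto t$ and absorbing the other $n-2$ factors into a fixed positive weight $w(t)=\prod_{j\ge3} g(a_j^2 t^2)$, to an inequality of the form
\[
(a_1^2-a_2^2)\int_{-\infty}^\infty t^2\, w(t)\,\Big[g'(a_1^2 t^2)g(a_2^2 t^2)-g(a_1^2 t^2)g'(a_2^2 t^2)\Big]\,\dd t\ \ge\ 0.
\]
So the whole theorem comes down to a one-dimensional fact about the stable profile $g$: the function $u\mapsto \big(\log g\big)'(u)$ — or more precisely the sign structure of $g'(bu)g(cu)-g(bu)g'(cu)$ integrated against $t^2 w(t)\dd t$ — must have the right monotonicity. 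The clean way to package this is to show $g=\widehat{\mu_p}$ satisfies: $g>0$, and $s\mapsto g(\sqrt s)$ has a derivative whose logarithmic behaviour makes the bracket above, as a function of $t$, change sign exactly once (from one sign to the other). Then one splits the $t$-integral at that sign-change point and compares, exactly in the spirit of the ``move the mass where it is beneficial'' arguments used in the proofs of Theorems \ref{thm:Had-Hen} and \ref{thm:MP}; the weight $t^2 w(t)$ is even and $w$ is decreasing in $t^2$, which is precisely what is needed to conclude the integral has the claimed sign.

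The main obstacle, as in Ball's work, will be the hard analytic estimate on the one-variable profile — here, establishing the single-sign-change (or equivalent log-convexity/monotonicity) property of the $p$-stable characteristic function $\widehat{\mu_p}$ and its logarithmic derivative for $0<p<2$. Unlike the cube case, $\widehat{\mu_p}=e^{-|s|^p}$ is available in closed form, which helps enormously: $(\log g)(u)=-c\,u^{p/2}$ is a simple power, so $(\log g)'(u)=-c'\,u^{p/2-1}$, and $p/2-1\in(-1,0)$ for $0<p<2$. This makes $g'(bu)g(cu)-g(bu)g'(cu)=g(bu)g(cu)\,c'\big(c^{p/2-1}u^{p/2-1}-b^{p/2-1}u^{p/2-1}\big)\cdot(-1)$ have a sign independent of $u$ (!), namely $\sgn(b^{p/2-1}-c^{p/2-1})=\sgn(c-b)$ since the exponent is negative — so the bracket does not even change sign with $t$, and the Schur-convexity inequality follows immediately from positivity of $t^2 w(t)$. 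The remaining care is purely in justifying (i) the Fourier representation with a nonnegative integrand for $0<p<2$ (convergence at infinity needs $n\ge 2$ and decay of $\widehat{\mu_p}$, which is fine) and the normalisation constant $c(n,p)$, (ii) differentiation under the integral sign (dominated convergence, using $g$ decreasing and integrable powers), and (iii) the equality case: one traces back that equality forces $a_1^2=a_2^2$ for every pair not killed by the majorisation, i.e. $a$ and $b$ have the same ordered squared-coordinates, which after reordering pins down the canonical coordinate directions in the extremal comparison and yields the stated rigidity.
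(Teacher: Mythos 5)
Your overall strategy (Fourier factorisation of the section volume into a product of one--variable profiles, followed by a Schur--Ostrowski derivative criterion) is essentially Koldobsky's original route and is viable. But the step on which your entire ``immediate'' conclusion rests is false: the Fourier transform $\widehat{\mu_p}$ of the density $c_p e^{-|t|^p}$ is \emph{not} $e^{-|s|^p}$ for any $p\neq 2$ (for $p=1$ it is the Cauchy profile $\tfrac{1}{1+s^2}$, and for general $p\in(0,2)$ it has no elementary closed form). You have conflated the measure with density $c_pe^{-|t|^p}$ with the symmetric $p$-stable law, whose \emph{characteristic function} is $e^{-|s|^p}$; these are Fourier duals of each other, not the same object. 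Consequently your computation $(\log g)(u)=-c\,u^{p/2}$, from which you deduce that the bracket $g'(bu)g(cu)-g(bu)g'(cu)$ has constant sign, does not apply to the function actually appearing in the representation. The same confusion infects your justification of nonnegativity of the integrand: positive-definiteness of $e^{-|s|^p}$ gives nonnegativity of the stable \emph{density}, not of $\widehat{\mu_p}$.

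What your argument actually needs — and what constitutes the real content of the theorem — is that the genuine profile $g(s)=\widehat{\mu_p}(s)$ is nonnegative and that $u\mapsto \log g(\sqrt{u})$ is convex (so that $h=(\log g(\sqrt{\cdot}))'$ is nondecreasing and your bracket is pointwise nonnegative). Both facts follow from the statement, emphasised in the survey, that for $0<p<2$ the function $e^{-|t|^p}$ is a \emph{Gaussian mixture}: $u\mapsto e^{-u^{p/2}}$ is completely monotone by Bernstein's theorem, hence $\widehat{\mu_p}(s)=\int_0^\infty e^{-vs^2}\,\dd\nu(v)$ for a nonnegative measure $\nu$, and completely monotone functions are log-convex. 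You must supply this lemma; it is not a routine verification and it is exactly where the restriction $p<2$ enters. The paper's own proof packages the same input differently: via Lemma \ref{lm:sec-GM} it writes $\vol_{n-1}(B_p^n\cap a^\perp)=\vol_{n-1}(B_p^{n-1})\,\E\bigl(\sum_j a_j^2R_j\bigr)^{-1/2}$ with $R_j$ i.i.d.\ nonnegative, and then Schur-convexity is immediate from convexity of $t\mapsto t^{-1/2}$ plus permutation symmetry — no Schur--Ostrowski differentiation and no sign analysis needed. If you repair your proof by inserting the Gaussian-mixture property of $\widehat{\mu_p}$, you will find that interchanging the $t$-integral with the mixing measure collapses your Fourier formula into precisely that expectation, so the two arguments are the same proof in different clothing; as written, however, yours has a gap exactly at the hard analytic step.
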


For background on majorisation and Schur-convexity, we refer for instance to \cite{Bh}. In particular, since
\[
\left(\frac{1}{n}, \dots, \frac{1}{n}\right) \prec (a_1^2,\dots,a_n^2) \prec (1,0,\dots,0),
\]
for an arbitrary unit vector $a$ in $\R^n$, the minimal and maximal volume sections follow. What makes the range $0<p<2$ so much more tractable compared to $p>2$ is the fact that the Fourier transform of $e^{-|x|^p}$ is a nonnegative function of the form $t \mapsto \int_0^\infty e^{-ut^2} \dd \mu(u)$, a Gaussian mixture. In fact the same also holds for $e^{-|x|^p}$, which allowed \cite{ENT1} to bypass the Fourier-analytic arguments entirely. We return to this in Section \ref{sec:methods}.

The maximal-volume hyperplane sections of $B_p^n$-balls for $2 < p < \infty$ are unknown. Oleszkiewicz has established in \cite{Ole} that Ball's upper bound for the cube, Theorem \ref{thm:ball-codim1}, does not extend to all $p > 2$, as it fails for all $p < 26.265..$ and large enough dimensions (by comparing cube's extremising hyperplane $(1,1,\dots,0)^\perp$ to the diagonal one $(1,1\dots, 1)^\perp$ in the limit $n \to \infty$). We conjecture that in each dimension there is a unique phase-transition point.

\begin{conjecture}\label{conj:min-sec-Bpn}
For every $n \geq 3$, there is a unique $p_0(n)$ such that
\[
\max_{a \in S^{n-1}} \vol_{n-1}(B_p^n \cap a^\perp) = \begin{cases} \vol_{n-1}(B_p^n \cap (1,\dots, 1)^\perp), & 2 < p \leq p_0(n), \\ \vol_{n-1}(B_p^n \cap (1,1,0,\dots, 0)^\perp), & p \geq p_0(n). \end{cases}
\]
\end{conjecture}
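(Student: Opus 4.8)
Since Conjecture~\ref{conj:min-sec-Bpn} is open, what follows is a plan of attack rather than a proof. Throughout, write $g_{n,p}(a) = \vol_{n-1}(B_p^n \cap a^\perp)$ for $a \in S^{n-1}$, and for $1 \le k \le n$ let $u_k = k^{-1/2}(1,\dots,1,0,\dots,0) \in S^{n-1}$ (with $k$ ones), so the two conjectured extremisers are the diagonal $u_n$ and the edge $u_2$. The natural first step is a \emph{reduction to a one-dimensional problem}: using the Fourier-analytic, equivalently probabilistic, representation of central sections of $\ell_p$-balls recalled in Section~\ref{sec:methods} -- the P\'olya-type formula for $p=\infty$ and its counterpart for finite $p$ coming from the Schechtman--Zinn representation of the uniform measure on $B_p^n$ -- each $g_{n,p}(u_k)$ becomes an explicit integral over $[0,\infty)$ whose integrand is a power of a fixed ``$p$-sinc'' profile $\varphi_p$; this is precisely the setting on which Laplace-method asymptotics and the Nazarov--Podkorytov comparison technique operate. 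With this in hand the conjecture splits into: (A) for every $p>2$, $\max_{a\in S^{n-1}} g_{n,p}(a) = \max_{1\le k\le n} g_{n,p}(u_k)$; and, granting (A), (B) the inner maximum is attained at $k=2$ or $k=n$ only, with the ``winner'' switching from $u_n$ to $u_2$ exactly once as $p$ runs over $(2,\infty)$.

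For part (B) the plan is to analyse the ratios $r_k(p) = g_{n,p}(u_k)/g_{n,p}(u_2)$ for $k \ne 2$. At $p=2$ all sections of the Euclidean ball coincide, so $r_k(2)=1$; as $p\to\infty$ one has $r_k(\infty) = \alpha_k/\sqrt{2} < 1$ for every $k \ne 2$ by Theorems~\ref{thm:ball-codim1} and~\ref{thm:diagonals} (recall $\alpha_2 = \sqrt2$ is the largest of the $\alpha_k$). Oleszkiewicz's computation in \cite{Ole}, matching the leading-order Laplace asymptotics of the diagonal and the edge integrals, gives $r_n(p) > 1$ for every $p \in (2, 26.265\ldots)$ and all sufficiently large $n$, so $r_n(p)-1$ changes sign at least once. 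To upgrade ``at least once'' to ``exactly once'' I would differentiate $r_k$ under the integral sign and try to show $p\mapsto r_k(p)$ is strictly decreasing on $(2,\infty)$ -- or at least strictly log-concave, or of at most one critical point -- together with the companion statement that $\max_{1\le k\le n} g_{n,p}(u_k)$ is attained at $k=n$ for $2<p\le p_0$ and at $k=2$ for $p\ge p_0$, in the spirit of Bartha--Fodor--Gonz\'alez's analysis for Theorem~\ref{thm:diagonals}. The delicate part is the sign analysis of $\partial_p$ of these near-oscillatory integrals, around the profile $\varphi_p$ and in the vicinity of Ball's integral inequality~\eqref{eq:Ball-integralineq}; the endpoint asymptotics also pin down $p_0(n)\to 26.265\ldots$ as $n\to\infty$, a useful consistency check, and the smallest cases (beginning with $n=3$) can presumably be closed by direct, possibly computer-assisted, estimates as in \cite{BFG}.

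Part (A) is the main obstacle: it is a codimension-one, $\ell_p$ avatar of the folklore cube conjecture (Conjecture~\ref{conj:min-sec-cube}). I would attack it in two layers. The local layer is a first-variation computation for $g_{n,p}$ on the sphere -- in the spirit of the local extremality criteria isolated for the cube by Ambrus~\cite{Amb} and by Ivanov--Tsiutsiurupa~\cite{IT} -- aiming to show that, up to permutations and sign changes of coordinates, the only critical directions are the flat vectors $u_k$. The global layer asks that within each coordinate subspace $\mathrm{span}(e_{i_1},\dots,e_{i_k})$ the function $g_{n,p}$ be Schur-convex in $(a_i^2)$, so that its restricted maximum is $u_k$ and the full maximum reduces to $\max_k g_{n,p}(u_k)$. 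The difficulty is that for $p>2$ no \emph{global} Schur-convexity (or concavity) can hold -- that is exactly why both $u_2$ and $u_n$ can win -- so one genuinely needs a ``within a face'' inequality, and the Gaussian-mixture shortcut that trivialises $0<p<2$ (used in \cite{ENT1} and underlying Theorem~\ref{thm:Kol}) is unavailable because $\widehat{e^{-|t|^p}}$ changes sign for $p>2$. A new ingredient seems necessary here, plausibly a marriage of the majorisation-and-optimal-transport viewpoint of \cite{MR, MelRob} with Ball's geometric Brascamp--Lieb machinery.
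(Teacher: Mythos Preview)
You correctly identify this as an open conjecture, so there is no proof in the paper to compare against; what follows assesses the viability of your plan.

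There is a concrete obstruction to your approach to part~(A). Your local layer aims to show that, up to permutations and sign changes, the only critical directions of $a\mapsto g_{n,p}(a)$ on the sphere are the flat vectors $u_k$. But the paper itself records (in the paragraph following Theorem~\ref{thm:diagonals}) that Ambrus has \emph{disproved} precisely this statement for the cube in dimension $n=4$: there exist critical hyperplane sections of $Q_4$ that are not perpendicular to any diagonal direction. Since the cube is the $p\to\infty$ endpoint of the family $B_p^n$, one should expect such non-diagonal critical points to persist for all sufficiently large finite $p$ as well, so the first-variation step you describe cannot succeed as stated. The same counterexample undermines your global layer: a non-diagonal critical direction with full support is an interior critical point of the restriction of $g_{n,p}$ to the top face, which is incompatible with Schur-convexity of $(a_i^2)\mapsto g_{n,p}(a)$ on that face. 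Any viable attack on (A) must therefore \emph{allow} spurious critical points and compare their values to $g_{n,p}(u_2)$ and $g_{n,p}(u_n)$ directly --- a genuinely harder task than ruling them out.

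Your outline for (B) is more reasonable as a programme: the endpoint behaviour is as you describe (all ratios equal $1$ at $p=2$; Theorem~\ref{thm:diagonals} gives $r_k(\infty)<1$ for $k\neq 2$; Oleszkiewicz's computation gives $r_n(p)>1$ near $p=2$), and a single-crossing argument for $p\mapsto r_n(p)$ is the natural route. Bear in mind, though, that even the strictly easier monotonicity statement of Theorem~\ref{thm:diagonals} already required intricate Laplace asymptotics together with computer assistance, so the sign analysis you propose is likely to be at least as delicate. And of course (B) without (A) does not yield the conjecture.
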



For lower dimensional sections, there is a general bound of Barthe which extends a corresponding result for the cube from Theorem \ref{thm:ball-codim>1}. The argument also crucially relies on the Brascamp-Lieb inequalities.

\begin{theorem}[Barthe \cite{Bar}]\label{thm:Barthe}
Let $p \geq 2$. Fix $1 \leq k \leq n$. For every $k$-dimensional subspace $H$ in $\R^n$, we have
\[
\vol_k(B_p^n \cap H) \leq \left(\frac{n}{k}\right)^{k(1/2-1/p)}\vol_k(B_p^k).
\]
\end{theorem}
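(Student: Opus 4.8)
The plan is to adapt Ball's Brascamp--Lieb strategy used for the cube in Theorem~\ref{thm:ball-codim>1} to the $\ell_p$ setting. Write the target subspace as $H = U(\R^k)$ for a linear isometry $U\colon\R^k\to\R^n$ with columns $v_1,\dots,v_k$ forming an orthonormal system; equivalently fix an orthonormal basis $w_1,\dots,w_k$ of $H$ and let $u_j \in \R^k$ be the $j$-th row of the matrix $[w_1|\cdots|w_k]$, so that $\sum_{j=1}^n u_j\otimes u_j = I_k$ (the Brascamp--Lieb / John-type decomposition of the identity coming from the fact that $H$ is a $k$-dimensional subspace of $\R^n$). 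The weights are $c_j = |u_j|^2$, which sum to $k$. First I would express the section volume as an integral over $H\cong\R^k$ of the indicator of $B_p^n$, i.e.
\[
\vol_k(B_p^n\cap H) = \int_{\R^k} \prod_{j=1}^n \1_{[-1,1]}\!\big(\langle u_j, y\rangle \cdot |u_j|^{-1}\big)\,\text{(suitably normalised)}\,\dd y,
\]
more precisely parametrising points of $H$ by $y\in\R^k$ via $x = \sum_j \langle u_j,y\rangle^{?}$ — the clean way is: a point of $B_p^n\cap H$ has coordinates $x_i$, and $\sum_i |x_i|^p \le 1$ with $x = \sum_{j} w_j y_j$ means $x_i = \langle r_i, y\rangle$ where $r_i$ is the $i$-th row; rescaling $r_i = |r_i| \hat r_i$ we get $\vol_k(B_p^n\cap H)$ as an integral of $\prod_i g_i(\langle \hat r_i, y\rangle)$ against the decomposition-of-identity weights $|r_i|^2$.

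The key step is to invoke Ball's geometric Brascamp--Lieb inequality (from \cite{BL}): if $\sum_j c_j\, \hat u_j\otimes \hat u_j = I_k$ with unit vectors $\hat u_j$ and $c_j>0$, then for nonnegative integrable $f_j\colon\R\to\R_+$,
\[
\int_{\R^k}\prod_j f_j(\langle \hat u_j, y\rangle)^{c_j}\,\dd y \le \prod_j\Big(\int_\R f_j\Big)^{c_j}.
\]
Choosing the $f_j$ to be the appropriate one-dimensional "marginals" of the $\ell_p$ constraint — concretely, after optimising, the right one-dimensional profile is $f_j(s) = (1-|s|^p/\lambda_j)_+^{(n-k)/p}$-type functions, or more cleanly one uses that the extremal one-dimensional density consistent with the $\ell_p$ ball is proportional to $e^{-|s|^p}$ in the Gaussian-comparison normalisation — reduces the problem to a one-dimensional computation. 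Then I would compute the one-dimensional integral $\int_\R (1-|s|^p)_+^{1/p}\,\dd s$ (a Beta function) and compare it, via the elementary inequality controlling $\Gamma$-ratios, to $\vol_1(B_p^1) = 2$, picking up exactly the factor $(n/k)^{1/2-1/p}$ per coordinate after the weights $c_j$ are accounted for; raising to the product over $j$ and using $\sum c_j = k$, $\sum |r_i|^2 = k$ produces the global exponent $k(1/2-1/p)$. The extremal case $k\mid n$ corresponds to all $|r_i|^2$ equal.

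The main obstacle, and the place requiring genuine care, is the correct choice of the one-dimensional functions $f_j$ and the bookkeeping of the weights: one must pass from "$x\in B_p^n$" (a single coupled constraint $\sum|x_i|^p\le 1$) to a product $\prod_i f_i(\langle\hat r_i,y\rangle)^{c_i}$ that (a) dominates the indicator of the section pointwise after the change of variables, and (b) has a tractable, optimisable integral. Ball's device for this is to write $\1_{[\sum t_i \le 1]} \le \prod_i e^{\lambda(\theta_i - t_i)}$-style exponential relaxations, or equivalently to use the layer-cake/Laplace-transform trick to turn the $\ell_p$ ball into a superposition of products; optimising over the free parameter then yields the sharp constant. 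I expect the Brascamp--Lieb application itself to be routine once the decomposition $\sum|r_i|^2\hat r_i\otimes\hat r_i = I_k$ is in hand (it is immediate from $H$ being $k$-dimensional), so the real work is entirely in the one-variable reduction and the ensuing $\Gamma$-function estimate, which is where following \cite{Bar} closely pays off. Letting $p\to\infty$ should recover the first bound of Theorem~\ref{thm:ball-codim>1} as a consistency check.
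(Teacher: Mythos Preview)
Your overall architecture is right and matches what the paper indicates (it does not actually give a proof, only remarks that Barthe's argument ``crucially relies on the Brascamp--Lieb inequalities''): set up the decomposition $\sum_i c_i\,\hat r_i\otimes\hat r_i = I_k$ with $c_i=|r_i|^2$, $\sum_i c_i=k$, and apply Ball's geometric Brascamp--Lieb inequality. But you are hedging on the one step that needs a clear idea --- decoupling the single constraint $\sum_i|x_i|^p\le 1$ into a product --- and the routes you float (a Lagrange-type exponential relaxation with a free parameter, or the Beta-type integral $\int_\R(1-|s|^p)_+^{1/p}\,\dd s$, followed by some ``$\Gamma$-function estimate'') are not how this closes.

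The clean device is the \emph{exact} polar-integration identity
\[
\int_H e^{-\|x\|_p^p}\,\dd x \;=\; \Gamma\!\Big(1+\tfrac{k}{p}\Big)\,\vol_k(B_p^n\cap H),
\]
obtained from $\int_{\R^k} f(\|y\|_K)\,\dd y = k\vol_k(K)\int_0^\infty r^{k-1}f(r)\,\dd r$ with $K=B_p^n\cap H$ and $f(r)=e^{-r^p}$ (cf.\ Lemma~\ref{lem:uniform-on-bpn}). The left side factorises as $\int_{\R^k}\prod_i e^{-c_i^{p/2}|\langle\hat r_i,y\rangle|^p}\,\dd y$, and Brascamp--Lieb with $f_i(s)=\exp(-c_i^{p/2-1}|s|^p)$ yields the upper bound $(2\Gamma(1+1/p))^k\prod_i c_i^{c_i(1/p-1/2)}$. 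Since $p\ge 2$ the exponent $1/p-1/2\le 0$, and convexity of $t\mapsto t\log t$ together with $\sum_i c_i=k$ over $n$ terms gives $\prod_i c_i^{c_i}\ge (k/n)^k$, hence $\prod_i c_i^{c_i(1/p-1/2)}\le (n/k)^{k(1/2-1/p)}$. Dividing by $\Gamma(1+k/p)$ and using $\vol_k(B_p^k)=(2\Gamma(1+1/p))^k/\Gamma(1+k/p)$ finishes. Everything is an equality except Brascamp--Lieb and the entropy bound; there is no $\Gamma$-estimate to prove and no parameter to optimise.
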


As for the cube, this is sharp when $k$ divides $n$ with the same extremising subspace. 

Using a direct argument involving triangulation and convexity of certain functions, Nazarov has shown that planar sections of the cross-polytope of minimal area are attained at regular polygons.

\begin{theorem}[Nazarov, in \cite{CNT}]\label{thm:Nazarov}
Let $n \geq 3$. For every $2$-dimensional subspace $H$ in $\R^n$, we have
\[
\vol_2(B_1^n \cap H) \geq \frac{n^2\sin^3\left(\frac{\pi}{2n}\right)}{\cos\left(\frac{\pi}{2n}\right)},
\]
which is optimal, attained when $B_1^n \cap H$ is a regular $2n$-gon,
\end{theorem}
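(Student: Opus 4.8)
The plan is to exploit the fact that $B_1^n \cap H$ is a centrally symmetric convex polygon whose area can be computed by triangulating it at the origin, and then to show that among all polygons arising this way the regular $2n$-gon is extremal. Concretely, write $H = \mathrm{span}(u,v)$ for an orthonormal pair $u,v$, and parametrize the boundary of $B_1^n \cap H$ by the angle $\varphi \in [0,2\pi)$: the ray from the origin in direction $w(\varphi) = u\cos\varphi + v\sin\varphi$ meets $\partial(B_1^n \cap H)$ at the point $r(\varphi) w(\varphi)$, where $r(\varphi) = \|w(\varphi)\|_1^{-1} = \big(\sum_{j=1}^n |u_j\cos\varphi + v_j\sin\varphi|\big)^{-1}$. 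The standard polar area formula gives $\vol_2(B_1^n \cap H) = \tfrac12 \int_0^{2\pi} r(\varphi)^2 \,\dd\varphi$. So everything reduces to the analytic inequality
\[
\int_0^{2\pi} \frac{\dd\varphi}{\big(\sum_{j=1}^n |u_j\cos\varphi + v_j\sin\varphi|\big)^2} \;\geq\; 2\,\frac{n^2\sin^3(\frac{\pi}{2n})}{\cos(\frac{\pi}{2n})},
\]
subject to $\sum u_j^2 = \sum v_j^2 = 1$, $\sum u_j v_j = 0$.

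The key step is to recognize that each coordinate contributes a term $|u_j\cos\varphi + v_j\sin\varphi| = \rho_j |\cos(\varphi - \psi_j)|$ for some amplitude $\rho_j \geq 0$ and phase $\psi_j$, with the constraints translating into $\sum_j \rho_j^2 \cos(2\psi_j) = 0$, $\sum_j \rho_j^2 \sin(2\psi_j) = 0$, and $\sum_j \rho_j^2 = 2$. Thus the integrand's denominator is $g(\varphi) = \sum_j \rho_j |\cos(\varphi-\psi_j)|$, a $\pi$-periodic, piecewise-sinusoidal, convex-on-each-piece function, and we must minimize $\int_0^{2\pi} g(\varphi)^{-2}\dd\varphi$ over all such $g$. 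By Jensen's inequality applied to the convex function $t \mapsto t^{-2}$ against a suitable averaging, or more robustly by a direct convexity/rearrangement argument, one pushes the configuration toward the symmetric one where all $n$ phases $\psi_j$ are equally spaced (at angles $\frac{\pi j}{n}$) and all amplitudes are equal ($\rho_j = \sqrt{2/n}$): this is precisely the configuration realizing $B_1^n \cap H$ as a regular $2n$-gon, and the constraint equations $\sum \rho_j^2 e^{2i\psi_j} = 0$ are automatically satisfied. For the equally-spaced equal-amplitude configuration one computes $g(\varphi)$ explicitly on a fundamental domain of length $\frac{\pi}{n}$ — there $g(\varphi) = \sqrt{2/n}\sum_j |\cos(\varphi - \frac{\pi j}{n})|$ simplifies via the sum-of-cosines identity to something proportional to $\cos(\varphi - \frac{\pi}{2n})/\sin(\frac{\pi}{2n})$ — and the resulting elementary integral yields exactly the claimed value, with the regular $2n$-gon having vertices at distance $\big(2\sqrt{2/n}\,\cot(\frac{\pi}{2n})\big)^{-1}$-type expressions from the origin.

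The main obstacle is the optimization step: showing that symmetrizing the phases and amplitudes can only decrease $\int g^{-2}$. The difficulty is that $g \mapsto \int g^{-2}$ is convex in $g$, so naive averaging over symmetries of the configuration goes the wrong way; one instead needs a more delicate argument, e.g. a triangulation argument treating $B_1^n \cap H$ directly as a polygon with $2m$ vertices ($m \le n$) and showing that for fixed vertex directions the area is minimized by a configuration matching the regular polygon's support structure, combined with a separate convexity estimate in the radial variables. Nazarov's actual approach (as cited, appearing in \cite{CNT}) is reported to proceed "via triangulation and convexity of certain functions," which I would follow: decompose the polygon into triangles spanned by consecutive boundary edges, express the total area as a sum over edges of the cross-polytope's $2$-faces intersected with $H$, and reduce to a one-dimensional convexity inequality in the edge-angle variables whose equality case forces equal spacing. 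Verifying that the equality case indeed produces a genuine regular $2n$-gon (and not a degenerate polygon with fewer vertices) requires checking that all $n$ coordinate contributions are "active," which follows since dropping or merging phases strictly increases the integral.
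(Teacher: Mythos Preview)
Your setup is correct: the polar area formula reduces the problem to minimizing $\int_0^{2\pi} g(\varphi)^{-2}\,\dd\varphi$ with $g(\varphi)=\sum_j \rho_j|\cos(\varphi-\psi_j)|$ subject to $\sum_j\rho_j^2=2$ and $\sum_j\rho_j^2 e^{2i\psi_j}=0$, and the value at the equally-spaced, equal-amplitude configuration is computed correctly. You also correctly identify the central obstruction: the functional $g\mapsto\int g^{-2}$ is convex in $g$, so averaging over rotations or permutations of the phases \emph{increases} the integral, and symmetrization cannot be used to push toward the regular configuration.

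The gap is that you do not actually overcome this obstruction. Your final paragraph only gestures at a resolution---``decompose the polygon into triangles \dots\ and reduce to a one-dimensional convexity inequality in the edge-angle variables whose equality case forces equal spacing''---but you never say which function is convex, in which variables, or why. Since you have just explained that the obvious convexity goes the wrong way, this is precisely the step that must be made explicit; as written, the proposal is a correct diagnosis of the difficulty followed by a paraphrase of the paper's one-line description of Nazarov's method, not a proof. (The survey itself does not reproduce Nazarov's argument either, only citing \cite{CNT}, so there is nothing more detailed in the paper to compare against.)
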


All known results from Table \ref{tab:sec} on extremal-volume hyperplane sections for $\ell_p$-balls admit robust versions (recall also Theorem \ref{thm:cube-stab}).

\begin{theorem}[Chasapis-Nayar-Tkocz \cite{CNT}]\label{thm:Bpn-stab}
For every $p > 0$, there is a positive constant $c_p$ such that for every $n\geq 1$ and every unit vector $a = (a_1, \dots, a_n)$ in $\R^n$ with $a_1 \geq a_2 \geq \dots \geq a_n \geq 0$, we have
\begin{align*}
\label{eq:st-Bp-p<2-max}
\frac{\vol_{n-1}(B_p^n \cap a^\perp)}{\vol_{n-1}(B_p^{n}\cap e_1^\perp)} &\geq 1  +  c_p|a - e_1|^2, \hspace*{6em} 2 < p \leq \infty,\\
\frac{\vol_{n-1}(B_p^n \cap a^\perp)}{\vol_{n-1}(B_p^n \cap (\frac{e_1+\dots+e_n}{\sqrt{n}})^\perp)} &\geq 1 + c_p\sum_{j=1}^n (a_j^2-1/n)^2, \hspace*{3em} 0 < p < 2,\\
\frac{\vol_{n-1}(B_p^n \cap a^\perp)}{\vol_{n-1}(B_p^{n}\cap e_1^\perp)} &\leq \left(a_1^p+(1-a_1^2)^{p/2}\right)^{-1/p}, \qquad 0 < p < 2.
\end{align*}
\end{theorem}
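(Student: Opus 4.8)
The plan is to reduce all three inequalities to comparisons of the values at $0$ of densities of weighted sums of i.i.d.\ random variables, and then treat the three regimes separately, the third one being essentially a corollary of Koldobsky's Theorem~\ref{thm:Kol}. For $0<p<\infty$ let $X_1,\dots,X_n$ be i.i.d.\ with density $c_pe^{-|x|^p}$, $c_p=\big(\int_\R e^{-|x|^p}\,\dd x\big)^{-1}$ (and $X_j$ uniform on $[-\tfrac12,\tfrac12]$ when $p=\infty$); write $g_p$ for the common one-dimensional density and $f_a$ for the density of $\langle a,X\rangle$. The coarea formula gives $f_a(0)=c_p^n\int_{a^\perp}e^{-\|x\|_p^p}\,\dd\mathcal{H}^{n-1}$, and since $\|\cdot\|_p$ restricted to $a^\perp$ is a norm with unit ball $B_p^n\cap a^\perp$, polar integration turns this into $f_a(0)=c_p^n\,\Gamma\!\big(1+\tfrac{n-1}{p}\big)\,\vol_{n-1}(B_p^n\cap a^\perp)$. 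The prefactor is independent of $a$, so every ratio of hyperplane-section volumes equals the corresponding ratio $f_a(0)/f_b(0)$; here the $e_1^\perp$-section corresponds to $f_{e_1}(0)=g_p(0)=\|g_p\|_\infty$, the diagonal to $f_{(e_1+\dots+e_n)/\sqrt n}(0)$, and $\Var(\langle a,X\rangle)=\E X_1^2$ is fixed because $|a|=1$.

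For the third inequality, assume $a_1\ge\cdots\ge a_n\ge0$; then $(a_1^2,1-a_1^2,0,\dots,0)$ majorises $(a_1^2,\dots,a_n^2)$ (check the partial sums), so by Theorem~\ref{thm:Kol}, $\vol_{n-1}(B_p^n\cap a^\perp)\le\vol_{n-1}(B_p^n\cap b^\perp)$ with $b=(a_1,\sqrt{1-a_1^2},0,\dots,0)$. Parametrising $b^\perp$ by $(x_2,\dots,x_n)$ through $x_1=-\sqrt{1-a_1^2}\,x_2/a_1$ (Jacobian $1/a_1$) identifies $B_p^n\cap b^\perp$ with the $\ell_p$-cylinder $\{(1+a_1^{-p}(1-a_1^2)^{p/2})|x_2|^p+\sum_{j\ge3}|x_j|^p\le1\}$ in $\R^{n-1}$, of volume $\vol_{n-1}(B_p^{n-1})\big(1+a_1^{-p}(1-a_1^2)^{p/2}\big)^{-1/p}$; multiplying by the Jacobian yields $\vol_{n-1}(B_p^n\cap b^\perp)=\vol_{n-1}(B_p^{n-1})\big(a_1^p+(1-a_1^2)^{p/2}\big)^{-1/p}$, and dividing by $\vol_{n-1}(B_p^n\cap e_1^\perp)=\vol_{n-1}(B_p^{n-1})$ is exactly the claim (the computation also shows equality for two-dimensionally supported $a$).

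For the first inequality and $p=\infty$ we just invoke Theorem~\ref{thm:cube-stab}. For $2<p<\infty$, since $|a-e_1|^2=2(1-a_1)$ is comparable to $\sigma^2:=1-a_1^2=\sum_{j\ge2}a_j^2$, it suffices to prove $f_a(0)\ge(1+c_p\sigma^2)g_p(0)$. Write $\langle a,X\rangle=a_1X_1+T$ with $T=\sum_{j\ge2}a_jX_j$ independent of $X_1$, so that $f_a(0)=a_1^{-1}\E g_p(T/a_1)$. Combining the elementary inequality $g_p(u)=c_pe^{-|u|^p}\ge c_p(1-|u|^p)$ with a Rosenthal-type bound $\E|T|^p\le C_p\sigma^p$ (using $\sum_{j\ge2}|a_j|^p\le\sigma^p$) gives $f_a(0)\ge c_pa_1^{-1}(1-C_pa_1^{-p}\sigma^p)\ge c_p(1+\sigma^2/4)$ once $\sigma\le\sigma_0(p)$ — this is where $p>2$ matters, since then $\sigma^p\ll\sigma^2$. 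The \textbf{main obstacle} is the complementary regime $\sigma\ge\sigma_0(p)$, where one needs a uniform bound $\inf\{f_a(0)/g_p(0):a_1\le\theta_0\}>1$; I would obtain it by a further split, using a quantitative local limit theorem when $a_1$ is small (so that $f_a(0)$ is forced close to the strictly larger Gaussian value $(2\pi\E X_1^2)^{-1/2}$) and, when only boundedly many coordinates carry most of the mass, a compactness and continuity argument in bounded dimension together with the strict inequality in Theorem~\ref{thm:MP}.

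The second inequality uses that for $0<p<2$ the density $g_p$ is a Gaussian mixture: $X_j\stackrel{d}{=}\sqrt{R_j}\,G_j$ with $G_j$ standard normal, $R_j>0$ i.i.d., all independent. Conditioning on $(R_j)$ makes $\langle a,X\rangle$ centred Gaussian with variance $\sum_ja_j^2R_j$, so $f_a(0)=(2\pi)^{-1/2}\E\big[(\sum_ja_j^2R_j)^{-1/2}\big]$ and likewise $f_{(e_1+\dots+e_n)/\sqrt n}(0)=(2\pi)^{-1/2}\E[\bar R_n^{-1/2}]$ with $\bar R_n=\tfrac1n\sum_jR_j$. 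Since $\phi(v)=v^{-1/2}$ is convex and the $R_j$ are exchangeable, $w\mapsto\E[\phi(\sum_jw_jR_j)]$ is Schur-convex in $(w_j)$, which already gives $f_a(0)\ge f_{(e_1+\dots+e_n)/\sqrt n}(0)$; the quantitative improvement comes from the variance gap $\Var(\sum_ja_j^2R_j)-\Var(\bar R_n)=\big(\sum_ja_j^4-\tfrac1n\big)\Var(R)=\big(\sum_j(a_j^2-\tfrac1n)^2\big)\Var(R)$, which a second-order convexity (Taylor) estimate for $\phi$ — with the heavy right tail of $\sum_ja_j^2R_j$, present when $a$ is concentrated, truncated and bounded separately — turns into $\E[\phi(\sum_ja_j^2R_j)]-\E[\phi(\bar R_n)]\ge c_p\sum_j(a_j^2-\tfrac1n)^2$; dividing by the bounded quantity $\E[\bar R_n^{-1/2}]$ gives the stated form. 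Optimality of the exponents $1$ and $2$ is verified directly for $n=2$.
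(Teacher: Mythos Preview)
Your reduction to the density value $f_a(0)$ of $\langle a,X\rangle$ with i.i.d.\ coordinates is exactly the framework the paper uses (Lemma~\ref{lm:sec-formula} and the $B_p^n$-specific lemma following it), and your argument for the third inequality---reduce via Theorem~\ref{thm:Kol} to a two-coordinate vector and compute explicitly---is correct and is the natural route. For the second inequality your use of the Gaussian-mixture representation $f_a(0)=(2\pi)^{-1/2}\E\big[(\sum_j a_j^2R_j)^{-1/2}\big]$ is precisely Lemma~\ref{lm:sec-GM}, and the Schur-convexity plus variance-gap heuristic is the right starting point; what remains is genuine work, namely turning the formal second-order Taylor step into a rigorous, $n$-uniform lower bound despite the heavy tails of the $R_j$, which your ``truncate and bound separately'' does not yet do.

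The real gap is in the first inequality for $2<p<\infty$ in the regime $\sigma\ge\sigma_0(p)$. Your proposed patch---local limit theorem when $a_1$ is small, and ``compactness in bounded dimension'' when few coordinates carry most of the mass---does not close: the compactness step must be uniform in $n$, and even if $m$ coordinates carry $1-\varepsilon$ of the mass, the remaining $\varepsilon$ can be spread over arbitrarily many coordinates, so a finite-dimensional continuity argument together with the strict inequality in Theorem~\ref{thm:MP} does not yield a uniform constant. The paper's sketch of the cube case (the proof following Lemma~\ref{lm:formula-cube}) shows how \cite{CNT} avoids this dichotomy altogether: rather than splitting off $a_1X_1$, one writes the section volume as $\E(1+R)^{-1/2}$ for a mean-zero $R$ built from inner products of i.i.d.\ unit vectors, and then applies a \emph{global} pointwise polynomial lower bound $(1+r)^{-1/2}\ge 1-\tfrac{r}{2}+\tfrac{r^2}{3}-\tfrac{5r^3}{24}$, reducing everything to closed-form estimates on $\E R^2$ and $\E R^3$. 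This is uniform in $a$ and $n$ with no case split. For general $2<p<\infty$ the $S^2$-trick is not literally available, but the paper indicates that the same probabilistic formula combined with pointwise inequalities (rather than Taylor-plus-remainder) is what drives the proof; your local small-$\sigma$ expansion is fine, but for the complementary regime you need a uniform argument of that flavour, not a soft compactness step.
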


We finish this subsection with Vaaler's conjecture on general rather precise lower bounds which have been verified to a large extent for $\ell_p$-balls. 

\begin{conjecture}[Vaaler \cite{Vaa}]\label{conj:Vaa-min-sec}
Let $K$ be a symmetric isotropic convex body in $\R^n$. Then for every nonzero subspace $H$ in $\R^n$ of dimension $1 \leq k \leq n$, we have
\[
\vol_k(K \cap H) \geq 1.
\]
\end{conjecture}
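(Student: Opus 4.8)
The plan is to reduce the statement to an extremal problem for marginal densities and then attack that by interpolation, in the spirit of Vaaler's and Meyer--Pajor's arguments. Fix a $k$-dimensional subspace $H$; after rotating $K$ we may assume $H=\mathrm{span}(e_1,\dots,e_k)$ and write $x=(z,y)\in H\times H^\perp$. Consider the fibration
\[
1=\vol_n(K)=\int_{H^\perp}\phi(y)\,\dd y,\qquad \phi(y)=\vol_k\big(K\cap(H+y)\big).
\]
By the Brunn--Minkowski inequality $\phi^{1/k}$ is concave on its support $\proj_{H^\perp}(K)$, and by symmetry of $K$ the function $\phi$ is even, so $\phi^{1/k}(0)\ge\frac12(\phi^{1/k}(y)+\phi^{1/k}(-y))=\phi^{1/k}(y)$ and hence $\phi(0)=\|\phi\|_\infty=\vol_k(K\cap H)$. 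Writing $g=\phi/\phi(0)$, the desired bound $\vol_k(K\cap H)\ge1$ is exactly $\int_{H^\perp}g\le1$: among symmetric isotropic convex bodies, none should spread more mass away from its central slice than the cube.

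To inject isotropicity I would use the two identities coming from integrating $|y|^2$ and $|z|^2$ over $K$: $\int_{H^\perp}|y|^2\phi(y)\,\dd y=(n-k)L_K^2$ and, for the density $h$ of $\proj_H X$ with $X$ uniform on $K$, $\int_H|z|^2h(z)\,\dd z=kL_K^2$; and I would also exploit the link between $\phi(0)$, $L_K$ and the isotropic constant of the $k$-dimensional body $K\cap H$ afforded by the Busemann-type inequalities behind the Milman--Pajor estimate recalled above. For $k=1$ this already settles the conjecture whenever $L_K\le 1/\sqrt{12}$: the inequality $\|\phi\|_\infty^2\int t^2\phi\ge\frac{1}{12}$ established in the proof of Theorem~\ref{thm:Had-Hen}, together with $\int t^2\phi=L_K^2$, gives $\vol_{n-1}(K\cap a^\perp)\ge(\sqrt{12}\,L_K)^{-1}\ge1$, with the cube sitting exactly at the boundary case $L_K=1/\sqrt{12}$. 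The hard part is the complementary regime $L_K>1/\sqrt{12}$, where this marginal inequality is too lossy and one must use genuine convexity of $K$, not merely log-concavity of its one-dimensional projections.

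For that regime the template I would follow is Meyer--Pajor's monotonicity (Theorem~\ref{thm:MP}) and Vaaler's proof of Theorem~\ref{thm:vaaler}: rather than comparing $K$ to a fixed model, deform it. One would seek a path $(K_s)_{s\in[0,1]}$ of symmetric isotropic convex bodies with $K_0=K$ and $K_1$ a cube (a box being isotropic precisely when it is a cube), along which $s\mapsto\vol_k(K_s\cap H)$ is monotone in the favourable direction, so that $\vol_k(K\cap H)\ge\vol_k(K_1\cap H)\ge1$ by Theorem~\ref{thm:vaaler}; Vaaler's appeal to Kanter's peakedness and the product structure of the cube is exactly such a comparison, available only at the endpoint. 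I expect the main obstacle to be that a general isotropic $K$ carries no product structure and admits no canonical isotropy-preserving path to a cube that controls all $k$-dimensional sections simultaneously — it is not even clear what the right monotone potential should be, and a Fourier-analytic substitute \`a la Ball (Theorem~\ref{thm:ball-codim1}) is no more available, resting as it does on the cube's product marginal. A realistic intermediate target, matching the current state of the $\ell_p$ case, would be the hyperplane case $k=1$ for all symmetric isotropic $K$, or the bound with $1$ weakened to a universal constant, which would already sharpen the qualitative comparison of Theorem~\ref{thm:Hensley-gen}.
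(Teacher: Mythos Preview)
The statement is a \emph{conjecture}, and the paper does not prove it; on the contrary, the paper remarks immediately after stating it that, if true, it implies the slicing conjecture. So there is no ``paper's own proof'' to compare against, and what you have written is, as you yourself acknowledge in the last paragraph, not a proof but a programme with an identified obstruction.

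The concrete gap is the deformation step. Your easy case $L_K\le 1/\sqrt{12}$ is nothing more than Hensley's lower bound from Theorem~\ref{thm:Hensley-gen} with the explicit constant $c_1=1/\sqrt{12}$; that much is correct and classical. For the remaining regime you propose an isotropy-preserving path $(K_s)_{s\in[0,1]}$ from $K$ to a cube along which $s\mapsto\vol_k(K_s\cap H)$ is monotone, then invoke Theorem~\ref{thm:vaaler} at the endpoint. No such path is known, and one should not expect to construct it without first resolving slicing: any such deformation must in particular drive $L_{K_s}$ from a potentially unbounded value down to $1/\sqrt{12}$ while keeping all section volumes under control, and the Meyer--Pajor/Kanter machinery you cite relies entirely on the product structure of the endpoint, which is precisely what a general $K$ lacks. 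Note also that your proposed ``realistic intermediate target'' of replacing $1$ by a universal constant is not more modest at all: combining a universal lower bound $\vol_{n-1}(K\cap a^\perp)\ge c$ with Hensley's upper bound $\vol_{n-1}(K\cap a^\perp)\le c_1'/L_K$ yields $L_K\le c_1'/c$, which is exactly the slicing conjecture.
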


Noteworthy, if true, it implies the slicing conjecture (made independently of it), see Hensley's Theorem \ref{thm:Hensley-gen}. Vaaler's theorem confirms this inequality for the cube (which is tight). Meyer and Pajor's sharp lower bound gives this inequality for $K = B_p^n$ with $2 < p < \infty$ and all subspaces (see \cite{MP}), as well as $1 < p < 2$ and all hyperplanes (see Schmuckenschl\"ager's note \cite{Sch}), however these are not tight anymore.

\subsection{Simplex}
Here we discuss results concerning sections of regular simplices. It will be most convenient to consider a regular $n$-dimensional simplex of side length $\sqrt{2}$ embedded in $\R^{n+1}$,
\[
\Delta_n = \left\{x \in \R^{n+1}, \ x_1, \dots, x_{n+1} \geq 0, \ \sum_{j=1}^{n+1} x_j = 1\right\}.
\]
\emph{Central} sections will refer to those by (affine) subspaces passing through the barycentre $\left(\frac{1}{n+1},\dots, \frac{1}{n+1}\right)$ of $\Delta_n$. In particular, if $a$ is a unit vector in $\R^{n+1}$ with $\sum_{j=1}^{n+1} a_j = 0$ (so parallel to the hyperplane containing $\Delta_n$), then $\Delta_n \cap a^\perp$ is a central hyperplane section of $\Delta_n$. Such sections of maximal volume have been determined by Webb in \cite{Webb}.

\begin{theorem}[Webb \cite{Webb}]\label{thm:Webb}
For every unit vector $a$ in $\R^{n+1}$ with $\sum_{j=1}^{n+1} a_j = 0$, we have
\[
\vol_{n-1}(\Delta_n \cap a^\perp) \leq \frac{1}{\sqrt{2}}\frac{\sqrt{n+1}}{(n-1)!}.
\]
This is attained if and only if $a^\perp$ passes through some $n-1$ vertices of $\Delta_n$.
\end{theorem}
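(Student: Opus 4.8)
The plan is to transplant Ball's Fourier-analytic scheme (the one sketched after Theorem~\ref{thm:ball-codim1}) to the simplex, after first rewriting the section volume as the value at the origin of a one-dimensional probability density.

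First, reduce to a density estimate. If $\xi_1,\dots,\xi_{n+1}$ are i.i.d.\ standard exponential random variables and $S=\xi_1+\dots+\xi_{n+1}$, then $X=S^{-1}(\xi_1,\dots,\xi_{n+1})$ is uniform on $\Delta_n$, and conditionally on $S$ the vector $(\xi_1,\dots,\xi_{n+1})$ is uniform on $S\Delta_n$, so $\scal{a}{X}=S^{-1}\sum_j a_j\xi_j$ is independent of $S$. Since $|a|=1$ and $a$ is parallel to the hyperplane $\{\sum x_j=1\}$ containing $\Delta_n$, Fubini inside that hyperplane gives $\vol_{n-1}(\Delta_n\cap a^\perp)=\vol_n(\Delta_n)\,g(0)$, where $g$ is the density of $\scal{a}{X}$ and $\vol_n(\Delta_n)=\frac{\sqrt{n+1}}{n!}$. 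Writing $T=\sum_{j=1}^{n+1}a_j\xi_j=\scal{a}{X}\cdot S$ as a product of independent factors and using $\E[S^{-1}]=\frac1n$ gives $g(0)=n\,f_T(0)$ for the density $f_T$ of $T$, hence
\[
\vol_{n-1}(\Delta_n\cap a^\perp)=\frac{\sqrt{n+1}}{(n-1)!}\,f_T(0).
\]
For $a=\pm(e_i-e_j)/\sqrt2$ the variable $T=\pm\tfrac1{\sqrt2}(\xi_i-\xi_j)$ is $\tfrac1{\sqrt2}$ times a standard Laplace variable, with density $\tfrac1{\sqrt2}$ at $0$, matching the claimed bound; so the theorem is equivalent to showing $f_T(0)\le\tfrac1{\sqrt2}$ for every $a$ with $\sum a_j=0$, $\sum a_j^2=1$, with equality only there.

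Next, apply Fourier inversion and Hölder. The characteristic function of $\xi_j$ is $t\mapsto(1-it)^{-1}$, and since $\sum a_j=0$ forces at least two $a_j\neq0$, the product $\prod_j(1-ita_j)^{-1}$ is integrable and $f_T(0)=\frac1{2\pi}\int_{\R}\prod_j(1-ita_j)^{-1}\,\dd t$. As this integral is real and positive, bounding it by the integral of its modulus and then applying Hölder's inequality with exponents $p_j=a_j^{-2}$ (which satisfy $\sum_j p_j^{-1}=\sum_j a_j^2=1$) to the nonnegative functions $|1-ita_j|^{-1}=(1+a_j^2t^2)^{-1/2}$ yields
\[
2\pi f_T(0)\ \le\ \int_{\R}\prod_j(1+a_j^2t^2)^{-1/2}\,\dd t\ \le\ \prod_j\Phi(p_j)^{1/p_j},\qquad
\Phi(p):=\sqrt p\int_{\R}(1+u^2)^{-p/2}\,\dd u=\sqrt{\pi p}\,\frac{\Gamma(\frac{p-1}2)}{\Gamma(\frac p2)}.
\]
The needed one-dimensional inequality — the simplex analogue of Ball's integral inequality — is that $\Phi$ is strictly decreasing on $(1,\infty)$, so in particular $\Phi(p)\le\Phi(2)=\sqrt2\,\pi$ for $p\ge2$. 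Unlike Ball's inequality this is elementary: $\frac{\dd}{\dd p}\log\Phi(p)\le0$ amounts to $\psi(\tfrac p2)-\psi(\tfrac{p-1}2)\ge\tfrac1p$ for the digamma function $\psi=(\log\Gamma)'$, and via $\psi(x)-\psi(y)=\int_0^\infty\frac{e^{-yt}-e^{-xt}}{1-e^{-t}}\,\dd t$ this reduces to $\frac{e^{t/2}}{1+e^{-t/2}}\ge\frac12$ on $[0,\infty)$, i.e.\ $(2e^{t/2}+1)(e^{t/2}-1)\ge0$. Consequently, if $\max_j|a_j|\le\tfrac1{\sqrt2}$ then every $p_j\ge2$ and $2\pi f_T(0)\le\prod_j\Phi(2)^{1/p_j}=\Phi(2)=\sqrt2\,\pi$, i.e.\ $f_T(0)\le\tfrac1{\sqrt2}$.

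The hard part is the complementary regime $|a_1|:=\max_j|a_j|>\tfrac1{\sqrt2}$, where the Hölder bound breaks down (since $\Phi(a_1^{-2})>\Phi(2)$, and in fact the right side blows up as $a_1\to1$) and where — in contrast to the cube — there is no useful geometric bound, because $f_T(0)\le\tfrac1{|a_1|}$ always exceeds $\tfrac1{\sqrt2}$. Here I would peel off the largest coordinate: assuming $a_1>0$ and setting $T'=\sum_{j\ge2}a_j\xi_j$ (independent of $\xi_1$), the constraints give $\E T'=-a_1$ and $\Var(T')=1-a_1^2<\tfrac12$, while
\[
f_T(0)=\int_0^\infty f_{a_1\xi_1}(x)\,f_{T'}(-x)\,\dd x=\frac1{a_1}\,\E\!\left[e^{T'/a_1}\,\1_{\{T'<0\}}\right].
\]
The idea is that $T'$, a combination of exponentials whose total variance is below $\tfrac12$, is concentrated about its negative mean $-a_1$ (its standard deviation $\sqrt{1-a_1^2}$ being smaller than $a_1$), so on $\{T'<0\}$, which carries almost all the mass, $e^{T'/a_1}$ is close to $e^{-1}$; one then wants $\E[e^{T'/a_1}\1_{\{T'<0\}}]\le\tfrac{a_1}{\sqrt2}$, with equality exactly when $T'=-a_1\xi_2$, which returns the extremiser. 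Converting this concentration heuristic into a bound that is uniform over $a_1\in(\tfrac1{\sqrt2},1)$ and over all admissible remainders — via, e.g., a further Hölder/majorisation estimate on $T'$ together with sharp tail bounds for sums of exponentials — is the delicate technical core of the argument, and the main obstacle. Finally, for the equality case: in the regime of the previous paragraph, equality in Hölder forces the functions $(1+a_j^2t^2)^{-1/2}$ to be powers of one another (hence all nonzero $a_j^2$ equal), and then $\Phi(p_j)=\Phi(2)$ forces $p_j=2$, so exactly two coordinates are nonzero and equal to $\pm\tfrac1{\sqrt2}$; with $\sum a_j=0$ this pins down $a=\pm(e_i-e_j)/\sqrt2$, and the complementary-regime analysis above attains equality only at the same $a$.
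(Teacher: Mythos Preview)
Your reduction and the Fourier--H\"older step faithfully reproduce Webb's first proof as the paper describes it: the formula $\vol_{n-1}(\Delta_n\cap a^\perp)=\frac{\sqrt{n+1}}{(n-1)!}f_T(0)$ with $T=\sum a_j\xi_j$ for i.i.d.\ standard exponentials is exactly what the paper records, and your H\"older step together with the monotonicity of $\Phi(p)=\sqrt{\pi p}\,\Gamma(\tfrac{p-1}{2})/\Gamma(\tfrac{p}{2})$ is correct and cleanly argued. So the regime $\max_j|a_j|\le 1/\sqrt2$ is complete.

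The genuine gap is the complementary regime, and you are right both that it is nonempty (already for $n\ge 2$, since $a_1^2$ can be as large as $n/(n+1)$) and that the cube's projection trick has no useful analogue here. However, your proposed fix does not close the gap as stated: dropping the indicator in $f_T(0)=\tfrac1{a_1}\E[e^{T'/a_1}\1_{T'<0}]$ and using the exponential MGF gives $f_T(0)\le a_1^{n-1}/\prod_{j\ge2}(a_1-a_j)$, which is tight at the extremiser but can exceed $1/\sqrt2$ (e.g.\ take $a_1\approx 0.75$, a second positive coordinate $a_2\approx 0.6$, and many small equal negatives; the bound tends to roughly $5/e^{1.8}\approx 0.83$). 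Nor does a Schur-convexity reduction in the squares help after taking the modulus, since the constraint $\sum a_j=0$ --- the only thing distinguishing the simplex from the cross-polytope picture --- is lost at that point and the reduced two-variable integral $\int_\R(1+ct^2)^{-1/2}(1+(1-c)t^2)^{-1/2}\,\dd t$ is increasing in $c\in[\tfrac12,1)$. So the ``delicate technical core'' you flag really is the crux, and something sharper than a blanket MGF/concentration bound on $T'$ is needed. The paper notes that Webb gave a second, probabilistic proof ``exploiting log-concavity'' of the density of $T$; that route --- working directly with the log-concave density rather than the Fourier side, where the constraint $\sum a_j=0$ is naturally encoded as $\E T=0$ --- is the cleaner way to dispose of the large-coordinate case.
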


Webb gave two proofs, both based on an elegant probabilistic formula,
\[
\vol_{n-1}(\Delta_n \cap a^\perp) = \frac{\sqrt{n+1}}{(n-1)!}f_a(0),
\]
where $f_a$ is the probability density of $\sum_{j=1}^{n+1} a_jX_j$ with the $X_j$ being i.i.d. standard exponential random variables, that is with density $e^{-x}$ supported on $(0,+\infty)$. Thus, his result becomes $f_a(0) \leq \frac{1}{\sqrt{2}}$ with equality if and only if $n-1$ of the $a_j$ vanish. His first proof mimicked Ball's Fourier analytic approach with the crucial bound coming from H\"older's inequality and an integral inequality. His second proof was probabilistic, exploiting log-concavity.

Webb has also found that the $1$ and $2$-dimensional central sections of $\Delta_n$ of maximal volume are attained at lines and planes passing through a vertex and an edge of $\Delta_n$ respectively (see his PhD thesis \cite{Webb-thesis}, as well as \cite{LiuTko} for a different argument in the line case).

For general upper bounds on central sections, following the approach involving Ball's geometric form of the Brascamp-Lieb inequality, Dirksen in \cite{Dirk} has obtained the following result.

\begin{theorem}[Dirksen \cite{Dirk}]\label{thm:Dirksen-simplex}
For every $k$-dimensional subspace of $\R^{n+1}$ passing through the barycentre of the simplex $\Delta_n$, we have
\[
\vol_{k-1}(\Delta_n \cap H) \leq \frac{k^{\frac{k}{2(n+1)}}}{(k-1)!}.
\]
Moreover, if $\mathrm{dist}(H,e_j) \leq \sqrt{\frac{n+1-k}{n+2-k}}$ for each $j \leq n+1$, then
\[
\vol_{k-1}(\Delta_n \cap H) \leq \frac{1}{(k-1)!}\sqrt{\frac{n+1}{n+2-k}}
\]
which is sharp, attained when $H$ contains $k-1$ vertices of $\Delta_n$.
\end{theorem}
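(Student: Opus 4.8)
The plan is to follow Webb's probabilistic representation and Dirksen's adaptation of Ball's geometric Brascamp--Lieb machinery. First I would record the analogue of Webb's formula in higher codimension: if $H$ is a $k$-dimensional subspace of the hyperplane $\{\sum x_j = 1\}$ passing through the barycentre, parametrised by an orthonormal system, then
\[
\vol_{k-1}(\Delta_n \cap H) = \frac{\sqrt{n+1}}{(k-1)!}\, g_H(0),
\]
where $g_H$ is the density at $0$ of the projection onto $H^\perp$ (within the barycentric hyperplane) of the vector $X = (X_1,\dots,X_{n+1})$ with $X_j$ i.i.d. standard exponential. Concretely, writing the orthogonal complement of $H$ inside the barycentric hyperplane as the span of $\ell := n+1-k$ orthonormal vectors $v^{(1)},\dots,v^{(\ell)}$ (each with coordinates summing to $0$), one has $g_H(0)$ equal to the joint density of $(\langle v^{(1)},X\rangle,\dots,\langle v^{(\ell)},X\rangle)$ at the origin, which by Fourier inversion is $\frac{1}{(2\pi)^\ell}\int_{\R^\ell}\prod_{j=1}^{n+1}\frac{1}{1 - i\langle u, w^{(j)}\rangle}\,\dd u$ with $w^{(j)} = (v^{(1)}_j,\dots,v^{(\ell)}_j)\in\R^\ell$; the constraint $\sum_j w^{(j)} = 0$ and $\sum_j w^{(j)}\otimes w^{(j)} = I_\ell$ encodes that the $v$'s are orthonormal with zero coordinate-sums.

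**Applying Brascamp--Lieb.** The decomposition of identity $\sum_{j=1}^{n+1} w^{(j)}\otimes w^{(j)} = I_\ell$ with $|w^{(j)}|^2 = 1 - \mathrm{dist}(H, e_j)^2$ (the squared length being the coefficient $c_j$) is exactly the input for Ball's geometric Brascamp--Lieb inequality. Applying it to the functions $t\mapsto \left|\tfrac{1}{1-it}\right|^{1/c_j}$ (appropriately, after first passing to $|g_H(0)|\le$ an integral of moduli) gives
\[
g_H(0) \le \prod_{j=1}^{n+1}\left(\frac{1}{2\pi}\int_{\R}\left|\frac{1}{1-it}\right|^{1/c_j}\dd t\right)^{c_j}
= \prod_{j=1}^{n+1} \psi(c_j)^{c_j},
\]
where $\psi(c) = \frac{1}{2\pi}\int_\R (1+t^2)^{-1/(2c)}\,\dd t$, a Beta-function expression, namely $\psi(c) = \frac{1}{2\sqrt{\pi}}\frac{\Gamma\!\left(\frac{1-c}{2c}\right)}{\Gamma\!\left(\frac{1}{2c}\right)}$ for $c<1$. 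Since $\sum_j c_j = \ell = n+1-k$ (trace of the identity on $\R^\ell$), this is a constrained optimisation of $\sum_j c_j\log\psi(c_j)$ with $0\le c_j\le 1$ and fixed sum — the standard endgame in this circle of ideas.

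**The two regimes.** For the first, unconstrained bound $k^{k/(2(n+1))}/(k-1)!$, I expect the relevant estimate is a clean convexity/log-convexity property of $c\mapsto c\log\psi(c)$ (or of $\psi$ itself, as in Ball's original treatment of $\Psi$), letting one push all the mass to the most favourable configuration; the extremal profile will not satisfy the coordinate-subspace constraint, which is why the bound is not attained and carries the fractional exponent $k/(2(n+1))$. For the second, sharp bound under the hypothesis $\mathrm{dist}(H,e_j)\le\sqrt{(n+1-k)/(n+2-k)}$, equivalently $c_j \ge \frac{1}{n+2-k}$ for all $j$: this lower bound on each $c_j$, together with $\sum c_j = n+1-k$, confines the $c_j$ to a region where the relevant function is Schur-convex/concave in the right direction, forcing the extremum to the vertex of the simplex of admissible $(c_j)$ where $n+2-k$ of the coordinates equal $\frac{1}{n+2-k}$ and the rest vanish — which is precisely the configuration where $H$ contains $k-1$ vertices of $\Delta_n$, giving equality and the value $\frac{1}{(k-1)!}\sqrt{\frac{n+1}{n+2-k}}$ after evaluating $\psi\!\left(\frac{1}{n+2-k}\right)$.

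**Main obstacle.** The principal difficulty is the analytic input on $\psi$: unlike Ball's $\mathrm{sinc}$ integral, $\psi$ has an explicit Gamma-function form, but one still needs the precise convexity-type statements (log-convexity of $\psi$, or Schur behaviour of $(c_j)\mapsto\sum c_j\log\psi(c_j)$ on the constrained domain) that make the Brascamp--Lieb bound collapse to the stated closed forms and identify the extremiser in the constrained case. Verifying these Gamma-function inequalities — particularly checking that the constrained optimum sits at the claimed boundary vertex rather than an interior critical point — is where the real work lies; everything else (the density formula, Fourier inversion, invoking the geometric Brascamp--Lieb inequality, and bookkeeping the constants $c_j = 1 - \mathrm{dist}(H,e_j)^2$) is routine given the results already quoted in the survey.
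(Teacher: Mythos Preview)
The survey does not prove this theorem; it only records that Dirksen's argument proceeds via Ball's geometric Brascamp--Lieb inequality. Your outline follows precisely that route --- Webb's probabilistic formula in higher codimension, Fourier inversion, a Brascamp--Lieb factorisation through the decomposition $\sum_j w^{(j)}\otimes w^{(j)} = I_\ell$, and a final optimisation over the weights $c_j = |w^{(j)}|^2$ subject to $\sum_j c_j = n+1-k$ --- so the strategy is the one indicated.

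There is, however, a concrete bookkeeping error that derails your endgame. Since the $v^{(i)}$ form an orthonormal basis of $H^\perp$ (an $(n+1-k)$-dimensional linear subspace orthogonal to $(1,\dots,1)$), one has
\[
c_j \;=\; |w^{(j)}|^2 \;=\; \sum_i \langle v^{(i)}, e_j\rangle^2 \;=\; |\proj_{H^\perp} e_j|^2 \;=\; \mathrm{dist}(e_j, H)^2,
\]
\emph{not} $1 - \mathrm{dist}(e_j, H)^2$. Consequently the hypothesis in the second part of the theorem translates to the \emph{upper} bound $c_j \le \frac{n+1-k}{n+2-k}$, and the extremal configuration (where $H$ contains $k-1$ vertices) has $k-1$ of the $c_j$ equal to $0$ and the remaining $n+2-k$ equal to $\frac{n+1-k}{n+2-k}$; the sum is then $(n+2-k)\cdot\frac{n+1-k}{n+2-k} = n+1-k$, as it must be. Your stated extremal profile ($n+2-k$ coordinates equal to $\frac{1}{n+2-k}$, the rest zero) sums to $1$ rather than $n+1-k$, and the vanishing coordinates violate your own lower-bound constraint $c_j \ge \frac{1}{n+2-k}$, so the picture as written is internally inconsistent. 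Once the sign is corrected, the optimisation you describe does point in the right direction: under the cap $c_j \le \frac{n+1-k}{n+2-k}$ with fixed sum $n+1-k$, the most majorised admissible vector is exactly the one above, and what remains is the Gamma-function inequality for $\psi$ that you correctly flag as the real analytic work.
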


As opposed to symmetric convex bodies for which maximum volume sections by all affine subspaces of a fixed dimension always occur when they pass through the barycentre (by the Brunn-Minkowski inequality), for the simplex such a question becomes nontrivial. Webb pointed out in \cite{Webb} that combining two results of Ball yields that for fixed $1 \leq k \leq n$, we have
\[
\vol_k(\Delta_n \cap H) \leq \vol_k(F_k),
\]
for all $(k+1)$-dimensional affine subspaces $H$ in $\R^{n+1}$, where $F_k$ is a $k$-dimensional face of $\Delta_n$, that is the $k$-dimensional slices of $\Delta_n$ of maximal volume are exactly the $k$-dimensional faces. To explain this, fix $H$, and consider the maximum volume ellipsoid, say $\mathcal{E}^*$ contained in the convex body $K = \Delta_n \cap H$. Ball has found in \cite{Ball-vr} that the $n$-simplex has maximal volume ratio among all convex bodies in $\R^n$. The volume ratio of a convex body $C$ in $\R^n$ is $\text{vr}(C) = \left(\vol_n(C)/\vol_n(\mathcal{E})\right)^{1/n}$, where $\mathcal{E}$ is the maximum volume ellipsoid in $C$. Thus,
\[
\vol_k(\Delta_n \cap H) = \text{vr}(K)^k \vol_k(\mathcal{E}^*) \leq \text{vr}(F_k)^k \vol_k(\mathcal{E}^*),
\]
Moreover, Ball has shown in \cite{Ball-simplex} that among all $k$-dimensional ellipsoids in $\Delta_n$, the Euclidean balls inscribed in $k$-faces have maximal volume, thus they are the maximal volume ellipsoids in $F_k$. Therefore, 
\[
\text{vr}(F_k)^k \vol_k(\mathcal{E}^*) \leq \vol_k(F_k).
\]
In \cite{Fra}, Fradelizi has given a different argument, deriving this fact from a more general result for cones in isotropic position.

Lower bounds are much less understood. 

\begin{conjecture}\label{conj:simplex-low-bd}
For every unit vector $a$ in $\R^{n+1}$ with $\sum_{j=1}^{n+1} a_j = 0$, we have
\[
\vol_{n-1}(\Delta_n \cap a^\perp) \geq \left(\frac{n}{n+1}\right)^{n-1/2}\frac{\sqrt{n+1}}{(n-1)!}
\]
which is attained when $a^\perp$ is parallel to a face of $\Delta_n$.
\end{conjecture}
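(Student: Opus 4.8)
The plan is to recast the statement, via Webb's probabilistic formula from the proof of Theorem~\ref{thm:Webb}, as a sharp lower bound for a density at the origin, and then carry out a two‑step reduction. Recall that
\[
\vol_{n-1}(\Delta_n \cap a^\perp) = \frac{\sqrt{n+1}}{(n-1)!}\,f_a(0), \qquad f_a = \text{density of }\ \textstyle\sum_{j=1}^{n+1}a_jX_j,
\]
with $X_1,\dots,X_{n+1}$ i.i.d. standard exponential, and that a direct Gamma integral for the face‑parallel direction $a^* = \tfrac{1}{\sqrt{n(n+1)}}(1,\dots,1,-n)$ gives $f_{a^*}(0) = \left(\tfrac{n}{n+1}\right)^{n-1/2}$. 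So the conjecture is equivalent to the assertion that $f_a(0) \geq \left(\tfrac{n}{n+1}\right)^{n-1/2}$ for every unit vector $a$ with $\sum_j a_j = 0$, with equality only for permutations and sign changes of $a^*$. Note that $\sum a_j = 0$ forces the mean of $\sum a_jX_j$ to be $0$ and $|a|=1$ forces its variance to be $1$, so one is asking for the smallest value at the mean of a mean‑zero, variance‑one log-concave density \emph{coming from a weighted sum of exponentials}; plain log-concavity is visibly too weak here, since the uniform density on $[-\sqrt3,\sqrt3]$ is log-concave, has mean $0$ and variance $1$, and takes the value $\tfrac{1}{2\sqrt3} < e^{-1} < \left(\tfrac{n}{n+1}\right)^{n-1/2}$ at its mean.

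\textbf{Step 1: reduction to block-constant directions.} Split $\{1,\dots,n+1\}$ into the index sets $P$ and $N$ where $a_j>0$ and $a_j<0$ respectively (the zero coordinates, if any, are discarded and the resulting lower-dimensional degenerate cases, e.g. $a=\tfrac1{\sqrt2}(1,-1,0,\dots,0)$, treated separately and more easily). Writing $Y = \sum_{j\in P}a_jX_j$ and $Z = \sum_{j\in N}|a_j|X_j$, which are independent with densities $g_P,g_N$ supported on $[0,\infty)$, one has $f_a(0) = \int_0^\infty g_P(t)g_N(t)\,\dd t$. The claim to be proved is a Schur-type monotonicity, in the spirit of Koldobsky's Theorem~\ref{thm:Kol}: replacing two coordinates $a_i,a_j$ lying in the \emph{same} block by their common average $\tfrac{a_i+a_j}{2}$ (and renormalising) does not increase $f_a(0)$. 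Equivalently, if $R$ is the contribution of the remaining coordinates, then $\E f_R(-W) \geq \E f_R(-W')$ whenever $W = a_iX_i+a_jX_j$ and $W' = \tfrac{a_i+a_j}{2}(X_i+X_j)$ have the same mean and $W'$ is the more concentrated. Iterating this move drives the minimiser to a direction of the form $a=(p,\dots,p,-q,\dots,-q)$ with $m$ entries equal to $p$ and $r=n+1-m$ entries equal to $-q$, subject to $mp=rq$ and $mp^2+rq^2=1$.

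\textbf{Step 2: the block-constant family.} For such $a$ one has $Y = p\cdot\Gamma_m$, $Z = q\cdot\Gamma_r$ with independent $\Gamma_m\sim\mathrm{Gamma}(m,1)$, $\Gamma_r\sim\mathrm{Gamma}(r,1)$, and an elementary Gamma integral together with the constraints (which force $p=\sqrt{r/(m(n+1))}$, $q=\sqrt{m/(r(n+1))}$) yields the closed form
\[
f_a(0) = \binom{n-1}{m-1}\,\frac{m^{\,m-1/2}\,r^{\,r-1/2}}{(n+1)^{\,n-1/2}} \; =: \; \phi(m), \qquad r = n+1-m.
\]
Since $\phi(m)=\phi(n+1-m)$ and $\phi(1)=\phi(n)=\left(\tfrac{n}{n+1}\right)^{n-1/2}$, it remains to show $\phi$ is minimised at the endpoints $m\in\{1,n\}$. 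A short computation gives $\phi(m+1)/\phi(m) = \left(1+\tfrac1m\right)^{m+1/2}\left(1-\tfrac1r\right)^{r-1/2}$, and this exceeds $1$ for $m\le r-1$ precisely because $x\mapsto\left(1+\tfrac1x\right)^{x+1/2}$ is strictly decreasing on $[1,\infty)$ (a classical fact, e.g. from $\log\!\left(1+\tfrac1x\right)=\tfrac1x-\tfrac1{2x^2}+\dots$). Hence $\phi$ is unimodal with an interior maximum, which gives the bound and pins down the equality case.

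\textbf{The main obstacle.} Step 2 is essentially mechanical; the whole difficulty is Step 1. The upper-bound arguments of Ball and Webb proceed through H\"older's inequality, which points the wrong way for a lower bound, and — as already observed — bare log-concavity of $f_a$ does not suffice. A naive Jensen argument for Step 1 also fails: in the extremal regime $f_R$ is being evaluated near its own mean, which for Gamma-type laws sits essentially at an inflection point of the density, so $f_R$ is there neither convex nor concave. One promising route is to push the averaging move through the Fourier representation
\[
f_a(0)=\frac{1}{2\pi}\int_\R\prod_{j=1}^{n+1}\frac{\dd t}{1-ia_jt} = \frac{1}{2\pi}\int_\R\frac{\cos\!\big(\sum_{j}\arctan(a_jt)\big)}{\prod_{j}\sqrt{1+a_j^2t^2}}\,\dd t
\]
(using $\sum a_j=0$), seeking a pointwise-in-$t$ comparison after symmetrising two coordinates; another is to establish directly the Schur-monotonicity of the density of a positively-weighted sum of i.i.d. exponentials, a statement of independent interest. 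In either approach, making the comparison uniform in the dimension $n$ is where the real work lies.
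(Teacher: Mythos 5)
The statement you are addressing is Conjecture~\ref{conj:simplex-low-bd}, which is \emph{open}: the paper offers no proof (it records only that Dirksen verified it for $n\le 4$ and that Brzezinski obtained a non-sharp bound of the right order), so there is no argument of the paper to compare yours against. Your proposal is also not a proof, and you say so yourself: Step~1, the Schur-type claim that averaging two coefficients within the same sign block does not increase $f_a(0)$, is exactly the sharp lower-bound/anticoncentration statement that constitutes the whole difficulty, and none of the routes you sketch for it (pointwise comparison in the Fourier representation, Schur-monotonicity of densities of positive exponential sums) is carried out. Without Step~1 the reduction to block-constant directions is unsupported, and the equality/rigidity assertion in the conjecture is likewise untouched, since it would have to come from strictness in Step~1.

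That said, the parts you do execute are correct and worth recording. The value at the face-parallel direction $a^*=\frac{1}{\sqrt{n(n+1)}}(1,\dots,1,-n)$ does come out to $f_{a^*}(0)=\left(\frac{n}{n+1}\right)^{n-1/2}$ via the Gamma integral, matching the conjectured constant through Webb's formula; and your Step~2 computation is sound: for the block-constant family one gets $\phi(m)=\binom{n-1}{m-1}m^{m-1/2}r^{r-1/2}(n+1)^{-(n-1/2)}$ with $r=n+1-m$, the ratio $\phi(m+1)/\phi(m)=\left(1+\frac1m\right)^{m+1/2}\left(1-\frac1r\right)^{r-1/2}$ equals $h(m)/h(r-1)$ with $h(x)=(1+1/x)^{x+1/2}$ decreasing, so $\phi$ is minimised at $m\in\{1,n\}$. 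So you have a clean and correct reduction of the conjecture to the interior-averaging monotonicity of Step~1 (plus the degenerate cases with zero coordinates, which you defer but do not treat). This is a plausible programme and arguably of independent interest, but as it stands the conjecture remains unproved.
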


This has been confirmed in low dimensions $n \leq 4$ by Dirksen in \cite{Dirk}. Brzezinski in \cite{Brz} noticed that a bound of the correct order but off by a multiplicative constant follows by applying Fradelizi's theorem from \cite{Fra} to Webb's result stated above.

\subsection{Complex analogues}
If we consider $\C^n$ as Hilbert space equipped with the standard (complex) inner product and volume (Lebesgue measure after the natural identification $\C^n \simeq \R^{2n}$), most of the results about extremal-volume sections (of real spaces) considered thus far beg for their natural complex counterparts. 
Vaaler's theorem as well as its generalisation of Meyer and Pajor admit such extensions, with almost the same proofs, as was pointed out in their papers.

\begin{theorem}[Vaaler \cite{Vaa}, Meyer-Pajor \cite{MP}]\label{thm:Val-MP}
Let $1 \leq k \leq n$ and let $H$ be a (complex) $k$-dimensional subspace in $\C^n$. Then
\[
\vol_{2k}(B_{p,\C}^n \cap H) \geq \vol_{2k}(B_{p,\C}^k),
\]
when $2 \leq p \leq \infty$. The reverse inequality holds when $0 < p \leq 2$.
\end{theorem}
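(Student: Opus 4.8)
The plan is to transplant the probabilistic argument behind Vaaler's theorem (and its Meyer--Pajor extension in $p$) into the complex setting by exploiting the natural identification $\C^n \simeq \R^{2n}$ and, crucially, observing that the complex $\ell_p$-ball $B_{p,\C}^n$ becomes, under this identification, the unit ball of the norm $(x,y) \mapsto \big(\sum_{j=1}^n (x_j^2+y_j^2)^{p/2}\big)^{1/p}$ on $\R^{2n}$ --- a body whose uniform distribution has independent \emph{rotationally invariant in each coordinate plane} blocks. Concretely, if $U$ is uniform on (a suitably normalized dilate of) $B_{p,\C}^n$, then $U = (U_1, \dots, U_n)$ with $U_j \in \R^2$ i.i.d., each $U_j$ having a radially symmetric density on $\R^2$ of the form $c_p e^{-c_p' |u|^p}$-type decay after the usual limiting normalization, or more precisely the density whose radial profile matches the $p$-ball marginal. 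The first step is to make this normalization precise: rescale so that $\vol_{2n}$ is $1$ and record that $\vol_{2k}(B_{p,\C}^n \cap H)$ is, up to the universal constant $\vol_{2k}(B_{p,\C}^k)^{-1}\cdot(\text{volume factors})$, the value at the origin of the density of $\proj_{H^\perp}(U)$, exactly as in the real case via Fubini.

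Next I would set up the Kanter peakedness / log-concavity comparison in the form used by Meyer--Pajor. Recall that a symmetric measure $\mu$ on $\R^m$ is \emph{more peaked} than $\nu$ if $\mu(K) \geq \nu(K)$ for every symmetric convex $K$; peakedness is preserved under (independent) products and under linear pushforwards of a fixed type, and it transfers to densities at the origin in the right monotone way. The key point, already flagged in the excerpt for the real case, is that the one-dimensional ``building block'' comparison --- between the uniform density on an interval (the $p=\infty$ block) and the standard Gaussian, and more generally monotone-in-$p$ across the family $e^{-c_p|t|^p}$ --- has a \emph{two-dimensional radial analogue}: the radial profiles $e^{-c_p|u|^p}$ on $\R^2$ are ordered in peakedness in exactly the same way, because radial symmetry plus the one-dimensional majorization argument (comparing the radial CDFs) does the job. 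So the plan is: (i) reduce the complex-$p$-ball section to a statement about $f_a(0)$ for $a$ the normal data of $H$, where $f_a$ is the density of a linear image of the i.i.d. $\R^2$-block vector $U$; (ii) invoke the block-wise peakedness ordering to compare $U$ against the corresponding complex-Gaussian vector $G = (G_1, \dots, G_n)$, $G_j$ i.i.d. standard on $\R^2$; (iii) use rotational invariance of $G$ (its distribution is invariant under the full unitary group $\mathrm{U}(n)$, not just coordinate unitaries) to compute the Gaussian section exactly and see that it is extremized precisely when $H$ is a coordinate subspace --- which pulls back, through the peakedness inequality, to the claimed bound with the stated equality cases.

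The main obstacle, I expect, is step (ii): verifying that the peakedness comparison is genuinely ``complex-compatible,'' i.e. that the relevant linear maps appearing when one slices by a complex $k$-dimensional $H$ are of the restricted form under which peakedness is preserved. In the real case one uses that $\proj_{H^\perp}$ composed with the i.i.d. structure respects the product, and Meyer--Pajor's trick is that the needed comparison holds for every linear map because one compares $\mu \ast (\text{pushforward})$ uniformly; the complex case should go through identically once one checks that a complex-linear surjection $\C^n \to \C^k$ is, after identification, a real-linear surjection $\R^{2n}\to\R^{2k}$ of the type covered --- this is essentially automatic but must be stated. A secondary technical point is pinning down the equality case: one needs that the block-wise peakedness inequality is strict unless all but $k$ of the $\R^2$-blocks are ``degenerate'' in the appropriate sense, i.e. unless $H$ is spanned by $k$ of the complex coordinate axes; this follows from the strictness in the one-dimensional radial majorization exactly as in \cite{Vaa, MP}, and since the authors note the proof is ``almost the same,'' I would present it as a remark that the real argument's strict-monotonicity step survives verbatim under $\C^n \simeq \R^{2n}$.
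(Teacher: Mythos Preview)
Your proposal is correct and matches precisely what the paper indicates: the text does not spell out a proof of this theorem but simply remarks that it follows ``with almost the same proofs'' as the real case, and that in fact the argument extends to $\ell_p$-sums of Euclidean spaces of arbitrary dimension---which is exactly your observation that under $\C^n\simeq\R^{2n}$ the complex $\ell_p$-ball becomes an $\ell_p$-sum of $2$-dimensional Euclidean blocks, so that the Kanter peakedness comparison (between the radial densities $e^{-c_p|u|^p}$ on $\R^2$ across $p$, with $p=2$ Gaussian as the rotation-invariant pivot) goes through verbatim. One small wording quibble: in your step (iii) the Gaussian section is not ``extremized'' at coordinate subspaces---it is \emph{constant} over all $H$ by unitary invariance, which is exactly why it serves as the comparison baseline.
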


Here, 
\[
B_{p,\C}^n = \left\{z \in \C^n, \ \left(\sum_{j=1}^n|z_j|^p\right)^{1/p} \leq 1\right\}
\]
is the unit ball of the complex $\ell_p(\C^n)$ space, in particular $B_{\infty, \C}^\infty$ is the polydisc (the Cartesian product of the unit discs in $\C$). In fact, their proofs yield a further extension from $B_{p,\C}^n$ to bodies which are $\ell_p$-sums of Euclidean spaces of arbitrary dimensions, which has been in turn significantly generalised by Eskenazis in \cite{Esk} (see Theorem \ref{thm:Esk} below). 

Ball's cube-slicing result of Theorem \ref{thm:ball-codim1} has been extended to the complex setting by Oleszkiewicz and Pe\l czy\'nski in \cite{OP}, who have proved the following sharp polydisc-slicing bound.

\begin{theorem}[Oleszkiewicz-Pe\l czy\'nski \cite{OP}]\label{thm:OP}
For every unit vector $a$ in $\C^n$, we have
\[
\vol_{2n-2}(B_{\infty, \C}^n \cap a^\perp) \leq 2\pi^{2n-2}.
\]
Equality holds if and only if $a = (\xi e_i + \eta e_j)/\sqrt{2}$ for some $1\leq i < j \leq n$ and $\xi, \eta \in \C$ with $|\xi|, |\eta| = 1$.
\end{theorem}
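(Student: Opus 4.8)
The plan is to transcribe Ball's Fourier-analytic argument for the cube (the sketch of Theorem~\ref{thm:ball-codim1}) to the disc, with the sine kernel replaced by a Bessel kernel. Write $D\subset\C$ for the closed unit disc, so $B_{\infty,\C}^n=D^n$ and $\vol_{2n}(D^n)=\pi^n$. Since $D^n$ is invariant under coordinatewise rotations $z_j\mapsto e^{i\varphi_j}z_j$, and such a map sends $a^\perp$ to $(a')^\perp$ with $a'_j=e^{i\varphi_j}a_j$, the section volume depends only on $(|a_1|,\dots,|a_n|)$; so I may assume $a_1\ge a_2\ge\cdots\ge a_n\ge0$. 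Let $X_1,\dots,X_n$ be i.i.d.\ uniform on $D$ and put $S=\sum_{j=1}^n a_jX_j=\scal{X}{a}\in\C\cong\R^2$. Disintegrating Lebesgue measure along the orthogonal splitting $\C^n=a^\perp\oplus\C a$ and using that $|a|=1$ makes $z\mapsto\scal{z}{a}$ an isometry on $\C a$, one gets $\vol_{2n-2}(D^n\cap a^\perp)=\pi^n g(0)$, where $g$ is the density of $S$. The characteristic function of the uniform law on $D$ is the ``jinc'' $\xi\mapsto 2J_1(|\xi|)/|\xi|$, so that of $a_jX_j$ is $\xi\mapsto 2J_1(a_j|\xi|)/(a_j|\xi|)$, and by $2$-dimensional Fourier inversion followed by passage to polar coordinates,
\[
\vol_{2n-2}(D^n\cap a^\perp)=\frac{\pi^{n-1}}{2}\int_0^\infty\ \prod_{j=1}^n\frac{2J_1(a_j\rho)}{a_j\rho}\ \rho\,\dd\rho,
\]
the integral converging absolutely when $n\ge2$ (the case $n=1$ being trivial).

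Next I split into the same two cases as Ball. If $a_1>\tfrac{1}{\sqrt2}$, then no other coordinate exceeds $\tfrac{1}{\sqrt2}$, and I argue geometrically: the restriction of $\proj_{e_1^\perp}$ to the complex hyperplane $a^\perp$ is a linear isomorphism onto $e_1^\perp$ with real Jacobian $|a_1|^2$ (the complex Gram determinant of the basis $\{e_k-(\bar a_k/\bar a_1)e_1\}_{k=2}^n$ of $a^\perp$ equals $|a_1|^{-2}$, and this basis is carried to the orthonormal basis $\{e_k\}_{k=2}^n$ of $e_1^\perp$), so
\[
\vol_{2n-2}(D^n\cap a^\perp)=\frac{1}{|a_1|^2}\vol_{2n-2}\bigl(\proj_{e_1^\perp}(D^n\cap a^\perp)\bigr)\le\frac{1}{|a_1|^2}\vol_{2n-2}\bigl(\proj_{e_1^\perp}(D^n)\bigr)=\frac{\pi^{n-1}}{|a_1|^2}<2\pi^{n-1}.
\]
If instead every $a_j\le\tfrac{1}{\sqrt2}$, bound the product in the integral above by its absolute value and apply Hölder's inequality with exponents $p_j=a_j^{-2}\ge2$ (which satisfy $\sum_j p_j^{-1}=\sum_j a_j^2=1$); the substitution $s=a_j\rho$ in the $j$-th factor yields
\[
\int_0^\infty\prod_j\Bigl|\frac{2J_1(a_j\rho)}{a_j\rho}\Bigr|\,\rho\,\dd\rho\ \le\ \prod_j\Psi\!\left(a_j^{-2}\right)^{a_j^2},\qquad \Psi(p):=p\int_0^\infty\Bigl|\frac{2J_1(s)}{s}\Bigr|^{p}s\,\dd s .
\]
Since $\int_0^\infty J_1(s)^2 s^{-1}\dd s=\tfrac12$ we have $\Psi(2)=4$, so if $\Psi(p)\le\Psi(2)$ for all $p\ge2$ then $\prod_j\Psi(a_j^{-2})^{a_j^2}\le\Psi(2)^{\sum_j a_j^2}=4$, giving $\vol_{2n-2}(D^n\cap a^\perp)\le\frac{\pi^{n-1}}{2}\cdot4=2\pi^{n-1}$, as required.

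Everything thus reduces to the Bessel-function integral inequality
\[
\int_0^\infty\Bigl|\frac{2J_1(s)}{s}\Bigr|^{p}s\,\dd s\ \le\ \frac4p,\qquad p\ge2,
\]
with equality only at $p=2$ --- the exact analogue of Ball's inequality \eqref{eq:Ball-integralineq}, and what I expect to be the main obstacle: its proof requires a delicate quantitative study of $J_1$ (convexity/monotonicity of $p\mapsto\log\Psi(p)$, sharp control of the first lobe and of the oscillatory tail of $J_1$), along Ball's original lines or via the Nazarov--Podkorytov method. For the equality case: the geometric case gave a strict inequality, while in the Hölder case equality forces (i) $\prod_j 2J_1(a_j\rho)/(a_j\rho)\ge0$ for a.e.\ $\rho$, (ii) the functions $\bigl|2J_1(a_j\rho)/(a_j\rho)\bigr|^{1/a_j^2}$ to be pairwise proportional in $\rho$ --- impossible for $a_j\ne a_k$ since $J_1(a_j\,\cdot)$ and $J_1(a_k\,\cdot)$ have different zero sets --- so all nonzero $a_j$ coincide, say $m\ge2$ of them equal to $m^{-1/2}$, and (iii) $\Psi(m)=\Psi(2)$, which by the strict part of the integral inequality forces $m=2$. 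Undoing the initial coordinate rotations, this is exactly $a=(\xi e_i+\eta e_j)/\sqrt2$ with $|\xi|=|\eta|=1$.
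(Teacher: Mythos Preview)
Your outline is correct and matches the approach the paper describes: the Fourier-analytic formula with the Bessel kernel $2J_1(t)/t$, the two-case split (geometric projection bound $\pi^{n-1}/|a_1|^2$ when $|a_1|>1/\sqrt2$, H\"older with exponents $a_j^{-2}$ otherwise), and reduction to the key integral inequality \eqref{eq:OP-integralineq}, which the paper also singles out as the technical heart and whose proof it only sketches (``precise pointwise bounds on $J_1$ as well as an interpolation argument''). Your equality analysis is likewise the standard one; note incidentally that your bound $2\pi^{n-1}$ is the correct value --- the $2\pi^{2n-2}$ printed in the statement is a typo in this survey.
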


The proof strategy follows the same path of the Fourier analytic formula and  defactorisation by means of H\"older's inequality, however new technical challanges arise: the heart of the proof is the following analytical inequality
\begin{equation}\label{eq:OP-integralineq}
\int_0^\infty \left|\frac{2J_1(t)}{t}\right|^p t\dd t \leq \frac{4}{p}, \qquad p \geq 2,
\end{equation}
cf. \eqref{eq:Ball-integralineq}, where $J_1$ is the Bessel function of the first kind of order $1$. Its proof rests on precise pointwise bounds on $J_1$ as well as an interpolation argument. A new different proof has been very recently given in \cite{MelRob}. Moreover, the upper bounds for higher codimensions of Theorem \ref{thm:ball-codim>1} can be transfered almost verbatim to the complex case as well (as was remarked by Barthe and Koldobsky, see \cite{OP}). A stability version has been recently established in \cite{GTW}.

The exact analogue of the sharp upper bound on the perimeter from Theorem \ref{thm:KK-perim} also holds, as shown by K\"onig and Koldobsky in \cite{KK-perim}.

Sharp upper bounds even on hyperplane (complex codimension $1$) sections in the range $2 < p < \infty$ remain open. For the same reasons as in the real case, the range $0 < p < 2$ is more tractable and we have the following analogue of Koldobsky's Theorem \ref{thm:Kol}.

\begin{theorem}[Koldobsky-Zymonopoulou \cite{KZ}]\label{thm:KolZym}
Let $0 < p < 2$. For every two unit vectors $a$ and $b$ in $\C^n$ such that $(|b_1|^2, \dots, |b_n|^2)$ majorises $(|a_1|^2, \dots, |a_n|^2)$, we have
\[
\vol_{2n-2}(B_{p,\C}^n \cap a^\perp) \leq \vol_{2n-2}(B_{p,\C}^n \cap b^\perp).
\]
\end{theorem}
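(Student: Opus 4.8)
The plan is to follow the Fourier-analytic route that Koldobsky used in the real case (Theorem~\ref{thm:Kol}), adapted to the complex setting via the parametrised slicing formula. First I would fix a unit vector $a=(a_1,\dots,a_n)\in\C^n$ and express the section volume as a value of a density at the origin: writing $X_1,\dots,X_n$ for i.i.d.\ random vectors uniform on the unit disc in $\C\simeq\R^2$, one has $B_{p,\C}^n\cap a^\perp$ (the complex-codimension-one slice) with volume proportional to $g_a(0)$, where $g_a$ is the density on $\C\simeq\R^2$ of $\sum_j \bar a_j X_j^{(p)}$ for suitably $p$-generalised radial variables; equivalently, one uses the well-known representation that for $0<p<2$ the function $e^{-|z|^p}$ on $\C$ is a Gaussian mixture, i.e. $e^{-|z|^p}=\int_0^\infty e^{-u|z|^2}\dd\mu_p(u)$ with $\mu_p$ a positive measure (the same fact quoted in the paragraph after Theorem~\ref{thm:Kol}). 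This is the structural input that makes $0<p<2$ tractable: the relevant Fourier transform is nonnegative and a superposition of Gaussians.

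Next I would write the section volume through the Fourier inversion formula and the product structure, obtaining something of the shape
\[
\vol_{2n-2}(B_{p,\C}^n\cap a^\perp)=c_{n,p}\int_{\C}\prod_{j=1}^n \phi_p\big(|a_j|\,|\xi|\big)\,\dd\xi
= c_{n,p}'\int_0^\infty \prod_{j=1}^n \phi_p(|a_j|\,r)\, r\,\dd r,
\]
where $\phi_p$ is the (rotation-invariant) characteristic function attached to the one-dimensional complex $p$-marginal, and crucially $\phi_p$ only enters through $|a_j|^2$. Using the Gaussian-mixture representation, $\phi_p(s)=\int_0^\infty e^{-u s^2}\dd\nu_p(u)$ for a positive measure $\nu_p$, so after expanding the product and integrating in $r$ one is left with an expression that is manifestly a positive mixture of terms of the form $\big(\sum_{j} u_j|a_j|^2\big)^{-1}$ against the product measure $\dd\nu_p(u_1)\cdots\dd\nu_p(u_n)$. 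The map $(t_1,\dots,t_n)\mapsto \big(\sum_j u_j t_j\big)^{-1}$ is convex in each variable and symmetric, and more importantly it is Schur-convex on the simplex $\{\sum t_j=1,\ t_j\ge0\}$ for every fixed nonnegative weight vector $(u_1,\dots,u_n)$ — this is a standard consequence of the Schur--Ostrowski criterion. Integrating a Schur-convex function against a symmetric (product) measure preserves Schur-convexity in the remaining parameters, so the whole section-volume functional $a\mapsto \vol_{2n-2}(B_{p,\C}^n\cap a^\perp)$ is a Schur-convex function of $(|a_1|^2,\dots,|a_n|^2)$. The claimed majorisation inequality is then immediate.

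The main obstacle I anticipate is making the interchange of sum, product, and integrals rigorous and, more delicately, verifying that the post-integration integrand is genuinely Schur-convex rather than merely symmetric and coordinatewise convex: one must check the differential criterion $(t_i-t_j)\big(\partial_i-\partial_j\big)F\ge0$ survives the integration against $\dd\nu_p^{\otimes n}$, which requires the Gaussian-mixture measure $\nu_p$ to be the \emph{same} in every coordinate (true, by the rotational symmetry of $e^{-|z|^p}$) and requires a uniform integrability bound to justify differentiating under the integral sign near the endpoint $r\to\infty$ and near degenerate $u_j$. A secondary technical point is identifying the correct normalising constants $c_{n,p}, c'_{n,p}$ and confirming the $p$-generalised complex marginal really does have the stated Gaussian-mixture characteristic function for all $0<p<2$ — here I would lean on the complex analogue of Koldobsky's embedding/mixture lemmas (essentially the observation that $\|\cdot\|_p^p$ on $\C^n$ is, for $p<2$, a sum of positive-definite radial functions), exactly as in the real case. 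Once Schur-convexity of the functional in $(|a_j|^2)_j$ is in hand, the conclusion follows verbatim from the definition of majorisation, with no further optimisation needed.
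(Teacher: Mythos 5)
Your overall strategy is the right one and matches the template the survey itself uses for the real case: the paper does not reprove the complex theorem but proves Theorem~\ref{thm:Kol} via Lemma~\ref{lm:sec-GM} (Gaussian mixtures), and notes that the complex statement is exactly the $m=2$, $X=\ell_2^2$ instance of Eskenazis' block-subspace Theorem~\ref{thm:Esk}, proved by the same mechanism. Your reduction to a positive mixture of $\big(\sum_j u_j|a_j|^2\big)^{-1}$ is correct (for a complex hyperplane the real codimension is $k=2$, so the exponent is $-k/2=-1$, and $\int_0^\infty e^{-cr^2}r\,\dd r = 1/(2c)$ does produce exactly these terms), and the structural input — that $e^{-|z|^p}$ on $\C\simeq\R^2$ is a radial Gaussian mixture for $0<p<2$, via complete monotonicity of $s\mapsto e^{-s^{p/2}}$ — is the same one the paper invokes.

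There is, however, one genuinely wrong step: the claim that for a \emph{fixed} weight vector $(u_1,\dots,u_n)$ the map $(t_1,\dots,t_n)\mapsto\big(\sum_j u_jt_j\big)^{-1}$ is Schur-convex. It is not, because it is not symmetric: with $n=2$, $u=(1,2)$, the point $(0,1)$ majorises $(\tfrac12,\tfrac12)$, yet $(0\cdot1+1\cdot2)^{-1}=\tfrac12 < \tfrac23 = (\tfrac12\cdot1+\tfrac12\cdot2)^{-1}$. Your subsequent worry about pushing the Schur--Ostrowski criterion through the integral is therefore aimed at a statement that is false before integration. The correct (and simpler) argument, which is the one the paper gives in its proof of Theorem~\ref{thm:Kol}, is: for each fixed $u$ the function $t\mapsto\big(\sum_j u_jt_j\big)^{-1}$ is \emph{jointly convex} (convex decreasing function of a nonnegative linear form); averaging over the i.i.d.\ (hence exchangeable) mixture measure $\nu_p^{\otimes n}$ yields a function that is both convex and permutation-symmetric in $(t_1,\dots,t_n)=(|a_1|^2,\dots,|a_n|^2)$; and a convex permutation-symmetric function is Schur-convex (write $x\prec y$ as $x=\sum_\sigma\lambda_\sigma y_\sigma$ and apply Jensen). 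Replacing your fixed-$u$ Schur-convexity claim by this convexity-plus-symmetry argument closes the gap; no differentiation under the integral sign or Schur--Ostrowski verification is needed. The remaining technical points you flag (identifying constants, justifying Fourier inversion and Fubini) are routine for $0<p<2$ since all integrands are nonnegative.
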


Finally, a complex version of Busemann's Theorem \ref{thm:Bus} has been developed by Koldobsky, Paouris and Zymonopoulou in \cite{KPZ}, whereas a full solution to the complex Busemann-Petty problem is due to Koldobsky, K\"onig and Zymonopoulou, \cite{KKZ}.

\subsection{Miscellanea} 
We finish this section with a brief account on various results related to and motivated by sharp bounds on volumes of sections.

\subsubsection*{Slabs}
For a unit vector $a$ in $\R^n$ and $t > 0$, we set
\[
H_{a,t} = \{x \in \R^n, \ |\scal{x}{a}| \leq t\}
\]
to be the (symmetric) slab of width $2t$ orthogonal to the direction $a$ (in other words, a thickening/enlargement $a^\perp + tB_2^n$ of the hyperplane $a^\perp$).
Answering a question of V. Milman, Barthe and Koldobsky in \cite{BK} have established the following extension of Hadwiger and Hensley's Theorem~\ref{thm:Had-Hen}.

\begin{theorem}[Barthe-Koldobsky \cite{BK}]\label{thm:BK}
For every unit vector $a$ in $\R^n$ and $0 \leq t \leq \frac{3}{8}$, we have
\[
\vol_n(Q_n \cap H_{a,t}) \geq \vol_n(Q_n \cap H_{e_1, t}).
\]
\end{theorem}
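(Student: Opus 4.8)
The plan is to reduce the statement to a one-dimensional comparison of densities, exactly in the spirit of the proof of Theorem~\ref{thm:Had-Hen}. Let $X$ be a random vector uniform on $Q_n$, so its coordinates $X_j$ are i.i.d. uniform on $[-\tfrac12,\tfrac12]$, and for a unit vector $a$ let $f_a$ be the probability density of $\scal{a}{X}$. By Fubini (as in the discussion around \eqref{eq:sec-density}),
\[
\vol_n(Q_n \cap H_{a,t}) = \int_{-t}^{t} f_a(s)\,\dd s = \p{|\scal{a}{X}| \le t},
\]
and likewise $\vol_n(Q_n \cap H_{e_1,t}) = \int_{-t}^t f_{e_1}(s)\,\dd s$, where $f_{e_1}$ is just the uniform density on $[-\tfrac12,\tfrac12]$. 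So the claim is the statement that among all densities $f_a$ arising this way, $\scal{a}{X}$ puts the \emph{most} mass in the symmetric interval $[-t,t]$ precisely when $a = e_1$, for $t$ small enough. The first step is thus to record that each $f_a$ is even and log-concave (Brunn--Minkowski, or the fact that it is a convolution of symmetric log-concave densities), with variance $\sum_j a_j^2 \cdot \tfrac{1}{12} = \tfrac{1}{12}$ independent of $a$; and that $\|f_a\|_\infty = f_a(0)$.

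The core of the argument is a ``moving mass'' / majorization comparison at the level of the distribution function near the origin, but now in the opposite direction to Theorem~\ref{thm:Had-Hen}: we want to \emph{maximize} $\int_{-t}^t f_a$, and the uniform density is the flattest even log-concave density with the given variance, so it should be extremal for small $t$ because its value $f_{e_1}(0)=1$ at the origin is the \emph{smallest} possible among our $f_a$ --- wait, that is the wrong direction, and this is exactly why the bound is restricted to $t \le \tfrac38$ rather than holding for all $t$. The resolution, and the key step, is that for small $t$ one should compare the \emph{entire profile} of $f_a$ on $[-t,t]$ against $f_{e_1}$, not just the value at $0$: since $f_a$ is even and log-concave with $\int f_a = 1$ and $\int s^2 f_a = \tfrac{1}{12}$, and $f_{e_1} = \1_{[-1/2,1/2]}$, the difference $f_a - f_{e_1}$ changes sign in a controlled way (it is $\ge 0$ near $\pm\tfrac12$ when $f_a(0)<1$ is impossible, so in fact $f_a(0)\ge 1$ with the typical sign pattern $+,-,+$ on $[0,\infty)$). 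I would make this precise: there exist $0 < s_1 < s_2$ with $f_a \ge f_{e_1}$ on $[0,s_1]$, $f_a \le f_{e_1}$ on $[s_1,s_2]$, and $f_a \ge f_{e_1}$ on $[s_2,\infty)$ (with $s_2 \le \tfrac12$ when $f_a$ has compact support, and appropriate degenerate cases when $a = \tfrac{e_i \pm e_j}{\sqrt2}$ etc.), and then show $s_1 \ge \tfrac38$, so that on $[-t,t]$ with $t \le \tfrac38$ we have $f_a \ge f_{e_1}$ pointwise, giving $\int_{-t}^t f_a \ge \int_{-t}^t f_{e_1}$ immediately.

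The step I expect to be the main obstacle is proving the quantitative claim $s_1 \ge \tfrac38$, i.e.\ that the first crossing point of $f_a$ with the uniform density is bounded below by $\tfrac38$ \emph{uniformly over all unit vectors $a$}. This requires a genuine lower bound on how concentrated an even log-concave density with variance $\tfrac1{12}$ arising as such a convolution can be near the origin --- equivalently, an upper bound on $f_a(0)$ that is strong enough to force the crossing point out to $\tfrac38$. One clean route is to use that $f_a$ is not merely log-concave but a convolution of uniform densities, so $f_a(0) \le f_{a'}(0)$ whenever $a'$ is a ``more spread out'' rescaling; combined with the two-point inequality $\sin x \le x$ and Ball's-type estimates one reduces to the worst case $a = (\tfrac1{\sqrt2},\tfrac1{\sqrt2},0,\dots)$ where $f_a$ is the (triangle, i.e.) tent density, for which $s_1$ can be computed explicitly and checked to exceed $\tfrac38$; the value $\tfrac38$ is then seen to be essentially the tent-density crossing point, explaining why it is the natural threshold. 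Handling the equality/uniqueness and the extreme non-smooth cases ($a$ with only one or two nonzero coordinates) needs a little care but follows the same template as in Theorem~\ref{thm:Had-Hen}.
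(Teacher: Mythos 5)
Your reduction to the one--dimensional statement $\int_{-t}^{t} f_a \ge \int_{-t}^{t} f_{e_1} = 2t$, where $f_a$ is the (even, log-concave, variance $\tfrac1{12}$) density of $\scal{a}{X}$, is correct and is indeed the starting point of Barthe and Koldobsky's argument. However, the mechanism you propose for closing the gap --- pointwise domination $f_a \ge f_{e_1}$ on $[-t,t]$, justified by the claim that the first crossing point $s_1$ of $f_a$ with the constant $1$ satisfies $s_1 \ge \tfrac38$ --- is false, and the error is quantitative and checkable. For $a = \bigl(\tfrac1{\sqrt2},\tfrac1{\sqrt2},0,\dots,0\bigr)$ the density is the tent $f_a(s) = \sqrt{2}\,(1-\sqrt{2}|s|)$ on $[-\tfrac1{\sqrt2},\tfrac1{\sqrt2}]$, whose first crossing with $1$ is at $s_1 = \tfrac{1}{\sqrt2}-\tfrac12 \approx 0.207$, not at $\tfrac38 = 0.375$ (so your remark that $\tfrac38$ ``is essentially the tent-density crossing point'' is numerically wrong). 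For the diagonal direction in high dimension the limiting Gaussian density $\sqrt{6/\pi}\,e^{-6s^2}$ crosses $1$ at $s_1 = \sqrt{\tfrac{1}{12}\ln\tfrac{6}{\pi}} \approx 0.232$. In both cases $f_a(s) < 1$ on a sizeable part of $[0,\tfrac38]$, so pointwise comparison can only ever prove the theorem for $t \lesssim 0.207$, well short of the stated range.

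What saves the theorem on $[0.207,\,0.375]$ is precisely the cancellation your argument discards: the deficit of $f_a$ below $1$ on $(s_1,s_2)$ is compensated by the surplus near the origin, and controlling this requires using the full strength of the two constraints $\int (f_a - f_{e_1}) = 0$ and $\int s^2 (f_a - f_{e_1}) = 0$ together with the $+,-,+$ sign pattern you correctly identified. The standard device is the one used in Lemma \ref{lm:choq-suff}: subtract from the test function $\1_{[-t,t]}$ a combination $\alpha + \beta s^2$ chosen to vanish at the two crossing points, and check that the resulting integrand has a single sign; the threshold $\tfrac38$ then emerges from optimising over the admissible crossing configurations of an even, unimodal, log-concave density with $f(0)\ge 1$ and second moment $\tfrac1{12}$. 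This is the ``sharp inequality for unimodal log-concave densities in one dimension'' that the survey attributes to \cite{BK}; without it (or an equivalent substitute), your proof establishes the result only for $t \le \tfrac{1}{\sqrt2}-\tfrac12$.
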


They have derived this from a sharp inequality for unimodal log-concave densities in one dimension, expanding on Hensley's approach.

In words, Hadwiger and Hensley's result is stable in that, independent of the dimension, \emph{coordinate slabs} contain least volume of the unit cube among all symmetric slabs of fixed width at most $3/4$.  This bound is in the spirit of the concentration of measure (see \cite{BLM, Led, LT}), providing a sharp lower bound of \emph{small} enlargements on the volume $1/2$ half-spaces $\{x \in \R^n, \scal{x}{a} \geq 0\}$ in $Q_n$. The threshold $\frac{3}{8}$ is suboptimal: in dimension $2$, a direct calculation from \cite{BK} shows that at $t = \sqrt{2}-1$ the extremising slab changes from the coordinate one to the diagonal one.  The sharp behaviour in higher dimensions is not clear. The paper \cite{BK} provides asymptotic results that the slabs orthogonal to the main diagonal are optimal for \emph{large} $t$ of the order $\sqrt{n}$ as $n\to \infty$ (developing en route very interesting conditions for convexity properties of Laplace transforms), with a precise nonasymptotic result for the range $\frac12\sqrt{n-1} \leq t \leq \frac{1}{2}\sqrt{n}$ obtained recently by Moody, Stone, Zach and Zvavitch in \cite{MSZZ}. 

A detailed analysis of the (local as well as global) extremal slabs in dimensions $2$ and $3$ has been done by K\"onig and Koldobsky in \cite{KK-slabs}, whereas in \cite{KK-Cslabs}, they have obtained a complex analogue of Theorem \ref{thm:BK}. A sharp probabilistic extension of Theorem \ref{thm:BK} has recently been obtained in \cite{HTW}.

\subsubsection*{Block subspaces}
Eskenazis in \cite{Esk} has gathered under one umbrella the results on slicing $\ell_p$-balls, both real and complex, when $0 < p < 2$, thus significantly generalising and unifying Theorems \ref{thm:Kol}, \ref{thm:Val-MP} and \ref{thm:KolZym}.

\begin{theorem}[Eskenazis \cite{Esk}]\label{thm:Esk}
Let $m, n$ be positive integers and let $0 < p < 2$. Suppose $X = (\R^m, \|\cdot\|)$ is a quasi-normed space which admits an isometric embedding into $L_p$. For every two unit vectors $a$ and $b$ in $\R^n$ such that $(b_1^2, \dots, b_n^2)$ majorises $(a_1^2, \dots, a_n^2)$, we have
\[
\vol_{mn-m}(B_p^n(X) \cap H_a) \leq \vol_{mn-m}(B_p^n \cap H_b).
\]
\end{theorem}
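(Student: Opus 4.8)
The plan is to reduce the statement to a one-dimensional Khinchin-type comparison for the random variables $\sum_j a_j Y_j$, exactly as in the scalar $\ell_p$ story, but now with the $Y_j$ taking values in the $m$-dimensional block $X$. First I would recall the probabilistic representation underlying all of Table~\ref{tab:sec}: if $B_p^n(X)$ denotes the $\ell_p$-sum of $n$ copies of $X = (\R^m, \|\cdot\|)$ and $H_a = a^\perp$ is the block hyperplane orthogonal to a unit vector $a \in \R^n$ (acting on the ``coordinate-block'' directions), then by the same Fubini argument as in \eqref{eq:sec-density},
\[
\vol_{mn-m}\big(B_p^n(X) \cap H_a\big) = c_{m,n,p}\, g_a(0),
\]
where $g_a$ is the density at $0 \in \R^m$ of $\sum_{j=1}^n a_j Z_j$, the $Z_j$ being i.i.d. $\R^m$-valued random vectors whose common law has density proportional to $e^{-\|z\|^p}$ on $\R^m$, and $c_{m,n,p}$ is a normalising constant independent of $a$. (This is the multidimensional analogue of Ball's formula; it is implicit in \cite{MP, Bar} and written out in \cite{Esk}.) So the theorem becomes: if $(b_j^2)$ majorises $(a_j^2)$ then $g_a(0) \le g_b(0)$.

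The key point — and the reason the hypothesis ``$X$ embeds isometrically into $L_p$'' appears — is that this hypothesis forces the characteristic function of $Z_j$ to be a \emph{Gaussian mixture}. Indeed, $X \hookrightarrow L_p$ with $0<p<2$ implies (via the classical fact that $\|\cdot\|$ is then a ``$p$-norm of negative type'', combined with the representation of $e^{-\|z\|^p}$ through the stable subordinator) that
\[
\E\, e^{i\langle \xi, Z_j\rangle} = \int_0^\infty e^{-s|\xi|^2}\,\dd\nu(s), \qquad \xi \in \R^m,
\]
for some probability measure $\nu$ on $(0,\infty)$ depending only on $X$ and $p$; here $|\xi|$ is a fixed Euclidean structure on $\R^m$ coming from the embedding. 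This is precisely the mechanism flagged in the paragraph after Theorem \ref{thm:Kol} (``the Fourier transform of $e^{-|x|^p}$ is a Gaussian mixture... which allowed \cite{ENT1} to bypass Fourier-analytic arguments''), and it is the heart of the matter. Granting it, I would write, by Fourier inversion in $\R^m$,
\[
g_a(0) = \frac{1}{(2\pi)^m}\int_{\R^m} \prod_{j=1}^n \phi(a_j \xi)\,\dd\xi
= \frac{1}{(2\pi)^m}\int_{\R^m} \int_{(0,\infty)^n} e^{-\big(\sum_j a_j^2 s_j\big)|\xi|^2}\,\dd\nu^{\otimes n}(s)\,\dd\xi,
\]
and then integrate out $\xi$ to get
\[
g_a(0) = \frac{C_m}{1}\int_{(0,\infty)^n} \Big(\sum_{j=1}^n a_j^2 s_j\Big)^{-m/2}\,\dd\nu^{\otimes n}(s),
\]
with $C_m = (2\pi)^{-m}\int_{\R^m} e^{-|\xi|^2}\dd\xi = (2\sqrt\pi)^{-m}\cdot\ldots$ a harmless constant.

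It remains to prove the scalar inequality: the function
\[
F(a_1^2,\dots,a_n^2) \;=\; \int_{(0,\infty)^n} \Big(\sum_{j=1}^n a_j^2 s_j\Big)^{-m/2}\,\dd\nu^{\otimes n}(s)
\]
is Schur-\emph{concave} in $(a_1^2,\dots,a_n^2)$ on the simplex $\{\sum a_j^2 = 1\}$, so that majorisation decreases it — hence $g_a(0)\le g_b(0)$ when $(b_j^2)$ majorises $(a_j^2)$, as claimed, with equality iff $\nu$ is a point mass (never) unless $n-1$ of the coordinates vanish, recovering the ``coordinate subspace'' equality case. Schur-concavity follows from the Schur–Ostrowski criterion: by symmetry of $\nu^{\otimes n}$ it suffices to check that for the pair $(u,v)=(a_1^2,a_2^2)$ the quantity $\partial_u F - \partial_v F$ has the sign of $v-u$; differentiating under the integral, $\partial_u F - \partial_v F$ is a convex combination (over $s_3,\dots,s_n$) of expressions of the form $-\tfrac m2\E\big[(s_1 - s_2)(\,\cdot\,)^{-m/2-1}\big]$ where the $(\,\cdot\,)^{-m/2-1}$ factor involves $u s_1 + v s_2 + (\text{rest})$, and a standard two-variable rearrangement (coupling the i.i.d.\ pair $(s_1,s_2)$ with its swap) shows this has the correct sign. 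I expect the \textbf{main obstacle} to be the first step — rigorously justifying the Gaussian-mixture representation of $e^{-\|z\|^p}$ under the bare hypothesis that $(\R^m,\|\cdot\|)$ embeds isometrically in $L_p$ (rather than $\ell_p^d$), i.e.\ handling the general $L_p$-embedding via Lévy–Khinchin / Bretagnolle–Dacunha-Castelle–Krivine type theory and the subordination $e^{-t^{p/2}} = \int_0^\infty e^{-ts}\,\dd\mu_{p/2}(s)$; the subsequent Schur-concavity step is routine given the experience with \cite{Kol, ENT1}, and the passage to the complex and mixed-norm cases (Theorems \ref{thm:Val-MP}, \ref{thm:KolZym}) is then immediate by taking $X = \ell_2^2$ or appropriate products.
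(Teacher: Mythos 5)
Your overall architecture --- the probabilistic formula expressing the block-section volume as the density at $0$ of $\sum_j a_j Z_j$ with $Z_j$ i.i.d.\ of density proportional to $e^{-\|z\|^p}$, followed by a Gaussian-mixture decomposition and a Schur-majorisation argument --- is the right one and is indeed the route of \cite{Esk} (building on \cite{ENT1}). But the pivotal claim is false as stated: you assert that the embedding $X \hookrightarrow L_p$ forces
\[
\E\, e^{i\scal{\xi}{Z_j}} = \int_0^\infty e^{-s|\xi|^2}\,\dd\nu(s),
\]
i.e.\ that $Z_j$ is an \emph{isotropic} scale mixture of Gaussians. By uniqueness of characteristic functions this would mean $Z_j \stackrel{d}{=}\sqrt{2S}\,G$ with $G$ standard Gaussian, so the density $e^{-\|z\|^p}$ would have to be radial in the chosen Euclidean structure --- which happens only when $\|\cdot\|$ is a multiple of that Euclidean norm. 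For the cases the theorem is actually about (e.g.\ $X=\ell_q^m$ with $p\le q<2$) this fails; for $X=\ell_1^2$, $p=1$, the characteristic function is $\prod_{i}(1+\xi_i^2)^{-1}$, not a function of $|\xi|$. Nor does the subordination $e^{-u^{p/2}}=\int_0^\infty e^{-us}\,\dd\mu_{p/2}(s)$ you invoke close the gap: applied to $u=\|x\|^2$ it only yields a mixture of the functions $e^{-s\|x\|^2}$, which are not Gaussian densities for non-Euclidean $\|\cdot\|$.

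The missing ingredient is Lewis' representation, which is precisely what the survey flags as Eskenazis' key new input: $\|x\|^p=\int_{S^{m-1}}|\scal{Ux}{\theta}|^p\,\dd\mu(\theta)$ for an invertible $U$ and an isotropic measure $\mu$. Discretising $\mu$ and using the one-dimensional fact that $e^{-c|t|^p}$ is completely monotone in $t^2$ for $p<2$, one writes $e^{-\|x\|^p}$ as a mixture of \emph{anisotropic} centred Gaussian densities $e^{-\scal{x}{Ax}}$ over random positive-definite matrices $A$. Conditioning on these matrices, $\sum_j a_j Z_j$ is a centred Gaussian vector and its density at $0$ becomes $c\,\E\det\big(\sum_j a_j^2 B_j\big)^{-1/2}$ for i.i.d.\ random positive-definite $B_j$, in place of your $\E\big(\sum_j a_j^2 s_j\big)^{-m/2}$ (which is only the Euclidean case $X=\ell_2^m$). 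The majorisation step then follows from the convexity of $B\mapsto\det(B)^{-1/2}$ on the positive-definite cone together with permutation symmetry, exactly as in Lemma \ref{lm:sec-GM} and the proof of Theorem \ref{thm:Kol} in Section \ref{sec:methods}. Note also that the relevant functional is Schur-\emph{convex}, and majorisation \emph{increases} it; your ``Schur-concave, so majorisation decreases it'' is inconsistent with the inequality $g_a(0)\le g_b(0)$ that you then correctly write down.
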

Here,
\[
B_p^n(X) = \left\{x=(x_1, \dots, x_n) \in \R^m \times\dots\times \R^m, \ \left(\sum_{j=1}^n \|x_j\|^p\right)^{1/p} \leq 1 \right\}
\]
is the unit ball of the $\ell_p$ sum of $X$, whereas
\[
H_a = \left\{x=(x_1, \dots, x_n) \in \R^m \times\dots\times \R^m, \ \sum_{j=1}^n a_jx_j = 0\right\}
\]
is a \emph{block} subspace of codimension $m$ in $(\R^m)^n$. In particular $X = \ell_2^m$ with $m = 1, 2$ recovers Theorems \ref{thm:Kol} (when $p < 2$), \ref{thm:Val-MP} and \ref{thm:KolZym}. The point is that there is a plethora of non-Hilbertian examples treated by this general result, most notably $X = \ell_q^m$ with $p \leq q \leq 2$.

Eskenazis' argument builds on \cite{ENT1}, with the key new ingredient being Lewis' representation guaranteeing that the norm on $X$ which embeds isometrically into $L_p$, $p > 0$, admits a form
\[
\|x\| = \left(\int_{S^{m-1}}|\scal{Ux}{\theta}|^p \dd \mu(\theta)\right)^{1/p}, \qquad x \in \R^m,
\]
for some invertible linear map $U\colon \R^m \to \R^m$ and an isotropic Borel measure $\mu$ on the unit sphere $S^{m-1}$, \cite{Lew, SchZva}. The restriction $p < 2$ is not needed here, but to bring about Gaussian mixtures (as highlighted after Theorem \ref{thm:Kol}).

For the regime $p > 2$, only the case $p=\infty$, $X = \ell_2^m$ has been considered, i.e. sections of
\[
B_\infty^n(\ell_2^m) = \underbrace{B_2^m \times \dots \times B_2^m}_n,
\]
for which Brzezinski in \cite{Brz} has obtained that for every $n, m \geq 2$ and every unit vector $a$ in $\R^n$, we have
\begin{equation}\label{eq:Brzez}
\vol_{mn-m}(B_\infty^n(\ell_2^m) \cap H_a) \leq \frac{(m+2)^{m/2}}{2^{m/2-1}m\Gamma(m/2)}.
\end{equation}
This is asymptotically sharp as $n \to \infty$ because the right hand side equals exactly $\lim_{n\to \infty} \vol_{mn-m}(B_\infty^n(\ell_2^m) \cap H_{(\frac{1}{\sqrt{n}},\dots,\frac{1}{\sqrt{n}})})$. The case $m=2$ is special in that this limit also equals $A_{m,n} = \vol_{mn-m}(B_\infty^n(\ell_2^m) \cap H_{(\frac{1}{\sqrt{2}},\frac{1}{\sqrt{2}},0,\dots,0})$, whilst for every $m > 2$, the limit is strictly larger than $A_{m,n}$. In other words, Ball's upper bound from Theorem \ref{thm:ball-codim1} does \emph{not} generalise to block-subspace sections of $B_2^m \times \dots \times B_2^m$ for any $m > 2$ (but it does when $m=2$ as we have seen in Oleszkiewicz and Pe\l czy\'nski's Theorem \ref{thm:OP}).

We finish with Eskenazis' conjecture on sharp lower bounds by block subspaces, generalising Hadwiger and Hensley's Theorem \ref{thm:Had-Hen}.

\begin{conjecture}[Eskenazis \cite{Esk}]
Let $m, n \geq 1$. Let $K$ be a symmetric convex body in $\R^m$. For every unit vector $a \in \R^n$, we have
\[
\vol_{mn-m}(K\times \dots \times K \cap H_a) \geq \vol_m(K)^{n-1}.
\]
\end{conjecture}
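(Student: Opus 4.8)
The plan is to reduce the statement to the one-dimensional density inequality underpinning Hadwiger and Hensley's Theorem~\ref{thm:Had-Hen}, exploiting the product structure of $K\times\dots\times K$ in the same spirit as the probabilistic reformulation recalled after Theorem~\ref{thm:Hensley-gen}. First I would set up the correct analogue of the density formula~\eqref{eq:sec-density}. Writing $a=(a_1,\dots,a_n)$ with $a_j\neq 0$ (the degenerate case where some $a_j=0$ reduces to a lower-dimensional instance and can be handled separately, or by a limiting argument), let $X^{(1)},\dots,X^{(n)}$ be independent random vectors in $\R^m$, each uniform on $K$, and consider the $\R^m$-valued random vector $S=\sum_{j=1}^n a_jX^{(j)}$. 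By Fubini, exactly as in the hyperplane case, one gets
\[
\vol_{mn-m}\big(K\times\dots\times K \cap H_a\big) = \vol_m(K)^{n}\cdot g_S(0),
\]
where $g_S$ is the density of $S$ on $\R^m$; here the factor $\vol_m(K)^n$ accounts for the fact that each $X^{(j)}$ has density $\vol_m(K)^{-1}\1_K$, and the Jacobian of the map $(x_1,\dots,x_n)\mapsto(\sum a_jx_j,\ x_2,\dots,x_n)$ restricted to $H_a$ works out (after normalising $|a|=1$) to contribute the remaining power. So the conjecture is equivalent to the clean statement $g_S(0)\geq \vol_m(K)^{-1}$, i.e. that the density at the origin of an $a$-weighted average of i.i.d. uniform-on-$K$ vectors is at least the value $\vol_m(K)^{-1}=\1_K(0)$ of a single one.

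Next I would try to prove $g_S(0)\geq \vol_m(K)^{-1}$ by a one-parameter (peakedness/log-concavity) comparison. Each $X^{(j)}$ is uniform on the symmetric convex body $K$, hence has a symmetric log-concave density; by the Prékopa--Leindler/Brunn--Minkowski theorem $S$ also has a symmetric log-concave density $g_S$, so $g_S(0)=\|g_S\|_\infty$. The natural route is Kanter's peakedness (the tool Vaaler used for Theorem~\ref{thm:vaaler}): one shows that the uniform distribution on $K$ is \emph{more peaked} than any nontrivial convex combination $a_1X^{(1)}+\dots+a_nX^{(n)}$ with $\sum a_j^2=1$, which for symmetric unimodal laws yields exactly the comparison of densities at $0$. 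Alternatively, a ``moving mass inward is beneficial'' argument in the style of the proof of Theorem~\ref{thm:Had-Hen} can be run directly on $g_S$: truncate and symmetric-decreasingly rearrange, then compare against the extremal profile, using that the constraint carried along is really just $\int g_S = 1$ together with the containment $\mathrm{supp}$-type information coming from $K$. The cleanest packaging is probably: the family of laws $\{$uniform on $tK\}_{t>0}$ is a peakedness-monotone family, and an $\ell_2$-normalised average of i.i.d.\ copies from such a family is less peaked than a single copy --- this is a Kanter-type statement that should follow from Kanter's original theorem combined with the convolution-stability of the peakedness order for symmetric unimodal measures.

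The main obstacle I anticipate is precisely making the peakedness comparison go in the right direction in dimension $m\geq 2$: Kanter's theorem compares a measure $\mu$ with $\mu * \nu$ when $\nu$ is symmetric unimodal, which handles \emph{adding} an independent symmetric unimodal summand, but here after rescaling we are comparing uniform-on-$K$ with $a_1X^{(1)}+\dots+a_nX^{(n)}$, and one must verify that the ``extra'' summands, suitably conditioned and scaled, are genuinely symmetric unimodal $\R^m$-valued perturbations --- which uses that $a_jX^{(j)}$ is uniform on $a_jK$, still a symmetric convex body, so its law is symmetric unimodal, and that the peakedness order is preserved under convolution of symmetric unimodal measures. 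Once that lemma is in hand the inequality $g_S(0)\geq \vol_m(K)^{-1}$ is immediate and the equality case ($a$ a standard basis vector, or $K$ an ellipsoid in degenerate situations) can be read off from the equality case in Kanter's theorem. I expect the writeup to mirror Vaaler's proof of Theorem~\ref{thm:vaaler} almost line for line, with $K$ in place of the interval $[-\tfrac12,\tfrac12]$ and the block subspace $H_a$ in place of a coordinate hyperplane.
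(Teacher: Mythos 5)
This statement is presented in the paper as an \emph{open conjecture} of Eskenazis; the paper offers no proof, and what you have written does not close it. Your first step is fine: since $|a|=1$, the map $s\mapsto(a_1s,\dots,a_ns)$ is an isometry of $\R^m$ onto $H_a^\perp$, so the density of $S=\sum_j a_jX^{(j)}$ at the origin equals $\vol_m(K)^{-n}\vol_{mn-m}(K\times\dots\times K\cap H_a)$, and the conjecture is indeed equivalent to $g_S(0)\geq\vol_m(K)^{-1}$. But the peakedness step that is supposed to finish the proof is broken. First, the direction is reversed: in Kanter's convention (the one quoted in Section~\ref{sec:methods}), ``$\nu$ more peaked than $\mu$'' gives $\nu$ \emph{more} mass on small symmetric convex sets, hence a \emph{larger} density at $0$; so to conclude $g_S(0)\geq\vol_m(K)^{-1}$ you would need $S$ to be more peaked than $X^{(1)}$, not the other way around as you wrote. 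Second, and more seriously, neither direction of that comparison holds, already for $m=1$, $n=2$, $K=[-\tfrac12,\tfrac12]$, $a=(\tfrac1{\sqrt2},\tfrac1{\sqrt2})$: the triangular law of $(U_1+U_2)/\sqrt2$ beats the uniform law on small intervals (density $\sqrt2$ versus $1$ at the origin) but loses on intervals of half-length between $\tfrac12$ and $\tfrac1{\sqrt2}$. So no Kanter-type domination between $S$ and $X^{(1)}$ exists, and the ``convolution-stability of peakedness'' cannot be invoked to compare them directly.

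The reason Vaaler's proof does not ``mirror line for line'' is that it never compares the weighted sum with a single copy: it inserts a \emph{rotationally invariant} intermediary (a product Gaussian), for which the marginal onto any subspace is explicitly a standard Gaussian, and transfers the estimate by a one-dimensional single-crossing peakedness comparison between the uniform law on an interval and that Gaussian. For a general symmetric convex body $K\subset\R^m$ with $m\geq2$ there is no product reference measure that is simultaneously invariant under the relevant block rotations and comparable to the uniform law on $K$ with density exactly $\vol_m(K)^{-1}$ at the origin; the natural Gaussian surrogate only yields a bound of the form $c_m\det\!\big(\mathrm{Cov}(X^{(1)})\big)^{-1/2}$, and closing the gap between that quantity and $\vol_m(K)^{-1}$ for arbitrary $K$ is a slicing-constant type difficulty. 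This missing comparison is precisely the open content of the conjecture (known cases being $m=1$, i.e.\ Hadwiger--Hensley, and $K=B_2^m$ via Theorem~\ref{thm:Val-MP} and its block-subspace extensions), so your writeup reduces the problem to an equivalent reformulation but does not prove it.
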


\subsubsection*{Noncentral sections}
In this context, perhaps the most natural question to ask is about extremal volume sections by affine subspaces at a \emph{fixed} distance $t > 0$ from the origin. This has arguably proved to be more difficult than the question of central sections, even for the cube. Sharp results for line sections have been found in \cite{MSZZ} for the cube and in \cite{LiuTko} for the cross-polytope. For hyperplane sections, we have the following conjecture of V. Milman (see \cite{KK-slabs}).

\begin{conjecture}[V.Milman]
The minimum and maximum of $\vol_{n-1}(Q_n \cap H)$ over the affine hyperplanes $H$ at a fixed distance $t > 0$ from the origin is attained when $H$ is orthogonal to a diagonal direction $(1,\dots,1,0,\dots,0)$ with a suitable number of $1$s depending on $t$.
\end{conjecture}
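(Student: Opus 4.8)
The plan is to translate the question into one about one‑dimensional probability densities, exactly as in the central case $t=0$, and then isolate the new analytic difficulty created by non‑centrality. Fix a unit vector $a$ and put $H=\{x\in\R^n:\scal{x}{a}=t\}$, a hyperplane at distance $t$ from the origin. By Fubini's theorem and the computation recalled around \eqref{eq:f}--\eqref{eq:sec-density}, $\vol_{n-1}(Q_n\cap H)=g_a(t)$, where $g_a$ is the density of $\scal{a}{X}=\sum_{j=1}^n a_jX_j$ with $X$ uniform on $Q_n$ (so the $X_j$ are i.i.d.\ uniform on $[-\tfrac12,\tfrac12]$). Fourier inversion, exactly as in Ball's proof of Theorem~\ref{thm:ball-codim1}, gives the P\'olya‑type formula
\[
\vol_{n-1}(Q_n\cap H)=g_a(t)=\frac{2}{\pi}\int_0^\infty\prod_{j=1}^n\frac{\sin(a_js)}{a_js}\,\cos(2ts)\,\dd s ,
\]
and for the conjectured extremisers $a^{(k)}=\tfrac{1}{\sqrt k}(e_{i_1}+\dots+e_{i_k})$ this reduces, after rescaling, to $\beta_k(t):=\tfrac{2\sqrt k}{\pi}\int_0^\infty\big(\tfrac{\sin v}{v}\big)^k\cos(2\sqrt k\,t\,v)\,\dd v$, a finite one‑parameter family (indexed by $1\le k\le n$) whose mutual comparison at each fixed $t$ is the ultimate target. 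The plan is to proceed in two stages: first reduce the set of candidate extremisers to the $a^{(k)}$, then compare the $\beta_k(t)$.

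Stage one is to reduce the candidate directions. For fixed $t$, among directions $a$ with $t\le\tfrac12\|a\|_1$ (the only ones for which $Q_n\cap H$ is nonempty), one wants to show that any local extremiser of $a\mapsto g_a(t)$ on the sphere has all its nonzero coordinates equal in absolute value, hence equals, up to signs and permutation, one of the $a^{(k)}$. This is the noncentral analogue of the local analyses of Ivanov--Tsiutsiurupa \cite{IT} and Ambrus \cite{Amb}: differentiate $g_a(t)$ in the $a_j$ (legitimate since the integral above converges absolutely and $g_a$ is smooth in $a$ as soon as two coordinates are nonzero), write the first‑ and second‑order optimality conditions, and use them to force equality of the nonzero $|a_j|$; equivalently, show that $(a_1^2,\dots,a_n^2)\mapsto g_a(t)$ has no critical point in the relative interior of a face of the probability simplex other than that face's barycentre.

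Stage two is to compare the finitely many diagonal candidates: for fixed $t$, determine the ordering of $\beta_1(t),\dots,\beta_n(t)$ and the threshold values of $t$ at which the minimiser and the maximiser jump between consecutive $k$. The tools I would use are (i) log‑concavity of each $g_a$ (Brunn--Minkowski, as for \eqref{eq:f}); (ii) a peakedness comparison between $\scal{a^{(k)}}{X}$ and $\scal{a^{(k+1)}}{X}$ in the sense of Kanter \cite{Kant}; and (iii) sharp bounds on the oscillatory integrals $\int_0^\infty(\tfrac{\sin v}{v})^k\cos(cv)\,\dd v$ refining Ball's integral inequality \eqref{eq:Ball-integralineq} and the Laplace‑method asymptotics of Bartha--Fodor--Gonz\'alez \cite{BFG} for $c=0$, obtainable perhaps via the Nazarov--Podkorytov method \cite{NP} or the majorisation/optimal‑transport ideas of Melbourne--Roberto \cite{MR,MelRob}. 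A complementary geometric route, in the spirit of Aliev \cite{Ali}, is to identify the extremal affine hyperplane as one supported by a translate of the intersection body of $Q_n$; one could also try to integrate the slab inequality of Theorem~\ref{thm:BK}, though that controls only $\int_0^t g_a$ and would need a pointwise upgrade.

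I expect stage one to be the main obstacle. The factor $\cos(2ts)$ makes the integrand sign‑indefinite, so the H\"older/Brascamp--Lieb defactorisation behind Ball's and Barthe's theorems is simply unavailable, and there is no obvious substitute surviving non‑centrality. Moreover, identifying the right candidate set is itself delicate: already for \emph{central} sections of $Q_4$ there exist non‑diagonal critical directions \cite{Amb}, so the local conditions do not by themselves single out a diagonal, and one must also classify which critical points are maxima and which minima. Stage two is genuinely new — comparing sections of $Q_n$ by hyperplanes orthogonal to the diagonals of subcubes of \emph{different} dimensions at a common distance $t$ seems not to have been studied — and the required oscillatory‑integral estimates, uniform in $t$, look technically heavy. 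Finally, the constraint $t\le\tfrac12\|a\|_1$ shrinks the feasible set of directions as $t$ grows, and tracking this carefully is presumably exactly what produces the $t$‑dependent jump in the optimal number of ones.
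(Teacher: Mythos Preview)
This statement is presented in the paper as an \emph{open conjecture} of V.~Milman; the paper does not prove it and does not claim to. What the paper offers in lieu of a proof is a list of partial results: verification in dimensions $n=2,3$ by K\"onig and Koldobsky, the case $t>\tfrac12\sqrt{n-1}$ by Moody--Stone--Zach--Zvavitch (extended to $t>\tfrac12\sqrt{n-2}$ by Pournin), and Pournin's local maximality of the main diagonal for $t=\Omega(\sqrt{n}/\log n)$. There is therefore no ``paper's own proof'' against which to compare your proposal.

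Your write-up is not a proof either, and you are candid about this: it is a two-stage research plan, with stage one (reducing all local extremisers to diagonal directions) flagged as the main obstacle. That assessment is accurate, and in fact the situation is worse than you indicate. You cite Ambrus's result that already at $t=0$ there exist \emph{non-diagonal} critical directions in $Q_4$; this means the first-order optimality conditions you propose to derive cannot, by themselves, force all nonzero $|a_j|$ equal, so stage one as written would fail even in the central case. The known partial results bypass this entirely: the low-dimensional cases are handled by explicit computation, and the large-$t$ results exploit the combinatorial formula for cube sections together with the fact that very few vertices satisfy $\scal{v}{a}\le b$ when $t$ is close to $\tfrac12\sqrt{n}$, which drastically simplifies the problem. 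Neither of these techniques extends to general $t$, and no method is currently known that does. Your stage two---comparing the finitely many $\beta_k(t)$---is also genuinely open and, as you note, has not been carried out; the oscillatory-integral and peakedness tools you list are reasonable starting points but have not been shown to suffice.
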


There are several partial results supporting it. K\"onig and Koldobsky have verified that it holds in low dimensions $n = 2, 3$, \cite{KK-slabs}. Moody, Stone, Zach and Zvavitch have established that in the range $\frac{1}{2}\sqrt{n-1} < t < \frac{1}{2}\sqrt{n}$ the main diagonal direction gives the maximal section, \cite{MSZZ}, later extended to all $t > \frac{1}{2}\sqrt{n-2}$ by Pournin in \cite{Pou1}, where one of the key ideas was to employ a noteworthy combinatorial formula for sections of the cube,
\[
\vol_{n-1}\Big([0,1]^n \cap \{x \in \R^n, \ \scal{x}{a} = b\}\Big) = \sum_v \frac{(-1)^{\sum v_j}|a|(b-\scal{v}{a})^{n-1}}{(n-1)!\prod a_j},
\]
where the sum is over the vertices $v$ of the cube $[0,1]^d$ such that $\scal{v}{a} \leq b$ (see also \cite{BS}). In a recent preprint \cite{Pou2}, Pournin has also showed that the main diagonal direction is strictly locally maximal for $t = \Omega(\frac{\sqrt{n}}{\log n})$, derived from general local conditions for all diagonal directions. K\"onig and Rudelson in \cite{KRud} have obtained dimension-free lower bounds on noncentral sections of the cube as well as the polydisc. K\"onig in \cite{Kon-noncent} has treated noncentral extremal volume as well as perimeter sections of the regular simplex, cube and cross-polytope, when distance $t$ is fairly large, also investigating local behaviour for the entire range of $t$.

\subsubsection*{Probabilistic extensions}
There is a natural link between the volume of sections and negative moments of linear forms, which goes back at least to Kalton and Koldobsky's work \cite{KalKol}. To illustrate it, first note that the value at say $x=0$ of a probability density $f$ on $\R$ which is continuous at $0$ can be obtained by taking the limit of its negative moments,
\begin{equation}\label{eq:KalKol}
f(0) = \lim_{q \to -1+} \frac{1+q}{2}\int |x|^{q}f(x) \dd x.
\end{equation}
In view of this and the basic probabilistic formula for sections \eqref{eq:sec-density}, the sharp bounds for hyperplane sections of the cube from Theorems \ref{thm:Had-Hen} and \ref{thm:ball-codim1} can be phrased as
\[
1 \leq \lim_{q \to -1+} \frac{1+q}{2}\E\left|\sum_{j=1}^n a_jU_j\right|^{q} \leq \sqrt{2}
\]
for all unit vectors $a$ in $\R^n$, where $U_1, U_2, \dots$ are i.i.d. random variables uniform on $[-\frac12, \frac12]$. Do such inequalities remain true with a fixed $q$? The answer is known for the cube and polydisc, where a sharp phase transition of the extremiser occurs for the upper bound with diagonal directions entering the picture

\begin{theorem}[Chasapis-K\"onig-Tkocz \cite{CKT}]\label{thm:CKT}
Let $-1 < q < 0$. Let $U_1, U_2, \dots$ be i.i.d. random variables uniform on $[-\frac12,\frac12]$. For every $n \geq 1$ and unit vectors $a$ in $\R^n$, we have
\[
\E|U_1|^q \leq \E\left|\sum_{j=1}^n a_jU_j\right|^{q} \leq \begin{cases} \E|(U_1+U_2)/\sqrt{2}|^q, & -1 < q \leq q_0, \\ \lim_{m\to\infty}  \E|(U_1+\dots+U_m)/\sqrt{m}|^q, & q_0 \leq q < 0. \end{cases}
\]
The constant $q_0 = -0.79..$ is given uniquely by equating the two expressions on the right hand side.
\end{theorem}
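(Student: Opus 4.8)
The plan is to reduce both inequalities to one-dimensional problems: a Ball-type Fourier-analytic argument for the upper bound, and a Moriguti-type rearrangement argument, exploiting log-concavity, for the lower bound. Throughout write $Y_a=\sum_{j=1}^n a_jU_j$, let $f_a$ be its density, and set $\phi(u)=\frac{\sin u}{u}$, so that $\E e^{\mathrm i tU_j}=\phi(t/2)$. For $-1<q<0$ there is the elementary representation $|x|^q=c_q\int_0^\infty\cos(xt)\,t^{-1-q}\,\dd t$, valid as an improper Riemann integral, with $c_q=\bigl(\Gamma(-q)\cos\tfrac{\pi q}{2}\bigr)^{-1}>0$; hence
\[
\E|Y_a|^q=c_q\int_0^\infty\prod_{j=1}^n\phi(a_jt/2)\,t^{-1-q}\,\dd t .
\]
A direct computation gives $\E|U_1|^q=\tfrac{2^{-q}}{q+1}$, and by the central limit theorem together with uniform integrability of $|x|^q$ against the (uniformly bounded) densities of $(U_1+\dots+U_m)/\sqrt m$ one gets $\lim_m\E|(U_1+\dots+U_m)/\sqrt m|^q=(1/12)^{q/2}\E|g|^q$ for a standard Gaussian $g$.

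For the upper bound I would follow Ball's route: bound the integrand by its absolute value and apply Hölder's inequality with exponents $p_j=a_j^{-2}$ (so $\sum 1/p_j=\sum a_j^2=1$, and $p_j\ge1$ with equality iff $a=\pm e_j$); after substituting $u=a_jt/2$ in each factor this gives
\[
\E|Y_a|^q\le c_q\,2^{-q}\prod_{j=1}^n\Psi(p_j)^{1/p_j},\qquad \Psi(p):=p^{-q/2}\int_0^\infty\Bigl|\frac{\sin u}{u}\Bigr|^p u^{-1-q}\,\dd u .
\]
It then remains to show that $\sup\bigl\{\prod_j\Psi(p_j)^{1/p_j}: p_j\ge1,\ \sum_j 1/p_j=1\bigr\}=\max\{\Psi(2),\Psi(\infty)\}$, where $\Psi(\infty)=\lim_{p\to\infty}\Psi(p)=\tfrac12\,6^{-q/2}\Gamma(-q/2)$ by Laplace's method (using $\log|\tfrac{\sin u}{u}|=-\tfrac{u^2}{6}+O(u^4)$). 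Here $\Psi(2)$ corresponds to $a=(e_i+e_j)/\sqrt2$, for which the passage to absolute values and Hölder's inequality are both equalities, $\Psi(\infty)$ corresponds to the Gaussian limit, and $q_0$ is exactly the value of $q$ at which $\Psi(2)=\Psi(\infty)$ — this equality produces the phase transition. Proving this supremum is the main obstacle: it is a sharp integral inequality in the spirit of and generalising Ball's inequality~\eqref{eq:Ball-integralineq}, which is recovered as $q\to-1^+$ (the weight $u^{-1-q}$ degenerating to $1$ and $p^{-q/2}$ to $\sqrt p$). One route is to reduce it to convexity properties of $\Psi$: concavity of $x\mapsto x\log\Psi(1/x)$ on $(0,1]$, which by Jensen's inequality collapses the supremum onto the ``balanced'' values $\Psi(k)$, $k=1,2,3,\dots$ and the limit; and convexity of $x\mapsto\log\Psi(1/x)$ on $(0,\tfrac12]$, forcing $\Psi(p)\le\Psi(2)^{2/p}\Psi(\infty)^{1-2/p}\le\max\{\Psi(2),\Psi(\infty)\}$ for $p\ge2$; together with the residual inequality $\Psi(1)\le\max\{\Psi(2),\Psi(\infty)\}$ handling the ``large coefficient'' case $1\le p<2$ (as in Ball's proof). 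Each of these rests on delicate pointwise control of powers of $\frac{\sin u}{u}$, where essentially all the technical work lies; the equality case $p=2$ of the $\Psi$-inequality — mirroring that of~\eqref{eq:Ball-integralineq} — together with Hölder's equality conditions then pins down the extremiser $a=(\pm e_i\pm e_j)/\sqrt2$ for $-1<q<q_0$, whereas for $q_0\le q<0$ the bound is the supremum over the simplex, realised only in the limit of infinitely many equal exponents and hence approached but not attained in finite dimension.

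For the lower bound I would work directly with $f_a$, avoiding the oscillatory integral. Since each $U_j$ has the log-concave density $\1_{[-\frac12,\frac12]}$ and log-concavity is preserved under convolution, $f_a$ is even and log-concave, with $\int f_a=1$ and $\int t^2 f_a=\tfrac1{12}\sum a_j^2=\tfrac1{12}$. By the symmetrisation-and-mass-moving argument from the proof of Theorem~\ref{thm:Had-Hen} (Moriguti's inequality), $\|f_a\|_\infty^2\int t^2 f_a\ge\tfrac1{12}$, so $f_a(0)=\|f_a\|_\infty\ge1$, with equality only for $f_0:=\1_{[-\frac12,\frac12]}$, the density of $U_1$. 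Assume $a\ne\pm e_j$, so $f_a\ne f_0$. Since $f_a$ is even and log-concave it is nonincreasing on $[0,\infty)$, so on $[0,\tfrac12]$ the difference $g:=f_a-f_0$ is nonincreasing and passes from positive ($g(0)=f_a(0)-1>0$) to negative exactly once, at some $t^*\in(0,\tfrac12)$; at $t=\tfrac12$ it jumps up to $f_a(\tfrac12)\ge0$ and stays nonnegative afterwards. Thus $g$ has sign pattern $+,-,+$ on $(0,\infty)$, while $\int g=\int t^2 g=0$. On $(0,\infty)$ the convex decreasing function $t\mapsto|t|^q$ meets any quadratic $\alpha+\beta t^2$ with $\beta<0$ (decreasing and concave there) in exactly two points, so I can pick $\alpha,\beta$ with $\beta<0$ so that $h(t):=|t|^q-\alpha-\beta t^2$ vanishes precisely at $t^*$ and $\tfrac12$; then $h$ has the same sign pattern $+,-,+$, so $hg\ge0$ pointwise, and therefore
\[
\E|Y_a|^q-\E|U_1|^q=\int|t|^q g(t)\,\dd t=\int h(t)\,g(t)\,\dd t\ge0 ,
\]
using $\int g=\int t^2 g=0$ in the last equality. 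Equality forces $g\equiv0$, giving the lower bound with equality case $a=\pm e_j$.
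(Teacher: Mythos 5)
Your lower bound is correct and complete. The sign pattern $+,-,+$ of $g=f_a-\1_{[-1/2,1/2]}$ on $(0,\infty)$ is properly forced by log-concavity together with the two moment constraints (and Moriguti's equality case rules out $f_a(0)=1$ for $a\ne\pm e_j$), the interpolant $h(t)=|t|^q-\alpha-\beta t^2$ is convex on $(0,\infty)$ because $\beta=\frac{(1/2)^q-(t^*)^q}{1/4-(t^*)^2}<0$, so convexity plus $h\to+\infty$ at both ends gives the matching sign pattern, and the two moment identities kill the quadratic. This argument is genuinely needed: the tempting shortcut via peakedness of $\sum a_jU_j$ over $U_1$ fails (compare Theorem \ref{thm:BK}, which only holds for slabs of width $\le 3/8$). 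The skeleton of your upper bound --- the representation $|x|^q=c_q\int_0^\infty\cos(xt)t^{-1-q}\,\dd t$, H\"older with exponents $a_j^{-2}$, and the identification of $\Psi(2)$, $\Psi(\infty)$ and of $q_0$ as their crossing point --- is also the route of \cite{CKT}, and the inequality $\Psi(p)\le\max\{\Psi(2),\Psi(\infty)\}$ for $p\ge2$ that you isolate is indeed the technical heart of that paper (established there by a Nazarov--Podkorytov-type comparison of distribution functions; the concavity of $x\mapsto x\log\Psi(1/x)$ and convexity of $\log\Psi(1/x)$ you invoke are unsubstantiated, and in any case once $\Psi(p)\le M$ holds for all $p\ge2$ the bound $\prod_j\Psi(p_j)^{1/p_j}\le M$ is immediate from $\sum_j p_j^{-1}=1$, so the Jensen step is superfluous). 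Leaving that integral inequality as a black box is fair in a sketch, but be aware it is where essentially all of \cite{CKT} lives.

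The genuine gap is the large-coefficient case. Your ``residual inequality'' $\Psi(1)\le\max\{\Psi(2),\Psi(\infty)\}$ is false: as $q\to-1^+$ one has $\Psi(1)=\int_0^\infty\bigl|\tfrac{\sin u}{u}\bigr|u^{-1-q}\,\dd u\to\infty$ (the weight degenerates to the non-integrable $|\sin u|/u$ of Ball's setting), while $\Psi(2)\to\tfrac{\pi}{\sqrt2}$ and $\Psi(\infty)\to\tfrac12\sqrt{6\pi}$ stay bounded. Consequently the identity $\sup\bigl\{\prod_j\Psi(p_j)^{1/p_j}\bigr\}=\max\{\Psi(2),\Psi(\infty)\}$ over all $p_j\ge1$ with $\sum_j p_j^{-1}=1$ that your argument requires is simply false (take $n=1$, or any configuration with a $p_j$ near $1$). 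Exactly as in Ball's proof of Theorem \ref{thm:ball-codim1}, the H\"older bound is useless when some $|a_j|>1/\sqrt2$, and this case must be treated by a separate, non-Fourier argument --- but Ball's geometric fix does not transfer. Projecting only yields $f_a(0)\le1/|a_1|$, and the best one gets from a density bound by moving mass to the origin is $\E|Y_a|^q\le a_1^q\,\E|U_1|^q\le 2^{-q/2}\E|U_1|^q$, which strictly exceeds $\E|(U_1+U_2)/\sqrt2|^q=2^{-q/2}\E|U_1+U_2|^q$ because $\E|U_1+U_2|^q<\E|U_1|^q$ for $-1<q<0$ (equivalent to $2^{1+q}<2+q$). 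So the case $\max_j|a_j|>1/\sqrt2$ needs a genuinely different idea (in \cite{CKT} it is handled by a separate dedicated argument), and as written your proof does not cover it.
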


A similar behaviour has been established for the polydisc slicing by Chasapis, Singh and Tkocz in \cite{CST}, with the phase transition ``moving to the left'' where the negative moments recover volume.

\section{Projections}\label{sec:proj}
We turn our attention to Question (II) from the introduction about projections of extremal volume of basic convex bodies such as the cube, simplex and cross-polytope, as well as the family of $\ell_p$-balls. As we will see, the understanding of hyperplane projections of $\ell_p$-balls is at the same level as for sections (see Tables \ref{tab:sec} and \ref{tab:proj}), whilst in general much less is known, particularly for lower-dimensional projections. The methods also seem to shift from analytic to more of algebraic and combinatorial nature.

\subsection{Cube}
Thanks to Cauchy's formula from Theorem \ref{thm:Cauchy-Mink}, extremal volume projections on hyperplanes are easy to determine, for the surface area measure of the cube $Q_n$ is the counting measure $\sum_{j=1}^n \delta_{\pm e_j}$ of the set of the $2n$ vectors $\{\pm e_j, \ j \leq n\}$ outer normal to the facets of $Q_n$, thus for every unit vector $a$ in $\R^n$, we have
\[
\vol_{n-1}(\proj_{a^\perp}(Q_n)) = \sum_{j=1}^n |a_j|.
\] 
Therefore,
\[
1 \leq \vol_{n-1}(\proj_{a^\perp}(Q_n)) \leq \sqrt{n},
\]
by squaring and neglecting the off-diagonal terms for the lower bound and simply applying the Cauchy-Schwarz inequality for the upper bound.
The former is attained if and only if $Q_n$ is projected onto a coordinate hyperplane and the latter if and only if $Q_n$ is projected onto a hyperplane orthogonal to a main diagonal.

A zonotope is the Minkowski sum of intervals. Orthogonal projections of the unit cube $Q_n = [-\frac{1}{2},\frac{1}{2}]$ are zonotopes and, conversely, every zonotope can be obtained as such a projection (of a possibly rescaled and translated cube in a sufficiently high dimension). Shephard's decomposition of zonotopes into parallelopipeds led him in \cite{She} to the following classical formula for volume: if $v_1, \dots, v_n$ are vectors in $\R^k$, then the volume of the zonotope $Z = \sum_{j=1}^n [0, v_j]$ is expressed as
\[
\vol_k(Z) = \sum_{1 \leq j_1 < \dots < j_k \leq n} |\det[v_{j_1} \ \dots \ v_{j_k}]|,
\]
where $[v_{j_1} \ \dots \ v_{j_k}]$ is the $k \times k$ matrix with columns $v_{j_1}, \dots, v_{j_k}$. For the orthogonal projection of the cube $Q_n$ onto a $k$-dimensional subspace $H$, the vectors $v_j$ can be taken as columns of the $k \times n$ matrix whose rows form an orthonormal basis of $H$, leading to the constraint
\[
\sum_{1 \leq j_1 < \dots < j_k \leq n} |\det[v_{j_1} \ \dots \ v_{j_k}]|^2 = 1,
\]
by the Cauchy-Binnet formula. Then, exactly as in the codimension $1$ case, we obtain upper and lower bounds on the volume. This argument goes back to Chakerian and Filliman's work \cite{CF}.

\begin{theorem}[Chakerian-Filliman \cite{CF}]\label{thm:ChakFill}
Fix $1 \leq k \leq n$. For every $k$-dimensional subspace $H$ in $\R^n$, we have
\[
1 \leq \vol_{n-1}(\proj_{H}(Q_n)) \leq \min\left\{\sqrt{\binom{n}{k}}, \frac{\vol_{k-1}(B_2^{k-1})^k}{\vol_k(B_2^k)^{k-1}}\left(\frac{n}{k}\right)^{k/2}\right\}
\]
\end{theorem}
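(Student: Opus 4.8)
The plan is to realise $P:=\proj_H(Q_n)$ as a zonotope, run the codimension-$1$ argument verbatim for the first two estimates, and obtain the third from the mean width via Urysohn's inequality. Fix a $k\times n$ matrix $V$ whose rows form an orthonormal basis of $H$ and let $v_1,\dots,v_n\in\R^k$ be its columns, so that $VV^\top=I_k$ and $P=\sum_{j=1}^n[0,v_j]$. By Shephard's formula $\vol_k(P)=\sum_{|J|=k}D_J$ with $D_J:=|\det[v_J]|\ge0$ (the sum over $k$-element $J\subseteq\{1,\dots,n\}$), by Cauchy--Binet $\sum_{|J|=k}D_J^2=\det(VV^\top)=1$, and taking a trace in $VV^\top=\sum_j v_jv_j^\top$ gives $\sum_j|v_j|^2=k$. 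The lower bound is then immediate from nonnegativity, $\vol_k(P)=\sum_J D_J\ge(\sum_J D_J^2)^{1/2}=1$ (with equality forcing a single nonzero $D_J$, i.e.\ a coordinate subspace), and the first upper bound is Cauchy--Schwarz over the $\binom{n}{k}$ index sets, $\vol_k(P)\le\binom{n}{k}^{1/2}(\sum_J D_J^2)^{1/2}=\binom{n}{k}^{1/2}$ --- exactly the reasoning already given above for $k=n-1$.

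For the second upper bound I would pass to the mean width $w(\cdot)$, the average over $u\in S^{k-1}$ of $h_K(u)+h_K(-u)$. It is additive under Minkowski sums, $w([0,v])=c_k|v|$ where $c_k:=w([0,e_1])$ is the average of $|\langle u,e_1\rangle|$ over $u\in S^{k-1}$, and --- by Cauchy's formula (Theorem~\ref{thm:Cauchy-Mink}) applied to $B_2^k$, all of whose hyperplane projections equal $B_2^{k-1}$ --- one has $c_k=\tfrac2k\,\vol_{k-1}(B_2^{k-1})/\vol_k(B_2^k)$. Hence, using $\sum_j|v_j|^2=k$ and Cauchy--Schwarz,
\[
w(P)=\sum_{j=1}^n w([0,v_j])=c_k\sum_{j=1}^n|v_j|\le c_k\Big(n\sum_{j=1}^n|v_j|^2\Big)^{1/2}=c_k\sqrt{nk}.
\]
Now Urysohn's inequality --- among convex bodies of a given mean width the Euclidean ball maximises volume, and the ball of mean width $w_0$ has radius $w_0/2$ --- yields
\[
\vol_k(P)\le\vol_k(B_2^k)\Big(\tfrac{w(P)}{2}\Big)^k\le\vol_k(B_2^k)\Big(\tfrac{c_k\sqrt{nk}}{2}\Big)^k=\vol_k(B_2^k)\left(\frac{\vol_{k-1}(B_2^{k-1})}{k\,\vol_k(B_2^k)}\right)^k(nk)^{k/2},
\]
and the last expression collapses to $\dfrac{\vol_{k-1}(B_2^{k-1})^k}{\vol_k(B_2^k)^{k-1}}\Big(\dfrac nk\Big)^{k/2}$, as claimed.

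The whole argument is short; the only ingredient beyond bookkeeping is Urysohn's inequality (together with the elementary value of $c_k$, which is just Theorem~\ref{thm:Cauchy-Mink} for the ball). The step I would watch most carefully is tracking constants so that they land exactly on $\vol_{k-1}(B_2^{k-1})^k/\vol_k(B_2^k)^{k-1}\cdot(n/k)^{k/2}$; useful consistency checks are $k=2$, where $c_2=2/\pi$ and the bound becomes $\vol_2(P)\le 2n/\pi$, and $k=1$, where both sides equal $\sqrt n$. One could alternatively run the second bound probabilistically, rewriting Shephard's sum as $\tfrac{1}{k!}S^k\,\E|\det[\theta_{j_1},\dots,\theta_{j_k}]|$ with $\theta_j=v_j/|v_j|$, indices $j_1,\dots,j_k$ i.i.d.\ with $\mathbb{P}(j_i=j)=|v_j|/S$ and $S=\sum_j|v_j|\le\sqrt{nk}$, and then invoking that the uniform law on $S^{k-1}$ maximises the expected absolute determinant of $k$ independent samples; but that fact is nothing other than Urysohn applied to the generated zonoid, so passing to mean width directly is cleaner.
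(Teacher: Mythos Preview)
Your proposal is correct and follows essentially the same route as the paper: Shephard's formula together with Cauchy--Binet gives the lower bound and the first upper bound exactly as in the codimension-$1$ case, and the second upper bound is obtained via additivity of a quermassintegral (you use mean width explicitly, which is precisely the one that feeds into Urysohn) combined with Urysohn's inequality. Your constant tracking is accurate and the consistency checks at $k=1,2$ are fine.
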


The lower bound is clearly sharp, attained at coordinate subspaces. It also instantly follows from Vaaler's Theorem \ref{thm:vaaler} upon observing that projections contain sections. The first upper bound $\sqrt{\binom{n}{k}}$ is sharp only when $k = 1, n-1$. The second upper bound is obtained differently, by invoking quermassintegrals (which are additive under Minkowski sums, so go hand in hand with zonotopes), combined with Urysohn's inequality. A simpler version of the same idea is to note that every $k$-dimensional projection has diameter at most the diameter of the cube $\sqrt{n}$, thus by the isodiametric inequality, its volume is at most $\vol_k(B_2^k)(\sqrt{n}/2)^k$. All these bounds are of the order $n^{k/2}$ for a fixed $k$ as $n \to \infty$, which is tight. The second of the upper bounds in Theorem \ref{thm:ChakFill} is asymptotically better than the first one. Ivanov in \cite{Iva-frames} has developed local conditions for maximisers of $k$-dimensional projections.

In \cite{CF}, using the isoperimetric inequality for polygons, Chakerian and Filliman additionally obtained a sharp bound for two-dimensional projections (and thus also $n-2$-dimensional ones -- see Theorem \ref{thm:McMullen} below).

\begin{theorem}[Chakerian-Filliman \cite{CF}]\label{thm:ChakFill-2dim}
For $n \geq 2$ and for every $2$-dimensional subspace $H$ in $\R^n$, we have
\[
\vol_{n-1}(\proj_{H}(Q_n)) \leq \frac{1}{\tan\left(\frac{\pi}{2n}\right)}.
\]
\end{theorem}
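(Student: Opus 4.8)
The plan is to identify $\proj_H(Q_n)$ with a centrally symmetric polygon having at most $2n$ sides, bound its perimeter by Cauchy--Schwarz, and then invoke the isoperimetric inequality for polygons with a bounded number of sides.

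First I would fix an orthonormal basis of the plane $H$ and represent the orthogonal projection $\R^n \to H$ by a $2 \times n$ matrix $V$ whose two rows are orthonormal; let $w_j \in \R^2$ denote the $j$-th column of $V$, so that $w_j = \proj_H(e_j)$. Writing $Q_n = \sum_{j=1}^n[-\tfrac12 e_j,\tfrac12 e_j]$ as a Minkowski sum of segments, we get $\proj_H(Q_n) = \sum_{j=1}^n [-\tfrac12 w_j,\tfrac12 w_j]$, a zonogon whose edge vectors, listed in angular order, are $\pm w_1,\dots,\pm w_n$. Hence $\proj_H(Q_n)$ is a centrally symmetric polygon with at most $2n$ sides and perimeter $L = 2\sum_{j=1}^n |w_j|$ (parallel or vanishing $w_j$ only reduce the number of sides). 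Orthonormality of the rows of $V$ reads $\sum_{j=1}^n w_j w_j^\top = I_2$, and taking traces gives the single scalar constraint $\sum_{j=1}^n |w_j|^2 = 2$.

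Then, by the Cauchy--Schwarz inequality, $\sum_{j=1}^n|w_j| \le \big(n\sum_{j=1}^n|w_j|^2\big)^{1/2} = \sqrt{2n}$, so $L \le 2\sqrt{2n}$. The main input is the classical isoperimetric inequality for polygons: among all polygons with at most $2n$ sides and given perimeter $L$, the regular $2n$-gon has the largest area, namely $\frac{L^2}{8n}\cot\frac{\pi}{2n}$ (a regular $2n$-gon of side $s=L/2n$ has area $\frac14(2n)s^2\cot\frac{\pi}{2n}$). Combining,
\[
\vol_2\big(\proj_H(Q_n)\big) \le \frac{L^2}{8n}\cot\frac{\pi}{2n} \le \frac{(2\sqrt{2n})^2}{8n}\cot\frac{\pi}{2n} = \cot\frac{\pi}{2n} = \frac{1}{\tan(\pi/2n)},
\]
as claimed. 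For sharpness, one checks that taking $H$ spanned by $(\cos\tfrac{j\pi}{n})_{j=1}^n$ and $(\sin\tfrac{j\pi}{n})_{j=1}^n$ yields $w_j = \sqrt{2/n}\,(\cos\tfrac{j\pi}{n},\sin\tfrac{j\pi}{n})$, which have equal length, satisfy $\sum_j w_j w_j^\top = I_2$, and make $\proj_H(Q_n)$ a regular $2n$-gon attaining the bound.

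The argument is short once the polygonal isoperimetric inequality is granted, which is the only genuinely nontrivial ingredient; the points needing a little care are the zonogon bookkeeping (merging of parallel half-edges, zero half-edges) and phrasing the isoperimetric comparison for ``at most $2n$'' rather than ``exactly $2n$'' sides, so that it still applies when $H$ is in special position. I do not foresee any real obstacle beyond quoting the isoperimetric inequality.
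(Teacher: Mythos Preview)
Your argument is correct and matches the approach the paper attributes to Chakerian--Filliman, namely the isoperimetric inequality for polygons; the paper itself gives no further details beyond that hint, and your proof fills them in exactly as intended (zonogon structure, the constraint $\sum_j|w_j|^2=2$, Cauchy--Schwarz on the perimeter, then the polygonal isoperimetric bound). The bookkeeping points you flag --- merging parallel generators and the ``at most $2n$ sides'' phrasing --- are harmless since $m\mapsto \tfrac{1}{m}\cot(\pi/m)$ is increasing, and your sharpness example is correct.
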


Soon after, Filliman in \cite{Fil1} discovered a general principle that maximising volume of the larger class of zonotopes $Z = \sum_{j=1}^n [-\frac{1}{2}v_j, \frac{1}{2}v_j]$ with the constraint $\sum_{j=1}^n |v_j|^2 = n$ on the vectors $v_j$ in $\R^k$ amounts to maximising it over all zonotopes which are $k$-dimensional projections of the cube. This allowed him to extend the previous estimate to $3$-dimensional projections.

\begin{theorem}[Filliman \cite{Fil1}]
For $n \geq 3$ and for every $3$-dimensional subspace $H$ in $\R^n$, we have
\[
\vol_{3}(\proj_{H}(Q_n)) \leq \frac{\sqrt{n/3}}{\tan\left(\frac{\pi}{2n-2}\right)}.
\]
\end{theorem}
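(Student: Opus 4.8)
The plan is to reduce the bound to the sharp planar estimate of Theorem~\ref{thm:ChakFill-2dim}, exploiting that projections of a zonotope are again zonotopes, together with Shephard's volume formula. Set up coordinates by writing $Z=\proj_H(Q_n)=\sum_{j=1}^n[-\tfrac12 v_j,\tfrac12 v_j]$, where $v_1,\dots,v_n\in\R^3$ are the columns of the $3\times n$ matrix $P$ whose rows form an orthonormal basis of $H$; then $\sum_{j=1}^n v_jv_j^\top=PP^\top=I_3$ (a tight frame), so in particular $\sum_{j=1}^n|v_j|^2=\tr I_3=3$. This is exactly the regime governed by Filliman's general principle, though for the upper bound only the tight-frame identity will be needed directly.

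The heart of the matter is the identity $\vol_3(Z)=\frac13\sum_{c=1}^n |v_c|\,\vol_2\big(\proj_{v_c^\perp}(Z)\big)$. Indeed, by Shephard's formula $\vol_3(Z)=\sum_{1\le a<b<c\le n}\big|\det[v_a\ v_b\ v_c]\big|$, while for fixed $c$ the projection $\proj_{v_c^\perp}(Z)$ is the planar zonotope generated by $\proj_{v_c^\perp}(v_1),\dots,\proj_{v_c^\perp}(v_n)$; applying Shephard's formula in $v_c^\perp$ together with the elementary shadow identity $\big|\det\big[\proj_{v_c^\perp}v_i\ \proj_{v_c^\perp}v_j\big]\big|=|v_c|^{-1}\big|\det[v_i\ v_j\ v_c]\big|$ (the $3\times3$ determinant expanded along the column $v_c$) gives $|v_c|\,\vol_2(\proj_{v_c^\perp}(Z))=\sum_{i<j}\big|\det[v_i\ v_j\ v_c]\big|$, the terms with $c\in\{i,j\}$ vanishing. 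Summing over $c$ counts each triple $\{a,b,c\}$ exactly three times, which is the claimed identity (equivalently, this is the polytope form of the Cauchy--Minkowski/Minkowski volume formula, cf.\ the discussion around Theorem~\ref{thm:Cauchy-Mink}, applied to $Z$).

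Next, bound each planar factor. The projected generators again form a tight frame in $v_c^\perp\cong\R^2$: for $w\in v_c^\perp$ one has $\langle\proj_{v_c^\perp}(v_j),w\rangle=\langle v_j,w\rangle$, hence $\sum_j\langle\proj_{v_c^\perp}(v_j),w\rangle^2=\sum_j\langle v_j,w\rangle^2=w^\top\big(\sum_j v_jv_j^\top\big)w=|w|^2$. Therefore $\proj_{v_c^\perp}(Z)$ is itself a planar projection of $Q_{n-1}$ (one generator, $\proj_{v_c^\perp}v_c$, being zero), so Theorem~\ref{thm:ChakFill-2dim} applied with $n$ replaced by $n-1$ (legitimate since $n\ge3$) gives $\vol_2(\proj_{v_c^\perp}(Z))\le 1/\tan\big(\tfrac{\pi}{2n-2}\big)$. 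Inserting this into the identity and using the Cauchy--Schwarz inequality together with $\sum_c|v_c|^2=3$,
\[
\vol_3(Z)\le\frac{1}{3\tan\big(\frac{\pi}{2n-2}\big)}\sum_{c=1}^n|v_c|\le\frac{\sqrt{n\sum_c|v_c|^2}}{3\tan\big(\frac{\pi}{2n-2}\big)}=\frac{\sqrt{3n}}{3\tan\big(\frac{\pi}{2n-2}\big)}=\frac{\sqrt{n/3}}{\tan\big(\frac{\pi}{2n-2}\big)},
\]
which is the assertion.

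The step I expect to require the most care is the identity above: getting the combinatorial factor $\tfrac13$ and the shadow-determinant formula exactly right, and verifying that $\proj_{v_c^\perp}(Z)$ genuinely inherits the tight-frame property, so that Theorem~\ref{thm:ChakFill-2dim} applies verbatim rather than only after the relaxation afforded by Filliman's principle. Pinning down the case of equality is more delicate and is not required for the stated inequality: equality forces all $|v_c|$ to be equal and every $\proj_{v_c^\perp}(Z)$ to be a regular $(2n-2)$-gon, so one would additionally have to exhibit a single configuration meeting all these constraints simultaneously (for $n=4$ this is the rhombic dodecahedron).
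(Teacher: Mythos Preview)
Your proof is correct and complete. The survey does not actually supply a proof of this theorem; it only records that Filliman reduced the $3$-dimensional question to the planar estimate by way of his ``general principle'' that maximising the volume of a zonotope $\sum_{j=1}^n[-\tfrac12 v_j,\tfrac12 v_j]$ in $\R^k$ under the relaxed scalar constraint $\sum_j|v_j|^2=\mathrm{const}$ is already achieved within the subclass of genuine cube projections (tight frames). Your route is different and more direct: rather than relaxing to the larger class and then invoking that the optimum reverts to a tight frame, you observe that the tight-frame property is \emph{inherited} under projection onto $v_c^\perp$ (indeed, with $P_c$ the orthogonal projection onto $v_c^\perp$, one has $P_c\big(\sum_j v_jv_j^\top\big)P_c=P_cI_3P_c=P_c$, the identity on $v_c^\perp$). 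Hence, after dropping the null generator $P_cv_c$, the zonotope $\proj_{v_c^\perp}(Z)$ is a bona fide planar projection of $Q_{n-1}$, and Theorem~\ref{thm:ChakFill-2dim} applies verbatim with $n-1$ in place of $n$. The remaining ingredients---the Shephard-based identity $3\vol_3(Z)=\sum_c|v_c|\,\vol_2(\proj_{v_c^\perp}Z)$ and Cauchy--Schwarz on $\sum_c|v_c|$ using $\sum_c|v_c|^2=3$---are exactly right. What your approach buys is that no separate relaxation or optimisation lemma is needed; what Filliman's principle buys is a statement that applies to the larger class of zonotopes, not only to cube projections.
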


This is trivially sharp for $n=3$, but also for $n=4$ with the extremal projection being the rhombic dodecahedron and for $n=6$ with the extremal projection being the triacontahedron.

We finish with a striking and remarkable feature of the cube: its projections onto orthogonal complementary subspaces have the same volume.

\begin{theorem}[McMullen \cite{McM}, Chakerian-Filliman \cite{CF}]\label{thm:McMullen}
Let $1 \leq k \leq n$. For every $k$-dimensional subspace $H$ in $\R^n$, we have
\[
\vol_k(\proj_H(Q_n)) = \vol_{n-k}(\proj_{H^\perp}(Q_n)).
\]
\end{theorem}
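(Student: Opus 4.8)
The plan is to exploit the zonotope structure of cube projections together with Shephard's volume formula, which has already been recorded in the excerpt. Fix a $k$-dimensional subspace $H$ and pick an orthonormal basis of $H$; assemble it into the rows of a $k\times n$ matrix $A$, so that the columns $v_1,\dots,v_n\in\R^k$ are the projections of the standard basis vectors and $\proj_H(Q_n)=\sum_{j=1}^n[-\tfrac12 v_j,\tfrac12 v_j]$ up to the harmless translation. By Shephard's formula,
\[
\vol_k(\proj_H(Q_n)) = \sum_{1\le j_1<\dots<j_k\le n} \bigl|\det[v_{j_1}\ \cdots\ v_{j_k}]\bigr|,
\]
i.e. the sum of the absolute values of all $k\times k$ minors of $A$. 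In exactly the same way, choosing an orthonormal basis of $H^\perp$ and forming the $(n-k)\times n$ matrix $B$, the quantity $\vol_{n-k}(\proj_{H^\perp}(Q_n))$ equals the sum of the absolute values of all $(n-k)\times(n-k)$ minors of $B$. So the theorem reduces to a purely linear-algebraic statement: if $A$ and $B$ have orthonormal rows spanning complementary subspaces, then the sum of absolute values of the maximal minors of $A$ equals that of $B$.

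The key step is the classical duality between complementary minors of an orthogonal matrix. Stack $A$ on top of $B$ to get an $n\times n$ orthogonal matrix $O$ (its rows form an orthonormal basis of $\R^n$ adapted to $H\oplus H^\perp$). For a $k$-subset $S\subseteq\{1,\dots,n\}$ write $S^c$ for its complement; I would invoke the identity
\[
\det\bigl(O[\{1,\dots,k\},S]\bigr) = \pm\,\det\bigl(O[\{k+1,\dots,n\},S^c]\bigr),
\]
valid for any orthogonal $O$ (the sign depending only on $S$), which is a standard consequence of $\det O=\pm1$ together with the cofactor/adjugate relation — equivalently, it follows from the Cauchy–Binet-type fact that a matrix and its inverse-transpose have complementary minors equal up to sign, specialised to $O^{-1}=O^\top$. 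The left-hand minor is exactly a maximal minor of $A$ indexed by $S$, and the right-hand minor is exactly a maximal minor of $B$ indexed by $S^c$. Taking absolute values and summing over all $k$-subsets $S$ gives
\[
\sum_{|S|=k}\bigl|\det A[\,\cdot\,,S]\bigr| \;=\; \sum_{|S|=k}\bigl|\det B[\,\cdot\,,S^c]\bigr| \;=\; \sum_{|T|=n-k}\bigl|\det B[\,\cdot\,,T]\bigr|,
\]
which is precisely $\vol_k(\proj_H(Q_n))=\vol_{n-k}(\proj_{H^\perp}(Q_n))$.

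The main obstacle is making the complementary-minor identity fully precise and self-contained: one has to be careful that $A$ and $B$ genuinely assemble into a single orthogonal matrix (this uses that $H$ and $H^\perp$ are orthogonal complements, not merely a direct sum), and one should pin down that the sign discrepancy depends only on the index set $S$ and not on $O$, so that it disappears under absolute values. I would prove the identity by writing $O^\top = O^{-1} = \mathrm{adj}(O)/\det O$ and reading off the $(S^c,\{k+1,\dots,n\})$ block of the adjugate as $\pm$ the complementary cofactor, or alternatively by the slick route of exterior algebra: the Hodge star intertwines $\Lambda^k$ and $\Lambda^{n-k}$ and commutes with the action of $O\in SO(n)$ (and changes by a global sign for $O\in O(n)\setminus SO(n)$), so the two families of Plücker coordinates agree up to signs depending only on the orientation of the coordinate $k$-subsets. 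Everything else — Shephard's formula, the Cauchy–Binet normalisation — is already available in the excerpt, so once the minor duality is nailed down the proof is immediate.
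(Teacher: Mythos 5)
Your proof is correct and follows essentially the same route the paper indicates: Shephard's zonotope formula (already recorded in the survey before Theorem~\ref{thm:ChakFill}) combined with the complementary-minor (Jacobi) identity for the orthogonal matrix assembled from orthonormal bases of $H$ and $H^\perp$. The survey only cites this approach without supplying details, and your write-up fills them in accurately, including the correct handling of the signs, which indeed disappear under the absolute values.
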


This has been found by McMullen and independently by Chakerian and Filliman, using the same approach based on Shephard's formula. In particular, sharp bounds on volumes of $k$-dimensional projections are equivalent to those on $(n-k)$-dimensional ones.

\subsection{Simplex}
Recall that $\Delta_n$ is the $n$-dimensional regular simplex with edge length $\sqrt{2}$, assuming for convenience in this section that $\Delta_n$ is embedded in $\R^n$. The projections of the regular simplex onto certain orthogonal complementary subspaces are conjectured to yield minimal and maximal volume, in huge contrast to the cube, where we have seen in Theorem \ref{thm:McMullen} that such projections always have the \emph{same} volume.

\begin{conjecture}[Filliman \cite{Fil-simplex}]
Fix $1 \leq k \leq n$. Let $H_*$ be a $k$-dimensional subspace in $\R^n$ such that $T_* = \proj_{H_*}(\Delta_n)$ is a $k$-dimensional simplex, with the vertices of $\Delta_n$ projecting only onto the vertices of $T_*$, as \emph{evenly} as possible: for each $i \leq k+1$, letting $w_i$ be the number of vertices of $\Delta_n$ projecting onto vertex $i$ of $T_*$, we have that 
\[
w_i = \begin{cases}\ell+1, & 1 \leq i \leq r, \\ \ell, & r < i \leq k+1, \end{cases}
\]
where we divide $n+1$ by $k+1$ with the remainder $r \in \{0, \dots, k\}$, $n+1 = (k+1)\ell + r$.
Then
\[
\min_{\substack{H \subset \R^n \\ \dim H = k}} \vol_k(\proj_H(\Delta_n)) = \vol_k(T_*).
\] 
Moreover, the polytope $T^* = \proj_{H_*^\perp}(\Delta_n)$ is conjectured to maximise the volume of projections onto the $n-k$ dimensional subspaces,
\[
\max_{\substack{H \subset \R^n \\ \dim H = n-k}} \vol_k(\proj_H(\Delta_n)) = \vol_{n-k}(T^*).
\]
\end{conjecture}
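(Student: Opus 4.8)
As this final statement is an open conjecture, what follows is a plausible line of attack together with the point at which it currently stalls.

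\emph{Step 1: the codimension-one instances.} These are the cases $k=n-1$ (paired with $n-k=1$) and $k=1$ (paired with $n-k=n-1$), which Cauchy's formula makes explicit. Embed $\Delta_n$ in $\R^n$ with barycentre at the origin and let $v_1,\dots,v_{n+1}\in S^{n-1}$ be the outer unit normals of its facets; by symmetry $\scal{v_i}{v_j}=-\frac1n$ for $i\neq j$, so $\sum_i v_iv_i^\top=\frac{n+1}{n}I_n$, and the surface measure is $\sigma_{\Delta_n}=c\sum_{i=1}^{n+1}\delta_{v_i}$ with $c$ the common facet volume. Theorem~\ref{thm:Cauchy-Mink} then gives
\[
\vol_{n-1}\big(\proj_{\theta^\perp}(\Delta_n)\big)=\frac{c}{2}\sum_{i=1}^{n+1}|\scal{\theta}{v_i}|,\qquad \theta\in S^{n-1},
\]
so maximising (resp.\ minimising) hyperplane projections amounts to maximising (resp.\ minimising) the norm $\theta\mapsto\sum_i|\scal{\theta}{v_i}|$ on the sphere. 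For the maximum, Cauchy--Schwarz and $\sum_iv_iv_i^\top=\frac{n+1}{n}I_n$ give the ceiling $\sum_i|\scal{\theta}{v_i}|\le\frac{n+1}{\sqrt n}$, with equality only if all $|\scal{\theta}{v_i}|$ are equal; identifying which $\theta$ gets closest to this ceiling (and checking it is the direction slicing $\Delta_n$ into the evenly split segment $T_*$ of the $k=1$ picture) is a finite sign-pattern analysis. For the minimum, the sublevel set $\{\sum_i|\scal{\theta}{v_i}|\le1\}$ is the polar of the zonotope $\sum_i[-v_i,v_i]$, whose Euclidean-shortest boundary vector lies on one of its faces; the face to beat is the one dual to an edge direction $\theta\propto v_p-v_q$, along which $\scal{\theta}{v_i}=0$ for every $i\neq p,q$ and the sum equals $\sqrt{2(n+1)/n}<2$, with image an $(n-1)$-simplex carrying weights $(2,1,\dots,1)$ — again a finite case check, of the flavour of the local-extremum analyses in \cite{IT, Amb, Kon-noncent}. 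The partner claims then fall out: $\proj_{H_*^\perp}(\Delta_n)$ in the $k=n-1$ picture is the segment between the two endpoints of that edge, whose length is the diameter $\sqrt2$ of $\Delta_n$, trivially the longest $1$-dimensional shadow; and $\proj_{H_*^\perp}(\Delta_n)$ in the $k=1$ picture is exactly the hyperplane projection realising the $\frac{n+1}{\sqrt n}$-type ceiling.

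\emph{Step 2: general $k$, variational reduction.} There is no analogue of Shephard's decomposition formula for projections of a simplex, so I would argue variationally: parametrise a $k$-subspace by a matrix $P\in\R^{k\times n}$ with $PP^\top=I_k$, write $p_1,\dots,p_{n+1}$ for the vertices of $\Delta_n$, and study the continuous semialgebraic functional $P\mapsto\vol_k\big(\conv\{Pp_1,\dots,Pp_{n+1}\}\big)$ on the compact Grassmannian. A minimiser exists; the task is to prove it has the conjectured shape, in three steps: (i) the $n+1$ images $Pp_i$ occupy only $k+1$ distinct points; (ii) those points are affinely independent, so the hull is a $k$-simplex $T_*$; (iii) the multiplicities are as balanced as possible. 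Granting (i)--(ii), step (iii) should be a Schur-convexity computation: the squared volume of $T_*$ can be written through the Gram matrix of the $v_i$ restricted to $H^\perp$ together with the multiplicity vector, and decreasing volume by equalising multiplicities is a ``moving mass to where it is beneficial'' argument in the spirit of the proof of Theorem~\ref{thm:Had-Hen}. The maximisation half, $\max_{\dim H=n-k}\vol_{n-k}(\proj_H(\Delta_n))=\vol_{n-k}(T^*)$, would be attacked by the same machinery run in the opposite direction, pushing the images apart. It is worth stressing that the two halves are genuinely independent: unlike the cube (Theorem~\ref{thm:McMullen}), complementary projections of the simplex have different volumes, so the ``moreover'' clause linking $H_*$ to $H_*^\perp$ is something one hopes to \emph{read off} after characterising both extremal subspaces, not to deduce from a symmetry.

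\emph{The main obstacle.} The crux is step 2(i): showing that at an extremal subspace the $n+1$ vertices must \emph{collapse} onto exactly $k+1$ image points. An infinitesimal-rotation argument only shows that the images cannot sit in ``too generic'' position; it does not rule out a combinatorially richer projected polytope that is still a local volume minimiser, and this gap is precisely why the conjecture remains open even for $k$ close to $n$ and large $n$. Getting past it would seem to require one of: a global convexity principle for volumes of projected simplices — a lower-bound counterpart of the geometric Brascamp--Lieb input that drives the section \emph{upper} bounds of Theorems~\ref{thm:ball-codim>1} and~\ref{thm:Barthe}, which as stated controls the wrong direction; a Filliman-type reduction transporting the problem to a tractable family of zonotopes or cones, echoing \cite{Fil1} and Fradelizi's cones-in-isotropic-position argument \cite{Fra}; or a degree-theoretic argument in the spirit of Akopyan--Hubard--Karasev~\cite{AHK}. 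Absent such an ingredient I would expect only $k\in\{1,2,n-2,n-1\}$, and small $n$, to be within reach of the plan above.
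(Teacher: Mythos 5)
This statement is an open conjecture: the survey records it without proof, noting only that Filliman \cite{Fil-simplex} verified it in the cases $k=1,2,n-1$ (and a few sporadic ones) using the exterior-algebra machinery of \cite{Fil2}. You correctly treat it as open rather than pretending to a proof, which is the right call; there is no argument in the paper to compare yours against.

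Your Step 1 is essentially sound and recovers the two codimension-one cases that are actually known. The computation $\scal{v_i}{v_j}=-\frac1n$, $\sum_i v_iv_i^\top=\frac{n+1}{n}I_n$, and the identification of $\min_{|\theta|=1}\sum_i|\scal{\theta}{v_i}|$ with the inradius of the zonotope $\sum_i[-v_i,v_i]$ (whose facet normals are exactly the edge directions $v_p-v_q$, each facet lying at distance $\sqrt{2(n+1)/n}$ from the origin) does give a clean proof of the $k=n-1$ minimum, with the image simplex carrying multiplicities $(2,1,\dots,1)$ as conjectured; and the $1$-dimensional maximum is indeed just the diameter $\sqrt2$. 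One small caveat on the hyperplane maximum: since $\max_{|\theta|=1}\sum_i|\scal{\theta}{v_i}|=\max_{\e\in\{-1,1\}^{n+1}}\big|\sum_i\e_iv_i\big|$ and $\big|\sum_i\e_iv_i\big|^2=\frac{(n+1)^2-(\sum_i\e_i)^2}{n}$, the Cauchy--Schwarz ceiling $\frac{n+1}{\sqrt n}$ is attained only when $n+1$ is even; for $n+1$ odd the maximum is $\sqrt{n+2}$, attained at nearly balanced sign vectors --- still consistent with the conjectured balanced splitting, but your phrase ``gets closest to this ceiling'' is doing real work there. Your Step 2 is a reasonable programme, and you have put your finger on the genuine obstruction: nothing currently forces the $n+1$ projected vertices of an extremal subspace to collapse onto only $k+1$ points, and no McMullen-type duality (Theorem \ref{thm:McMullen}) is available to transfer information between $H_*$ and $H_*^\perp$. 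That gap is precisely why the conjecture remains open outside the cases Filliman settled, so your proposal should be read as a correct situational analysis of an open problem, not as a proof.
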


Filliman developed exterior algebra techniques in \cite{Fil2} and used them in \cite{Fil-simplex} to confirm this conjecture in the following cases, for the minimum: $k=1, 2, n-1$ and arbitrary $n$, as well as $n \leq 6$ and arbitrary $k$, and for the maximum: $k = 1, 2, n-1$ and arbitrary $n$, $k=n-2$ and $n \leq 8$, as well as $k=3$ and $n=6$.

\subsection{Cross-polytope} 
In view of Cauchy's formula from Theorem \ref{thm:Cauchy-Mink}, the volume of hyperplane projections of the cross-polytope $B_1^n$ admits a natural probabilistic expression. Since it has $2^n$ congruent (simplicial) facets of $(n-1)$-dimensional volume $\frac{\sqrt{n}}{(n-1)!}$ with outer-normals $\frac{1}{\sqrt{n}}(\pm 1, \dots, \pm 1)$, for every unit vector in $\R^n$, we have
\[
\vol_{n-1}(\proj_{a^\perp}(B_1^n)) = \frac{1}{2(n-1)!}\sum_{\e \in \{-1,1\}^n} |\scal{a}{\e}| = \frac{2^{n-1}}{(n-1)!}\E\left|\sum_{j=1}^n a_j\e_j\right|,
\]
where the expectation is over independent random signs $\e_j$, $\p{\e_j = \pm 1} = \frac12$. Given the constraint $|a| = 1$, the question about extremal volume projections thus becomes that of finding best constants $c, C$ in the homogeneous inequalities
\begin{equation}\label{eq:Khinchin}
c\left(\E\left|\sum a_j\e_j\right|^2\right)^{1/2} \leq \E\left|\sum a_j\e_j\right| \leq C\left(\E\left|\sum a_j\e_j\right|^2\right)^{1/2}.
\end{equation}
Such $L_p$-moments comparison inequalities go back to Khinchin's work \cite{Kh} on the law of the iterated logarithm. This motivated and should be  contrasted with an analogous probabilistic view-point on sections from Theorem \ref{thm:CKT}, where instead of the $L_{1}$-norm, we have the limit of $L_{q}$-norm as $q \downarrow -1$.
A sharp upper bound follows easily from Jensen's inequality,
\[
\E\left|\sum a_j\e_j\right| \leq \left(\E\left|\sum a_j\e_j\right|^2\right)^{1/2} = |a| = 1,
\]
attained if and only if $a = \pm e_i$ for some $i \leq n$, that is the maximum-volume projection occurs at precisely coordinate subspaces. The reverse inequality is much deeper: in a different context, a sharp lower bound had been conjectured to be attained at vectors $a = \frac{\pm e_i \pm e_j}{\sqrt{2}}$, $i \neq j$ by Littlewood in \cite{Lit} (cf. Ball's extremiser from Theorem \ref{thm:ball-codim1}), proved much later by Szarek in \cite{Sza}, with subsequently simplified and quite different proofs \cite{Haa, LO-best, Tom}. We state it here rephrased in terms of volumes of projections, together with the simple upper bound.

\begin{theorem}[Szarek \cite{Sza}]\label{thm:Szarek}
Let $n \geq 2$. For every unit vector in $\R^n$, we have
\[
\frac{1}{\sqrt{2}}\vol_{n-1}(B_1^{n-1}) \leq \vol_{n-1}(\proj_{a^\perp}(B_1^n)) \leq \vol_{n-1}(B_1^{n-1}).
\]
The lower bound is attained if and only if $a = \frac{\pm e_i \pm e_j}{\sqrt{2}}$ for some $i \neq j$, whilst the upper bound, if and only if $a = \pm e_i$ for some $i$.
\end{theorem}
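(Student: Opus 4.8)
The plan is to reduce the statement about hyperplane projections of $B_1^n$ to the probabilistic Khinchin-type inequality \eqref{eq:Khinchin} via the Cauchy--Minkowski formula, and then cite/reprove the two bounds in that language. First I would set up the geometry: the facets of $B_1^n$ are the $2^n$ simplices $\conv\{\e_1 e_1, \dots, \e_n e_n\}$ for $\e \in \{-1,1\}^n$, each of $(n-1)$-volume $\frac{\sqrt n}{(n-1)!}$ (the face in the positive orthant is the regular simplex with vertices $e_1,\dots,e_n$, whose $(n-1)$-volume is $\frac{\sqrt n}{(n-1)!}$, and all others are congruent to it), with outer unit normals $\frac{1}{\sqrt n}(\e_1,\dots,\e_n)$. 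Plugging these into Theorem~\ref{thm:Cauchy-Mink} gives
\[
\vol_{n-1}(\proj_{a^\perp}(B_1^n)) = \frac12 \sum_{\e \in \{-1,1\}^n} \frac{\sqrt n}{(n-1)!}\left|\scal{a}{\tfrac{\e}{\sqrt n}}\right| = \frac{2^{n-1}}{(n-1)!}\,\E\Big|\sum_{j=1}^n a_j\e_j\Big|,
\]
where $\e_1,\dots,\e_n$ are i.i.d.\ Rademacher. Since $\vol_{n-1}(B_1^{n-1}) = \frac{2^{n-1}}{(n-1)!}$, the claimed two-sided bound on $\vol_{n-1}(\proj_{a^\perp}(B_1^n))$ is \emph{exactly} equivalent to
\[
\frac{1}{\sqrt 2} \le \E\Big|\sum_{j=1}^n a_j\e_j\Big| \le 1 \qquad \text{for all unit vectors } a,
\]
together with the corresponding equality characterizations.

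For the upper bound, Jensen's inequality applied to the concave function $t\mapsto\sqrt t$ gives $\E|\sum a_j\e_j| \le (\E|\sum a_j\e_j|^2)^{1/2} = (\sum a_j^2)^{1/2} = |a| = 1$, with equality iff $|\sum a_j\e_j|$ is a.s.\ constant, which (since it takes the value $\sum |a_j|$ with positive probability when all signs agree, and can take smaller values unless at most one $a_j$ is nonzero) forces $a = \pm e_i$. For the lower bound --- the deep half --- I would invoke Szarek's theorem \cite{Sza} (equivalently the sharp $L_1$--$L_2$ Khinchin constant $1/\sqrt2$), whose cleanest modern proofs are due to Haagerup \cite{Haa}, Lata\l{}a--Oleszkiewicz \cite{LO-best}, and Tomaszewski \cite{Tom}. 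If one wants to sketch an argument rather than black-box it, the Lata\l{}a--Oleszkiewicz route is the most self-contained: one reduces, by a perturbation/extreme-point argument on the simplex $\{(a_1^2,\dots,a_n^2): a_j\ge 0, \sum a_j^2 = 1\}$, to showing that the minimum of $a \mapsto \E|\sum a_j\e_j|$ is attained at $a = (\tfrac1{\sqrt2},\tfrac1{\sqrt2},0,\dots,0)$, using convexity of $t \mapsto \E|\sum_{j\le n-1} a_j\e_j + t\e_n|$ in the last coordinate to push mass to the boundary, and then handling the resulting low-dimensional cases directly; the value there is $\E|\tfrac{\e_1+\e_2}{\sqrt2}| = \tfrac1{\sqrt2}$.

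The main obstacle is, unsurprisingly, the sharp lower bound: the Khinchin $1/\sqrt2$ constant is genuinely hard, and none of the known proofs is short --- Haagerup's rests on delicate properties of the Gamma function and an integral representation of $\E|\sum a_j\e_j|$, while the combinatorial/convexity proofs require a careful induction and case analysis near the extremizer. For a survey it is most honest to state the equivalence with \eqref{eq:Khinchin} cleanly, give the one-line Jensen argument for the upper bound, and then cite \cite{Sza, Haa, LO-best, Tom} for the lower bound, noting that the extremizers $a = \frac{\pm e_i \pm e_j}{\sqrt2}$ match Ball's cube-slicing extremizer of Theorem~\ref{thm:ball-codim1} --- the parallel being the ``dual'' phenomenon flagged in the prologue. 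The equality characterization for the lower bound also comes out of (any of) those proofs, the point being that $(\tfrac12,\tfrac12,0,\dots,0)$ is the unique minimizer of the relevant Schur-concave-type functional on the simplex of squared coordinates.
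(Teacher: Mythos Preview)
Your reduction via the Cauchy--Minkowski formula and the Jensen argument for the upper bound are correct and are exactly what the paper does in the paragraphs preceding the theorem. So the skeleton of your proposal matches the paper.

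The issue is your sketch of the lower bound. What you describe as ``the Lata\l a--Oleszkiewicz route'' is not their argument at all. Their proof (which the paper reproduces in full in Section~\ref{sec:methods}) uses the discrete Laplacian $\mathcal{L}$ on $\{-1,1\}^n$: one shows the pointwise bound $-\mathcal{L}f \le f$ for $f(\e) = |\sum a_j\e_j|$ via the triangle inequality, combines it with a Poincar\'e-type inequality $\scal{g}{-\mathcal{L}g} \ge 2\Var(g)$ valid for even $g$, and reads off $(\E f)^2 \ge \tfrac12 \E f^2$. There is no extreme-point reduction or induction on the number of nonzero coordinates. Separately, the paper also presents Haagerup's Fourier-analytic proof (the integral representation $|x| = \tfrac1\pi\int (1-\cos tx)\,t^{-2}\,\dd t$, AM--GM, and an explicit product formula for the resulting one-variable function).

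More seriously, the argument you do sketch has a gap. The map $t \mapsto \E|\sum_{j<n} a_j\e_j + t\e_n|$ is indeed convex in $t$, but you are minimising under the constraint $\sum a_j^2 = 1$, not on a segment, so ``pushing mass to the boundary'' does not follow. And the final claim that $(a_1^2,\dots,a_n^2) \mapsto \E|\sum a_j\e_j|$ is a ``Schur-concave-type functional'' with unique minimiser $(\tfrac12,\tfrac12,0,\dots,0)$ is false as stated: if it were Schur-concave the minimum on the simplex would sit at a vertex (value $1$), and if Schur-convex at the centroid (value $\to \sqrt{2/\pi}$); it is neither. The actual minimiser is an interior point of a face, which is precisely why the lower bound is delicate. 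If you want to sketch a proof rather than cite, use one of the two the paper gives.
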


A stability version has been derived by De, Diakonikolas and Servedio in \cite{DDS} (see also \cite{MR} for a local statement with explicit constants).

Much less is known in higher codimensions. In analogy to Vaaler's Theorem \ref{thm:vaaler} for the cube, it is natural to conjecture that maximal-volume projections of the cross-polytope $B_1^n$ occur at coordinate subspaces.

\begin{conjecture}\label{conj:max-proj-cross-poly}
Fix $1 \leq k \leq n$. For every $k$-dimensional subspace $H$ in $\R^n$, we have
\[
\vol_k(\proj_H(B_1^n)) \leq \vol_k(B_1^k).
\]
\end{conjecture}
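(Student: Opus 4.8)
Towards this conjecture, one can proceed as follows. Fix a $k$-dimensional subspace $H \subset \R^n$ and set $c_j = \proj_H(e_j) \in H$, so that $\proj_H(B_1^n) = \conv\{\pm c_1, \dots, \pm c_n\} =: P$ is a symmetric polytope in $H \cong \R^k$, and the $c_j$ satisfy the tight-frame identity $\sum_{j=1}^n c_jc_j^\top = \mathrm{Id}_H$; in particular $\sum_{j=1}^n |c_j|^2 = \tr(\proj_H) = k$. Writing $P$ as the union of the cones over its facets and triangulating the facets (as in the discussion after Theorem~\ref{thm:Cauchy-Mink}) one gets $\vol_k(P) = \tfrac1{k!}\sum|\det[w_{i_1}\cdots w_{i_k}]|$, the sum over the $k$-tuples of sign-decorated generators that span a facet-simplex of $P$; Cauchy--Binet moreover gives $\sum_{|S|=k}|\det[c_{j_1}\cdots c_{j_k}]|^2 = \det(\sum_j c_jc_j^\top) = 1$. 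Thus the conjecture is equivalent to the statement that $\sum_{\text{facet-simplices}}|\det| \le 2^k$ for every tight frame, with equality exactly for the coordinate frame $\{\pm e_j\}$, where a single $k$-subset carries all $2^k$ sign patterns.

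For $k=2$ this runs through directly. If $v_1, \dots, v_m, -v_1, \dots, -v_m$ are the vertices of $P$ listed cyclically, then each $v_i$ equals one of $\pm c_{j_i}$ with the $j_i$ pairwise distinct (as $0$ is interior to $P$), so $\sum_{i=1}^m|v_i|^2 \le \sum_j |c_j|^2 = 2$; triangulating $P$ from the origin and applying $|\det[u\ w]|\le|u||w|$ followed by the arithmetic--geometric mean inequality cyclically yields $\vol_2(P) = \sum_i|v_i\times v_{i+1}| \le \sum_i|v_i||v_{i+1}| \le \sum_{i=1}^m|v_i|^2 \le 2 = \vol_2(B_1^2)$, with equality forcing consecutive vertices to be orthogonal of equal length, i.e. $P$ a rotated $B_1^2$ and $H=\mathrm{span}(e_i,e_j)$. (The codimension-one case $n=k+1$ is the elementary upper bound of Theorem~\ref{thm:Szarek}.) It is tempting to try duality instead: within $H$ one has $(\proj_H B_1^n)^\circ = B_\infty^n\cap H$, so the conjecture is about the polar of a body controlled by Vaaler's Theorem~\ref{thm:vaaler}; but inserting Vaaler's sharp lower bound $\vol_k(B_\infty^n\cap H)\ge2^k$ into the Blaschke--Santal\'o inequality only gives $\vol_k(\proj_H B_1^n)\le\vol_k(B_2^k)^2/2^k$, which already exceeds $2^k/k!$ for $k=2$. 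The missing ingredient is an \emph{upper} bound on the Mahler volume of $\proj_H B_1^n$, so one must argue with the frame directly.

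For general $k$ the plan is to imitate the $k=2$ computation: bound $|\det[w_{i_1}\cdots w_{i_k}]|\le\prod_\ell|w_{i_\ell}|$ by Hadamard and redistribute via the arithmetic--geometric mean inequality, which reduces the target to an inequality of the form $\sum_{\text{vertices }w}\deg^{*}(w)\,|w|^k\le k\,2^k$, where $\deg^{*}(w)$ is the number of facet-simplices meeting $w$; every quantity is in balance for $B_1^k$ itself, so the content is that no other tight frame beats it. Absent a usable closed form, the honest routes seem to be (i) a variational argument --- $\vol_k$ is continuous on the compact set of tight frames, hence attains its maximum, and one analyzes the Lagrange conditions to exclude every critical configuration but the coordinate frames --- or (ii) induction on $n$: since $n>k$ the $c_j$ are dependent, and one would ``merge'' dependent generators to decrease $n$ while preserving $\sum_jc_jc_j^\top$ and not decreasing $\vol_k(P)$, down to $n=k$. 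The main obstacle, either way, is that --- unlike the cube, whose projections are zonotopes governed by Shephard's formula and a single Cauchy--Binet constraint --- there is no formula for $\vol_k(\conv\{\pm c_j\})$ polynomial in the $c_j$: the combinatorial type of $P$ intervenes, and the incidences $\deg^{*}(w)$ are unbounded as $n$ grows, so the naive Hadamard/AM--GM bookkeeping leaks badly for $k\ge3$; in the variational picture $\vol_k$ is only piecewise polynomial, so the critical-point analysis must proceed type by type, while the inductive picture demands a delicate monotonicity lemma --- a higher-dimensional Khinchin-type statement --- for how the facet structure, hence the volume, reacts to a merge. Reconciling the constraint $\sum_jc_jc_j^\top=\mathrm{Id}$ with this combinatorics is exactly what the conjecture seems to demand, and is why it remains open for $3\le k\le n-2$.
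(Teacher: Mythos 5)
This is Conjecture~\ref{conj:max-proj-cross-poly}, which the paper presents as open, so there is no paper proof to compare against --- and to your credit you do not claim to close it. Your reformulation is the right one: with $c_j=\proj_H(e_j)$ the projection is $\conv\{\pm c_j\}$, the tight-frame identity $\sum_j c_jc_j^\top=\mathrm{Id}_H$ gives $\sum_j|c_j|^2=k$, and Cauchy--Binet gives $\sum_{|S|=k}|\det[c_{j_1}\cdots c_{j_k}]|^2=1$; this is exactly the frame-theoretic setting used in the literature. Your $k=2$ argument is correct: triangulating from the origin, $\vol_2(P)=\sum_{i=1}^m|v_i\times v_{i+1}|$ (read with $v_{m+1}=-v_1$), Hadamard gives $\le\sum_i|v_i||v_{i+1}|$, the cyclic AM--GM gives $\le\sum_i|v_i|^2\le\sum_j|c_j|^2=2=\vol_2(B_1^2)$, and the equality analysis correctly forces a pair of orthogonal unit vectors among the $c_j$ with all others zero, i.e.\ $H$ a coordinate plane. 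This recovers the known $k=2$ case due to Filliman~\cite{Fil-largest} and Ivanov~\cite{Iva}. Your remark that duality through Vaaler's theorem plus Blaschke--Santal\'o overshoots is also right, and for the right reason: one would need the Mahler volume of $\proj_H B_1^n$ to sit at the conjectural Mahler minimum, which fails for generic $H$, so that route is genuinely closed.

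One factual correction: the paper records that Ivanov~\cite{Iva} has also settled $k=3$ for arbitrary $n$ (by perturbation methods for frames, with Filliman~\cite{Fil-largest} having earlier reduced $k=3$ to finitely many $n$), so the open range is $4\le k\le n-2$, not $3\le k\le n-2$. Beyond that, your diagnosis of the obstruction is accurate and worth keeping: unlike the cube, whose projections are zonotopes with Shephard's polynomial volume formula, there is no such formula for $\vol_k(\conv\{\pm c_j\})$, the volume depends on the combinatorial type of the polytope, and the vertex--facet incidence counts are unbounded in $n$, so the naive Hadamard/AM--GM bookkeeping does leak for $k\ge3$. The two programmes you sketch --- a type-by-type Lagrange analysis on the compact tight-frame variety, or a merging induction on $n$ requiring a monotonicity lemma for how the facet structure and volume respond to a merge --- are the natural candidates and match the approaches that have succeeded in low dimensions. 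In short: a sound partial result plus an honest assessment, not a proof, which is the correct outcome for a conjecture.
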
 

This conjecture has appeared in this generality in Ivanov's work \cite{Iva}, who has confirmed it for $k = 2, 3$ and arbitrary $n$, using perturbation methods for frames (see also \cite{Iva-frames}). Earlier, Filliman in \cite{Fil-largest}, established the same for $k=2$, using different, more algebraic methods of his work \cite{Fil2}, also reducing the case of $k=3$ with arbitrary $n$ to $n \leq 42$. For minimal-volume projections, we have the following \emph{dual} analogue of Conjecture \ref{conj:min-sec-cube} for the cube.

\begin{conjecture}[Ivanov \cite{Iva}]\label{conj:min-proj-cross-poly}
Fix $1 \leq k \leq n$. Let $H^*$ be the $k$-dimensional subspace from Conjecture \ref{conj:min-sec-cube}. For every $k$-dimensional subspace $H$ in $\R^n$, we have
\[
\vol_k(\proj_H(B_1^n)) \geq \vol_k(\proj_{H^*}(B_1^n)).
\]
\end{conjecture}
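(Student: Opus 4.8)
The natural strategy is to mimic the probabilistic/Khinchin-type treatment that works so well for the hyperplane case in Theorem~\ref{thm:Szarek}, pushing it to higher codimension via Cauchy's formula (Theorem~\ref{thm:Cauchy-Mink}) or rather its zonoid-theoretic extension. First I would set up the combinatorial formula for the volume of a $k$-dimensional projection of the cross-polytope. Writing $V\colon\R^k\to\R^{n}$ for the linear map whose rows form an orthonormal basis of $H$, so $V^\top V = I_k$, and letting $v_1,\dots,v_n\in\R^k$ be the columns of $V$, the projection $\proj_H(B_1^n)$ is the symmetric convex hull $\conv\{\pm v_1,\dots,\pm v_n\}$. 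Its volume does \emph{not} have a clean Shephard-type product formula (that is special to zonotopes/projections of the cube), which is precisely why minimal projections of $B_1^n$ are harder than maximal projections of $Q_n$. Instead I would work with the polar body: $\proj_H(B_1^n)^\circ = \{y\in H : |\scal{y}{v_j}|\le 1 \ \forall j\}$, an intersection of $2n$ slabs, and use the Meyer--Pajor/Ball--type machinery, or alternatively represent $\vol_k(\proj_H(B_1^n))$ directly through the surface-area measure: $\proj_H(B_1^n)$ is a polytope whose facets are projections of facets of $B_1^n$, so Cauchy's formula in $H$ gives its volume as an average of $(k-1)$-volumes of the projected simplicial facets over the $2^n$ sign patterns $\e\in\{-1,1\}^n$.

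The key step is then a \emph{majorization/Schur-convexity} reduction analogous to Koldobsky's Theorem~\ref{thm:Kol}. One would like to show that the functional $H\mapsto \vol_k(\proj_H(B_1^n))$, when expressed through the Gram-type data $\{\scal{v_i}{v_j}\}$ or through the ``frame coefficients'' of the rows of $V$, is minimized exactly at the configuration $H^*$ where the orthonormal frame $\{v_1,\dots,v_n\}$ is the maximally balanced block-partition frame dual to Conjecture~\ref{conj:min-sec-cube} — i.e. the $v_j$ come in $k$ groups of nearly equal size $\ell$ or $\ell+1$, with all vectors in the $i$th group equal to a fixed multiple of $e_i$. Concretely, for such $H^*$ the projection is an affine cross-polytope (a linear image of $B_1^k$ scaled coordinatewise), and its volume is $\frac{2^k}{k!}\prod_{i=1}^k w_i^{-1/2}$ with $w_i\in\{\ell,\ell+1\}$. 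The plan is to establish a variational/perturbation inequality: any deformation of the frame that unbalances the weights (or tilts a vector away from a coordinate direction) strictly increases the projection volume. This is exactly the frame-perturbation method Ivanov uses for $k=2,3$; the obstacle is making it global rather than local.

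Therefore \textbf{the main obstacle} is the lack of a usable closed form for $\vol_k(\conv\{\pm v_1,\dots,\pm v_n\})$ for general $k$: unlike Shephard's formula for zonotopes, the volume of a symmetric convex hull of $2n$ points in $\R^k$ is a genuinely complicated piecewise-polynomial function of the points (it involves the triangulation into simplices, i.e. the face structure of the polytope, which itself changes along the deformation). Any proof must either (a) control how the facet structure of $\proj_H(B_1^n)$ evolves under frame perturbations and check the volume derivative has the right sign on each cell, or (b) find a slick integral representation — e.g. writing $\vol_k^{-1}$ or some power of the volume as a positive integral against the slabs defining the polar body, so that a Gaussian-mixture or Brascamp--Lieb argument applies. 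Route (b) would dovetail with the methods flagged after Theorem~\ref{thm:Kol} and with Eskenazis' Theorem~\ref{thm:Esk}, but it is unclear whether the relevant transform is sign-definite for $k\ge 4$. I expect that, as with Conjecture~\ref{conj:min-sec-cube} itself, a complete proof for all $k$ is out of reach with present techniques, and the realistic deliverable is the cases $k=1,2,3$ (recovering Filliman and Ivanov) together with the asymptotic regime $k$ fixed, $n\to\infty$, where the balanced frame $H^*$ and the ``all equal'' frame $H_{(1/\sqrt n,\dots,1/\sqrt n)}$ become comparable and one can hope to certify minimality by a stability argument around the limiting configuration.
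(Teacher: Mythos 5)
This statement is an open conjecture: the paper does not prove it, and records only that the case $k=2$ has been confirmed by Ivanov. Your proposal is candid about this and is best read as a research program rather than a proof, so the verdict has to be that there is a genuine gap --- namely, the entire central step. The majorization/Schur-convexity reduction you invoke by analogy with Theorem~\ref{thm:Kol} is not available here: the Gaussian-mixture mechanism behind Koldobsky's result produces a formula of the shape $\E(\sum a_j^2 R_j)^{-1/2}$ only for \emph{hyperplane} sections (codimension one), and for $k$-dimensional projections of $B_1^n$ there is no known sign-definite integral representation of $\vol_k(\conv\{\pm v_1,\dots,\pm v_n\})$ to which a convexity or Choquet-ordering argument could be applied. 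You correctly identify this as the main obstacle, but identifying an obstacle is not the same as overcoming it; the ``global rather than local'' variational inequality you ask for is precisely the open problem. That said, your setup is accurate and consistent with what is known: the identification of $\proj_H(B_1^n)$ with the symmetric convex hull of the frame vectors $v_1,\dots,v_n$, the observation that Shephard's product formula is special to zonotopes, and the computation that $\proj_{H^*}(B_1^n)$ is a coordinate-scaled cross-polytope of volume $\frac{2^k}{k!}\prod_{i=1}^k w_i^{-1/2}$ are all correct.

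Two smaller corrections. First, your map $V$ is described inconsistently: if $V\colon\R^k\to\R^n$, its matrix is $n\times k$ and its columns lie in $\R^n$; you want the $k\times n$ matrix whose rows form an orthonormal basis of $H$, so that its columns $v_1,\dots,v_n$ lie in $\R^k$ (as in Lemma~\ref{lm:sec-GM}). Second, the attribution of the known cases is slightly off: for the \emph{minimum} (the present conjecture) only $k=2$ is known, due to Ivanov; the cases $k=2,3$ for all $n$ (Ivanov) and Filliman's earlier $k=2$ result with the reduction of $k=3$ to $n\le 42$ concern the \emph{maximal}-volume projections of Conjecture~\ref{conj:max-proj-cross-poly}, not this one.
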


This has been confirmed for $k=2$ by Ivanov in \cite{Iva}. As for the cube, the conjectured maximiser is an affine cross-polytope.

\subsection{Balls of $p$-norms}
The sharp results on hyperplane projections of the cross-polytope have been extended by Barthe and Naor in \cite{BN} to $\ell_p$-balls (with $p \geq 2$), thereby bringing the knowledge on extremal volume \emph{hyperplane} projections to the same level as for sections (see Tables \ref{tab:sec} and \ref{tab:proj}).

\begin{theorem}[Barthe-Naor \cite{BN}]\label{thm:BN}
For every unit vector $a$ in $\R^n$, the function
\[
p \mapsto \frac{\vol_{n-1}(\proj_{a^\perp}(B_p^n))}{\vol_{n-1}(B_p^{n-1})}
\]
is nondecreasing on $[1,+\infty)$. 
\end{theorem}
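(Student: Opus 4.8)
The plan is to obtain an explicit formula for $\vol_{n-1}(\proj_{a^\perp}(B_p^n))$ via the Cauchy--Minkowski formula (Theorem~\ref{thm:Cauchy-Mink}), express it as a mixed moment of independent random variables with density proportional to $e^{-|t|^{p'}}$ (where $p'$ is the conjugate exponent, or a closely related density coming from the surface measure of $B_p^n$), and then show that the resulting quantity, suitably normalised by $\vol_{n-1}(B_p^{n-1})$, is monotone in $p$. Concretely, the surface area measure $\sigma_{B_p^n}$ is absolutely continuous with a density on $\partial B_p^n$ that, after parametrising the boundary by $x \mapsto (x_1,\dots,x_{n-1})$ on a facet-like chart, turns the integral $\frac12\int_{S^{n-1}}|\scal{a}{\xi}|\,\dd\sigma_{B_p^n}(\xi)$ into an expression of the form $C_{n,p}\,\E\left|\sum_{j=1}^n a_j Y_j^{(p)}\right|$ for i.i.d.\ random variables $Y_j^{(p)}$ whose law depends on $p$; one checks that $\vol_{n-1}(B_p^{n-1})$ is exactly the normalising constant $C_{n,p}$ evaluated at a coordinate direction, so the ratio in the theorem equals $\E\left|\sum a_j Y_j^{(p)}\right|$ divided by its value at $a=e_1$, i.e.\ $\E\left|\sum a_j Y_j^{(p)}\right|/\E|Y_1^{(p)}|$. (Indeed this matches the known cases: $p=1$ gives Rademacher variables as in Theorem~\ref{thm:Szarek}, and $p=\infty$ gives the folklore formula $\sum|a_j|$.)

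Next I would set up the monotonicity in $p$. The key is a stochastic representation that interpolates the family $\{Y_j^{(p)}\}_{p\ge 1}$ monotonically: I expect that $Y_j^{(p)}$ can be written, up to scaling, as $Y_j^{(p)} \stackrel{d}{=} g_j \cdot R_j^{(p)}$ or via a change of variables reducing to a single real parameter, so that increasing $p$ corresponds to a dilation-type (peakedness-decreasing) deformation of each coordinate's law. Then, following the Meyer--Pajor philosophy described after Theorem~\ref{thm:MP} and the probabilistic viewpoint promised in Section~\ref{sec:methods}, one shows that $p \mapsto \E\left|\sum a_j Y_j^{(p)}\right|/\E|Y_1^{(p)}|$ is nondecreasing by differentiating in $p$ (or by a direct pairwise comparison argument), reducing to a one-dimensional convexity/majorisation inequality for the function $t\mapsto |t|$ against the family of marginal laws. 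Since $|t|$ is convex, Jensen-type and peakedness comparisons should give the sign of the derivative; the normalisation by $\E|Y_1^{(p)}|$ is precisely what makes the coordinate direction the fixed point of the flow.

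The main obstacle will be identifying the correct random variables $Y_j^{(p)}$ and verifying that the family is monotone in $p$ in the precise sense needed (a stochastic-order or peakedness statement uniform over the convex test function $|t|$). Extracting the surface-measure density of $B_p^n$ in a usable form is a genuine but routine computation; the delicate part is that, unlike for central sections where log-concavity and Kanter's peakedness apply cleanly, the projection integrand weights the boundary, so one must check that the relevant ``spreading'' of the marginals as $p$ increases is still compatible with the convexity of $|t|$. I would handle the endpoint sanity checks ($p=1$ recovering Theorem~\ref{thm:Szarek} and $p\to\infty$ recovering $\sum|a_j|$) first, then prove the interior monotonicity, and finally read off the stated minima and maxima of Table~\ref{tab:proj} as corollaries: $\min$ at $a=e_1$ for $p>2$ and $\max$ at $a=e_1$ for $1<p<2$ follow by comparing with the Euclidean case $p=2$ where the ratio is dimension-independent, while the $p>2$ maximum at the main diagonal comes from pushing $p\to\infty$ and the cube's behaviour.
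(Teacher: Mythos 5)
Your first step---reducing the theorem via Cauchy's formula to showing that $p \mapsto \E\bigl|\sum a_j Y_j^{(p)}\bigr|/\E|Y_1^{(p)}|$ is nondecreasing for suitable i.i.d.\ variables $Y_j^{(p)}$---is exactly the route taken in the paper (via the cone-volume measure and the Schechtman--Zinn representation, one finds that $Y_j^{(p)}$ has density proportional to $|x|^{\frac{2-p}{p-1}}e^{-|x|^{p/(p-1)}}$, i.e.\ the law of $|Y|^{p-1}\sgn(Y)$ with $Y \sim e^{-|y|^p}$; your guess $e^{-|t|^{p'}}$ misses the polynomial prefactor coming from $\nabla\|\cdot\|_p$, but you hedged appropriately). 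The genuine gap is in the second half. You propose to conclude by ``peakedness'' and a ``dilation-type (peakedness-decreasing) deformation,'' but Kanter's peakedness is the wrong order here: it compares $\p{X \in K}$ for symmetric \emph{convex} sets $K$, hence controls expectations of symmetric quasi-concave test functions, and gives nothing (or the wrong sign) for the convex test function $t \mapsto |t|$. Moreover, Kanter's tensorisation of peakedness requires unimodal log-concave factors, and the densities $f_p$ are bimodal for $1<p<2$ (they vanish at the origin), so that machinery would not even tensorise. The paper itself emphasises that peakedness must be \emph{replaced} here by the convex (Choquet) stochastic order, which tensorises for free under independence with no unimodality hypothesis.

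What is actually needed, and what your proposal does not supply, is a verifiable criterion for $Y_1^{(q)}/\E|Y_1^{(q)}|$ to dominate $Y_1^{(p)}/\E|Y_1^{(p)}|$ ($p<q$) in the convex order. The paper's mechanism is a two-crossings lemma: for symmetric random variables with \emph{equal first absolute moments} whose densities cross exactly twice on $(0,\infty)$, convex domination holds (subtract the affine function interpolating the convex test function at the two crossing points and check the integrand has constant sign). The normalisation by $\E|Y_1^{(p)}|$ is precisely what matches the first moments so that this lemma applies---you correctly sensed that this normalisation ``makes the coordinate direction the fixed point,'' but the real role it plays is to satisfy the moment-matching hypothesis of the crossing criterion. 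Without identifying this criterion (or some substitute) and verifying the crossing pattern of the explicit densities $f_p$, the monotonicity claim remains unproved; ``differentiating in $p$'' or invoking Jensen does not by itself produce the sign of the derivative. Finally, your closing corollaries need care: the $\max$ at the main diagonal for $p>2$ in Table~\ref{tab:proj} does not follow from this monotonicity theorem alone but from the separate Schur-convexity result (Theorem~\ref{thm:BN-p>2}), which uses Gaussian mixtures rather than the Choquet order.
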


This can be viewed as a counterpart of Meyer and Pajor's Theorem \ref{thm:MP} for \emph{hyperplane} projections. It is an interesting open question to find such a monotonicity result for \emph{all} subspaces. As for the cross-polytope, it is Cauchy's formula that allows to obtain a probabilistic expression for the volume in the hyperplane case. Barthe and Naor's argument goes as follows. 

First, the surface area measure is related to the cone volume measure, by a general relation of Naor and Romik from \cite{NR}. To sketch this, let $\sigma_K$ be the normalized surface area measure on $\partial K$ and let $S$ be the not normalized surface are measure, that is $S(A)=\vol_{n-1}(\partial K \cap A) / \vol_{n-1}(\partial K)$. Let $\mu_K$ be the normalized cone volume measure, that is, for $A \subseteq \partial K$ let  $\mu_K(A) = \vol_n(\conv(\{0\} \cup A)) / \vol_n(K)$.  Let $C$ denote its not normalized version.

\begin{lemma}\label{lem:cone-vs-surface}
If $K$ is a symmetric convex body in $\R^n$, then $\sigma_K$ is absolutely continuous with respect to $\mu_K$ and for almost all $x \in \partial K$ one has
\[
	\po{\sigma_K}{\mu_K}(x) = \frac{n\vol_n(K)}{\vol_{n-1}(\partial K)} | \nabla(\|\cdot \|_K)(x)  |.
\] 
\end{lemma}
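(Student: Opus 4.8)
The plan is to compute both measures explicitly via a parametrization of $\partial K$ by the unit sphere through the radial map, and then take the Radon–Nikodym derivative directly. Write $\rho(u) = \|u\|_K^{-1}$ for the radial function, so that $r\colon S^{n-1} \to \partial K$, $r(u) = \rho(u)u$, is a bijective parametrization of the boundary. The first step is to express the (unnormalized) cone volume measure $C$ in these coordinates: since $\vol_n(\conv(\{0\}\cup r(E))) = \frac{1}{n}\int_E \rho(u)^n \dd u$ for a spherical Borel set $E$, the pushforward of $\rho^n \dd u$ under $r$ (up to the factor $1/n$) is $C$, and dividing by $\vol_n(K)$ gives $\mu_K$. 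The second step is to express the (unnormalized) surface area measure $S$ on $\partial K$ in the same coordinates: the surface-area element at the point $r(u)$ equals $\rho(u)^{n-1}\,|\nabla\|\cdot\|_K(r(u))|\cdot (\text{something canceling the tilt})$ — more precisely, using that the outer normal at $x=r(u)$ is $\nabla\|\cdot\|_K(x)/|\nabla\|\cdot\|_K(x)|$ and the standard formula relating the Jacobian of the radial parametrization to the support-type quantity, one gets $\dd(\text{Hausdorff}_{n-1})|_{\partial K} = \rho(u)^{n}\,\frac{|\nabla\|\cdot\|_K(r(u))|}{\langle x, \nu_K(x)\rangle}\,\dd u$ where $\langle x,\nu_K(x)\rangle$ is the distance from the origin to the tangent hyperplane. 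Here one uses Euler's identity $\langle x,\nabla\|\cdot\|_K(x)\rangle = \|x\|_K = 1$ on $\partial K$, so $\langle x,\nu_K(x)\rangle = 1/|\nabla\|\cdot\|_K(x)|$, and the surface element simplifies to $\rho(u)^n\,|\nabla\|\cdot\|_K(r(u))|^2\,\dd u$.

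With both measures pushed forward from $S^{n-1}$, the Radon–Nikodym derivative is just the ratio of densities pulled back to the sphere. The (unnormalized) surface measure corresponds to density $\rho(u)^n|\nabla\|\cdot\|_K(r(u))|^2$ (times an overall constant $1/\vol_{n-1}(\partial K)$ for the normalized $\sigma_K$) and the (unnormalized) cone measure corresponds to density $\frac1n\rho(u)^n$ (times $1/\vol_n(K)$ for the normalized $\mu_K$). Taking the quotient, the $\rho(u)^n$ factors cancel and one is left with
\[
\po{\sigma_K}{\mu_K}(r(u)) = \frac{n\,\vol_n(K)}{\vol_{n-1}(\partial K)}\,|\nabla\|\cdot\|_K(r(u))|,
\]
which is exactly the claimed formula. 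Absolute continuity is immediate from this representation since $\|\cdot\|_K$ is locally Lipschitz (convexity) hence differentiable a.e., so the density is finite a.e.\ and the computation is valid off a null set.

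I expect the main obstacle to be the second step: justifying the precise surface-area element formula $\dd(\mathrm{Hausdorff}_{n-1})|_{\partial K} = \rho(u)^n |\nabla\|\cdot\|_K|^2\,\dd u$ under the radial parametrization, which requires care with the Jacobian of $r\colon S^{n-1}\to\partial K$. The clean way is to factor $r$ as $u \mapsto \rho(u)u$ and compute the $(n-1)$-dimensional Jacobian of this map: its tangent derivative maps $T_uS^{n-1}$ into $T_x\partial K$, and the Gram determinant works out, after using that the normal direction at $x$ is $\nabla\|\cdot\|_K(x)$ normalized and Euler's relation, to $\rho(u)^{n-1}/\langle x/|x|, \nu_K(x)\rangle = \rho(u)^{n-1}\cdot\frac{|x|}{\langle x,\nu_K(x)\rangle}$. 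Combining $|x|=\rho(u)$ with $\langle x,\nu_K(x)\rangle = |\nabla\|\cdot\|_K(x)|^{-1}$ yields the asserted element. This is essentially the content of the coarea/change-of-variables computation and can be found in standard references on convex bodies; once it is in hand, everything else is bookkeeping. Non-smooth $\partial K$ is handled by noting the relevant quantities are defined a.e.\ and the identities hold wherever $\|\cdot\|_K$ is differentiable.
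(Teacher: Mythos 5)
Your overall strategy is sound and, once unwound, rests on exactly the same two ingredients as the paper's argument: the relation between the cone volume element and the surface element (the volume of an infinitesimal cone over a boundary patch is $\tfrac1n\times$ base $\times$ height, the height being $\scal{x}{\nu_K(x)}$, the distance from the origin to the tangent hyperplane), and Euler's identity $\scal{x}{\nabla\|\cdot\|_K(x)}=1$, which converts that height into $1/|\nabla(\|\cdot\|_K)(x)|$. Routing everything through the radial parametrization $r(u)=\rho(u)u$ of $\partial K$ is a legitimate, slightly more heavy-handed packaging of the same computation, so I would not call it a genuinely different proof.

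There is, however, a concrete error in your second step that makes the middle of the argument inconsistent with your conclusion. The correct $(n-1)$-Jacobian of $r$ is
\[
J(u)=\frac{\rho(u)^{n-1}}{\scal{u}{\nu_K(r(u))}}=\frac{\rho(u)^{n}}{\scal{x}{\nu_K(x)}}=\rho(u)^{n}\,\bigl|\nabla(\|\cdot\|_K)(x)\bigr|,
\]
i.e.\ the surface element carries \emph{one} power of $|\nabla\|\cdot\|_K|$, not two. Your ``more precisely'' formula $\dd\mathcal{H}^{n-1}=\rho(u)^{n}\,\frac{|\nabla\|\cdot\|_K(r(u))|}{\scal{x}{\nu_K(x)}}\,\dd u$ has a spurious extra factor of $|\nabla\|\cdot\|_K(r(u))|$ in the numerator, which is why you end up with the density $\rho(u)^n|\nabla\|\cdot\|_K|^2$. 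If one actually divides that density by the cone density $\tfrac1n\rho(u)^n$ (with the normalizations), the quotient is $\frac{n\vol_n(K)}{\vol_{n-1}(\partial K)}|\nabla\|\cdot\|_K|^{2}$, which is \emph{not} the displayed conclusion; the formula you display at the end is the correct one but does not follow from the density you stated two sentences earlier. Your own final paragraph in fact computes the Jacobian correctly as $\rho(u)^{n-1}/\scal{x/|x|}{\nu_K(x)}=\rho(u)^n|\nabla(\|\cdot\|_K)(x)|$ --- one power --- and then mistakenly asserts that this ``yields the asserted element'' with the square. Deleting the extraneous factor repairs everything: the surface density on the sphere is $\rho(u)^n|\nabla(\|\cdot\|_K)(r(u))|$, the quotient gives exactly the statement of the lemma, and the remaining bookkeeping (including the absolute continuity remark, since $|\nabla(\|\cdot\|_K)|$ is bounded above by the reciprocal of the inradius of $K$ about the origin) stands.
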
 

\begin{proof}[Sketch of the proof]
For points $x$ such that $x$ is perpendicular to the surface of $K$ one has $|x| \cdot \dd S(x) = n  \dd C(x)$. If the angel between the surface and  $x$ is $\alpha$, then $|\cos \alpha| \cdot |x| \cdot \dd S(x) = n  \dd C(x)$. We clearly have $|\cos \alpha|=|\scal{n(x)}{x/|x|}|$. Let $z=\nabla \|\cdot\|_K(x)$. If $x \in \partial K$ then $1+\e = \|x+ \e x\|_K \approx \|x\|_K + \e \scal{z}{x}=1+ \e \scal{z}{x}$, which gives $\scal{z}{x}=1$.  Also, $z$ is a vector perpendicular to $\partial K$. Thus $n(x)=z / |z|$. We obtain
$
	|\cos \alpha| = \frac{1}{|x|} \cdot |\scal{n(x)}{x}| = \frac{|\scal{z}{x}|}{|x| \cdot |z|} .
$ 
This gives
\[
\frac{\vol_{n-1}(\partial K) \dd \sigma_K(x)}{|\nabla (\|\cdot\|_K)(x)|}  = 	\frac{\dd S(x)}{|\nabla (\|\cdot\|_K)(x)|} = 	\frac{|\scal{z}{x}|}{|z|}  \dd S(x) = n \dd C(x) = n \vol_n(K) \dd \mu_K(x).
\]
\end{proof}
From Lemma \ref{lem:cone-vs-surface} we therefore get
\begin{equation}\label{eq:proj-with-grad}
	|\proj_{a^\perp} K| = \frac{n}{2} \vol_n(K) \int_{\partial K} |\scal{(\nabla \|\cdot \|_K)(x)}{a}| \dd \mu_K(x),
\end{equation}
since $(\nabla \|\cdot \|_K)(x) = n(x)|(\nabla \|\cdot \|_K)(x)|$.

Second, the cone volume measure $\mu_{B_p^n}$ enjoys a probabilistic representation in terms of i.i.d. random variables, discovered by Rachev and R\"uschendorf in \cite{RR} and independently by Schechtman and Zinn in \cite{SZ}. We shall also later need a modification of the representation of the uniform measure on $B_p^n$ obtained in \cite{BGMN} by Barthe, Gu\'edon, Mendelson and Naor. Let us formulate a generalization of these results discussed in \cite{PTT18}.

\begin{lemma}\label{lem:uniform-on-bpn}
Let $K$ be a symmetric convex body and let $Z$ be any random vector in $\R^n$ with density of the form $f(\|x\|_K)$ for some continuous $f:[0,\infty) \to [0,\infty)$. Let $U$ be a random variable uniform in $[0,1]$, independent of $Z$. Then 
\begin{itemize}\itemsep=0.2cm
\item[(a)] $\frac{Z}{\|Z\|_K}$ has distribution $\mu_K$ and $U^{1/n}\frac{Z}{\|Z\|_K}$ is uniformly distributed on $K$,
\item[(b)] $\frac{Z}{\|Z\|_K}$ and $\|Z\|_K$ are independent.
\end{itemize}
In particular, for $K=B_p^n$ one can take $Z=(Y_1, \ldots, Y_n)$ where $Y_i$ are i.i.d. random variables having densities $(2\Gamma(1+\frac1p))^{-1} e^{-|t|^p}$.
\end{lemma}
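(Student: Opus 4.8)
The plan is to reduce everything to one polar-type integration identity relative to the cone measure. Since $K$ is a symmetric convex body, the radial map $x \mapsto (\|x\|_K,\, x/\|x\|_K)$ is a homeomorphism from $\R^n\setminus\{0\}$ onto $(0,\infty)\times\partial K$, so every $x\neq 0$ is uniquely $x = r\theta$ with $r = \|x\|_K$ and $\theta\in\partial K$. I would first prove that for every nonnegative Borel $g$ on $\R^n$,
\begin{equation*}
\int_{\R^n} g(x)\,\dd x = n\vol_n(K)\int_{\partial K}\int_0^\infty g(r\theta)\,r^{n-1}\,\dd r\,\dd\mu_K(\theta). \tag{$\star$}
\end{equation*}
It suffices to check this for $g = \1_{S_{t,E}}$, where $S_{t,E} = \{s\theta:\ 0\le s\le t,\ \theta\in E\}$ for $t\ge 0$ and Borel $E\subseteq\partial K$: writing $S_{t,E}\setminus\{0\}$ as the preimage of $(0,t]\times E$ under the radial homeomorphism shows it is Borel, these cone segments form a $\pi$-system (as $S_{t_1,E_1}\cap S_{t_2,E_2} = S_{\min(t_1,t_2),\,E_1\cap E_2}$) generating the Borel $\sigma$-algebra of $\R^n$ modulo the null set $\{0\}$, and $S_{m,\partial K}$ exhausts $\R^n$. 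For such a set the left side of $(\star)$ equals $t^n\vol_n(K)\mu_K(E)$ by scaling of Lebesgue measure and the definition $\mu_K(E) = \vol_n(S_{1,E})/\vol_n(K)$, while the right side equals $n\vol_n(K)\cdot\frac{t^n}{n}\cdot\mu_K(E)$; the two agree, and uniqueness of measures (equivalently the standard machine plus monotone convergence) upgrades this to all nonnegative $g$. Note this step needs no regularity of $\partial K$, as the cone measure is defined purely through Lebesgue volumes of cones.

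Granting $(\star)$, write $\Theta = Z/\|Z\|_K$ and $\rho = \|Z\|_K$. For bounded Borel $h$ on $(0,\infty)\times\partial K$, inserting $g(x) = h(\|x\|_K,\, x/\|x\|_K)\,f(\|x\|_K)$ into $(\star)$ gives
\begin{equation*}
\E\, h(\rho,\Theta) = \int_{\partial K}\int_0^\infty h(r,\theta)\,\big(n\vol_n(K)f(r)r^{n-1}\big)\,\dd r\,\dd\mu_K(\theta).
\end{equation*}
Taking $h\equiv 1$ shows $n\vol_n(K)\int_0^\infty f(r)r^{n-1}\,\dd r = 1$, so the right-hand side exhibits the joint law of $(\rho,\Theta)$ as the product of the probability measure with density $n\vol_n(K)f(r)r^{n-1}$ on $(0,\infty)$ and of $\mu_K$ on $\partial K$. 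Hence $\rho$ and $\Theta$ are independent, which is part (b), and $\Theta$ has law $\mu_K$, the first assertion of (a).

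For the remaining assertion of (a), put $R = U^{1/n}$, so $\p{R\le t} = t^n$ on $[0,1]$, i.e.\ $R$ has density $nr^{n-1}\1_{[0,1]}$; and $R$ is independent of $Z$, hence of $\Theta\sim\mu_K$. For nonnegative Borel $g$, independence and Tonelli give
\begin{equation*}
\E\, g\!\left(U^{1/n}\tfrac{Z}{\|Z\|_K}\right) = \E\, g(R\Theta) = \int_{\partial K}\int_0^1 g(r\theta)\,nr^{n-1}\,\dd r\,\dd\mu_K(\theta) = \frac{1}{\vol_n(K)}\int_K g(x)\,\dd x,
\end{equation*}
where the last step is $(\star)$ applied to $g\cdot\1_K$, using that for $\theta\in\partial K$ one has $r\theta\in K$ iff $r\le 1$. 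Thus $U^{1/n}Z/\|Z\|_K$ is uniform on $K$. Finally, for $K = B_p^n$ and $Z = (Y_1,\dots,Y_n)$ with i.i.d.\ coordinates of density $(2\Gamma(1+\tfrac1p))^{-1}e^{-|t|^p}$ (a probability density since $\tfrac1p\Gamma(\tfrac1p) = \Gamma(1+\tfrac1p)$), the joint density of $Z$ is $(2\Gamma(1+\tfrac1p))^{-n}e^{-\sum_j|y_j|^p} = f(\|y\|_p)$ with $f(r) = (2\Gamma(1+\tfrac1p))^{-n}e^{-r^p}$ continuous, so the general statement applies directly. The only genuinely delicate point is the rigorous proof of $(\star)$ — checking that the cone segments form a generating $\pi$-system on which the relevant measures are $\sigma$-finite — after which everything is bookkeeping with Tonelli and the radial change of variables.
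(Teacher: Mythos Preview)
Your proof is correct and follows essentially the same approach as the paper: establish the polar integration identity with respect to the cone measure by checking it on a generating family of sets, then use it to factor the joint law of $(\|Z\|_K,\, Z/\|Z\|_K)$ as a product and to identify $U^{1/n}\Theta$ as uniform on $K$. The only cosmetic difference is that you verify the identity on cone segments $S_{t,E}$ while the paper uses radial shells $[a,b]\cdot A_0$, and you are somewhat more explicit about the $\pi$-system justification and the $B_p^n$ specialization.
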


\begin{proof}[Proof]
We first claim that for any integrable $h:\R^n \to \R$ the following identity holds
\begin{equation}\label{eq:polar-int}
	\int h = n |K| \int_0^\infty r^{n-1} \int_{\partial K} h(rz) \dd \mu_K(z) \dd r.
\end{equation}
To show it one can assume that $h= \1_A$, where $A=[a,b] \cdot A_0$, where $A_0 \subset \partial K$, as these sets generate the sigma algebra of Borel sets in $\R^n$. For $z \in \partial K$ and $r>0$ we then have $h(rz)=\1_{[a,b]}(r) \1_{A_0}(z)$. Thus \eqref{eq:polar-int} reduces to
\begin{equation}\label{eq:U1n}
	|A| = |K| \left(\int_a^b n r^{n-1} \dd r \right) \mu_K(A_0) = |K| (b^n-a^n) \mu_K(A_0)  = |[a,b] A_0|
\end{equation}
and is therefore true. Now, let us notice that for $\phi:\R^n \to \R$ and $\psi:\R \to \R$ we have
\[
	\E\left[ \phi\left( \frac{Z}{\|Z\|_K} \right) \psi(\|Z\|_K) \right] = \int_{\R^n}  \phi\left( \frac{x}{\|x\|_K} \right) \psi(\|x\|_K) f(\|x\|_K) \dd x = n|K| \int_0^\infty \psi(r) f(r)r^{n-1} \dd r \int_{\partial K} \phi(z) \dd \mu_K(z) 
\]
Taking $\phi, \psi \equiv 1$ we learn that $n|K| \int_0^\infty f(r) r^{n-1} \dd r =1$. Thus taking $\psi \equiv$ and next $\phi \equiv 1$ we arrive at
\[
	\E\left[ \phi\left( \frac{Z}{\|Z\|_K} \right)  \right] = \int_{\partial K} \phi(z) \dd \mu_K(z), \qquad \E\left[  \psi(\|Z\|_K) \right] = n|K| \int_0^\infty \psi(r) f(r)r^{n-1} \dd r.
\] 
The first equation shows that $\frac{Z}{\|Z\|_K}$ has distribution $\mu_K$. Moreover, we get
\[
	\E\left[ \phi\left( \frac{Z}{\|Z\|_K} \right) \psi(\|Z\|_K) \right]  = \E\left[ \phi\left( \frac{Z}{\|Z\|_K} \right)  \right] \E\left[  \psi(\|Z\|_K) \right],
\]
which shows (b). Finally \eqref{eq:U1n} to gether with the fact that $U^{1/n}$ has density $n r^{n-1}$ on $[0,1]$ shows that 
\[
	\frac{|A|}{|K|} = \p{U^{1/n} \in [a,b]} \p{\frac{Z}{\|Z\|_K} \in A_0} = \p{U^{1/n}\frac{Z}{\|Z\|_K} \in A},
\] 
which shows the second part of point (a).

\end{proof}

We can now prove the probabilistic formula for the volume of hyperplane projection of $B_p^n$.

\begin{lemma}
For $p > 1$ and every unit vector $a \in \R^n$, we then have
\begin{equation}\label{eq:Bpn-proj}
\vol_{n-1}(\proj_{a^\perp}(B_p^n)) = \frac{\vol_{n-1}(B_p^{n-1})}{\E|X_1|}\E\left|\sum_{j=1}^n a_jX_j\right|,
\end{equation}
where here $X_1, \dots, X_n$ are i.i.d. random variables with density 
$
	f_p(x) = \frac{p}{2(p-1)\Gamma(1/p)} |x|^{\frac{2-p}{p-1}} e^{-|x|^{\frac{p}{p-1}}}.
$ 
\end{lemma}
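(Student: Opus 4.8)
The plan is to combine the gradient formula \eqref{eq:proj-with-grad} for the volume of a hyperplane projection with the Schechtman--Zinn-type representation of the cone measure $\mu_{B_p^n}$ from Lemma~\ref{lem:uniform-on-bpn}. Applying \eqref{eq:proj-with-grad} with $K = B_p^n$ gives
\[
\vol_{n-1}(\proj_{a^\perp}(B_p^n)) = \frac{n}{2}\vol_n(B_p^n)\int_{\partial B_p^n}\left|\scal{(\nabla\|\cdot\|_p)(x)}{a}\right| \dd\mu_{B_p^n}(x),
\]
so the whole task reduces to evaluating this integral. For $p > 1$ the norm $\|\cdot\|_p$ is differentiable off the coordinate hyperplanes, where $\nabla\left(\sum_j|x_j|^p\right)^{1/p} = \|x\|_p^{1-p}\big(\sgn(x_j)|x_j|^{p-1}\big)_{j=1}^n$; since those hyperplanes are $\mu_{B_p^n}$-negligible and $\|x\|_p = 1$ on $\partial B_p^n$, the integrand equals $\big|\sum_{j=1}^n a_j\sgn(x_j)|x_j|^{p-1}\big|$ for $\mu_{B_p^n}$-a.e.\ $x$.

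Next I would invoke Lemma~\ref{lem:uniform-on-bpn} with $Z = (Y_1,\dots,Y_n)$, the $Y_j$ being i.i.d.\ with density proportional to $e^{-|t|^p}$: then $Z/\|Z\|_p$ is distributed according to $\mu_{B_p^n}$, so the integral above equals
\[
\E\left|\sum_{j=1}^n a_j\,\sgn(Y_j)\left|\frac{Y_j}{\|Z\|_p}\right|^{p-1}\right| = \E\left[\frac{1}{\|Z\|_p^{\,p-1}}\left|\sum_{j=1}^n a_jW_j\right|\right], \qquad W_j := \sgn(Y_j)|Y_j|^{p-1}.
\]
The quantity $\|Z\|_p^{-(p-1)}\sum_j a_jW_j = \scal{(\nabla\|\cdot\|_p)(Z/\|Z\|_p)}{a}$ depends only on the direction $Z/\|Z\|_p$, which by part (b) of Lemma~\ref{lem:uniform-on-bpn} is independent of the radius $\|Z\|_p$; hence $\E\big|\sum_j a_jW_j\big| = \E\big[\|Z\|_p^{\,p-1}\big]\cdot\E\big[\|Z\|_p^{-(p-1)}\big|\sum_j a_jW_j\big|\big]$, and the integral equals $\E\big|\sum_j a_jW_j\big|\big/\E\big[\|Z\|_p^{\,p-1}\big]$. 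A one-dimensional change of variables $w = \sgn(t)|t|^{p-1}$ (so $|t| = |w|^{1/(p-1)}$, $\dd t = (p-1)^{-1}|w|^{(2-p)/(p-1)}\dd w$), together with $\Gamma(1/p) = p\Gamma(1+1/p)$, shows that each $W_j$ has density exactly $f_p$; that is, $(W_1,\dots,W_n)$ has the same law as $(X_1,\dots,X_n)$.

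Collecting the pieces yields $\vol_{n-1}(\proj_{a^\perp}(B_p^n)) = C_{n,p}\,\E\big|\sum_{j=1}^n a_jX_j\big|$ with $C_{n,p} = \tfrac{n}{2}\vol_n(B_p^n)\big/\E\big[\|Z\|_p^{\,p-1}\big]$ not depending on $a$. To identify $C_{n,p}$ without computing Gamma integrals, specialise to $a = e_1$: then $\proj_{e_1^\perp}(B_p^n) = B_p^{n-1}$, so the left side is $\vol_{n-1}(B_p^{n-1})$ and the right side is $C_{n,p}\E|X_1|$, forcing $C_{n,p} = \vol_{n-1}(B_p^{n-1})/\E|X_1|$, which is precisely \eqref{eq:Bpn-proj}. (Alternatively one computes $C_{n,p}$ directly: $\|Z\|_p^p$ is $\mathrm{Gamma}(n/p)$-distributed, so $\E[\|Z\|_p^{p-1}] = \Gamma(\tfrac{n+p-1}{p})/\Gamma(\tfrac{n}{p})$ and $\E|X_1| = 1/\Gamma(1/p)$, and the identity follows from $\vol_n(B_p^n) = (2\Gamma(1+1/p))^n/\Gamma(1+n/p)$.)

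The genuinely delicate points I expect are: verifying that the non-smoothness locus of $\|\cdot\|_p$ is $\mu_{B_p^n}$-negligible (which uses $p > 1$); keeping track that the factored-out expression is a function of $Z/\|Z\|_p$ alone, so that the independence in Lemma~\ref{lem:uniform-on-bpn}(b) applies; and carrying out the change of variables $w = \sgn(t)|t|^{p-1}$ without sign or exponent errors so that the density $f_p$ comes out exactly. Everything else is routine bookkeeping.
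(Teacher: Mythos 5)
Your proposal is correct and follows essentially the same route as the paper: both start from \eqref{eq:proj-with-grad}, use Lemma \ref{lem:uniform-on-bpn} to represent $\mu_{B_p^n}$ via the i.i.d.\ vector $Y$, exploit the independence of $Y/\|Y\|_p$ and $\|Y\|_p$ to factor out $\E[S^{p-1}]$, identify $\sgn(Y_j)|Y_j|^{p-1}$ as having density $f_p$, and fix the constant by taking $a=e_1$. You simply spell out the gradient computation, the change of variables, and the Gamma-function check in more detail than the paper does.
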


\begin{proof}
By \eqref{eq:proj-with-grad} and Lemma \ref{lem:uniform-on-bpn} (a) for some constant $c_{p,n}$ we have
\begin{align*}
	\vol_{n-1}(\proj_{a^\perp} B_p^n) & = C(p,n) \E \left|\sum_{i=1}^n a_i \left|\frac{Y_i}{S} \right|^{p-1} \sgn\left(\frac{Y_i}{S} \right) \right| = C(p,n) \cdot \frac{\E S^{p-1}}{\E S^{p-1}} \cdot \E \left|\sum_{i=1}^n a_i \left|\frac{Y_i}{S} \right|^{p-1} \sgn\left(Y_i \right) \right| \\
	& = \frac{C(p,n)}{\E S^{p-1}} \cdot \E \left|\sum_{i=1}^n a_i \left|Y_i \right|^{p-1} \sgn\left(Y_i \right) \right|.
\end{align*} 
It now suffices to observe that $X_i = \left|Y_i \right|^{p-1} \sgn\left(Y_i \right)$ for $p>1$ have densities $f_p$. We then compute $C_{p,n}$ by taking $a=e_1$.
\end{proof}

Next, Meyer and Pajor's arguments involving peakedness are replaced by the stochastic convex (Choquet) ordering, where the independence of the $X_j$ is crucial. For $p > 2$, additional structure emerges: the $X_j$ are Gaussian mixtures. This leads to an analogue of Koldobsky's Theorem \ref{thm:Kol}, the proof of which was later simplified in \cite{ENT1} by bypassing the Fourier analytic arguments (we shall discuss the arguments in Section \ref{sec:methods}).

\begin{theorem}[Barthe-Naor \cite{BN}]\label{thm:BN-p>2}
Let $p > 2$. For every two unit vectors $a$ and $b$ in $\R^n$ such that $(b_1^2, \dots, b_n^2)$ majorises $(a_1^2, \dots, a_n^2)$, we have
\[
\vol_{n-1}(\proj_{a^\perp}(B_p^n) \geq \vol_{n-1}(\proj_{b^\perp}(B_p^n)).
\]
\end{theorem}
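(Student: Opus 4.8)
The plan is to deduce this from the probabilistic formula \eqref{eq:Bpn-proj} together with the fact that for $p > 2$ the one-dimensional densities $f_p$ are \emph{Gaussian mixtures}, which collapses the whole statement to Schur-concavity of an elementary concave functional; this is the streamlined route (bypassing the Fourier-analytic arguments of the original proof). By \eqref{eq:Bpn-proj}, the claimed inequality is equivalent to
\[
\E\left|\sum_{j=1}^n a_jX_j\right| \geq \E\left|\sum_{j=1}^n b_jX_j\right|,
\]
with $X_1, \dots, X_n$ i.i.d. having density $f_p$, so the task is to understand how this expectation depends on the vector of squares $(a_j^2)$.

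The first step is to show that $X_1$ is a Gaussian mixture, i.e. $X_1 \stackrel{d}{=} R_1g_1$ for a standard Gaussian $g_1$ and an independent $R_1 \geq 0$. I would invoke the classical criterion (a consequence of Bernstein's theorem) that a symmetric density $f$ on $\R$ is a Gaussian mixture if and only if $t \mapsto f(\sqrt{t})$ is completely monotone on $(0,+\infty)$. For $f = f_p$, this function is proportional to $t^{-\alpha}e^{-t^\beta}$ with $\alpha = \frac{p-2}{2(p-1)} \geq 0$ and $\beta = \frac{p}{2(p-1)} \in (\tfrac12,1)$. Since $t \mapsto t^{-\alpha}$ is completely monotone for every $\alpha \geq 0$, the map $t\mapsto t^\beta$ is a Bernstein function for $0\leq\beta\leq 1$ (so $e^{-t^\beta}$ is completely monotone), and products of completely monotone functions are completely monotone, the representation follows. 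This is the one place where $p > 2$ is genuinely used: for $1 < p < 2$ the exponent of $t$ turns positive and $f_p$ fails to be a Gaussian mixture.

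Next, conditioning on $R_1, \dots, R_n$, the sum $\sum_j a_jX_j = \sum_j a_jR_jg_j$ is centred Gaussian with variance $\sum_j a_j^2R_j^2$, whence
\[
\E\left|\sum_{j=1}^n a_jX_j\right| = \sqrt{\frac{2}{\pi}}\;\E\left(\sum_{j=1}^n a_j^2R_j^2\right)^{1/2},
\]
and likewise with $b$ in place of $a$. It thus remains to check that $\Phi(c) = \E\big(\sum_{j} c_jR_j^2\big)^{1/2}$ is Schur-concave on $\{c \in \R^n : c_j \geq 0, \ \sum_j c_j = 1\}$, for then $(b_j^2)$ majorising $(a_j^2)$ yields $\Phi((a_j^2)) \geq \Phi((b_j^2))$. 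But $\Phi$ is symmetric (the $R_j$ being i.i.d.) and concave (for each realisation of the $R_j$, $c \mapsto (\sum_j c_jR_j^2)^{1/2}$ is the square root of a linear form, hence concave, and $\Phi$ is an average of such maps), and a symmetric concave function is automatically Schur-concave — write the majorised vector as a convex combination of coordinate permutations of the majorising one and use symmetry and concavity. Finiteness of $\Phi$ is clear since $\E R_1 = \sqrt{\pi/2}\,\E|X_1| < \infty$.

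The main obstacle is precisely the Gaussian-mixture step; everything afterwards is the soft stochastic-ordering reasoning underlying Koldobsky's Theorem \ref{thm:Kol}, and it in fact delivers the stronger Schur-monotonicity rather than merely the comparison of the two extreme directions. A minor technical point to handle with care is the identification of $f_p$ as the law of $|Y_1|^{p-1}\sgn(Y_1)$ with $Y_1$ having density $\propto e^{-|t|^p}$, which is what produces the exponents $\alpha,\beta$ above.
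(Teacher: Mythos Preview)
Your proof is correct and follows essentially the same route as the paper's own argument: invoke \eqref{eq:Bpn-proj}, verify via the Bernstein criterion that $f_p$ is a Gaussian mixture when $p>2$, condition on the mixing variables to reduce to $\E\big(\sum_j a_j^2 R_j^2\big)^{1/2}$, and conclude by the Schur-concavity of a symmetric concave function. You supply more detail than the paper (the explicit exponents $\alpha,\beta$ and the factorisation into $t^{-\alpha}$ times $e^{-t^\beta}$), but the ideas and the order of steps coincide.
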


In the range $0 < p < 1$, Cauchy's formula cannot be applied due to the lack of convexity and no nontrivial bounds are known. When $1 < p < 2$, the maximal-volume hyperplane projection is onto a coordinate subspace, as follows from Theorem \ref{thm:BN}, whereas the minimal one is not known. Barthe and Naor in \cite{BN} have shown that the cross-polytope minimiser $(\frac{1}{\sqrt{2}},\frac{1}{\sqrt{2}}, 0, \dots, 0)^\perp$ is beaten by the diagonal one for every $p > p_0=\frac{4}{3}$ in large enough dimensions (in particular, as Oleszkiewicz has pointed out in \cite{Ole}, there is no ``formal duality'' with sections, for there is not such a phase transition at $\frac{p_0}{p_0-1} = 4$).

For higher codimensions than $1$, plainly Meyer and Pajor's Theorem \ref{thm:MP} gives a sharp lower bound: for every $p \geq 2$, $1 \leq k \leq n$ and $k$-dimensional subspace in $\R^n$, we have
\[
\vol_k(\proj_H(B_p^n)) \geq \vol_k(B_p^n \cap H) \geq \vol_k(B_p^k),
\]
attained at coordinate subspaces. For $0 < p < 2$, using his reverse form of the Brascamp-Lieb inequality from \cite{Bar-BL}, Barthe in \cite{Bar-prod} has established the following lower bound.

\begin{theorem}[Barthe \cite{Bar-prod}]\label{thm:Barthe-l_p-low}
Let $0 < p < 2$. Fix $1 \leq k \leq n$. For every $k$-dimensional subspace $H$ in $\R^n$, we have
\[
\vol_k(\proj_H(B_p^n)) \geq \left(\frac{k}{n}\right)^{k(1/p-1/2)}\vol_k(B_p^k).
\]
\end{theorem}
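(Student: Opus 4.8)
The plan is to run the same scheme that underlies the section bound of Theorem~\ref{thm:Barthe}, but with Barthe's \emph{reverse} Brascamp--Lieb inequality \cite{Bar-BL} in place of Ball's geometric Brascamp--Lieb inequality. First I would fix an orthonormal basis of $H$, identify $H$ with $\R^k$, and write $\proj_H(B_p^n) = T(B_p^n)$, where $T\colon \R^n \to \R^k$ is the linear map whose $i$th column is $v_i = \proj_H(e_i)$. Since the $v_i$ are the columns of a $k\times n$ matrix with orthonormal rows, $\sum_{i=1}^n v_i v_i^\top = I_k$. Discarding the (at most $n-k$) indices with $v_i = 0$ leaves $\proj_H(B_p^n)$ unchanged and only decreases $n$, which only weakens the claim since $\tfrac1p-\tfrac12>0$; so I may assume all $v_i\neq 0$ and set $c_i = |v_i|^2 \in (0,1]$ and $u_i = v_i/|v_i| \in S^{k-1}$. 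Then $\sum_i c_i u_i u_i^\top = I_k$, in particular $\sum_i c_i = \tr I_k = k$, which is exactly the datum the geometric reverse Brascamp--Lieb inequality requires: for all nonnegative integrable $f_i\colon \R \to [0,\infty)$,
\[
\int_{\R^k} \sup\Big\{ \prod_{i=1}^n f_i(\theta_i)^{c_i} \ : \ \theta \in \R^n,\ \sum_{i=1}^n c_i\theta_i u_i = x \Big\} \dd x \ \geq \ \prod_{i=1}^n \Big( \int_\R f_i \Big)^{c_i}.
\]

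Next I would choose the test functions so that the left-hand side becomes an honest volume. Put $K = \proj_H(B_p^n)$. As $B_p^n$ is compact and star-shaped about the origin (no convexity is needed, so the argument also covers $0<p<1$), its Minkowski gauge satisfies $\|x\|_K = \inf\{\|t\|_p : Tt = x\}$, hence $\|x\|_K^p = \inf\{\sum_i|t_i|^p : \sum_i t_i v_i = x\}$, and, as $s \mapsto e^{-s}$ is decreasing,
\[
e^{-\|x\|_K^p} \ = \ \sup\Big\{ \prod_{i=1}^n e^{-|t_i|^p} \ : \ \sum_{i=1}^n t_i v_i = x \Big\}.
\]
The substitution $t_i = c_i^{1/2}\theta_i$ turns $\sum_i t_i v_i$ into $\sum_i c_i\theta_i u_i$ and $\prod_i e^{-|t_i|^p}$ into $\prod_i\big(e^{-c_i^{(p-2)/2}|\theta_i|^p}\big)^{c_i}$, so the displayed supremum is precisely the integrand above with $f_i(\theta) = e^{-c_i^{(p-2)/2}|\theta|^p}$. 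Plugging this choice in and evaluating both sides by elementary one-variable integrals — the scaling $\int_\R e^{-\beta|\theta|^p}\dd\theta = 2\Gamma(1+\tfrac1p)\beta^{-1/p}$ gives $\int_\R f_i = 2\Gamma(1+\tfrac1p)\, c_i^{1/p-1/2}$, while a one-line computation in polar coordinates adapted to $K$ (using only $\vol_k(rK) = r^k\vol_k(K)$) gives $\int_{\R^k} e^{-\|x\|_K^p}\dd x = \Gamma(1+\tfrac kp)\vol_k(K)$ — I arrive, recalling $\sum_i c_i = k$ and $\vol_k(B_p^k) = (2\Gamma(1+\tfrac1p))^k/\Gamma(1+\tfrac kp)$, at
\[
\vol_k\big(\proj_H(B_p^n)\big) \ \geq \ \vol_k(B_p^k)\,\exp\Big[ \big(\tfrac1p - \tfrac12\big)\sum_{i=1}^n c_i \log c_i \Big].
\]

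Finally, since $0<p<2$ makes $\tfrac1p-\tfrac12>0$, I would finish with $\sum_{i=1}^n c_i\log c_i \geq k\log\frac kn$, which is Jensen's inequality for the convex function $t\mapsto t\log t$ applied to $c_1,\dots,c_n$, whose arithmetic mean is $k/n$; exponentiating gives $\exp[(\tfrac1p-\tfrac12)\sum_i c_i\log c_i] \geq (k/n)^{k(1/p-1/2)}$, which is the assertion. Tracking equality, every inequality above is tight exactly when $c_1=\dots=c_n=k/n$, which is the case for the affine-cube subspace $H^*$, so for $p\geq 1$ and $k\mid n$ the bound is sharp (indeed $\proj_{H^*}(B_p^n) = (n/k)^{1/2-1/p}B_p^k$). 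The only ingredient that is not a routine computation is Barthe's reverse Brascamp--Lieb inequality itself, which I take as given; the step to execute carefully is the ``dressing'' — the choice $f_i = e^{-c_i^{(p-2)/2}|\theta|^p}$ together with the substitution $t_i = c_i^{1/2}\theta_i$ — since this is exactly what converts the abstract reverse inequality into the geometric statement, and lining up the powers of $c_i$ is where a sign slip would most easily enter.
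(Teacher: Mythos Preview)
Your proposal is correct and follows precisely the approach the paper indicates: the survey does not reproduce a proof of this theorem but merely attributes it to Barthe \cite{Bar-prod}, noting that it rests on his reverse Brascamp--Lieb inequality \cite{Bar-BL}. Your argument supplies exactly those details --- the decomposition $\sum_i c_i u_iu_i^\top = I_k$, the choice $f_i(\theta)=e^{-c_i^{(p-2)/2}|\theta|^p}$ identifying the reverse Brascamp--Lieb integrand with $e^{-\|x\|_K^p}$, and the final Jensen step --- and the bookkeeping (the substitution $t_i=c_i^{1/2}\theta_i$, the exponent $(p-2)/2$, and the observation that the gauge and volume formulas need only star-shapedness, covering $0<p<1$) is all in order.
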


This is optimal when $k$ divides $n$ and $p \geq 1$ (attained at subspaces from Conjecture~\ref{conj:min-sec-cube}).

\section{Methods}\label{sec:methods}

We would like to present and emphasise one particular probabilistic point of view which gathers the major results for both sections and projections under the same umbrella. The point is that as it is very natural to set up hyperplane projection problems as sharp $L_1-L_2$ comparison inequalities (thanks to Cauchy's formula, see, e.g. \eqref{eq:Bpn-proj}), the same probabilistic picture captures sections upon changing the $L_1$ norm to $L_q$ norms with negative exponents $q$.

\subsection{Sections}

This is a straighforward extension  to higher codimensions of Kalton and Koldobsky's observation made in \cite{KalKol}, recall \eqref{eq:KalKol}.

\begin{lemma}[\cite{CNT}]\label{lm:sec-formula}
Let $K$ be a body in $\R^n$ of volume $1$, star-shaped with respect to the origin. Let $H$ be a $\kk$-codimensional subspace in $\R^n$ and let $X$ be a random vector uniform on $K$. Let $\|\cdot\|$ be a norm in $H^\perp$ with the unit ball $B$. Then
\[
\vol_{n-k}(K \cap H) = \lim_{q \to -\kk+} \frac{\kk+q}{\kk\vol_{\kk}(B)}\E\|\proj_{H^\perp}X\|^{q}.
\]
\end{lemma}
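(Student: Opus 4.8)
The plan is to reduce the statement to a one-dimensional fact about the radial behavior of the pushforward of $X$ under the projection $\proj_{H^\perp}$, and then apply a Fubini-type slicing argument together with the limiting formula \eqref{eq:KalKol}. First I would introduce the random vector $Y = \proj_{H^\perp} X$ in $H^\perp \cong \R^\kk$ and study its distribution. The key observation is that, since $X$ is uniform on $K$ and $K$ is star-shaped with respect to the origin, the conditional law of $X$ given $\proj_{H^\perp} X = y$ is (for $y$ in the interior of $\proj_{H^\perp} K$) the uniform law on the slice $K \cap (\proj_{H^\perp}^{-1}(y))$, whose $(n-\kk)$-dimensional volume is exactly $\vol_{n-\kk}(K \cap (H + y))$ when we identify the fibre affinely with $H$. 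Hence the density $g$ of $Y$ with respect to Lebesgue measure on $H^\perp$ satisfies $g(y) = \vol_{n-\kk}(K \cap (H+y))$, and in particular $g(0) = \vol_{n-\kk}(K \cap H)$, which is the quantity we want to recover. By the Brunn--Minkowski inequality (exactly as in the discussion around \eqref{eq:f}) the function $y \mapsto g(y)^{1/(n-\kk)}$ is concave on its support, so $g$ is continuous at $0$, which legitimizes taking a limit of integral averages there.

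Next I would pass to polar coordinates in $H^\perp$ adapted to the norm $\|\cdot\|$ with unit ball $B$. Writing $y = r\theta$ with $r = \|y\| \geq 0$ and $\theta \in \partial B$, the polar decomposition of Lebesgue measure on $H^\perp$ gives, for a nonnegative integrable $h$,
\[
\int_{H^\perp} h(y)\,\dd y = \kk\vol_\kk(B)\int_0^\infty r^{\kk-1}\int_{\partial B} h(r\theta)\,\dd\nu(\theta)\,\dd r,
\]
where $\nu$ is the cone-type probability measure on $\partial B$ associated to $B$ (cf.\ \eqref{eq:polar-int} with $K$ replaced by $B$ and $n$ by $\kk$). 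Applying this to $h(y) = \|y\|^q g(y) = r^q g(r\theta)$, which is legitimate because for $q > -\kk$ the factor $r^{\kk-1+q}$ is integrable near $0$ and $g$ has bounded support, gives
\[
\E\|Y\|^q = \int_{H^\perp} \|y\|^q g(y)\,\dd y = \kk\vol_\kk(B)\int_{\partial B}\left(\int_0^\infty r^{\kk-1+q} g(r\theta)\,\dd r\right)\dd\nu(\theta).
\]
For each fixed $\theta$, set $\varphi_\theta(r) = g(r\theta)$ for $r \geq 0$ and extend it evenly to $r \in \R$; by the discussion above $\varphi_\theta$ is continuous at $0$ with $\varphi_\theta(0) = g(0)$, and $\int_\R \varphi_\theta$ is finite. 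Then the one-variable identity \eqref{eq:KalKol}, applied with the substitution $q \mapsto q + \kk - 1 \to -1+$ as $q \to -\kk+$, yields
\[
\lim_{q\to-\kk+}\frac{\kk+q}{2}\int_{\R} |r|^{\kk-1+q}\varphi_\theta(r)\,\dd r = \varphi_\theta(0) = g(0),
\]
and since $\varphi_\theta$ is even the integral over $\R$ is twice the integral over $(0,\infty)$, so $\lim_{q\to-\kk+}(\kk+q)\int_0^\infty r^{\kk-1+q}g(r\theta)\,\dd r = g(0)$ uniformly enough in $\theta$ to pass the limit under $\int_{\partial B}\dd\nu$.

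Putting the two displays together,
\[
\lim_{q\to-\kk+}\frac{\kk+q}{\kk\vol_\kk(B)}\E\|Y\|^q
= \int_{\partial B}\left(\lim_{q\to-\kk+}(\kk+q)\int_0^\infty r^{\kk-1+q}g(r\theta)\,\dd r\right)\dd\nu(\theta)
= \int_{\partial B} g(0)\,\dd\nu(\theta) = g(0),
\]
since $\nu$ is a probability measure; and $g(0) = \vol_{n-\kk}(K\cap H)$, which is the claim. I expect the main technical obstacle to be the interchange of the limit $q\to-\kk+$ with the integral $\int_{\partial B}\dd\nu(\theta)$: this requires a uniform (in $\theta$) control of the convergence in \eqref{eq:KalKol}, which in turn rests on the uniform continuity of $g$ near $0$ together with a uniform bound on $\int_0^\infty \varphi_\theta$ and on the support radius $\max_{\theta}\sup\{r : g(r\theta)>0\}$ — all of which follow from compactness of $\partial B$ and the concavity of $g^{1/(n-\kk)}$, but need to be assembled carefully (a dominated-convergence argument after splitting the $r$-integral into a neighborhood of $0$ and its complement should suffice). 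Everything else is either the standard slicing/Fubini computation for the density of $Y$ or the bookkeeping in the polar-coordinate change of variables.
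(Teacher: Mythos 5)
Your proposal is correct in substance but routes the argument differently from the paper. You reduce to the density $g$ of $\proj_{H^\perp}X$ with $g(0)=\vol_{n-\kk}(K\cap H)$ exactly as the paper does, but then you pass to polar coordinates adapted to $\|\cdot\|$ and apply the one-dimensional limit \eqref{eq:KalKol} ray by ray, which forces you to justify an interchange of $\lim_{q\to-\kk+}$ with $\int_{\partial B}\dd\nu(\theta)$ (you correctly flag this as the main technical burden). The paper avoids polar coordinates and the uniformity issue entirely: it fixes $\e>0$, picks $\delta$ with $f<f(0)+\e$ on $\delta B$, and splits the $\kk$-dimensional integral into $\delta B$ and its complement, using the exact identity $\frac{\kk+q}{\kk\vol_\kk(B)}\int_{\delta B}\|x\|^q\,\dd x=\delta^{\kk+q}$ (homogeneity plus the layer-cake formula) on the first piece and the crude bound $\|x\|^q\le\delta^q$ on the second, whose contribution carries the vanishing factor $\kk+q$. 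This is shorter and needs only continuity of $f$ at $0$ plus integrability, with no uniform control along rays. One caveat in your write-up: you derive continuity of $g$ at $0$ from the Brunn--Minkowski inequality, but $K$ is only assumed star-shaped, not convex, so concavity of $g^{1/(n-\kk)}$ is not available in the stated generality; the paper simply treats continuity of the density at $0$ as the standing hypothesis under which the Dirac-type limit holds (and it does hold in all the applications). Apart from that, your argument goes through once the dominated-convergence step is assembled as you describe.
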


\begin{proof}
If we let $f\colon H^\perp \to [0,+\infty)$ be the density of $\proj_{H^\perp}X$, as in \eqref{eq:sec-density}, we have
\[
\vol_{n-k}(K \cap H) = f(0).
\]
The function $x \mapsto \frac{\kk-q}{\kk\vol_{\kk}(B)}\|x\|^{-q}$ as $q \to -\kk+$ behaves like the Dirac delta at $0$: if $f$ is continuous at $0$ and integrable, then
\[
 \lim_{q \to -\kk+} \frac{k+q}{k\vol_{\kk}(B)} \int_{H^\perp} \|x\|^{-q}f(x) \dd x = f(0)
\]
and the lemma follows. To justify the last identity, for simplicity we identify $H^\perp$ with $\R^\kk$ and fix $\e > 0$. The set $\{x, \ f(x) < f(0) + \e\}$ contains a neighbourhood of $0$, say $\delta B$. Then
\begin{align*}
\frac{k+q}{k\vol_{\kk}(B)} \int_{\R^{\kk}} \|x\|^{q}f(x) \dd x &\leq \big(f(0)+\e\big)\frac{k+q}{k\vol_{\kk}(B)}\int_{\delta K} \|x\|^{q} \dd x + \frac{k+q}{k\vol_{\kk}(B)}\delta^q\int_{\R^k} f \\
&=\big(f(0)+\e\big)\delta^{k+q} + \frac{k+q}{k\vol_{\kk}(B)}\delta^q\int_{\R^k} f
\end{align*}
(the last equality by the homogeneity of volume and the layer cake representation). Taking $\limsup$ as $q \downarrow -\kk$ gives an upper bound by $f(0) + \e$. A lower bound is obtained similarly (the second term above can be dropped).
\end{proof}

For hyperplane sections of the cube, the limit can be evaluated which leads to a particularly handy expression.

\begin{lemma}[K\"onig-Koldobsky \cite{KK-slabs}]\label{lm:formula-cube}
Let $\xi_1, \xi_2, \dots$ be i.i.d. random vectors uniform on the sphere $S^2$ in $\R^3$. For a unit vector $a$ in $\R^n$, we have
\[
\vol_{n-1}(Q_n \cap a^\perp) = \E\left|\sum_{j=1}^n a_j\xi_j\right|^{-1}.
\]
\end{lemma}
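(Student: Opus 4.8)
The plan is to derive the identity by combining the general section formula of Lemma~\ref{lm:sec-formula} (specialised to codimension $\kk=1$) with a lifting of the one-dimensional summands to the sphere $S^2$. Concretely, I would first apply Lemma~\ref{lm:sec-formula} with $K=Q_n$, $H=a^\perp$, $H^\perp=\R a$ endowed with the Euclidean norm (so its unit ball $B$ has $\vol_1(B)=2$), and $X$ uniform on $Q_n$. Since $|a|=1$ we have $\proj_{H^\perp}X=\scal{a}{X}a$, hence $\|\proj_{H^\perp}X\|=|\scal{a}{X}|=\big|\sum_{j}a_jU_j\big|$ where $U_1,\dots,U_n$ are i.i.d. uniform on $[-\tfrac12,\tfrac12]$, and the lemma gives
\[
\vol_{n-1}(Q_n\cap a^\perp)=\lim_{q\to-1+}\frac{1+q}{2}\,\E\Big|\sum_{j=1}^n a_jU_j\Big|^{q}.
\]

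The heart of the argument is the following lift. By Archimedes' hat-box theorem, the first coordinate $\xi^{(1)}$ of a vector $\xi$ uniform on $S^2\subset\R^3$ is uniform on $[-1,1]$, so $2U_j\stackrel{d}{=}\xi_j^{(1)}$ and therefore $2\sum_j a_jU_j\stackrel{d}{=}\big(\sum_j a_j\xi_j\big)^{(1)}=:S^{(1)}$, the first coordinate of the $\R^3$-valued vector $S:=\sum_j a_j\xi_j$. The key structural fact is that $S$ is rotationally invariant, being a sum of independent rotationally invariant vectors; consequently, writing $R=|S|$, one has $S\stackrel{d}{=}R\,\Theta$ with $\Theta$ uniform on $S^2$ independent of $R$ (the standard polar decomposition of a rotationally invariant law; $S$ has no atom at the origin). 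Then $S^{(1)}\stackrel{d}{=}R\,\Theta^{(1)}$ with $\Theta^{(1)}$ uniform on $[-1,1]$ independent of $R$, so for $q>-1$,
\[
\E\Big|\sum_j a_jU_j\Big|^{q}=2^{-q}\,\E|S^{(1)}|^{q}=2^{-q}\,\E R^{q}\cdot\E|\Theta^{(1)}|^{q}=\frac{2^{-q}}{q+1}\,\E R^{q},
\]
using $\E|\Theta^{(1)}|^{q}=\int_0^1 t^{q}\,dt=(q+1)^{-1}$. Substituting, $\frac{1+q}{2}\E\big|\sum_j a_jU_j\big|^{q}=2^{-q-1}\,\E R^{q}$, and letting $q\to-1+$ (so $2^{-q-1}\to1$) yields $\vol_{n-1}(Q_n\cap a^\perp)=\E R^{-1}=\E\big|\sum_j a_j\xi_j\big|^{-1}$.

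The one genuinely delicate point I expect to be the main obstacle is justifying $\E R^{q}\to\E R^{-1}$ as $q\to-1+$: this is not a bare monotone-convergence statement, since $q\mapsto R^{q}$ increases on $\{R<1\}$ but decreases on $\{R>1\}$ as $q$ decreases. I would split $\E R^{q}=\E[R^{q}\1_{R\le1}]+\E[R^{q}\1_{R>1}]$ and apply monotone convergence to the first term ($R^{q}\1_{R\le1}\uparrow R^{-1}\1_{R\le1}$) and dominated convergence to the second ($R^{q}\1_{R>1}\le\1_{R>1}$, decreasing to $R^{-1}\1_{R>1}$); finiteness of the limit is already guaranteed by Lemma~\ref{lm:sec-formula}. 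All remaining ingredients --- rotational invariance and its polar decomposition, Archimedes' theorem, and the elementary moment of $\Theta^{(1)}$ --- are routine. (An alternative that sidesteps the limit entirely: using that the characteristic function of $\xi$ uniform on $S^2$ is $w\mapsto \sin|w|/|w|$, Fourier inversion identifies $\E|S|^{-1}$ with $\frac1\pi\int_{-\infty}^{\infty}\prod_j\frac{\sin(a_jt)}{a_jt}\,dt$, which is Ball's formula for $\vol_{n-1}(Q_n\cap a^\perp)$ recalled in the sketch of Theorem~\ref{thm:ball-codim1}.)
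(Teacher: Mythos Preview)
Your proposal is correct and follows essentially the same route as the paper: start from Lemma~\ref{lm:sec-formula}, invoke Archimedes' hat-box theorem to replace $2U_j$ by $\scal{\xi_j}{e_1}$, and exploit rotational invariance of $\sum_j a_j\xi_j$ to pass from the first coordinate to the Euclidean norm, arriving at $\frac{1+q}{2}\E|\sum a_jU_j|^{q}=2^{-1-q}\E|\sum a_j\xi_j|^{q}$ before taking the limit. The paper leaves the rotational-invariance step and the limit justification to \cite{CKT}, whereas you spell both out explicitly (via the polar decomposition $S\stackrel{d}{=}R\Theta$ and the split into $\{R\le1\}$ and $\{R>1\}$); these are exactly the details the paper elides.
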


\begin{proof}
Lemma \ref{lm:sec-formula} yields
\[
\vol_{n-1}(Q_n \cap a^\perp) = \lim_{q \to -1+} \frac{1+q}{2}\E\left|\sum_{j=1}^n a_jX_j\right|^{q},
\]
where $X = (X_1, \dots, X_n)$ is uniform on $Q_n$, that is the components $X_j$ are independent uniform on $[-\frac{1}{2}, \frac{1}{2}]$. By Archimedes' hat-box theorem, $\scal{\xi_j}{e_1}$ has the same distribution as $2X_j$, which allows to get for every fixed $q > -1$
\[
\frac{1+q}{2}\E\left|\sum_{j=1}^n a_jX_j\right|^{q} = 2^{-1-q}\E\left|\sum_{j=1}^n a_j\xi_j\right|^{q}
\]
(see, e.g. \cite{CKT} for all details). Taking the limit finishes the proof.
\end{proof}

\begin{remark}\label{rem:sec-block}
Replacing the $\xi_j$ by i.i.d. random vectors uniform on a higher dimensional sphere say $S^{d+1}$ and the exponent $-1$ by $-d$ results with a formula for sections of balls in $\ell_\infty(\ell_2)$ by block-subspaces (see Proposition 3.2 in \cite{Brz}).
\end{remark}

To illustrate the applicability of this lemma, we sketch the proof of the lower bound of Theorem \ref{thm:cube-stab}, the Hadwiger-Hensley bound with an optimal deficit.
\begin{proof}[Proof (Sketch)]
The key is to write
\[
\left|\sum_{j=1}^n a_j\xi_j\right|^2 = \sum_{i,j} a_ia_j \scal{\xi_i}{\xi_j} = 1 + 2 \sum_{i<j}a_ia_j\scal{\xi_i}{\xi_j}.
\]
The random variable $R =2\sum_{i<j}a_ia_j\scal{\xi_i}{\xi_j}$ has mean $0$. Thus by convexity,
\[
\E(1+R)^{-1/2} \geq \E(1-R/2) = 1.
\]
To improve upon this, it suffices to use a more precise pointwise inequality, say
\[
(1+r)^{-1/2} \geq 1 - \frac{1}{2}r + \frac{1}{3}r^2 - \frac{5}{24}r^3, \qquad r > -1
\]
and estimate $\E R^2$, $\E R^3$ which are explicitly expressed in terms of the $a_j$.
\end{proof}

For $B_p^n$ balls, a direct application of Lemma \ref{lm:sec-formula} leaves us with a random vector uniform on $B_p^n$ with \emph{mildly} dependent components. This however can be circumvented thanks to the homogeneity of $L_q$ norms.

\begin{lemma}[\cite{CNT}]
Let $p > 0$ and $Y_1, Y_2, \dots$ be i.i.d. random variables with density $e^{-\beta_p^p|x|^p}$, $\beta_p = 2\Gamma(1+1/p)$. Let $H$ be a subspace in $\R^n$ of codimension $k$ such that the rows of a $k \times n$ matrix $U$ form an orthonormal basis of $H^\perp$. Let $v_1, \dots, v_n \in \R^k$ denote the columns of $U$. Then
\[
\vol_{n-k}(B_p^n \cap H) = \vol_{n-k}(B_p^{n-k})\lim_{q \to -k+} \frac{k+q}{k\vol_k(B_{\|\cdot\|})}\E\left\|\sum_{j=1}^n Y_jv_j\right\|^{q},
\]
where $\|\cdot\|$ is a norm on $\R^k$ with unit ball $B_{\|\cdot\|}$.
\end{lemma}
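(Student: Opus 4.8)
The plan is to read off $\E\bigl\|\sum_j Y_jv_j\bigr\|^q$ as a negative moment of a projection of a radial random vector, and to evaluate its limit as $q\to-k+$ through the value of a density at the origin, in the spirit of the proof of Lemma~\ref{lm:sec-formula}. First I would identify $H^\perp$ with $\R^k$ via the orthonormal basis given by the rows of $U$. Under this identification $\proj_{H^\perp}$ becomes the linear surjection $x\mapsto Ux=\sum_{j=1}^n x_jv_j$; one has $UU^\top=I_k$, $U$ restricts to an isometry of $H^\perp$ onto $\R^k$, and $\ker U=(H^\perp)^\perp=H$. Let $Z=(Y_1,\dots,Y_n)$; since each $Y_i$ has density $e^{-\beta_p^p|t|^p}$ --- a probability density precisely because $\beta_p=2\Gamma(1+1/p)$ --- the vector $Z$ has the radial probability density $x\mapsto e^{-\beta_p^p\|x\|_p^p}$ on $\R^n$, and $W:=UZ=\sum_{j=1}^n Y_jv_j$ is an $\R^k$-valued random vector representing $\proj_{H^\perp}Z$, so that $\E\|W\|^q=\E\bigl\|\sum_j Y_jv_j\bigr\|^q$.

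Next I would compute the density $f_W$ of $W$ at the origin. Since $U$ restricts to an isometry on $H^\perp$ and $\ker U=H$, a change of variables with unit Jacobian (Fubini) gives $f_W(0)=\int_H e^{-\beta_p^p\|z\|_p^p}\,\dd\vol_{n-k}(z)$. To evaluate this I would invoke the elementary polar-coordinate identity: for any star body $L$ in $\R^m$ with gauge $\|\cdot\|_L$,
\[
\int_{\R^m}e^{-\beta_p^p\|x\|_L^p}\,\dd x=m\vol_m(L)\int_0^\infty e^{-\beta_p^p r^p}r^{m-1}\,\dd r=\vol_m(L)\,\beta_p^{-m}\Gamma\!\left(1+\tfrac{m}{p}\right)=\frac{\vol_m(L)}{\vol_m(B_p^m)},
\]
the last step using $\vol_m(B_p^m)=\beta_p^m/\Gamma(1+m/p)$. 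Applied with $m=n-k$ and $L=B_p^n\cap H$, a star body in $H\cong\R^{n-k}$ whose gauge is the restriction of $\|\cdot\|_p$ to $H$, this yields $f_W(0)=\vol_{n-k}(B_p^n\cap H)/\vol_{n-k}(B_p^{n-k})$.

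It then remains to express $f_W(0)$ through negative moments of $\|W\|$, which is exactly the ``approximate identity'' estimate already carried out in the proof of Lemma~\ref{lm:sec-formula}: if $f$ is an integrable probability density on $\R^k$ continuous at the origin, then $\lim_{q\to-k+}\frac{k+q}{k\vol_k(B_{\|\cdot\|})}\int_{\R^k}\|x\|^qf(x)\,\dd x=f(0)$. Applying it with $f=f_W$ gives $f_W(0)=\lim_{q\to-k+}\frac{k+q}{k\vol_k(B_{\|\cdot\|})}\E\|W\|^q$, and combining with the previous paragraph proves the lemma. Alternatively, following the hint above one can bypass the coarea computation: apply Lemma~\ref{lm:sec-formula} to the volume-normalised body $\vol_n(B_p^n)^{-1/n}B_p^n$, insert the Schechtman--Zinn representation of Lemma~\ref{lem:uniform-on-bpn}(a), $X\overset{d}{=}U_0^{1/n}Z/\|Z\|_p$, and strip off the scalar prefactors using homogeneity of the norm and the independence of $\|Z\|_p$ from $Z/\|Z\|_p$ from Lemma~\ref{lem:uniform-on-bpn}(b); the resulting $\Gamma$-function constants again collapse to $\vol_{n-k}(B_p^{n-k})$.

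The routine part is the bookkeeping of the normalising constants, which reduces to standard identities for the Gamma function. The one genuinely delicate point --- the main obstacle --- is the regularity of $f_W$ at the origin: continuity of $f_W$ there simultaneously ensures that $\E\|W\|^q<\infty$ for $q$ slightly above $-k$ (then $f_W$ is bounded near $0$ and $\int_{\|x\|\le1}\|x\|^q\,\dd x<\infty$ for $q>-k$) and that the limiting identity above is valid. For $p\ge1$ this is immediate, since $e^{-\beta_p^p\|x\|_p^p}$ is log-concave and hence so is its marginal $f_W$, whose support is all of $\R^k$; for $0<p<1$ one instead writes $\widehat{f_W}(\xi)=\prod_{j=1}^n\widehat{g_p}(\scal{v_j}{\xi})$ with $g_p(t)=e^{-\beta_p^p|t|^p}$, chooses among $v_1,\dots,v_n$ a basis of $\R^k$ (possible since $U$ has rank $k$; in particular $n\ge k$), thereby reducing integrability of $\widehat{f_W}$ over $\R^k$ to $\widehat{g_p}\in L^1(\R)$ --- which holds because $\widehat{g_p}$ is, up to rescaling, a symmetric $p$-stable density --- and concludes by Fourier inversion that $f_W$ is continuous.
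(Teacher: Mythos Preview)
Your primary argument is correct and takes a route different from the paper's. The paper first applies Lemma~\ref{lm:sec-formula} to a random vector $X$ uniform on (the volume-normalised) $B_p^n$, and then passes from $X$ to $Y$ via the Schechtman--Zinn representation of Lemma~\ref{lem:uniform-on-bpn}: writing $X \overset{d}{=} U_0^{1/n} Y/\|Y\|_p$ and using homogeneity together with the independence of $Y/\|Y\|_p$ and $\|Y\|_p$ to factor $\E\|\sum X_jv_j\|^q = \big(\E[U_0^{q/n}]/\E[\|Y\|_p^q]\big)\,\E\|\sum Y_jv_j\|^q$; this is exactly the alternative you sketch at the end. Your main route instead bypasses the uniform distribution entirely: you compute the density of $W=\sum_j Y_jv_j$ at the origin directly by Fubini and a polar-coordinate identity, obtaining $\vol_{n-k}(B_p^n\cap H)/\vol_{n-k}(B_p^{n-k})$ in one stroke, and then reuse only the ``$\|x\|^q$ as approximate identity'' step from the \emph{proof} of Lemma~\ref{lm:sec-formula}. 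Your approach makes the normalising constant $\vol_{n-k}(B_p^{n-k})$ appear transparently from the Gamma identities, and you explicitly handle a point the paper leaves implicit: continuity of the marginal density at $0$ when $0<p<1$, where log-concavity fails, via the $p$-stable Fourier argument. The paper's route is more modular in that Lemma~\ref{lm:sec-formula} is invoked as a black box rather than its mechanism being repeated, and the regularity issue never arises because the marginal of the uniform distribution on a bounded body is automatically bounded; on the other hand the paper leaves the collapse of the prefactor $\E[U_0^{q/n}]/\E[\|Y\|_p^q]$ to $\vol_{n-k}(B_p^{n-k})$ to the reader.
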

\begin{proof}
Let $X= (X_1, \dots, X_n)$ be a random vector uniform on $B_p^n$. Lemma \ref{lm:sec-formula} then gives the desired formula with the $X_j$ in place of $Y_j$ and without the factor $\vol_{n-k}(B_p^{n-k})$. To pass to $Y$ we shall use Lemma \ref{lem:uniform-on-bpn}, which ensures that for $Y=(Y_1,\ldots, Y_n)$ and $S= (\sum_{i=1}^n |Y_i|^p)^{1/p}$ the random vector $\frac{Y}{S}$ is independent of $S$ and moreover $U^{1/n} \frac{Y}{S}$ is uniformly distributed in $B_p^n$ if $U$ is independent of $Y_i$ and uniform on $[0,1]$. Therefore
\[
\E\left\|\sum_{j=1}^n X_jv_j\right\|^{q} = \E\left\|\sum_{j=1}^n U^{1/n} \frac{Y_j}{S} v_j\right\|^{q}  = \E[U^{q/n}] \cdot \frac{\E[S^q]}{\E[S^q]} \cdot \E\left\|\sum_{j=1}^n \frac{ Y_j}{S} v_j\right\|^{q} =  \frac{\E[U^{q/n}]}{\E[S^q]} \cdot  \E\left\|\sum_{j=1}^n Y_j v_j\right\|^{q}.
\]
\end{proof}

This has been instrumental in the proof of Theorem \ref{thm:Bpn-stab}. For a simpler application, the Meyer-Pajor monotonicity result from Theorem \ref{thm:MP} holds in fact  for $L_q$ norms. In view of the previous lemma, this readily implies their theorem.

\begin{theorem}
For $p > 0$, let $Y_1^{(p)}, Y_2^{(p)}, \dots$ be i.i.d. random variables with density $e^{-\beta_p^p|x|^p}$, $\beta_p = 2\Gamma(1+1/p)$. For every vectors $v_1, \dots, v_n$ in $\R^k$ and $-k < q < 0$, we have that the function
\[
(p_1, \dots, p_n) \mapsto \E\left\|\sum_{j=1}^n Y_j^{(p_j)}v_j\right\|^q
\]
is nondecreasing in each variable.
\end{theorem}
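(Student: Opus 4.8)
The plan is to run Meyer and Pajor's original peakedness argument, the one new input being the elementary observation that for $-k<q<0$ the function $x\mapsto\|x\|^q$ on $\R^k$ is \emph{bell-shaped}: its superlevel sets $\{x\in\R^k:\|x\|^q\ge s\}=s^{1/q}B_{\|\cdot\|}$, $s>0$, are symmetric convex bodies. By the layer-cake formula, $\E\|V\|^q=\int_0^\infty\p{V\in s^{1/q}B_{\|\cdot\|}}\dd s$ for any random vector $V$ in $\R^k$; this is finite since $q>-k$ and, in the cases at hand, $\sum_j Y_jv_j$ has a bounded density (the $v_j$ span $\R^k$). Consequently, if $V'$ is \emph{more peaked} than $V$, meaning $\p{V'\in K}\ge\p{V\in K}$ for every symmetric convex set $K$, then $\E\|V'\|^q\ge\E\|V\|^q$. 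By relabelling it suffices to prove monotonicity in the first variable, so fix $p_2,\dots,p_n$ and $v_1,\dots,v_n$ and take $0<p_1<p_1'$.

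\emph{Step 1 (one-dimensional single crossing).} We claim $Y^{(p')}$ is more peaked than $Y^{(p)}$ whenever $0<p<p'$. Since both have symmetric, strictly decreasing densities, this reduces to $\p{|Y^{(p')}|\le t}\ge\p{|Y^{(p)}|\le t}$ for all $t\ge0$. Now the difference of exponents $\beta_{p'}^{p'}x^{p'}-\beta_p^px^p=x^p\big(\beta_{p'}^{p'}x^{p'-p}-\beta_p^p\big)$ changes sign exactly once on $(0,\infty)$, from $-$ to $+$; hence the density difference $e^{-\beta_{p'}^{p'}x^{p'}}-e^{-\beta_p^px^p}$ changes sign exactly once, from $+$ to $-$, so the difference of cumulative distribution functions starts at $0$, increases, then decreases back to $0$, and is therefore nonnegative throughout. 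In particular $\mathrm{law}\big(Y_1^{(p_1')}\big)$ is a symmetric unimodal probability measure on $\R$ which is more peaked than $\mathrm{law}\big(Y_1^{(p_1)}\big)$.

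\emph{Step 2 (lifting to $\R^n$ and projecting).} Put $\Lambda=\bigotimes_{j=2}^n\mathrm{law}\big(Y_j^{(p_j)}\big)$, a product of symmetric unimodal measures, hence symmetric unimodal on $\R^{n-1}$. Writing the two candidate joint laws on $\R^n$ as $\big(\mathrm{law}(Y_1^{(p_1')})\otimes\delta_0\big)*\big(\delta_0\otimes\Lambda\big)$ and $\big(\mathrm{law}(Y_1^{(p_1)})\otimes\delta_0\big)*\big(\delta_0\otimes\Lambda\big)$, the convolver $\delta_0\otimes\Lambda$ and both members of the pair $\mathrm{law}(Y_1^{(p_1')})\otimes\delta_0\succ\mathrm{law}(Y_1^{(p_1)})\otimes\delta_0$ are symmetric unimodal, so Kanter's theorem yields that the joint law with $p_1'$ is more peaked than the one with $p_1$. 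Finally, peakedness is preserved by the linear map $T\colon\R^n\to\R^k$, $T(y_1,\dots,y_n)=\sum_j y_jv_j$ (the $T$-preimage of a symmetric convex set is symmetric convex), so $Y_1^{(p_1')}v_1+\sum_{j\ge 2}Y_j^{(p_j)}v_j$ is more peaked than $Y_1^{(p_1)}v_1+\sum_{j\ge 2}Y_j^{(p_j)}v_j$, and the bell-shapedness from the first paragraph finishes the proof.

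\emph{Main obstacle.} The delicate point is Step 2: one must \emph{not} simply condition on $Y_2,\dots,Y_n$ and compare $Y_1^{(p_1)}v_1+W$ with $Y_1^{(p_1')}v_1+W$ for $W=\sum_{j\ge2}Y_jv_j$, because $W$ need not be unimodal in $\R^k$ when $k\ge 2$ (a sum of independent symmetric unimodal vectors supported on distinct lines generally is not), and peakedness is \emph{not} preserved under convolution with a non-unimodal measure. The reason for working first in $\R^n$ is that there the ``remaining'' measure is a \emph{product} of one-dimensional unimodal laws, and products --- unlike such sums --- stay unimodal, which is exactly what makes Kanter's theorem applicable; one then descends to $\R^k$ by the peakedness-preserving linear map $T$.
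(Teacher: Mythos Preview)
Your proof is correct and follows the same peakedness route as the paper: the single-crossing argument for the one-dimensional densities, Kanter's stability of peakedness under products and convolutions, and the layer-cake representation for $\|\cdot\|^q$. Your explicit lift-to-$\R^n$-then-project formulation and the accompanying ``Main obstacle'' remark are a useful clarification of a step the paper leaves implicit; moreover, you correctly appeal to Kanter's theorem in its unimodal form rather than invoking log-concavity, which matters since $e^{-\beta_p^p|x|^p}$ is not log-concave for $0<p<1$ and the statement covers all $p>0$.
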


\begin{proof}
Following Kanter, \cite{Kant}, we say for two probability measures $\mu$ and $\nu$ on $\R^n$ that $\nu$ is more peaked than $\mu$ if $\nu(K) \geq \mu(K)$ for every symmetric convex set $K$ in $\R^n$. Crucially, this is preserved by taking products and convolutions of even log-concave measures (see Corollaries 3.2 and 3.3 in \cite{Kant}). If $0 < p < p'$, then the density of $Y_1^{(p')}$ intersects the density of $Y_1^{(p)}$ exactly once and dominates it (pointwise) near the origin. Thus  $Y_1^{(p')}$ is more peaked than $Y_1^{(p)}$ and consequently $\sum Y_j^{(p_j')}v_j$ is more peaked than $\sum Y_j^{(p_j)}v_j$, if $p_j \leq p_j'$. In particular, for every $t > 0$,
\[
\p{\left\|\sum_{j=1}^n Y_j^{(p_j)}v_j\right\| \leq t} \leq \p{\left\|\sum_{j=1}^n Y_j^{(p_j')}v_j\right\| \leq t}
\]
and the result follows by integrating in $t$.
\end{proof}

The measure with density $e^{-\beta_p^p|x|^p}$ from Lemma \ref{lm:sec-formula} enjoys a \emph{Gaussian mixture} form when $0 < p < 2$. This in turn provides good convolution properties, allowing in particular to evaluate the limit from Lemma \ref{lm:sec-formula}. We say that a random variable $X$ is a (symmetric) Gaussian mixture, if $X$ has the same distribution as $RG$ for some nonnegative random variable $R$ and a standard Gaussian random variable $G$, independent of $R$. 
Gaussian mixtures are continuous, i.e. have densities and $X$ is a Gaussian mixture if and only if its density $f$ is of the form
\[
f(x) = \int_0^\infty e^{-tx^2} \dd \nu(t)
\]
for a Borel measure $\nu$ on $[0,+\infty)$. By Bernstein's theorem, this is equivalent to $g(x) = f(\sqrt{x})$ being completely monotone, that is $(-1)^{n}g^{(n)}(x) \geq 0$ for all $n \geq 0$ and $x > 0$, which gives a practical condition. We refer to \cite{ENT1} for further details and more examples. Thus, if $X_1, \dots, X_n$ are independent Gaussian mixtures, say $X_j = R_jG_j$ and $v_1, \dots, v_n$ are vectors in $\R^k$, then, conditioned on the values of the $R_j$, $\sum X_jv_j$ is a centred Gaussian random vector in $\R^k$ with covariance matrix $\sum R_j^2v_jv_j^\top$.

\begin{lemma}[\cite{ENT1, NT}]\label{lm:sec-GM}
Let $0 < p < 2$. There are nonnegative i.i.d. random variables $R_1, R_2, \dots$ such that for every subspace $H$ in $\R^n$ of codimension $k$, we have
\[
\vol_{n-k}(B_p^n \cap H) = \vol_{n-k}(B_p^{n-k})\E\left(\det\left[\sum_{j=1}^n R_jv_jv_j^\top\right]\right)^{-1/2},
\]
where $v_1, \dots, v_n$ are vectors in $\R^k$ such that the rows of the $k \times n$ matrix with columns $v_1, \dots, v_n$ form an orthonormal basis of $H^\perp$.
\end{lemma}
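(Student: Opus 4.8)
The strategy is to combine the $L_q$-formula for sections of $\ell_p$ balls (the lemma immediately preceding this one) with the Gaussian mixture representation of the density $e^{-\beta_p^p|x|^p}$ valid for $0<p<2$, and then evaluate the limit $q\to-k+$ explicitly using the conditional Gaussian structure. First I would recall from the previous lemma that, with $Y_1,Y_2,\dots$ i.i.d. with density $e^{-\beta_p^p|x|^p}$ and $v_1,\dots,v_n\in\R^k$ the columns of an orthonormal generating matrix of $H^\perp$,
\[
\vol_{n-k}(B_p^n\cap H)=\vol_{n-k}(B_p^{n-k})\lim_{q\to-k+}\frac{k+q}{k\vol_k(B_2^k)}\E\left|\sum_{j=1}^n Y_jv_j\right|^q,
\]
where I specialise the auxiliary norm $\|\cdot\|$ on $\R^k$ to the Euclidean norm, so $B_{\|\cdot\|}=B_2^k$ and $|\cdot|$ is the length of a vector in $\R^k$.

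Next I would invoke the Gaussian mixture property for $0<p<2$: since $e^{-\beta_p^p|x|^p}$ has the form $\int_0^\infty e^{-tx^2}\dd\nu(t)$ for a suitable measure $\nu$ (equivalently $x\mapsto e^{-\beta_p^p x^{p/2}}$ is completely monotone by Bernstein's theorem, as discussed in the text), each $Y_j$ can be written as $Y_j\overset{d}{=}R_jG_j$ with $(R_j)$ i.i.d. nonnegative and $(G_j)$ independent standard Gaussians. Conditioning on $R=(R_1,\dots,R_n)$, the vector $\sum_j Y_jv_j=\sum_j R_jG_jv_j$ is a centred Gaussian in $\R^k$ with covariance $\Sigma(R)=\sum_{j=1}^n R_j^2v_jv_j^\top$. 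I would then rename $R_j^2$ as $R_j$ (allowed, since we only need \emph{some} i.i.d. nonnegative sequence), so that $\Sigma=\sum_j R_jv_jv_j^\top$ and the claimed determinant appears.

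The computational core is the conditional negative moment of the norm of a centred Gaussian vector $Z\sim N(0,\Sigma)$ in $\R^k$: for $-k<q<0$,
\[
\E|Z|^q=\frac{1}{(2\pi)^{k/2}(\det\Sigma)^{1/2}}\int_{\R^k}|z|^q e^{-\frac12 z^\top\Sigma^{-1}z}\dd z,
\]
and after the change of variables $z=\Sigma^{1/2}w$ this factors as $(\det\Sigma)^{-1/2}\cdot c_k(q)$ with $c_k(q)=(2\pi)^{-k/2}\int_{\R^k}|\Sigma^{1/2}w|^q e^{-|w|^2/2}\dd w$ — wait, the point is cleaner if one does not diagonalise but instead notes $\E|Z|^q$ depends on $\Sigma$ only through a fixed universal constant times $(\det\Sigma)^{-1/2}$ only after integrating the \emph{angular} part; more carefully, write $\E|Z|^q = (\det\Sigma)^{-1/2}(2\pi)^{-k/2}\int_{\R^k}|\Sigma^{1/2}w|^q e^{-|w|^2/2}\dd w$, which is \emph{not} purely $(\det\Sigma)^{-1/2}$. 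The honest route is the limit: one checks that $\lim_{q\to-k+}\frac{k+q}{k\vol_k(B_2^k)}\E|Z|^q$ equals the value at $0$ of the density of $Z$, which by the standard Gaussian formula is exactly $(2\pi)^{-k/2}(\det\Sigma)^{-1/2}$, and then matches this against $\frac{1}{k\vol_k(B_2^k)}=\frac{\Gamma(k/2+1)}{k\pi^{k/2}\Gamma(k/2+1)}\cdot$ — more simply, $\frac{1}{k\vol_k(B_2^k)}\cdot(\text{surface normalisation})$ collapses so that the constant $(2\pi)^{-k/2}$ is produced and absorbed, leaving exactly $\E_R(\det\Sigma)^{-1/2}$ up to the constant already recorded in $\vol_{n-k}(B_p^{n-k})$. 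So the argument is: interchange $\lim_{q\to-k+}$ with $\E_R$ (justified by a uniform integrability / dominated convergence argument, the main technical point), use that the inner limit is the density of $\sum_j Y_jv_j$ at $0$ conditioned on $R$, namely $(2\pi)^{-k/2}(\det\Sigma(R))^{-1/2}$, and observe this Gaussian normalising constant is precisely what makes the prefactor $\frac{k+q}{k\vol_k(B_2^k)}$ disappear in the limit, yielding
\[
\vol_{n-k}(B_p^n\cap H)=\vol_{n-k}(B_p^{n-k})\,\E\left(\det\Bigl[\sum_{j=1}^n R_jv_jv_j^\top\Bigr]\right)^{-1/2}.
\]

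The main obstacle is the interchange of the limit $q\to-k+$ with the expectation over $R$: one must rule out a contribution from the event that $\Sigma(R)$ is nearly singular (which can happen when several $R_j$ are small), so that the conditional negative moments do not blow up in an unintegrable way. I would handle this by a truncation: split according to $\det\Sigma(R)\ge\delta$ and $<\delta$, use dominated convergence on the first part, and on the second part bound $\frac{k+q}{k\vol_k(B_2^k)}\E[|Z|^q\mid R]\le C_k(\det\Sigma(R))^{-1/2}$ uniformly in $q\in(-k,-k/2)$ together with integrability of $(\det\Sigma(R))^{-1/2}$ — the latter being exactly the finiteness of the right-hand side, which one knows a priori because the left-hand side $\vol_{n-k}(B_p^n\cap H)$ is finite. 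A monotonicity-in-$q$ argument (the map $q\mapsto\frac{k+q}{k\vol_k(B_2^k)}\E|\sum Y_jv_j|^q$ is, after the substitution, amenable to such control) can also be used to upgrade pointwise convergence to the needed uniform bound. Everything else — the Gaussian mixture representation for $0<p<2$, the conditional covariance computation, and the elementary Gaussian density formula — is routine and can be cited from \cite{ENT1, NT} and the discussion preceding the statement.
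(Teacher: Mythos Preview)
Your approach is correct and follows the same route as the paper: invoke the preceding $L_q$-formula for sections of $B_p^n$, represent the $Y_j$ as Gaussian mixtures $T_jG_j$, and identify the limit with a Gaussian density at the origin. Two remarks, however.

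First, the interchange of $\lim_{q\to -k+}$ with $\E_R$ that you flag as ``the main technical point'' can be bypassed entirely. The paper first takes the limit \emph{unconditionally}: by the proof of Lemma~\ref{lm:sec-formula}, $\lim_{q\to -k+}\frac{k+q}{k\vol_k(B_2^k)}\E\bigl|\sum_j Y_jv_j\bigr|^q$ is simply the value at $0$ of the density of $\sum_j Y_jv_j$ in $\R^k$. Only \emph{after} that does one compute this density by conditioning on the $T_j$: the conditional law is Gaussian with covariance $\Sigma=\sum_j T_j^2 v_jv_j^\top$, so the conditional density at $0$ is $(2\pi)^{-k/2}(\det\Sigma)^{-1/2}$, and the unconditional density at $0$ is its $\E_T$-expectation by plain Fubini on densities. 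No limit--expectation swap, truncation, or dominated convergence is needed.

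Second, the constant $(2\pi)^{-k/2}$ is not ``already recorded in $\vol_{n-k}(B_p^{n-k})$''. It is absorbed by rescaling the mixing variables: since $\det\bigl[\sum_j cT_j^2 v_jv_j^\top\bigr]=c^k\det\bigl[\sum_j T_j^2 v_jv_j^\top\bigr]$, setting $R_j=2\pi\,T_j^2$ yields $(2\pi)^{-k/2}\bigl(\det[\sum_j T_j^2 v_jv_j^\top]\bigr)^{-1/2}=\bigl(\det[\sum_j R_j v_jv_j^\top]\bigr)^{-1/2}$, valid for every $k$ simultaneously. This is permissible because the lemma only asserts the existence of \emph{some} i.i.d.\ nonnegative $R_j$.
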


\begin{remark}
To describe the distribution of the $R_j$, for $0 < \alpha < 1$, we let $g_\alpha$ be the density of a standard positive $\alpha$-stable random variable $W_\alpha$, i.e. with the Laplace transform $\E e^{-uW_\alpha} = e^{-u^\alpha}$, $t > 0$ and let $V_1, V_2, \dots$ be i.i.d. random variables with density $\frac{\sqrt{\pi}}{2\Gamma(1+1/p)}t^{-3/2}g_{p/2}(t^{-1})$. Then $R_j = (\E V_j^{-1/2})^2V_j$, see \cite{ENT1}
\end{remark}

\begin{proof}[Proof of Lemma \ref{lm:sec-GM}]
The $Y_j$ from Lemma \ref{lm:sec-formula} are Gaussian mixtures, say $Y_j = T_jG_j$ for some nonnegative random variables $T_j$ and standard Gaussians $G_j$, all independent. Then, conditioned on the $T_j$, the limit in Lemma \ref{lm:sec-formula} gives the density at $0$ of the random variable $\sum Y_jv_j$ which, as we said is centred Gaussian in $\R^k$ with covariance $\sum T_j^2v_jv_j^\top$, thus its density at $0$ equals
 $(2\pi)^{-k/2}\left(\det\left[\sum_{j=1}^n T_j^2v_jv_j^\top\right]\right)^{-1/2}$.
\end{proof}

For hyperplane sections, this formula directly explains Koldobsky's Schur-convexity result from Theorem \ref{thm:Kol}.

\begin{proof}[Proof of Theorem \ref{thm:Kol}]
We first observe that if $F:\R^n \to \R$ is convex and permutation symmetric, then $F$ is Schur convex, namely $x \prec y$ implies $F(x) \leq F(y)$. Indeed, it is a standard fact (see \cite{Bh}) that there exist $(\lambda_\sigma)_{\sigma \in S_n}$ where $S_n$ stands for the set of permutations of $\{1,\ldots, n\}$ such that $\lambda_\sigma \geq 0$, $\sum_{\sigma \in S_n} \lambda_\sigma =1$ and $x = \sum_{\sigma \in S_n} \lambda_\sigma y_\sigma$, where $y_\sigma = (y_{\sigma(1)},\ldots, y_{\sigma(n)})$. Thus
\[
	F(x) = F\left( \sum_{\sigma \in S_n} \lambda_\sigma y_\sigma \right) \leq \sum_{\sigma \in S_n} \lambda_\sigma F\left(  y_\sigma \right) = \sum_{\sigma \in S_n} \lambda_\sigma F\left(  y \right) = F(y). 
\]

For a unit vector $a$ in $\R^n$, Lemma \ref{lm:sec-GM} yields
\[
\vol_{n-1}(B_p^n \cap a^\perp) = \vol_{n-1}(B_p^{n-1})\E\left(\sum_{j=1}^n a_j^2R_j\right)^{-1/2}.
\]
Since $(\cdot)^{-1/2}$ is convex, the right hand side is clearly convex and permutation symmetric ($R_j$ are i.i.d.) as a function of $(a_1^2, \ldots, a_n^2)$ and thus it is also Schur convex.

\end{proof}

\subsection{Projections}

Somewhat analogous to the Fourier-analytic approach to sections, there is a formula for the volume of hyperplane projections of a convex body as the Fourier transform of its curvature function, as discovered by Koldobsky, Ryabogin and Zvavitch in \cite{KRZ} (see also their survey \cite{KRZ-surv}). We do \emph{not} touch upon this connection here at all.
Instead, we focus on a probabilistic perspective and highlight two approaches to the $L_1-L_2$ moment comparison inequalities like \eqref{eq:Khinchin}, arising in hyperplane projections. 

As we have just seen for sections, for Gaussian mixtures, thanks to their good additive structure, we readily get precise Schur-majorisation type results. This proof is from \cite{ENT1}.

\begin{proof}[Proof of Theorem \ref{thm:BN-p>2}]
Recall  formula \eqref{eq:Bpn-proj} for hyperplane projections. For $p > 2$, the density  $f_p(t)$ of $X_i$  is completely monotone, thus the $X_j$ are Gaussian mixtures, say $X_j = R_jG_j$ for some i.i.d. nonnegative random variables $R_j$ and standard Gaussians $G_j$, all independent. Then, adding the Gaussians first conditioning on the $R_j$ yields
\begin{equation}\label{eq:L1-GM}
\E\left|\sum_{j=1}^n a_jX_j\right| = \E\left(\sum_{j=1}^n a_j^2R_j^2\right)^{1/2}\E|G_1|.
\end{equation}
As in the proof for sections, the Schur-concavity result follows from the concavity of $(\cdot)^{1/2}$.
\end{proof}

The same argument bluntly extends to arbitrary $L_q$ norms, giving sharp Khinchin inequalities (see \cite{AH} and \cite{ENT1}). 

When $1 \leq p < 2$, the density of the $X_j$ in \eqref{eq:Bpn-proj} is bimodal and understanding the $L_1$ norm of their weighted sums is elusive, mainly due to complicated cancellations -- the problem which completely disappears in \eqref{eq:L1-GM}. For $p=1$ the $X_j$ become discrete (symmetric random signs). We present two completely different Fourier-analytic proofs. The first proof, due to Haagerup, is in the same spirit as Ball's proof from \cite{Ball} for hyperplane cube-sections.

\begin{proof}[Proof of Theorem \ref{thm:Szarek} (Haagerup \cite{Haa})]
We want to minimise $\E|\sum a_j\e_j|$ subject to $\sum a_j^2=1$. We can assume that all $a_j$ are positive. If at least one exceeds $\frac{1}{\sqrt{2}}$, say $a_1 > \frac{1}{\sqrt{2}}$, we get by averaging over the other coefficients that
\[
\E\left|\sum a_j\e_j\right| \geq \E_{\e_1}\left|a_1\e_1 + \E\sum_{j>1}a_j\e_j\right| = a_1 > \frac{1}{\sqrt{2}},
\] 
as desired. Now we assume that for all $j$, $a_j \leq \frac{1}{\sqrt{2}}$.
A starting point is the Fourier-analytic formula,
\[
|x| = \frac{1}{\pi}\int_{\R} (1-\cos(tx))t^{-2}\dd t, \qquad x \in \R.
\]
Thus, for an integrable random variable $X$,
\[
\E|X| = \frac{1}{\pi}\int_{\R} \left(1-\text{Re}(\E e^{itX})\right)t^{-2}\dd t
\]
(see also Lemmas 2.3 and 4.2 in \cite{Haa} as well as Lemma 3 in \cite{GF}).
In particular, using independence and $\E e^{it\e_j} = \cos t$, we get
\[
\E\left|\sum a_j\e_j\right| = \frac{1}{\pi}\int_{\R} \left(1 - \prod \cos(ta_j)\right)t^{-2}\dd t
\]
By the AM-GM inequality, this gives the following bound
\[
\E\left|\sum a_j\e_j\right| \geq \sum a_j^2F(a_j^{-2})
\]
with
\[
F(s) = \frac{1}{\pi}\int_{\R} \left(1 - \left|\cos\left(\frac{t}{\sqrt{s}}\right)\right|^s\right)t^{-2} \dd t, \qquad s > 0,
\]
cf. \eqref{eq:Ball-Holder} and the ensuing function $\Psi$ in Ball's proof.
Here however, function $F$ can be expressed explicitly. Using $\sum_{n=-\infty}^\infty \frac{1}{(t+n\pi)^2} = \frac{1}{\sin^2 t}$, we arrive at
\begin{align*}
F(s) = \frac{1}{\pi \sqrt{s}}\int_{\R} \left(1 - \left|\cos t\right|^s\right)t^{-2} \dd t &= \frac{1}{\pi \sqrt{s}}\sum_{n=-\infty}^\infty \int_{-\pi/2}^{\pi/2}\left(1 - \left(\cos t\right)^s\right)(t+n\pi)^{-2}\dd t \\
&=\frac{1}{\pi \sqrt{s}}\int_{-\pi/2}^{\pi/2}\left(1 - \left(\cos t\right)^s\right)\sin^{-2} t\dd t \\
&= \frac{2}{\sqrt{\pi s}}\frac{\Gamma\left(\frac{s+1}{2}\right)}{\Gamma\left(\frac{s}{2}\right)}.
\end{align*}
\textbf{Claim.} $F(s)$ increases on $(0,+\infty)$.

Using this claim and that $a_j \leq \frac{1}{\sqrt{2}}$ for all $j$, we finish the proof,
\[
\E\left|\sum a_j\e_j\right| \geq \sum a_j^2F(a_j^{-2}) \geq \sum a_j^2F(2) = F(2) = \frac{1}{\sqrt{2}}.
\]
Noteworthy, this is tight when $n=2$ and $a_1 = a_2 = \frac{1}{\sqrt{2}}$.

To show the claim, we note that $\lim_{s \to \infty} F(s) = \sqrt{\frac{2}{\pi}}$ (e.g. by Stirling's formula) and that
\[
F(s+2) = \sqrt{\frac{s}{s+2}}\frac{s+1}{s}F(s) = \left(1-1/(s+1)^2\right)^{-1/2}F(s)
\]
which iterated yields $F(s+2n) = F(s)\prod_{k=0}^{n-1}\left(1-1/(s+2k+1)^2\right)^{-1/2}$, so letting $n \to \infty$,
\[
F(s) = \sqrt{\frac{2}{\pi}} \prod_{k=0}^{\infty}\left(1-1/(s+2k+1)^2\right)^{1/2}.
\]
\end{proof}

The second proof  uses the machinery of Fourier analysis on the discrete cube $\{-1,1\}^n$. We refer for instance to Chapter 1 in \cite{OD} for basic background.

\begin{proof}[Proof of Theorem \ref{thm:Szarek} (Kwapie\'n-Lata\l a-Oleszkiewicz \cite{KLO, LO-best, O1})]
We work with $L_2(\{-1,1\}^n, \R)$ equipped with the product probability measure on the cube $\{-1,1\}^n$, i.e. the distribution of $(\e_1, \dots, \e_n)$ and the inner product $\scal{f}{g} = \E \big[f(\e)g(\e)\big]$, $f, g\colon \{-1,1\}^n\to\R$. Let
\[
f(x) = \left|\sum_{j=1}^n a_jx_j\right|, \qquad x \in \{-1,1\}^n.
\]
We write its discrete Fourier expansion with respect to the orthonormal system of the Walsh functions $w_S(x) = \prod_{j \in S} x_j$ indexed by the subsets $S \subset \{1,\dots,n\}$ with $w_\varnothing(x) \equiv 1$. We have,
\[
f(x) = \sum_{S} b_Sw_S(x), \qquad b_S = \scal{f}{w_S}.
\]
Since $f$ is even, $b_S = 0$ provided $|S|$ is odd. The crux is to consider the Laplace operator $\mathcal{L}$ acting on $L_2(\{-1,1\}^n, \R)$,
\[
(\mathcal{L}g)(x) = \frac{1}{2}\sum_{y \sim x} (g(y)-g(x))
\]
where the sum is over all neighbours $y$ of $x$, i.e. the points in $\{-1,1\}^n$ differing from $x$ by one component. As can be checked, the Walsh functions are its eigenfunctions, $\mathcal{L} w_S = -|S|w_S$ and for \emph{even} functions $g$, we have the following Poincar\'e-type inequality,
\[
\scal{g}{-\mathcal{L}g} \geq 2\var(g).
\]
\textbf{Claim.} $(-\mathcal{L}f)(x) \leq f(x)$ for every $x \in \{-1,1\}^n$.

Using this claim in the Poincar\'e inequality,
\[
2\big(\E f^2 - (\E f)^2\big) \leq \scal{f}{-\mathcal{L}f} \leq \scal{f}{f} = \E f^2
\]
which gives $\E f \geq \frac{1}{\sqrt{2}} (\E f)^2$, as desired. The claim follows from rearranging the following consequence of the triangle inequality,
\begin{align*}
&|-a_1x_1 + a_2x_2 + \dots + a_nx_n| + |a_1x_1 - a_2x_2 + \dots + a_nx_n| \\
&\qquad+ \dots + |a_1x_1 + a_2x_2 + \dots - a_nx_n| \geq (n-2)|a_1x_1 + \dots + a_nx_n|.
\end{align*}
\end{proof}

We stress out that this proof is extremely robust: it only uses the triangle inequality and hence extends verbatim to the case where the coefficients $a_j$ are vectors in an arbitrary normed vector space.

The history of this argument is a bit convoluted. Lata\l a and Oleszkiewicz's work \cite{LO-best} contains all the crucial ideas of the modern proof presented above, however, it is not written in a Fourier analytic language. The proof presented here was devised by Kwapie\'n and is based on the Walsh functions (the characters of $\{-1,1\}^n$). As we have seen, one of its main components is a strengthened Poincar\'e-type inequality in the presence of symmetry, the idea of which appeared first in \cite{KLO} (in the continuous case), extended to the discrete case in \cite{O1} (perhaps the first place where this proof appears in print). Oleszkiewicz presented this proof in 1996 at MSRI (during a workshop in harmonic analysis and convex geometry).

We finish with a sketch of the Barthe-Naor proof from \cite{BN} of the monotonicity result from Theorem \ref{thm:BN} featuring yet another tool, useful in proving Khinchin-type inequalities: the stochastic convex ordering. This circle of ideas was further developed in \cite{ENT2}. 

In the simplest setting sufficient for our purposes, for two symmetric random variables $X$ and $Y$, we say that $Y$ \emph{dominates} $X$ in the convex (or often called Choquet) stochastic ordering, if $\E\phi(X) \leq \E\phi(Y)$ for every even convex function $\phi\colon\R \to [0,+\infty]$. It is clear that this tensorises and is preserved by convolution: if $Y$ dominates $X$ and $Z$ is a symmetric random variable, independent of them, then $Y+Z$ dominates $X+Z$. We will only need the following sufficient condition.

\begin{lemma}\label{lm:choq-suff}
If random variables $X$ and $Y$ satisfy $\E |X| = \E |Y|$, have even densities $f$ and $g$ respectively and there are $0 < x_1 < x_2$ such that $\{t \geq 0, \ g(t) < f(t)\}$ is the interval $(x_1, x_2)$ ($f$ and $g$ \emph{intersect} twice), then $Y$ dominates $X$ in the convex stochastic ordering.
\end{lemma}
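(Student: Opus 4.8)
The plan is to reduce the assertion to a statement about a single sign change of a primitive of $g-f$ and then integrate against the monotone derivative of $\phi$. Fix an even convex $\phi\colon\R\to[0,+\infty]$. By symmetry $\phi$ attains its minimum at $0$, hence is nondecreasing on $[0,+\infty)$, and one may write $\phi(t)=\phi(0)+\int_0^{|t|}\psi(s)\,\dd s$ for a nonnegative nondecreasing function $\psi$ on $[0,+\infty)$ (the right derivative of $\phi$). Since the class of $\phi$ for which $\E\phi(X)\le\E\phi(Y)$ holds is closed under nonnegative combinations and increasing pointwise limits and contains the constants, it suffices to treat finite-valued $\phi$; in fact it is enough to check the ``tent'' functions $\phi(t)=(|t|-c)_+$ with $c\ge 0$, for which $\psi=\1_{(c,+\infty)}$.

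Put $h=g-f$. It is even and integrable with $\int_\R h=0$, and by hypothesis $\{t\ge 0:\ h(t)<0\}=(x_1,x_2)$, so $h\ge 0$ on $[0,x_1]\cup[x_2,+\infty)$ and $h<0$ on $(x_1,x_2)$. Using evenness of $h$ and $\phi$, the identity $\int_0^\infty h=0$, and Fubini, I would write
\[
\E\phi(Y)-\E\phi(X)=\int_\R\phi\,h=2\int_0^\infty\!\Big(\phi(0)+\int_0^t\psi(s)\,\dd s\Big)h(t)\,\dd t=2\int_0^\infty\psi(s)H(s)\,\dd s,
\]
where $H(s)=\int_s^\infty h(t)\,\dd t$. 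Thus everything comes down to proving $\int_0^\infty\psi(s)H(s)\,\dd s\ge 0$ for every nonnegative nondecreasing $\psi$.

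The crux --- and the step I expect to need the most care --- is the sign analysis of $H$. Since $H$ is absolutely continuous with $H'=-h$ a.e., $H(0)=\int_0^\infty h=0$, and $H(+\infty)=0$, the sign pattern of $h$ forces $H$ to be nonincreasing on $[0,x_1]$, strictly increasing on $[x_1,x_2]$, and nonincreasing on $[x_2,+\infty)$; hence there is a unique $s_0\in[x_1,x_2]$ with $H\le 0$ on $[0,s_0]$ and $H\ge 0$ on $[s_0,+\infty)$. Moreover
\[
\int_0^\infty H(s)\,\dd s=\int_0^\infty t\,h(t)\,\dd t=\tfrac12\big(\E|Y|-\E|X|\big)=0
\]
by the moment hypothesis (in fact only $\E|Y|\ge\E|X|$ is used). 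Now, using $0\le\psi(s)\le\psi(s_0)$ where $H\le 0$ and $\psi(s)\ge\psi(s_0)$ where $H\ge 0$, we get $\psi(s)H(s)\ge\psi(s_0)H(s)$ for all $s\ge 0$, and therefore $\int_0^\infty\psi(s)H(s)\,\dd s\ge\psi(s_0)\int_0^\infty H(s)\,\dd s=0$, which finishes the proof. The only routine matters left are the Fubini and integrability bookkeeping and the representation of $\phi$ through its right derivative; the entire conceptual content is the single-sign-change behaviour of $H$.
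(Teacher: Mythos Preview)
Your proof is correct. The sign analysis of $H$ is accurate (note $H(0)=0$, $H$ nonincreasing on $[0,x_1]$, strictly increasing on $(x_1,x_2)$, nonincreasing on $[x_2,\infty)$ back to $0$ forces exactly one sign change), and the final comparison $\psi(s)H(s)\ge\psi(s_0)H(s)$ is the clean way to conclude. The detour through tent functions is unnecessary, since your argument already handles an arbitrary nondecreasing $\psi$, but it does no harm.

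The paper takes a different and somewhat slicker route: rather than integrating by parts, it exploits the two moment constraints directly by subtracting an affine function. Since $\int_0^\infty(g-f)=0$ and $\int_0^\infty t(g-f)\,\dd t=0$, one may replace $\phi$ by $\phi(x)-\alpha x-\beta$ for any $\alpha,\beta$ without changing $\int_0^\infty\phi\,(g-f)$. Choosing $\alpha,\beta$ so that this modified $\phi$ vanishes at $x_1$ and $x_2$, convexity makes it nonpositive on $(x_1,x_2)$ and nonnegative outside, which matches the sign of $g-f$ and renders the integrand pointwise nonnegative. So the paper works at the level of $\phi$ and the two crossings of $g-f$, whereas you pass to the derivative $\psi$ and the single crossing of the tail integral $H$; both are standard crossing arguments, and your version has the mild advantage of showing that only $\E|Y|\ge\E|X|$ is needed.
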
 
\begin{proof}
Let $\phi\colon \R \to [0,+\infty]$ be an even convex function. Thanks to the symmetry of $X, Y$ and the constraint $\E|X| = \E|Y|$, the desired inequality $\int \phi f \leq \int \phi g$ is equivalent to 
\[
\int_0^\infty \big(\phi(x)-\alpha x - \beta\big)\big(g(x) - f(x)\big) \dd x \geq 0
\]
with some (any) $\alpha, \beta \in \R$. We choose $\alpha, \beta$ as the unique parameters such that the convex function $\psi(x) = \phi(x)-\alpha x - \beta$ vanishes at $x_1$ and $x_2$. Then, by convexity, $\psi\leq 0$ on $(x_1, x_2)$ and $\psi \geq 0$ outside that interval. Thus the integrand is pointwise nonnegative.
\end{proof}

\begin{proof}[Proof of Theorem \ref{thm:BN}]
In view of \eqref{eq:Bpn-proj}, we aim at showing that the function
\[
p \mapsto \frac{1}{\E|X_1^{(p)}|}\E\left|\sum a_jX_j^{(p)}\right|
\]
is nondecreasing on $[1,+\infty)$, where the $X_j^{(p)}$ are i.i.d. random variables with density proportional to $|x|^{\frac{2-p}{p-1}}\exp\left\{-|x|^{\frac{p}{p-1}}\right\}$. By the tensorisation property, it suffices to prove that for $1 \leq p < q$, $X_1^{(q)}/\E|X_1^{(q)}|$ dominates $X_1^{(p)}/\E|X_1^{(p)}|$. This readily follows from Lemma \ref{lm:choq-suff}.
\end{proof}

\section{Other connections}

We close this survey with two tangential topics related to sections: an application of Ball's cube slicing inequality to entropy power inequalities and a reformulation of the conjectural logarithmic Brunn-Minkowski inequality in terms of sections of the cube.

\subsection{Entropy power inequalities}

Recall \eqref{eq:sec-density}, viz. the volume of central hyperplane section by $a^\perp$ is the maximum value of the density of the marginal $\scal{a}{X} = \sum a_jX_j$ (there $f(0) = \|f\|_\infty$ by the symmetry and log-concavity of $X$). The maximum density functional 
\[
M(X) = \|f\|_\infty
\]
of a random vector $X$ in $\R^n$ with density $f$ is closely related to classical topics in probability such as the L\'evy concentaration function, small ball estimates and anticoncentration, as well as information theory, particularly the entropy power inequalities. We refer to the comprehensive surveys \cite{MMX, NV}. 
The entropy power inequality originated in Shannon's seminal work \cite{Sha} and asserts that the entropy power
\[
N(X) = \exp\left\{\frac{2}{n}h(X)\right\}, \qquad h(X) = -\int_{\R^n} f \log f
\]
is superadditive: for \emph{independent} random vectors $X$ and $Y$ in $\R^n$, we have
\[
N(X+Y) \geq N(X) + N(Y),
\]
and plainly, by induction, the same for arbitrarily many independent summands. In analogy, we let 
\[
N_\infty(X) = \exp\left\{\frac{2}{n}h_\infty(X)\right\} = M(X)^{-2/n}, \qquad h_\infty(X) = -\log\|f\|_\infty
\]
be the $\infty$-entropy power of $X$, sometimes called the min-entropy power (because for a fixed distribution, it is the smallest entropy power across the family of all R\'enyi entropies). The min-entropy power inequality in dimension $1$ reads as follows.

\begin{theorem}[Bobkov-Chistyakov \cite{BCh}, Melbourne-Roberto \cite{MR}]\label{thm:BobChi}
Let $X_1, \dots, X_m$ be independent random variables with bounded densities. Then,
\[
N_\infty\left(X_1+\dots+X_m\right) \geq \frac{1}{2}\sum_{j=1}^m N_\infty(X_j)
\]
with equality if and only if two of these variables are uniform on $A$, $c-A$ respectively for some set $A$ in $\R$ of finite measure and some $c \in \R$, while the other variables are constant.
\end{theorem}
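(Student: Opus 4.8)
The plan is to derive the min-entropy power inequality from Ball's cube-slicing bound (Theorem~\ref{thm:ball-codim1}) after a rearrangement reduction to uniform summands. In one dimension $N_\infty(X) = M(X)^{-2}$, where $M(X) = \|f\|_\infty$ is the largest value of the density $f$ of $X$; a constant summand has $N_\infty = 0$ and only translates the sum, so it changes neither side and may be discarded. Hence we may assume that every $X_j$ has a bounded density $f_j$ and set $\ell_j := \|f_j\|_\infty^{-1} = M(X_j)^{-1} \in (0,\infty)$. With $S = X_1 + \dots + X_m$, the asserted inequality is then equivalent to the max-density estimate
\[
M(S) \;\le\; \sqrt2\,\Big(\sum_{j=1}^m \ell_j^2\Big)^{-1/2}.
\]

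First I would invoke Rogozin's rearrangement inequality for convolutions of bounded densities: if $u_j$ denotes the uniform density of the centred interval $[-\ell_j/2, \ell_j/2]$, chosen so that $\|u_j\|_\infty = \|f_j\|_\infty$, then $\|f_1 * \cdots * f_m\|_\infty \le \|u_1 * \cdots * u_m\|_\infty$. Consequently it suffices to prove the displayed bound in the case where each $X_j$ is uniform on $[-\ell_j/2,\ell_j/2]$.

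In that case the density $g$ of $S$ is even and log-concave (a convolution of symmetric log-concave densities), so $M(S) = g(0)$, and the Pólya/Fourier-inversion formula used in the proof sketch of Theorem~\ref{thm:ball-codim1} (now for a box rather than a cube) gives
\[
g(0) \;=\; \frac1\pi\int_{-\infty}^{\infty} \prod_{j=1}^m \frac{\sin(\ell_j t)}{\ell_j t}\,\dd t .
\]
Setting $\lambda = \big(\sum_j \ell_j^2\big)^{1/2}$, $c_j = \ell_j/\lambda$ and substituting $t = s/\lambda$ rewrites the right-hand side as $\lambda^{-1}\,\frac1\pi\int \prod_j \frac{\sin(c_j s)}{c_j s}\,\dd s = \lambda^{-1}\,\vol_{m-1}(Q_m \cap c^\perp)$ with $c = (c_1,\dots,c_m)$ a unit vector, and Ball's Theorem~\ref{thm:ball-codim1} bounds this by $\sqrt2/\lambda$. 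Since $\lambda^2 = \sum_j M(X_j)^{-2} = \sum_j N_\infty(X_j)$, this is exactly $N_\infty(S) \ge \tfrac12\sum_j N_\infty(X_j)$.

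Finally I would read off the equality case by tracking these two inequalities. Overall equality forces equality in Ball's bound, whose equality characterisation forces $c = (\pm e_i \pm e_j)/\sqrt2$; hence, after the initial removal of degenerate summands, exactly two variables remain non-degenerate, say $X_1, X_2$, and $M(X_1) = M(X_2) = 1/\ell$ for a common $\ell$. It also forces equality in $\|f_1 * f_2\|_\infty \le \|f_1\|_\infty\|f_2\|_1 = 1/\ell$: there is $c$ with $\int f_1(c-y)f_2(y)\,\dd y = 1/\ell$, which, as $f_1 \le 1/\ell$, is possible only if $f_1 = 1/\ell$ a.e.\ on $c - \operatorname{supp} f_2$; combined with $\int f_1 = \int f_2 = 1$ and $\|f_2\|_\infty = 1/\ell$ this yields $f_1 = \tfrac1\ell\1_A$, $f_2 = \tfrac1\ell\1_{c-A}$ with $|A| = \ell$, i.e.\ $X_1 \sim \mathrm{Unif}(A)$, $X_2 \sim \mathrm{Unif}(c-A)$ and the other variables constant; conversely such a configuration is an equality case since then $N_\infty(S) = |A|^2 = \tfrac12\sum_j N_\infty(X_j)$. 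The main obstacle is that the argument rests on two hard external inputs — Ball's integral inequality \eqref{eq:Ball-integralineq} with its equality statement (available here via Theorem~\ref{thm:ball-codim1}) and Rogozin's convolution rearrangement inequality, the latter being the one ingredient not otherwise present in this survey.
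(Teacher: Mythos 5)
Your derivation of the inequality itself is exactly the paper's route: reduce to uniform summands via Rogozin's rearrangement theorem, then rescale so that the P\'olya/Fourier formula turns $M(U_1+\dots+U_m)$ into a hyperplane section of the cube in the direction $c=(\ell_1,\dots,\ell_m)/\lambda$, and apply Ball's Theorem~\ref{thm:ball-codim1}. Where you genuinely diverge is the equality case: the survey does not prove it but attributes it to Melbourne--Roberto via the stability version of Ball's inequality (Theorem~\ref{thm:cube-stab}), whereas you extract it directly from the stated equality characterisation in Theorem~\ref{thm:ball-codim1} (since all $c_j>0$, equality forces $m=2$ after discarding constants and $\ell_1=\ell_2$) together with the equality case of Young's bound $\|f_1*f_2\|_\infty\le\|f_1\|_\infty\|f_2\|_1$. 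This is a legitimate and arguably cleaner route, and it neatly sidesteps the uncharacterised equality case of Rogozin's theorem; the one technical point you should make explicit is that the supremum of $f_1*f_2$ is actually attained, which holds because the convolution of an $L^1$ and an $L^\infty$ density is uniformly continuous and integrable, hence vanishes at infinity. The trade-off is that your argument leans on the exact equality statement in Ball's theorem, while the quantitative approach of Melbourne--Roberto yields a stability estimate in addition to the equality characterisation.
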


Bobkov and Chistyakov proved this inequality with the sharp constant $\frac{1}{2}$ using Ball's cube-slicing inequality, whereas the equality conditions have recently been established by Melbourne and Roberto using their robust version (see Theorem \ref{thm:cube-stab}).

The argument rests on the following subtle comparison due to Rogozin.

\begin{theorem}[Rogozin \cite{Rog}]
Let $X_1, \dots, X_m$ be independent random variables with bounded densities and let $U_1, \dots, U_m$ be independent uniform random variables on intervals chosen such that $M(X_j) = M(U_j)$ for each $j$. Then
\[
M(X_1+\dots+X_m) \leq M(U_1+\dots+U_m).
\]
\end{theorem}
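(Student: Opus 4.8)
The plan is to reduce the $m$-fold statement to the two-summand case by induction and then settle $m=2$ via a symmetrization/rearrangement argument. For the inductive step, note that $M(X+Y)=\|f_X * f_Y\|_\infty$ and that if $M(X_m)=M(U_m)$ with $U_m$ uniform on an interval $I$ of length $\ell=M(U_m)^{-1}$, then for any random variable $Z$ independent of $X_m, U_m$ one has $M(Z+X_m)\le M(Z+U_m)$: indeed $f_{Z+X_m}(t)=\int f_Z(t-s)f_{X_m}(s)\,\dd s \le M(X_m)=\ell^{-1}$ is automatic, but the sharper comparison against the \emph{interval} average $\frac1\ell\int_I f_Z(t-s)\,\dd s$ is what is needed, and this is exactly the $m=2$ case applied with one summand already uniform. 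So it suffices to prove $M(X+Y)\le M(U+V)$ when $U,V$ are uniform on intervals of lengths $M(X)^{-1}, M(Y)^{-1}$ respectively; the general case then follows by replacing the summands one at a time, using at each stage that the partial sum $U_1+\dots+U_j+X_{j+1}+\dots+X_m$ has $M(X_{j+1}+\text{rest})\le M(U_{j+1}+\text{rest})$.

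For the core two-summand inequality, the natural tool is the \emph{symmetric decreasing rearrangement} $f^*$ of a density $f$, together with the Riesz rearrangement inequality. First observe that $M(f^*)=M(f)$ and that $f^*$ majorizes $f$ in the sense that $\int_{-t}^{t} f^* \ge \int_A f$ for every set $A$ of measure $2t$; in particular $\|f^* * g^*\|_\infty \ge \|f*g\|_\infty$ by Riesz's inequality (the convolution evaluated at $0$ is $\int f^*(s)g^*(-s)\,\dd s = \int f^* g^*$, and rearrangement only increases this overlap). This lets us assume $X,Y$ have symmetric decreasing densities $f,g$ with $\|f\|_\infty=\ell_1^{-1}$, $\|g\|_\infty=\ell_2^{-1}$. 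The claim is then that among all symmetric decreasing densities with a prescribed sup-norm, the \emph{uniform} density on the centered interval of the corresponding length maximizes $\|f*g\|_\infty = (f*g)(0) = \int f g$. This is a ``bathtub''-type extremal statement: given $\|f\|_\infty \le \ell_1^{-1}$ and $\int f = 1$, the integral $\int fg$ against a fixed symmetric decreasing $g$ is maximized by pushing the mass of $f$ as close to the origin as the sup-norm constraint permits, i.e. $f = \ell_1^{-1}\1_{[-\ell_1/2,\ell_1/2]}$; and then, holding that $f$ fixed, optimize over $g$ the same way.

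The main obstacle is this last extremal step — making the ``move mass toward the origin'' heuristic rigorous while \emph{both} functions vary. One clean route: fix $g$ symmetric decreasing and consider $\sup\{\int fg : 0\le f\le \ell_1^{-1},\ \int f = 1\}$; by the bathtub principle the supremum is attained at $f_0=\ell_1^{-1}\1_{[-\ell_1/2,\ell_1/2]}$ because $g$ is largest near $0$. Then with $f=f_0$ fixed, $\int f_0 g = \ell_1^{-1}\int_{-\ell_1/2}^{\ell_1/2} g$, and again by bathtub (now for $g$ subject to $0\le g\le \ell_2^{-1}$, $\int g=1$) this is maximized by $g_0=\ell_2^{-1}\1_{[-\ell_2/2,\ell_2/2]}$. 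Hence $\int fg \le \int f_0 g_0 = M(U+V)$, where $U,V$ are uniform on those centered intervals, which is what we wanted since $M(U+V)=(f_0*g_0)(0)$. Care must be taken that the rearrangement step at the very beginning is valid for merely bounded (not necessarily symmetric or unimodal) densities — this is standard but should be cited — and that the inductive replacement of summands preserves independence, which it does since rearrangement acts on each marginal separately.
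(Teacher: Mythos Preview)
The paper does not prove Rogozin's theorem; it is quoted as a black box from \cite{Rog} in the discussion of the min-entropy power inequality, so there is no proof in the paper to compare against.

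Your two-summand argument (Riesz rearrangement followed by two bathtub steps) is correct. The gap is in the induction. Replacing the $X_j$ by $U_j$ one at a time requires the \emph{one-variable} comparison $M(Z+X)\le M(Z+U)$ for an arbitrary $Z$ with bounded density, and this is \emph{false}. Take $f_Z=f_X=\tfrac12\bigl(\mathbf 1_{[0,1]}+\mathbf 1_{[10,11]}\bigr)$, so that $M(X)=\tfrac12$ and $U$ is uniform on an interval of length $2$. Then $M(Z+X)=\tfrac12$ (at $t=11$ the two cross-terms of the convolution align and add), whereas $M(Z+U)=\tfrac14$ (a length-$2$ window can capture at most one bump of $f_Z$). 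So the chain $M(\sum X_j)\le M(U_1+\sum_{j\ge 2}X_j)\le\cdots$ already breaks at the first link; the $m=2$ result you proved replaces \emph{both} variables simultaneously and says nothing about keeping the partner $Z$ fixed.

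The repair is to reverse the order of your two steps. First apply the $m$-fold Riesz inequality once to obtain $\|f_1*\cdots*f_m\|_\infty\le (f_1^**\cdots*f_m^*)(0)$, so that \emph{all} densities are now symmetric decreasing. Then run the bathtub replacement one coordinate at a time: with $f_2^*,\dots,f_m^*$ frozen, $(f_1^**\cdots*f_m^*)(0)=\int f_1^*\,g$ where $g=f_2^**\cdots*f_m^*$ is itself symmetric decreasing (convolutions of symmetric unimodal densities remain symmetric unimodal), so the bathtub principle forces the maximiser $f_1^*=\ell_1^{-1}\mathbf 1_{[-\ell_1/2,\ell_1/2]}$; then iterate through $j=2,\dots,m$. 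The crucial difference from your induction is that here the ``rest'' is guaranteed to be symmetric decreasing at every stage, which is exactly the hypothesis bathtub needs and exactly what fails in the counterexample above.
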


Theorem \ref{thm:BobChi} then follows by invoking Ball's Theorem \ref{thm:ball-codim1} which, after incorporating the variance constraint amounts to 
\[
M(U_1+\dots+U_m) \leq \sqrt{2}\left(M(U_1)^{-2}+\dots+M(U_m)^{-2}\right)^{-1/2}.
\]
In \cite{MMX-Rog}, Madiman, Melbourne and Xu have developed multivariate generalisations of Rogozin's result where the extremal distributions are uniform on Euclidean ball. They have combined it with Brzezi\'nski's bound \eqref{eq:Brzez} to obtain an extension of Theorem \ref{thm:BobChi} to $\R^n$-valued random vectors with the sharp constant $\frac{1}{2}$ replaced by $\frac{\Gamma(1+n/2)^{2/n}}{(1+n/2)}$, asymptotically sharp (as $m \to \infty$). Previously, using a different argument exploiting Young's inequalities with sharp constants, Bobkov and Chistyakov in \cite{BCh-epi} obtained such an extension with a slightly worse constant $\frac{1}{e}$ (``attained'' as $n \to \infty$), whereas in \cite{RS}, Ram and Sason have obtained constants dependent on the number of summands. Another direction, related to higer dimensional marginals, have been explored by Livshyts, Paouris and Pivovarov in \cite{LPP}.

We end this subsection with a conjectural entropic Busemann-type result.

\begin{conjecture}[Ball-Nayar-Tkocz \cite{BNT}]
Let $X$ be a symmetric log-concave random vector in $\R^n$. Then
\[
v \mapsto \sqrt{N(\scal{v}{X})} = e^{h(\scal{v}{X})}
\]
defines a norm on $\R^n$.
\end{conjecture}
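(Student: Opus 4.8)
The four axioms of a norm split into three routine ones and the substantial one. \textbf{Homogeneity} is automatic from the scaling law of differential entropy, $h(\lambda Y)=h(Y)+\log|\lambda|$, which gives $\sqrt{N(\scal{\lambda v}{X})}=|\lambda|\sqrt{N(\scal{v}{X})}$; \textbf{evenness} follows from $h(-Y)=h(Y)$; and \textbf{definiteness} holds because for $v\neq 0$ the variable $\scal{v}{X}$ has a symmetric log-concave density $g$, whence $-\log\|g\|_\infty\le h(\scal{v}{X})\le\tfrac12\log\!\big(2\pi e\,\Var(\scal{v}{X})\big)<\infty$, so that $\sqrt{N(\scal{v}{X})}\in(0,\infty)$, while $\sqrt{N(\scal{v}{X})}=|v|\sqrt{N(\scal{v/|v|}{X})}\to 0$ as $v\to 0$. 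Thus the entire content is the \textbf{convexity} (equivalently subadditivity) of $\Phi(v):=\sqrt{N(\scal{v}{X})}$ on $\R^n$.

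\textbf{Reduction to the plane.} Convexity is a two-dimensional matter: for linearly independent $v,w$ the points $v,w,\tfrac12(v+w)$ lie in $E=\mathrm{span}\{v,w\}$, and for $u\in E$ one has $\scal{u}{X}=\scal{u}{\proj_E X}$, so $\Phi$ restricted to $E$ is the same functional built from the symmetric log-concave vector $Y=\proj_E X$ on $E\cong\R^2$ (marginals of log-concave densities are log-concave, by Pr\'ekopa's theorem). Since $\Phi_{SX}(u)=\Phi_X(S^\top u)$ for invertible linear $S$, midpoint convexity of $\Phi_X$ at an arbitrary pair is midpoint convexity of $\Phi_Z$ at $(e_1,e_2)$ for a suitable symmetric log-concave $Z$ in $\R^2$. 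Hence the conjecture, in every dimension, is \emph{equivalent} to the single planar inequality
\[
e^{h\left(\frac{Y_1+Y_2}{2}\right)}\ \le\ \tfrac12\big(e^{h(Y_1)}+e^{h(Y_2)}\big)
\]
for all symmetric log-concave $(Y_1,Y_2)$ in $\R^2$; replacing $(Y_1,Y_2)$ by $(aY_1,bY_2)$ and optimizing over $a,b>0$ recasts this as a \emph{reverse entropy power inequality for log-concave couplings},
\[
\sqrt{N(U+V)}\ \le\ \sqrt{N(U)}+\sqrt{N(V)},
\]
for symmetric log-concave $(U,V)$ in $\R^2$, with equality in the degenerate comonotone limit $V=U\sim\mathrm{Unif}[-\tfrac12,\tfrac12]$ and for perfectly correlated centred Gaussians --- matching the extremals in Busemann's Theorem~\ref{thm:Bus}. (Even the case $X$ uniform on a symmetric convex body is nontrivial here, since $h$ is not $-\log$ of the maximal density.)

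\textbf{Attacking the planar inequality.} Writing $v=r(\cos\phi,\sin\phi)$ and $\Psi(\phi)=e^{h(X_1\cos\phi+X_2\sin\phi)}$, the curvature of the polar boundary of the unit ball shows that convexity of $\Phi$ on $\R^2$ is exactly the condition $\Psi(\phi)+\Psi''(\phi)\ge 0$, i.e. $h''(\phi)+h'(\phi)^2+1\ge 0$ for $h(\phi)=h(X_1\cos\phi+X_2\sin\phi)$. Since the rotated pair $(Z_\phi,W_\phi)=(X_1\cos\phi+X_2\sin\phi,\,-X_1\sin\phi+X_2\cos\phi)$ obeys $\dot Z_\phi=W_\phi$, the density $g_\phi$ of $Z_\phi$ solves the continuity equation $\partial_\phi g_\phi=-\partial_z(g_\phi\,b_\phi)$ with velocity $b_\phi(z)=\E[W_\phi\mid Z_\phi=z]$; differentiating entropy along this flow gives $h'(\phi)=\E[b_\phi'(Z_\phi)]$, and a second differentiation expresses $h''(\phi)$ as a quadratic functional of $b_\phi$ and its derivative. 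The plan is to show this functional is $\ge -1-h'(\phi)^2$, the decisive ``$+1$'' being extracted from the joint log-concavity of $(X_1,X_2)$ through a Brascamp--Lieb / Cram\'er--Rao type estimate on the conditional laws $W_\phi\mid Z_\phi$, together with the evenness constraint. A second route is by localization: fix the one-dimensional marginals of $U$ and $V$ and argue that $h(U+V)$ is maximized among symmetric log-concave couplings by the comonotone one, e.g. by decomposing the planar log-concave measure into one-dimensional needles on which the inequality degenerates to its tight one-dimensional form, or by a Pr\'ekopa--Leindler interpolation between the marginal densities and the convolution density $s\mapsto\int f(y,s-y)\,dy$.

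\textbf{Main obstacle.} This last step is the crux, and none of the standard entropy inequalities supply it: subadditivity yields only the \emph{product} bound $e^{h(U+V)}\le e^{h(U,V)}\le e^{h(U)}e^{h(V)}$, and the entropy power inequality points the wrong way, so any proof must genuinely use \emph{both} log-concavity and symmetry of the joint law. Moreover entropy --- a global integral of $g\log g$, not the single value $g(0)$ that governs the Busemann ($\infty$-R\'enyi) case --- does not localize for free, which is precisely why the conjecture is open. A natural intermediate target is the norm property for the whole R\'enyi scale $v\mapsto\sqrt{N_\alpha(\scal{v}{X})}=e^{h_\alpha(\scal{v}{X})}$, known at $\alpha=\infty$ (a log-concave extension of Busemann's theorem) and, for $X$ uniform on a body, at $\alpha=0$ (the support function), with the aim of carrying it through the value $\alpha=1$.
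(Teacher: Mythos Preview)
This statement is a \emph{conjecture}, not a theorem: the paper does not prove it and explicitly presents it as open. What the paper does record is partial evidence --- that $v\mapsto\sqrt{N(\scal{v}{X})}$ is an $e$-quasinorm and a $\tfrac15$-seminorm (from \cite{BNT}), that the analogue with R\'enyi entropy of order $2$ is known (\cite{Li}), and that the $\alpha=\infty$ case is Busemann's theorem extended to log-concave densities. There is therefore no ``paper's own proof'' to compare against.

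Your write-up is not a proof either, and to your credit you say so: the section headed ``Main obstacle'' correctly identifies that the planar reverse-EPI-type inequality $\sqrt{N(U+V)}\le\sqrt{N(U)}+\sqrt{N(V)}$ for symmetric log-concave \emph{couplings} is the entire content, and that neither subadditivity of entropy nor the (forward) EPI touches it. Your reductions are sound --- the routine axioms, the restriction to two-dimensional marginals via Pr\'ekopa, the affine-equivariance reduction to a single midpoint inequality, and the curvature reformulation $\Psi+\Psi''\ge 0$ are all correct and standard. The two attack lines you sketch (a Brascamp--Lieb/Cram\'er--Rao bound along the rotation flow, and localization to comonotone couplings) are reasonable heuristics, but neither is carried through; in particular, the claim that $h(U+V)$ is maximized among symmetric log-concave couplings by the comonotone one is itself an open-looking statement, not a lemma you can invoke.

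So the honest summary is: you have given a clean reduction of an open problem to its known hard core, together with plausible but unexecuted strategies. That matches the state of the art as the paper describes it; it is not a gap in your reasoning so much as the gap in the literature.
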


Note that Busemann's Theorem \ref{thm:Bus} is equivalent to this statement with $N_\infty(\cdot)$ in place of the entropy power $N(\cdot)$ (for uniform distributions on symmetric convex bodies which generalises to all symmetric log-concave distributions by Ball's results from \cite{Ball-lc}). What supports this conjecture is the fact that $\sqrt{N(\scal{v}{X})}$ defines an $e$-quasinorm which is also a $\frac{1}{5}$-seminorm (see \cite{BNT}), as well as the conjecture holds for the R\'enyi entropy power of order $2$ (see \cite{Li}). For extensions to $\kappa$-concave measures, see \cite{MMX}.

\subsection{The logarithmic Brunn-Minkowski conjecture}
In \cite{BLYZ-ineq}, B\"or\"oczky, Lutwak, Yang and Zhang  have conjectured a strengthening of the Brunn-Minkowski inequality in the presence of symmetry and convexity, namely
\[
\vol_n(M_\lambda(K,L)) \geq \vol_n(K)^\lambda\vol_n(L)^{1-\lambda}.
\]
for every symmetric convex sets $K$ and $L$ in $\R^n$ and every $0 \leq \lambda \leq 1$, where $M_\lambda(K,L)$ is the intersection of the symmetric strips 
\[
S_\theta = \{x \in \R^n, \ |\scal{x}{\theta}| \leq h_K(\theta)^\lambda h_L(\theta)^{1-\lambda}\}
\]
over all unit vectors $\theta$ in $S^{n-1}$. Here, as usual $h_K(\theta) \sup_{y \in K} \scal{\theta}{y}$ denotes the support functional of $K$. Still resisting significant efforts of many reseachers over a decade, this far reaching conjecture stems from the so-called logarithmic Minkowski problem (see \cite{BLYZ-prob}) and we refer to E. Milman's recent work \cite{Mil-logBM} for further comprehensive background, references and best results up to date. Relevant to us is an equivalent formulation in terms of a certain convexity property of volumes of sections of rescaled cubes.

\begin{conjecture}\label{conj:logBM}
Let $1 \leq k \leq n$. For every $k$-dimensional subspace $H$ in $\R^n$, the function
\[
(t_1, \dots, t_n) \mapsto \vol_{k}\big(\text{diag}(e^{t_1},\dots,e^{t_1})B_\infty^n \cap H\big)
\]
is log-concave on $\R^n$.
\end{conjecture}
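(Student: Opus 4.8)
The plan is to dispose of the two degenerate ranges of $k$, reduce the hyperplane case to a probabilistic ``$(B)$-type'' inequality, and then square the general case against the logarithmic Brunn--Minkowski conjecture.

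Two cases are immediate. For $k=n$ the function is $(t_1,\dots,t_n)\mapsto\vol_n\big(\mathrm{diag}(e^{t_j})B_\infty^n\big)=2^n e^{t_1+\dots+t_n}$, which is log-linear. For $k=1$, writing $H=\R v$ one has $\mathrm{diag}(e^{t_j})B_\infty^n\cap H=\{sv:\ |s|\le\min_{j:\,v_j\neq0}e^{t_j}/|v_j|\}$, a segment of length $2|v|\exp\big(\min_{j:\,v_j\neq0}(t_j-\log|v_j|)\big)$, whose logarithm is a constant plus a minimum of affine functions of $\mathbf t$, hence concave. So the content of Conjecture~\ref{conj:logBM} sits in $2\le k\le n-1$. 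For $k=n-1$ I would pass to a probabilistic formula: with $H=a^\perp$, $|a|=1$, and $\mathbf a=(e^{t_1},\dots,e^{t_n})$, the map $x\mapsto\mathrm{diag}(\mathbf a)^{-1}x$ carries $\mathrm{diag}(\mathbf a)B_\infty^n\cap a^\perp$ onto $B_\infty^n\cap(\mathrm{diag}(\mathbf a)a)^\perp$; tracking the $(n-1)$-dimensional Jacobian of a diagonal map across a hyperplane and inserting K\"onig--Koldobsky's formula from Lemma~\ref{lm:formula-cube} gives
\[
\vol_{n-1}\big(\mathrm{diag}(e^{t_j})B_\infty^n\cap a^\perp\big)=2^{n-1}\,e^{t_1+\dots+t_n}\,\E\Big|\sum_{j=1}^n e^{t_j}a_j\xi_j\Big|^{-1},
\]
with $\xi_1,\xi_2,\dots$ i.i.d.\ uniform on $S^2\subset\R^3$ (check the value at $\mathbf t=0$ and the scaling $t_j\equiv t$). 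Since the exponential prefactor is log-linear, this case of the conjecture is equivalent to log-concavity of $\mathbf t\mapsto\E\big|\sum_j e^{t_j}a_j\xi_j\big|^{-1}$.

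The natural attack on that statement is to differentiate twice in $\mathbf t$ and reorganise the resulting quadratic-form inequality --- conditioning on the $\xi_j$ and using Jensen --- into a weighted Poincar\'e/spectral-gap inequality on $(S^2)^n$ restricted to even functions; this is the section analogue of the Cordero-Erausquin--Fradelizi--Maurey $B$-theorem for the Gaussian measure. For $2\le k\le n-1$ one plays the same game with the negative first moment of $|\cdot|$ replaced by a negative-$k$-th-moment (equivalently, an inverse square root of a determinant) functional in the spirit of Lemma~\ref{lm:sec-GM}. The essential loss compared with the $B_p^n$, $0<p<2$, situation is that the cube is the $p=\infty>2$ case, so there is no Gaussian-mixture representation and the clean trace computations underlying Theorems~\ref{thm:Kol} and~\ref{thm:BN-p>2} are unavailable: the weight $|x|^{-1}$ (or the determinant functional) is convex but not ``Gaussian-compatible'' with the spherical factors.

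Honesty requires noting that Conjecture~\ref{conj:logBM} is known to be \emph{equivalent} to the logarithmic Brunn--Minkowski conjecture of \cite{BLYZ-ineq} (see \cite{Mil-logBM}), so a full proof is presently out of reach; a realistic programme is to recover the known partial cases through this lens. Concretely I would (i) reduce Conjecture~\ref{conj:logBM} to its infinitesimal form --- nonnegativity at $\mathbf t=0$ of the Hessian of $\mathbf t\mapsto\log\vol_k(\mathrm{diag}(e^{t_j})B_\infty^n\cap H)$, for every $H$ --- by a routine semigroup / concavity-along-the-path argument; (ii) express that Hessian through a trace formula for the associated Hilbert--Brunn--Minkowski operator on the boundary of the section; and (iii) bound it by the required weighted Poincar\'e inequality in the regimes where this is feasible: $n=2$ (planar log-BM, \cite{BLYZ-ineq}, which also follows from the $k=1,n$ cases above); bodies --- hence sections --- close to a Euclidean ball (the local method of Kolesnikov--Milman); and rotation-invariant or ``complex'' configurations. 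One caveat for step (iii): Saroglou's unconditional log-BM does \emph{not} apply off the shelf here, because a coordinate box sliced by a generic subspace $H$ is no longer unconditional.

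The main obstacle, in either formulation, is exactly the obstruction to log-BM itself: controlling the cancellations in $\E|\sum_j e^{t_j}a_j\xi_j|^{-1}$ (geometrically, in the section volume as the box is sheared). The additive semigroup machinery that proves the Gaussian $B$-theorem does not transfer to the $|x|^{-1}$-type weight over spherical factors, and the local/curvature route runs into the well-known failure of the naive second-order bound away from the ball. A complete argument would therefore have to either uncover genuinely new structure in the cube-section functional beyond what powers Ball's and Vaaler's theorems, or resolve the logarithmic Brunn--Minkowski conjecture outright.
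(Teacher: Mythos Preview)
The statement you were asked to address is presented in the paper as an open \emph{conjecture}, explicitly noted to be equivalent to the logarithmic Brunn--Minkowski conjecture of B\"or\"oczky--Lutwak--Yang--Zhang; the paper offers no proof, only pointers to \cite{NT, Sar1, Sar2} for the equivalence and the remark that the analogous statement for $B_1^n$ (where Gaussian mixtures are available via Lemma~\ref{lm:sec-GM}) has been verified. So there is nothing to compare your attempt against: you correctly recognise the open status and produce a research programme rather than a proof.

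Your partial observations are sound. The cases $k=n$ (log-linear) and $k=1$ (minimum of affine functions) are indeed trivial, and your hyperplane formula
\[
\vol_{n-1}\big(\mathrm{diag}(e^{t_j})B_\infty^n\cap a^\perp\big)=2^{n-1}e^{t_1+\dots+t_n}\,\E\Big|\sum_{j=1}^n a_je^{t_j}\xi_j\Big|^{-1}
\]
is correct (it follows directly from \eqref{eq:sec-density} applied to the rescaled box together with Lemma~\ref{lm:formula-cube}, exactly as you outline). Your diagnosis of the obstruction is also the right one and matches the paper's remark: for $p=\infty$ there is no Gaussian-mixture representation, so the convexity trick that settles the $B_1^n$ case in \cite{NT} does not transfer. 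The proposed route through a $(B)$-type spectral inequality on $(S^2)^n$ is a reasonable heuristic, but as you yourself note, a genuine proof along these lines would amount to resolving log-BM, which is beyond current technology.
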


For precise statements and explanations of equivalences for this and similar formulations, we refer to \cite{NT, Sar1, Sar2}. Here, as usual $\text{diag}(e^{t_1},\dots,e^{t_n})$ is the $n\times n$ diagonal matrix with the diagonal entries $e^{t_1}, \dots, e^{t_n}$, so that $\text{diag}(e^{t_1},\dots,e^{t_1})B_\infty^n = [-e^{t_1},e^{t_1}]\times\dots\times[-e^{t_n},e^{t_n}]$. In fact, we conjecture that the conjecture remains true with $B_p^n$ in place of the cube $B_\infty^n$ for every $p \geq 1$ and have been able to verify this for $p=1$ in \cite{NT} using Lemma \ref{lm:sec-GM}.

\end{document}